\tikzset{middlearrow/.style={decoration={markings,mark= at position 0.5 with {\arrow{#1}} ,},postaction={decorate}}}
\def \({\left(}
\def \){\right)}
\def \[{\left[}
\def \]{\right]}
\newcommand{\bg}{{\textbf {g}}}
\newcommand{\bu}{{\textbf {u}}}
\newcommand{\bh}{{\textbf {h}}}
\newcommand{\rI}{{\mathrm{I}}}
\newcommand{\bv}{{\textbf {v}}}
\newcommand{\bB}{{\textbf {B}}}
\newcommand{\bx}{{\textbf {x}}}
\newcommand{\bomega}{{\boldsymbol{\omega}}}
\newcommand{\bgamma}{{\boldsymbol{\gamma}}}
\newcommand{\bz}{{\textbf {z}}}
\newcommand{\bc}{{\textbf {c}}}
\newcommand{\bxi}{{\boldsymbol{\xi}}}
\newcommand{\bzero}{{\textbf{0}}}
\newcommand{\be}{\begin{equation}}
\newcommand{\ee}{\end{equation}}
\newcommand{\beqa}{\begin{eqnarray}}
\newcommand{\eeqa}{\end{eqnarray}}
\newcommand{\mE}{\mathbb{E}}
\newcommand{\bea}{\begin{align}}
\newcommand{\eea}{\end{align}}
\newcommand{\rmd}{\mathrm{d}}
\newcommand{\bigo}{\ensuremath{\mathcal{O}}}
\newcommand{\indi}{\mathds{1}}
\theoremstyle{plain}
\newtheorem{thm}{\textbf{Theorem}} %Different numbering
\newtheorem{corollary}{\textbf{Corollary}}[section]
\newtheorem{lemma}{\textbf{Lemma}}[section]
\newtheorem{theorem}{\textbf{Theorem}}[section]
\newtheorem{proposition}{\textbf{Proposition}}[section]
\newtheorem{conjecture}{\textbf{Conjecture}}[section]
\theoremstyle{remark}
\newtheorem*{remark*}{\textbf{Remark}}
 \newenvironment{proofof}[1]{{\bf {\em Proof of #1.}}}{\hfill \rule{2mm}{2mm} %\qed 
 }
\DeclareMathAlphabet{\varmathbb}{U}{bbold}{m}{n}
\newcommand{\id}{\mathds{1}}
\newcommand{\EE}{\mathbb{E}}
\newcommand{\bbR}{\mathbb{R}}
\newcommand{\bbN}{\mathbb{N}}
\newcommand{\bbC}{\mathbb{C}}
\def\du#1{#1}
\newcommand{\ud}[1]{#1}
\newcommand{\Z}{\mathcal{Z}}
\newcommand{\mZ}{\mathcal{Z}}
\newcommand{\mO}{\mathcal{O}}
\newcommand{\mN}{\mathcal{N}}
\newcommand{\mC}{\mathcal{C}}
\newcommand{\extr}{\textrm{\textbf{extr}}}
\newcommand{\td}[1]{{\tilde{#1}}}
\newcommand{\R}{\mathbb{R}}
\newcommand{\spacecase}[0]{\vspace{0.3cm} \\}
\newcommand{\hhspace}[0]{\hspace{0.3cm} }
\newcommand{\andcase}[0]{\hspace{ 0.2cm }\textrm{ and }\hspace{ 0.2cm }}
\def\E{\mathbb{E}}
\def\out{\text{out}}
\def\dd{\text{d}}
\def\iid{\text{i.i.d.}}
\def\sgn{\text{sgn}}
\newcommand{\underlim}[2]{\underset{#1 \to #2}{\longrightarrow}}
\begin{document}
%\newpage

\title{The spiked matrix model with generative priors}
\author{Benjamin Aubin$^\dagger$, Bruno Loureiro$^\dagger$, Antoine Maillard$^\star$ \\ 
Florent Krzakala$^\star$, Lenka Zdeborov{\'a}$^\dagger$}
\date{
$\dagger$ \textit{Institut de Physique Th\'eorique \\
CNRS \& CEA \& Universit\'e Paris-Saclay, Saclay, France}\\
$\star$ \textit{Laboratoire de Physique Statistique\\
CNRS \& Sorbonnes Universit\'es \& \\
\'Ecole Normale Sup\'erieure, PSL University, Paris, France}\\
\vspace{1cm}
}

\maketitle

\begin{abstract}
Using a low-dimensional parametrization of signals is a generic and powerful way to enhance performance in signal processing and statistical inference. A very popular and widely explored type of dimensionality reduction is sparsity; another
type is generative modelling of signal distributions. Generative models based on neural networks, such as GANs or variational auto-encoders,
are particularly performant and are gaining on applicability. In this paper we study spiked matrix models, where a low-rank matrix is observed through a noisy
channel. This problem with sparse structure of the spikes has attracted
broad attention in the past literature. Here, we replace the sparsity assumption by
generative modelling, and investigate the consequences on statistical and
algorithmic properties. We analyze the Bayes-optimal
performance under specific generative models for the spike. In contrast with
the sparsity assumption, we do not observe regions of parameters where
statistical performance is superior to the best known algorithmic
performance. We show that in the analyzed cases the approximate
message passing algorithm is able to reach optimal performance. We also design
enhanced spectral algorithms and analyze their performance and
thresholds using random matrix theory, showing their superiority to the
classical principal component analysis. We complement our theoretical
results by illustrating the performance of the spectral algorithms when the spikes come from real datasets.
\end{abstract}

\newpage
\makeatletter
\def\l@subsubsection#1#2{}
\makeatother

{
  \hypersetup{linkcolor=black}
\begin{spacing}{0.9}
  \tableofcontents
  \end{spacing}
}

\newpage
\section{Introduction}
A key idea of modern signal processing is to exploit the structure of the
signals under investigation. A traditional and powerful way of doing so is via sparse
representations of the signals.
Images are typically sparse in the wavelet domain, sound in the
Fourier domain, and sparse coding \cite{olshausen1997sparse} is designed to search
automatically for dictionaries in which the signal is sparse. This
compressed representation of the signal can be used to enable
efficient signal processing under larger noise or with fewer samples
leading to the ideas behind compressed sensing
\cite{donoho2006compressed} or sparsity enhancing regularizations.
Recent years brought a surge of interest in another powerful and
generic way of representing signals -- generative modeling. In particular the generative adversarial
networks (GANs) \cite{goodfellow2014generative} provide an impressively
powerful way to represent classes of signals. A recent series of works on compressed
sensing and other regression-related problems successfully explored the idea
of replacing the traditionally used sparsity by generative
models \cite{tramel2016inferring,bora2017compressed, manoel2017multi,hand2017global,
  fletcher2018inference,hand2018phase,mixon2018sunlayer}.
These results and performances conceivably suggest that
\cite{BlogSoledad}:
\begin{equation}
{\rm Generative \, \, models \, \, are \, \, the \, \, new \, \, sparsity.} \nonumber
\end{equation}

Next to compressed sensing and regression, another technique in statistical analysis that uses
sparsity in a fruitful way is sparse principal component
analysis (PCA) \cite{zou2006sparse}. Compared to the standard PCA, in sparse-PCA the principal components are linear combinations of a few of
the input variables, specifically $k$ of them. This means (for rank-one) that we aim to decompose the observed data
matrix $Y \in {\mathbb R}^{n\times p}$ as $Y = {\bu} {\bv}^\intercal
+ \xi$ where the spike $\bv \in {\mathbb R}^p$ is a vector with only
$k \ll p$ non-zero components, and $\bu, \xi$ are commonly modelled as
independent and identically distributed (i.i.d.) Gaussian variables. 

The main goal of this paper is to explore the idea of replacing
sparsity of the spike $\bv$ by the assumption that the spike belongs to the
range of a generative model.  Sparse-PCA with structured sparsity
inducing priors is well studied, e.g.~\cite{jenatton2010structured},
in this paper we remove the sparsity entirely and in a sense replace it by lower
dimensionality of the latent space of the generative model. 
For the purpose of comparing generative model priors and sparsity we
focus on the rich range of properties in the noisy high-dimensional
regime (denoted below, borrowing statistical physics jargon, as the \emph{thermodynamic limit}) where the spike
$\bv$ cannot be estimated consistently, but can be estimated better than by
random guessing. In particular we analyze two
spiked-matrix models as considered in a series of existing works on sparse-PCA,
e.g.~\cite{rangan2012iterative,deshpande2014information,lesieur2015phase,perry2016optimality,lelarge2019fundamental,barbier2016mutual,miolane2017fundamental},
defined as follows:
\paragraph{Spiked Wigner model ($\bv \bv^\intercal$):}
 Consider an unknown vector (the spike) $\bv^\star \in \bbR^{p}$ drawn
 from a distribution $P_v$;  we observe a matrix $Y \in \bbR^{ p
   \times p}$ with a symmetric noise term $\xi \in \bbR^{p \times p}$ and $\Delta > 0$:
\begin{align}
	Y = \frac{1}{\sqrt{p}} {\bv^\star} {\bv^\star}^\intercal + \sqrt{\Delta} \xi \, ,\label{Wigner}
\end{align}
where
$\xi_{ij} {\sim} \mN\(0,1\)$ i.i.d. The aim is to find back the hidden spike ${\bv^\star}$ from $Y$ (up to a global sign).
\paragraph{Spiked Wishart (or spiked covariance) model ($\bu \bv^\intercal$):}
Consider two unknown vectors ${\bu}^{\star}\in \bbR^{n}$  and ${\bf
  v}^{\star} \in \bbR^{p}$ drawn from distributions $P_u$ and $P_v$
and let $\xi \in \bbR^{n \times p}$ with $\xi_{\mu i}
{\sim} \mN\(0,1\)$ i.i.d. and $\Delta > 0$, we observe
\begin{align}
	Y = \frac{1}{\sqrt{p}} {\bu^\star} {\bv^\star}^\intercal + \sqrt{\Delta} \xi \,;\label{Wishart}
\end{align}
the goal is to find back the hidden spikes ${\bu}^{\star}$ and ${\bv}^\star$ from $Y \in \bbR^{n \times p}$.

The noisy high-dimensional limit that we consider in this paper (the
{\it  thermodynamic limit}) is $p,n \! \to\!  \infty$ while $\beta\!
\equiv\! n/p \!=\! \Theta(1)$, and
the noise $\xi$ has a variance $\Delta\!=\!\Theta(1)$.
The prior $P_v$ is representing the spike $\bv$ via a $k$-dimensional
parametrization with $\alpha\!\equiv\! p/k \!=\! \Theta(1)$. In the sparse case,
$k$ is the number of non-zeros components of $\bv^{\star}$, while in
generative models $k$ is the number of latent variables.

\subsection{Considered generative models}
\label{sec:generative_models}
The simplest non-separable prior $P_v$ that we consider is the
Gaussian model with a covariance matrix $\Sigma$, that is
$P_{v}(\bv)={\cal N}(\bv;\bzero,\Sigma)$. This prior is not compressive,
yet it captures some structure and can be simply estimated
from data via the empirical covariance. We use this prior later to produce
Fig.~\ref{main:experiement_mnist}.

To exploit the
practically observed power of generative models, it would be desirable to consider
models (e.g. GANs, variational auto-encoders, restricted Boltzmann
machines, or others) trained on datasets of examples of possible spikes. Such
training, however, leads to correlations between the weights of the
underlying neural networks for which
the theoretical part of the present paper does not apply readily. To
keep tractability in a closed form, and subsequent theoretical insights, we focus on multi-layer generative models where all the
weight matrices $W^{(l)}$, $l=1,\dots,L$, are fixed, layer-wise independent,
i.i.d. Gaussian with zero mean and unit variance. Let $\bv \in \bbR^{p}$ be the output of such a generative model
\begin{equation}
    \bv = \varphi^{(L)} \( \frac{1}{\sqrt{k}} W^{(L)} \dots \varphi^{(1)} \(\frac{1}{\sqrt{k}} W^{(1)} \bz \)
    \dots \)\, .
    \label{main:eqMLmodel}
  \end{equation}
with $\bz\in \bbR^{k}$ a latent variable drawn from separable distribution
$P_z$, with $\rho_z = \EE_{P_z} \[z^2\]$ and $\varphi^{(l)}$ element-wise activation functions that can be either deterministic or
  stochastic. In the setting considered in this paper the ground-truth spike $\bv^*$ is
generated using a ground-truth value of the latent variable
$\bz^*$. The spike is then estimated from the knowledge of the
data matrix $Y$, and the known form of the spiked-matrix and of the
generative model. In particular the matrices $W^{(l)}$ are known, as are the
parameters $\beta$, $\Delta$, $P_z$, $P_u$, $P_v$,
$\varphi^{(l)}$. Only the spikes $\bv^*$, $\bu^*$ and the latent vector
$\bz^*$ are unknown, and are to be inferred.

For concreteness and simplicity, the generative model that will be
  analyzed in most examples given in the present paper is
  the single-layer case of (\ref{main:eqMLmodel}) with $L=1$: 
\begin{align}
	\bv = \varphi \( \frac{1}{\sqrt{k}} W \bz \) \hhspace \Leftrightarrow \hhspace
  \bv \sim P_{\rm out}\( \cdot \Big| \frac{1}{\sqrt{k}} W \bz \) \, . \label{gen_single}
\end{align}
We define the compression ratio $\alpha \equiv p/k$. In what follows we will illustrate our results for
  $\varphi$ being linear, sign and ReLU functions.

\subsection{Summary of main contributions}
We analyze how the availability of generative priors, defined in
section \ref{sec:generative_models}, influences the statistical and
algorithmic properties of the spiked-matrix models (\ref{Wigner}) and (\ref{Wishart}). Both sparse-PCA
and generative priors provide statistical advantages when the
effective dimensionality $k$ is small, $k \ll p$. However, we show that from
the algorithmic perspective the two cases are quite different.
This is why our main findings are best presented in a context of the results known
for sparse-PCA. We draw two main conclusions from the present work:

{\bf (i) No algorithmic gap with generative-model priors:}
Sharp and detailed results are known in the thermodynamic limit (as
defined above) when
the spike $\bv^\star$ is sampled from a separable distribution $P_v$. A detailed account of several examples can be found in
\cite{lesieur2017constrained}. The main finding for sparse priors $P_v$ is that
when the sparsity $\rho = k/p = 1/\alpha$ is large enough then there exist optimal
algorithms \cite{deshpande2014information}, while for
$\rho$ small enough there is a striking gap between statistically optimal performance and the one of best
known algorithms \cite{lesieur2015phase}. The small-$\rho$ expansion
studied in 
\cite{lesieur2017constrained} is consistent with the well-known results for exact
recovery of the support of $\bv^\star$ \cite{amini2009high,berthet2013computational}, which is one of
the best-known cases in which gaps between statistical and best-known algorithmic performance were described.

Our analysis of the spiked-matrix models with generative priors reveals that in
this case known algorithms are able to obtain (asymptotically) optimal
performance even when the dimension is greatly reduced, i.e. $\alpha \gg 1$. Analogous
conclusion about the lack of algorithmic gaps was reached for the
problem of phase retrieval under a generative prior in
\cite{hand2018phase}. This result suggests that plausibly generative
priors are better than sparsity as they lead to algorithmically easier
problems.

{\bf (ii) Spectral algorithms reaching statistical threshold:}
Arguably the most basic algorithm used to solve the spiked-matrix
model is based on the leading singular vectors of the matrix $Y$. We
will refer to this as PCA. 
Previous work on spiked-matrix models \cite{perry2016optimality,lesieur2017constrained} established that in the thermodynamic limit and for separable priors of zero mean PCA
reaches the best performance of all known efficient algorithms in terms of the value of noise
$\Delta$ below which it is able to provide positive correlation between
its estimator and the ground-truth spike. 
While for sparse priors
positive correlation is statistically reachable even for larger values of
$\Delta$ \cite{perry2016optimality,lesieur2017constrained}, no efficient algorithm beating the PCA threshold is
known\footnote{This result holds only for sparsity $\rho=\Theta(1)$. A
line of works shows that when sparsity $k$ scales slower than linearly
with $p$, algorithms more performant than PCA exist
\cite{amini2009high,deshpande2014sparse}}.

In the case of generative priors we find in this paper that other spectral
methods improve on the canonical PCA. We design a spectral method,
called LAMP, that
(under certain assumptions, e.g. zero mean of the spikes) reach the statistically optimal threshold,
meaning that for larger values of noise variance no other (even exponential) algorithm is
able to reach positive correlation with the spike. Again this is a
striking difference with the sparse separable prior, making the generative
priors algorithmically more attractive. We demonstrate the performance
of LAMP on the spiked-matrix model when
the spike is taken to be one of the fashion-MNIST images showing
considerable improvement over canonical PCA.

\section{Analysis of information-theoretically optimal estimation}
We first discuss the information theoretic results on the estimation
of the spike, regardless of the computational cost. A considerable
amount of results have been obtained for the spiked-matrix models with
separable priors~\cite{rangan2012iterative,deshpande2014information,deshpande2016asymptotic,krzakala_mutual_2016,barbier2016mutual,lelarge2019fundamental,AlaouiKrzakala,alaoui2017finite,barbier2018adaptive,mourrat2019hamilton}. Here,
we extend these results to the case where the spike
$\bv^\star \in \bbR^{p}$ is generated from a {\it generic
  non-separable prior} $P_v$ on $\bbR^p$.
  
\subsection{Mutual Information and Minimal Mean Squared Error}
We consider the mutual information between the ground-truth spike
$\bv^\star$ and the observation $Y$, defined as
$ I(Y;\bv^\star)=D_{\mathrm {KL}
}(P_{(v^\star,Y)}\|P_{v^\star} P_{Y})$. 
Next, we consider the best possible value of
the mean-squared-error on recovering the spike, commonly called the
minimum mean-squared-error (MMSE). The MMSE estimator is computed from
marginal-means of the posterior distribution $P(\bv|Y)$.
\begin{thm}\label{theorem_uu}[Mutual information for the
  spiked Wigner model with structured spike] Informally (see SM
  section \ref{appendix:proof_uu} for details and proof), assume the
  spikes $\bv^\star$ come from a sequence (of growing dimension $p$) of
  generic structured priors $P_v$ on $\bbR^p$, then
  \begin{align} \lim_{p \to \infty} i_p &\equiv \lim_{p\to \infty}
    \frac {I(Y;\bv^\star)}p = \inf_{\rho_v \ge q_v \ge 0} {i}_{\rm
    RS}(\Delta,q_v),\\
{\text with}~~~
    i_{\rm
      RS}(\Delta,q_v) &~\equiv
    \frac{(\rho_v-q_v)^2}{4\Delta} + \lim_{p \to \infty}
    \frac{I\left(\bv;\bv+\sqrt{\frac{\Delta}{q_v}} \bxi \right)}p
    \, \label{eq:information_theory:limip}\end{align}
and $\bxi$ being a
  Gaussian vector with zero mean, unit diagonal variance and
  $\rho_v=\lim\limits_{p \to \infty} \E_{P_v}[\bv^\intercal\bv]/p$.
\end{thm}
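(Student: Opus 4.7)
The plan is to prove this by the adaptive interpolation method of Barbier--Macris, which has already been used successfully to handle separable priors in spiked Wigner models. The novelty to address is that the prior $P_v$ is non-separable (the spike lives in the range of a generative model), so the argument must be formulated intrinsically on $\bbR^p$ without exploiting independence across the coordinates of $\bv^\star$.

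First I would set up the standard interpolating Hamiltonian on $t\in[0,1]$ that mixes the spiked Wigner observation with an auxiliary scalar denoising channel. Concretely, given an interpolation function $r:[0,1]\to\bbR_{\ge 0}$ one observes
\begin{align*}
Y_{ij}(t) &= \sqrt{\tfrac{t}{p}}\,v^\star_i v^\star_j + \sqrt{\Delta}\,\xi_{ij},\\
Y'_i(t) &= \sqrt{R(t)/\Delta}\, v^\star_i + \xi'_i,
\end{align*}
where $R(t)=\int_0^t r(s)\,\d s$ and the $\xi,\xi'$ are independent standard Gaussians. Let $\psi_p(t)$ be the associated free energy per site. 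At $t=1$, $R(1)$ is a parameter to be chosen, and the $t=0$ endpoint is exactly the right-hand side denoising channel appearing in $i_{\rm RS}$; at $t=1$ one recovers the spiked Wigner free energy (up to an error coming from the side channel, which is handled by the standard perturbation/small-noise argument). Differentiating in $t$ and using Gaussian integration by parts together with the Nishimori identity gives the sum rule
\begin{equation*}
\psi_p(1)-\psi_p(0) = -\int_0^1 \Big(\tfrac{1}{4\Delta}\EE\langle (Q_v-r(t))^2\rangle_t - \tfrac{r(t)^2}{4\Delta}\Big)\,\d t + o_p(1),
\end{equation*}
where $Q_v = \bv^\intercal \tilde\bv / p$ is the overlap between two i.i.d.\ replicas under the posterior. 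Crucially this identity does not use separability of $P_v$; only the posterior Nishimori symmetry and the quadratic coupling to $\bv$ are needed.

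From the sum rule both bounds follow: the Guerra-type upper bound is obtained by dropping the nonnegative $\EE\langle (Q_v-r(t))^2\rangle_t$ term and optimising the constant choice $r(t)\equiv q_v$, giving $\lim i_p \le \inf_{q_v} i_{\rm RS}(\Delta,q_v)$. For the matching lower bound I would pick $r(t)$ to be the solution of the ODE $r'(t) = F(r(t))$ where $F$ is the (non-separable) scalar MMSE of the side denoising problem, apply the implicit function / Cauchy--Lipschitz argument to ensure existence on $[0,1]$, and then use overlap concentration to show $\EE\langle(Q_v-r(t))^2\rangle_t\to 0$ for almost every $R(1)$.

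The main obstacle, and the step that genuinely needs work in the non-separable setting, is precisely this overlap concentration. In the separable case one proves concentration of $Q_v$ by perturbing each coordinate with an independent small side channel and invoking a Ghirlanda--Guerra type differentiation identity term by term. With a generative prior this must be replaced by a single scalar perturbation (the side channel above) plus a fluctuation identity for the Bayes-optimal posterior: thermal fluctuations of $Q_v$ are controlled by the derivative of $\psi_p$ in the side-channel strength (an $L$-derivative estimate), and disorder fluctuations by a Gaussian--Poincar\'e inequality applied to the full $Y$ observation. Both pieces only require $\rho_v = \lim p^{-1}\EE[\bv^\intercal\bv]<\infty$ and boundedness of the posterior overlap (guaranteed when $P_v$ has appropriate tails, e.g.\ bounded activations $\varphi^{(l)}$ as in the paper's examples). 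Combining concentration with the adaptive sum rule closes the lower bound and yields the claimed variational formula. The small technical points left are standard: (i) existence of the limit $\lim_{p} p^{-1} I(\bv;\bv+\sqrt{\Delta/q_v}\bxi)$ under the prior assumptions (a sub-additivity / replica-coupling argument), and (ii) showing that the infimum over $q_v\in[0,\rho_v]$ is attained, for which convexity of $i_{\rm RS}$ in $q_v^{-1}$ on the relevant range suffices.
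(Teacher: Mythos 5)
Your proposal is correct in outline but takes a genuinely different route from the paper. You use the adaptive interpolation method of Barbier--Macris: a single time-dependent interpolating family with an ODE-driven $r(t)$, deriving both bounds from the resulting sum rule. The paper instead combines a (non-adaptive) Guerra interpolation at fixed $q_v$ for the upper bound with a Franz--Parisi potential argument for the lower bound, following Alaoui--Krzakala. The crucial distinction is how each handles the lower bound's remainder term $\EE\langle(Q_v-q_v)^2\rangle$: you need to prove overlap concentration (via a scalar side-channel perturbation and a Ghirlanda--Guerra/$L$-derivative plus Gaussian--Poincar\'e argument, valid for almost every perturbation strength), which you rightly flag as the main obstacle in the non-separable setting; the paper instead enforces it, by restricting the Gibbs measure to configurations with overlap in a window $[m,m+\epsilon)$, so that $\EE\langle(R_{1,*}-m)^2\rangle^{m,\epsilon}\le\epsilon^2$ holds trivially. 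The price the paper pays is a Laplace-type discretisation argument (Propositions~\ref{laplace_one} and \ref{laplace_two}) to pass from the constrained Franz--Parisi free energy back to the true free energy, relying on Tsirelson--Ibragimov--Sudakov concentration over the Gaussian noise and an explicit concentration hypothesis for $f_p^0(\,\cdot\,,\bv^\star)$ over the spike. Your route yields a cleaner single sum rule and dispenses with the Franz--Parisi machinery, but the overlap-concentration step you defer is precisely where the non-separability bites and must be spelled out in detail (including the continuity in $R(1)$ to go from almost-every to the desired value); the paper's approach sidesteps that step at the cost of the Laplace argument and the extra concentration hypothesis on the denoising free energy, which it makes explicit in Theorem~\ref{theorem_uu_detail}. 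Both require the elements of $\bv$ to be bounded, and both arrive at the same variational formula, so the approaches are essentially interchangeable once the technical work is carried out.
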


This theorem connects the asymptotic mutual information of the spiked
model with generative prior $P_v$ to the mutual information between $\bv$
taken from $P_{v}$ and its noisy version,
$I(\bv;\bv+\sqrt{{\Delta}/{q_v}}\bxi)$. Computing this later mutual information is itself a
high-dimensional task, hard in full generality, but it can be done for
a range of models. The simplest tractable case is when the prior $P_{v}$ is
separable, then it yields back exactly the formula known from
\cite{krzakala_mutual_2016,barbier2016mutual,lelarge2019fundamental}. It
can be computed also for the Gaussian generative model,
$P_{v}(\bv)={\cal N}(\bv;\bzero,\Sigma)$, leading to 
$I(\bv;\bv+\sqrt{{\Delta}/{q_v}}\bxi) = {\rm Tr}\left( \log{(\rI_p + q_v
    \Sigma/\Delta)}\right)/2$. 
    
More interestingly, the mutual information associated to the
generative prior in eq.~(\ref{eq:information_theory:limip}) can also be asymptotically computed for the multi-layer
generative model with random weights, defined in eq.~(\ref{main:eqMLmodel}). 
Indeed, for the single-layer prior (\ref{gen_single}) the corresponding formula for mutual information
has been derived and proven in \cite{Barbier2017c}. 
For the multi-layer case the mutual information formula has been derived in
\cite{manoel2017multi,reeves2017additivity} and proven for the case of two layers in
\cite{gabrie2018entropy}. Theorem \ref{theorem_uu} together with the
results from \cite{Barbier2017c, manoel2017multi,
  reeves2017additivity, gabrie2018entropy}
yields the following formula (see SM sec.~\ref{appendix:proof_uu} for
details) for the spiked Wigner model (\ref{Wigner}) with single-layer
generative prior (\ref{gen_single}):
\begin{align}
	i_{\rm RS} (\Delta,q_v) =  \frac{\rho_v^2}{4\Delta} + \frac{q_v^2}{4\Delta}
	+\frac{1}{\alpha} \min_{q_z} \max_{\hat{q}_z}
	\[\frac{1}{2} q_z \hat{q}_z  - \Psi_z(\hat{q}_z) - \alpha
  \Psi_{\rm out}\(\frac{q_v}{\Delta}, q_z\)   \]\, , 
	\label{main:free_entropy_uu}
\end{align}
where the functions $\Psi_z, \Psi_{\rm out}$ are defined by
\begin{align}
	\Psi_z (x) & \equiv \EE_{\xi} \[ \mZ_z\( x^{1/2} \xi ,x  \)
          \log \( \mZ_z\( x^{1/2} \xi ,x  \) \) \] \,
                     ,  \label{main:definition_Psi_z} 
        \\ 
	\Psi_{\rm out} (x,y) & \equiv \EE_{\xi, \eta} \[\mZ_{\rm
            out}\( x^{1/2} \xi , x , y^{1/2} \eta  , \rho_z - y \)
          \log\( \mZ_{\rm out}\( x^{1/2} \xi , x , y^{1/2} \eta  ,
          \rho_z - y \) \) \]\, ,
                \label{main:definition_Psi_out}
\end{align}
with $\xi, \eta {\sim} \mN\(0,1\)$ i.i.d., and $\mZ_z$ and $\mZ_{\rm out}$ are the normalizations of the following denoising scalar distributions:
\begin{align}
	Q_z^{\gamma, \Lambda} (z) \equiv \displaystyle \frac{P_z(z)}{\mZ_z(\gamma, \Lambda)}  e^{ - \frac{\Lambda }{2} z^2  + \gamma z  } \,; Q_{\rm out}^{B,A,\omega,V} (v, x) \equiv \displaystyle \frac{P_{\rm out}(v |x)}{\Z_{\rm out}(B, A,\omega, V)} e^{ -\frac{A}{2} v^2 + B v }   e^{ -\frac{\(x - \omega\)^2}{2V}  } \,.
\label{main:definition_Z}
\end{align}

Result (\ref{main:free_entropy_uu}) is remarkable in that it
connects the asymptotic mutual
information of a high-dimensional model with a simple
scalar formula that can be easily evaluated. In the SM
sec.~\ref{sec:appendix:replicafreeen} we show how this formula is obtained using the
heuristic replica method from statistical physics and, once we have
the formula in hand, we prove it using the interpolation method in SM
sec.~\ref{appendix:proof_uu}. In SM
sec.~\ref{sec:app:replicas:wishart} we also give the corresponding formula for the spiked Wishart model, and in sec.~\ref{sec:app:replicas:application}
for the multi-layer case.

Beyond its theoretical interest, the main point of the mutual
information formula is that it yields the optimal value of the
mean-squared error (MMSE). It is well-known \cite{cover2012elements}
that the mean-squared error is minimized by an estimator evaluating the
conditional expectation of the signal given the observations. Following generic theorems on the connection
between the mutual information and the MMSE \cite{GuoShamaiVerdu_IMMSE},
one can prove in particular that for the spiked-matrix model
\cite{AlaouiKrzakala} the MMSE on the spike $\bv^{\star}$ is
asymptotically given by:
\begin{align}
	{\rm MMSE}_v = \rho_v-q_v^\star  \,, \label{eq:MMSE}
\end{align}
where $q_v^\star$ is the optimizer of the function
$i_{\rm RS}\(\Delta , q_v\)$.
\subsection{Examples of phase diagrams}
\label{sec:examples}

Taking the extremization over $q_{v}, \hat{q}_z, q_{z}$ in eq.~(\ref{main:free_entropy_uu}), we obtain the following fixed point equations:
\begin{align}
		q_v = 2 \partial_{q_v} \Psi_{\rm out}
  \(\frac{q_v}{\Delta}, q_z \),  \hhspace	q_z =
  2 \partial_{\hat{q}_z} \Psi_{z} \(\hat{q}_z \),  \hhspace
		\hat{q}_z = 2 \alpha \partial_{q_z} \Psi_{\rm out} \(\frac{q_v}{\Delta}, q_z \).
	\label{main:SE_uu}
\end{align}
Using (\ref{eq:MMSE}), analyzing the fixed
points of eqs.~(\ref{main:SE_uu}) provides all the informations about
the performance of the Bayes-optimal estimator in the models under
consideration.

\paragraph{Phase transition:} A first question is whether better estimation
than random guessing from the prior is possible.
In terms of fixed points of eqs.~\eqref{main:SE_uu}, this corresponds to
the existence of the \emph{non-informative} fixed point $q^{\star}_{v} =
0$ (i.e. zero overlap with the spike, or maximum ${\rm MSE}_v =
\rho_v$).
Evaluating the right-hand side of eqs.~\eqref{main:SE_uu} at $q_{v} =
0$, we can see that $q_{v}^{\star} = 0$ is a fixed point if
\begin{align}
  \mathbb{E}_{P_{z}}\[z\] = 0 \andcase \mathbb{E}_{Q_{\out}^{0}}\[v\]
  = 0\, , \label{trivial_fixed}
\end{align}
where $Q_{\out}^{0}(v,x) \equiv Q_{\out}^{0,0,0,\rho_{z}}(v,x)$
from eq.~(\ref{main:definition_Z}). Note that for a deterministic
channel the second condition is equivalent to $\varphi$ being an odd
function.

When the condition (\ref{trivial_fixed}) holds,
$(q_{v},\hat{q}_{z},q_{z}) = (0,0,0)$ is a fixed point of
eq.~\eqref{main:SE_uu}. The numerical stability of this fixed point
determines a phase transition point $\Delta_{c}$, defined as the noise below which the fixed point $(0,0,0)$ becomes unstable. This corresponds to the value of $\Delta$ for which the largest eigenvalue of the Jacobian of the eqs.~\eqref{main:SE_uu} at $(0,0,0)$, given by

\begin{align}
    2\dd(\partial_{q_{v}}\Psi_{\rm out},\alpha\partial_{q_{z}}\Psi_{\out},\partial_{\hat{q}_{z}}\Psi_{z})|_{(0,0,0)} =
        \begin{pmatrix}
        \frac{1}{\Delta}\left(\mathbb{E}_{Q_{\out}^{0}}v^2\right)^2 & 0 & \frac{1}{\rho_{z}^{2}}\left(\mathbb{E}_{Q_{\out}^{0}}vx\right)^2 \\
        \frac{\alpha}{\Delta}\left(\mathbb{E}_{Q_{\out}^{0}}vx\right)^2 &  0 &\frac{\alpha}{\rho_{z}^2}\left(\mathbb{E}_{Q_{\out}^{0}}x^2-\rho_{z}\right)^2 \\ 
        0 &  \left(\mathbb{E}_{P_{z}}z^2\right)^2 & 0
    \end{pmatrix},
        \label{eq:phasetransition:jacobian}
\end{align}
becomes greater than one. The details of this calculation can be found in sec.~\ref{sec:app:stability} of the SM.

\begin{figure}[tb!]
	\centering
		\includegraphics[width=1.0\linewidth]{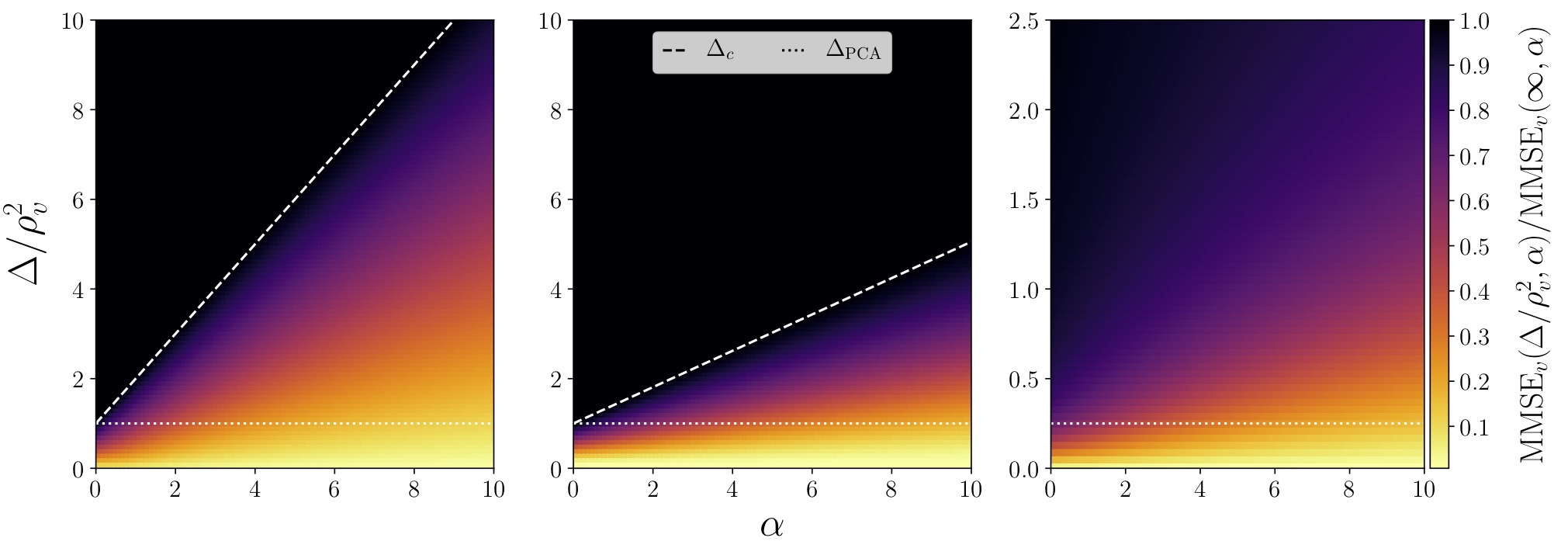}
	\caption{Spiked Wigner model: ${\rm MMSE}_v$ on the spike as a function of noise to signal ratio $\Delta/\rho_v^2$, and generative prior (\ref{gen_single}) with
          compression ratio $\alpha$ for linear (left, $\rho_v=1$), sign
          (center, $\rho_v=1$), and relu (right, $\rho_v=1/2$) activations. Dashed white lines
          mark the phase transitions $\Delta_c$, matched by both the
          AMP and LAMP algorithms. Dotted white line
          marks the phase transition of canonical PCA.}
	\label{main:fig_map_mse_delta_alpha}
\end{figure}

\begin{figure}[tb!]
	\centering
		\includegraphics[width=1.0\linewidth]{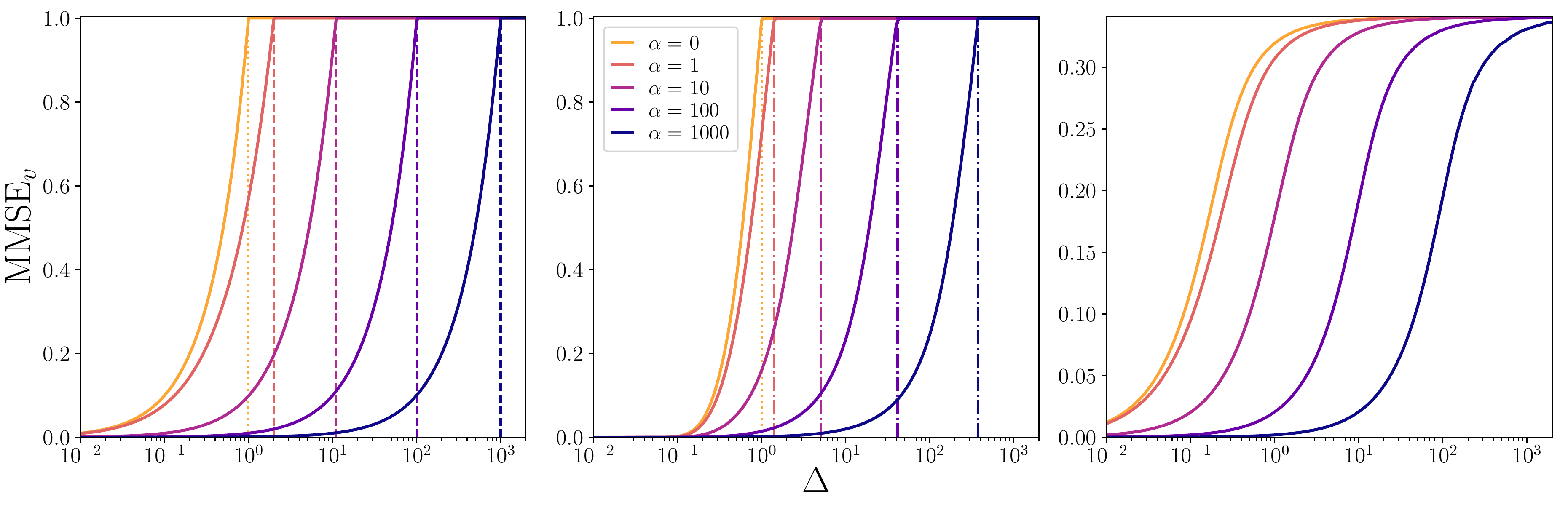}
	\caption{Spiked Wigner model: ${\rm MMSE}_v$ as a function of
          noise $\Delta$ for a wide range of compression ratios
          $\alpha=0,1,10,100,1000$, for linear (left), sign
          (center), and relu (right) activations. Unique stable fixed
          point of (\ref{main:SE_uu}) is found for all these cases.}
	\label{main:fig_mse_u_SE}
\end{figure}

It is instructive to compute $\Delta_{c}$ in specific cases. We
therefore fix $P_{z} = \mathcal{N}(0,1)$ and $P_{\out}(v|x)=
\delta(v-\varphi(x))$ and discuss two different choices of (odd)
activation function $\varphi$.
\begin{description}
    \item[Linear activation:] For $\varphi(x)=x$ the leading
      eigenvalue of the Jacobian becomes one at $\Delta_{c} =
      \alpha+1$.
Note that in the limit $\alpha = 0$ we recover the phase
    transition
$\Delta_{c}=1$ known from the case with separable prior \cite{lesieur2017constrained}. For
$\alpha>0$, we have $\Delta_{c}>1$ meaning the spike can be estimated
more efficiently when its structure is accounted for.

    \item[Sign activation:] For $\varphi(x)=\sgn(x)$ the
      leading eigenvalue of the Jacobian becomes one at $\Delta_{c} =
      1+\frac{4\alpha}{\pi^2}$. For $\alpha=0$, $P_{v} =
      \text{Bern(1/2)}$, and the transition $\Delta_{c}=1$ agrees with
      the one found for a separable prior distribution \cite{lesieur2017constrained}. As in the linear case, for
      $\alpha>0$, we can estimate the spike for
      larger values of noise than in
      the separable case.
\end{description}

In Fig.~\ref{main:fig_map_mse_delta_alpha} we solve the fixed point
equations (\ref{main:SE_uu}) and plot the MMSE obtained from the fixed
point in a heat map, for the linear, sign and relu
activations. The white dashed line marks the above stated threshold
$\Delta_c$. 
The property that we find the most
striking is that in these three evaluated cases, for all values of
$\Delta$ and $\alpha$ that we analyzed, we always found that
eq.~(\ref{main:SE_uu}) has a unique stable fixed point. Thus we have
not identified any first order phase transition (in the physics
terminology). This is illustrated in Fig.~\ref{main:fig_mse_u_SE} for larger values of
$\alpha$, where we solved the eq.~(\ref{main:SE_uu}) iteratively from
uncorrelated initial condition, and from initial condition
corresponding to the ground truth signal, and found that both lead to the same fixed
point.

\section{Approximate message passing with generative priors}
\label{main:sec_amp}
A straightforward algorithmic evaluation of the Bayes-optimal estimator is exponentially costly. This section is devoted to the analysis of an approximate message passing (AMP) algorithm that for the analyzed cases is able to reach the optimal performance (in the thermodynamic limit). For the purpose of presentation,
we focus again on the spiked Wigner model (see SM for the spiked Wishart model). For separable priors, the AMP for the spiked Wigner model is well known~\cite{rangan2012iterative,deshpande2014information,lesieur2015phase}. It can, however, be extended to non-separable priors~\cite{metzler2016denoising,manoel2017multi,berthier2017state}. We show in SM sec.~\ref{appendix:amp_derivation} how AMP can be generalized to handle the generative model (\ref{gen_single}). It reads:
\begin{algorithm}[!htb]
  \caption{AMP algorithm for the spiked Wigner model with single-layer generative prior.}
\begin{algorithmic}
    \STATE {\bfseries Input:} $Y \in \bbR^{p \times  p}$ and $W\in \bbR^{p \times k}$:
    \STATE \emph{Initialize to zero:} $( \bg, \hat{\bv}, \bB_{v} , A_{v})^{t=0}$. 
	\STATE \emph{Initialize with:} $\hat{\bv}^{t=1}=\mN(0,\sigma^2) $, $\hat{\bz}^{t=1}=\mN(0,\sigma^2)$, and $\hat{\bc}^{t=1}_v = \id_p$, $\hat{\bc}^{t=1}_z=\id_k$, $t=1$.
    \REPEAT
    \STATE \emph{Spiked layer:}
    \STATE $\ud{{\bB}}_v^{t} = \frac{1}{\Delta} \frac{Y}{\sqrt{p}}   \ud{\hat{\bv}}^{t}
 -  \frac{1}{\Delta } \frac{ \( \id_p^\intercal \hat{\bc}^{t}_v\) }{p}  \hat{\bv}^{t-1}$ \andcase $A^{t}_v = \frac{1}{\Delta p} \|\hat {\bv}^t \|_2^2 \rI_p  $.
   \STATE \emph{Generative layer:}
   \STATE $V^t =\frac{1}{k} \( \id_k^\intercal \hat{\bc}^{t}_z\) \rI_p $, \hhspace ${\boldsymbol \omega }^t =  \frac{1}{\sqrt{k}} W \hat{ {\bf z} }^{t} - V^t {\bg}^{t-1}$  \andcase ${\bg}^{t} = f_{\rm out}\({\bB}^{t}_v , A^{t}_v , {\boldsymbol \omega}^{t} , V^{t} \) $,
  \STATE $\du{\Lambda}^t = \frac{1}{k} \| {\bg}^t \|_2^2 \rI_k$
	\andcase ${ {\boldsymbol  \gamma} }^t = \frac{1}{\sqrt{k}}  W^\intercal  {\bg}^t + \Lambda^t \hat{\bz}^t  $.
    \STATE \emph{Update of the estimated marginals:}
    \STATE $\hat{\bv}^{t+1} = \ud{f}_v(\bB^{t}_v , A^{t}_v , {\boldsymbol \omega}^t , \du{V}^t)   \hspace{0.5cm}\andcase \hspace{0.5cm}  \hat{\bc}^{t+1}_v = \partial_B f_v( \bB^{t}_v , A^{t}_v , {\boldsymbol \omega}^t , V^t) $,
    \STATE $\hat{\bz}^{t+1} = f_z({\boldsymbol \gamma}^t , \Lambda^t)   \hspace{0.5cm} \andcase \hspace{0.5cm}  \hat{\bc}^{t+1}_z = \partial_\gamma f_z ({\boldsymbol \gamma}^t , \Lambda^t)  $,
    \STATE ${t} = {t} + 1$.
    \UNTIL{Convergence.}
    \STATE {\bfseries Output: $\hat{\bv}, \hat{\bz}$.}
\end{algorithmic}
\label{main:AMP_vv}
\end{algorithm}
where $\rI_s$ and $\id_s$ denote respectively the identity matrix and vector of ones of size $s$. The update functions $f_{\rm out}$ and $f_v$ are the means of $V^{-1}\(x-\omega\)$ and $v$ with respect to $Q_{\rm out}$, eq.~\eqref{main:definition_Z}, while the update function $f_z$ is the mean of $z$ with respect to $Q_z$, eq. (\ref{main:definition_Z}).

The algorithm for the spiked Wishart model is very similar and both derivations are given in SM sec.~\ref{appendix:AMP_derivation}.  We define the overlap of the AMP estimator with the ground truth spike as $(\hat{\bv}^t)^\intercal \bv^\star/p {\longrightarrow} q_v^t$ as ${p \to \infty}$.
Perhaps the most important virtue of AMP-type algorithms is that their asymptotic performance can be tracked exactly via a set of scalar equations called {\it state evolution}. This fact has been proven for a range of models including the spiked matrix models with separable priors in \cite{javanmard2013state}, and with non-separable priors in \cite{berthier2017state}. To help the reader understand the state evolution equations we provide a heuristic derivation in the SM, section \ref{appendix:derivation_SE_from_AMP}. The state evolution states that the overlap $q_v^t$ evolves under iterations of the AMP algorithm as:
\begin{align}
		q_{v}^{t+1} = 2 \partial_{q_v} \Psi_{\rm out}
  \(\frac{q_{v}^{t}}{\Delta}, q_{z}^{t} \), \quad	q_{z}^{t+1} = 2 \partial_{\hat{q}_z} \Psi_z \(\hat{q}_{z}^t \),  \quad
		\hat{q}_{z}^{t} = 2 \alpha \partial_{q_z} \Psi_{\rm out} \(\frac{q_{v}^{t}}{\Delta}, q_{z}^{t} \),
	\label{main:SE_AMP_uu}
\end{align}
with initialization $q_v^{t=0} = \varepsilon $, $q_z^{t=0}=\varepsilon$ and a small $\varepsilon>0$.
We notice immediately that (\ref{main:SE_AMP_uu}) are the same equations as the fixed point equations related to the Bayes-optimal estimation (\ref{main:SE_uu}) with specific time-indices and initialization, but crucially the same fixed points.
Thus the analysis of fixed points in section \ref{sec:examples} applies also to the behaviour of AMP. In particular, since in all cases analyzed we found the stable fixed point of (\ref{main:SE_uu}) to be unique, it means the AMP algorithm is able to reach asymptotically optimal performance in all these cases. This is further illustrated in Fig.~\ref{main:bbp_lamp_amp_se} where we explicitly compare runs of AMP on finite size instances with the results of the asymptotic state evolution, thus also giving an idea of the amplitude of the finite size effects. Note that we provide a demonstration notebook in the \href{https://github.com/sphinxteam/StructuredPrior_demo}{GitHub repository} \cite{StructuredPrior_demo_repo} that compares AMP, LAMP and PCA numerical performances.

\section{Spectral methods for generative priors}
\label{sec:spectralmethods}
\begin{figure}[tb!]
\centering
	\includegraphics[width=1.0\linewidth]{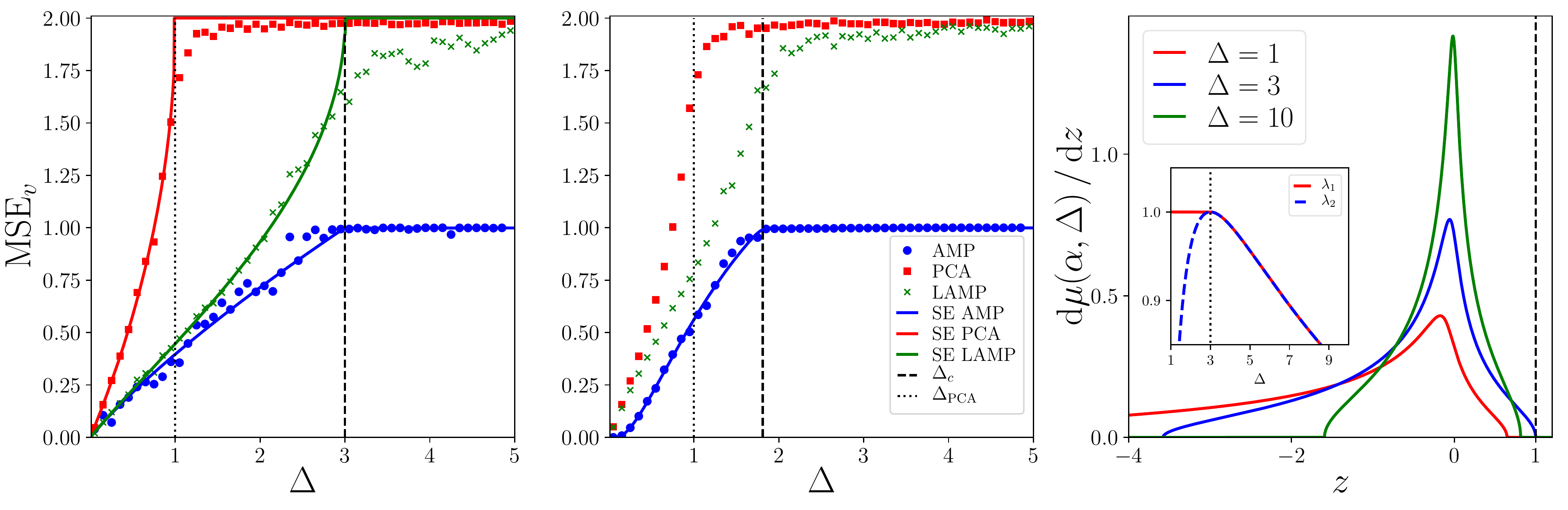}
	\caption{Comparison between PCA, LAMP and AMP for {(left)}
          the linear, {(center)} and sign activations, at compression ratio
          $\alpha=2$. Lines correspond to the theoretical asymptotic
          performance of PCA (red line), LAMP (green line) and AMP
          (blue line). Dots correspond to simulations
          of PCA (red squares), LAMP (green crosses) for $k=10^4$ and AMP (blue
          points) for $k=5.10^3$, $\sigma^2=1$. {(Right)} Illustration of the spectral phase transition in the
          matrix $\Gamma_p^{vv}$ eq.~(\ref{eq:matrix_symmetric}) at $\alpha=2$ with
          an informative leading eigenvector with eigenvalue equal to
          $1$ out of the bulk  for   $\Delta\le 1+\alpha$.  We show
          the bulk spectral density $\mu(\alpha, \Delta)$. The inset
          shows the two leading eigenvalues.}
	\label{main:bbp_lamp_amp_se}
\end{figure}

Spectral algorithms are the most commonly used ones for the spiked
matrix models. For instance, canonical PCA estimates the spike from the leading
eigenvector of the matrix~$Y$. A classical result from Baik, Ben Arous
and P\'ech\'e (BBP)~\cite{baik2005phase} shows that this eigenvector
is correlated with the signal if and only if the signal-to-noise ratio
$\rho_v^2/\Delta >1$. For sparse separable priors (with
$\rho_v^2=\Theta(1)$), $\Delta_{\rm PCA} = \rho_v^2$ is also the threshold
for AMP and it is conjectured that no polynomial algorithm can improve
upon it \cite{lesieur2017constrained}. In the previous section we show
that for the analyzed generative priors AMP has a better threshold
than PCA. Here we design a
spectral method, called LAMP, that matches the AMP threshold and is hence superior
over the canonical PCA. In order to do so, we follow the powerful
strategy pioneered in \cite{krzakala_spectral_2013}
and linearize the AMP around its
non-informative fixed point. In the spiked Wigner model with a single-layer
prior the linearized AMP leads to the following operator:
\begin{align}
 \du{\Gamma}_p^{vv} = \frac{1}{\Delta}  \( (a-b) \rI_p  +  b  \frac{\du{W} \du{W}^\intercal}{k} + c  \frac{\ud{\id}_p \ud{\id}_k^\intercal}{k} \frac{\du{W}^\intercal}{\sqrt{k} }  \) \times \(\frac{\du{Y}}{\sqrt{p}}  -  a \rI_p  \) \,,\label{eq:spectral:gammas}
\end{align}
where parameters are moments of distributions $P_z$ and $Q_{\rm out}^0$ according to
\begin{align}
	a \equiv \rho_v\,, \hhspace  b \equiv  \rho_z^{-1} \EE_{Q_{\rm out}^0} [v x]^2\,,  \hhspace c \equiv \frac{1}{2} \rho_z^{-3} \EE_{P_{\rm z}} \[z^3\]  \EE_{Q_{\rm out}^0}[v x^2] \EE_{Q_{\rm out}^0}[v x]\,.
\end{align}
We denote the spectral algorithm that takes the leading eigenvectors
of (\ref{eq:spectral:gammas}) as LAMP (for linearized-AMP). Its
derivation is presented in SM sec.~\ref{appendix:lamp_derivation} together with the one for the
spiked Wishart model. 

For the specific case of Gaussian $z$ and prior
(\ref{gen_single}) with the sign activation function we
obtain $(a,b,c)=(1,2 / \pi,0)$. For linear activation we get $(a,b,c)=(1,1,0)$, leading to 
\begin{align}\label{eq:matrix_symmetric}
  \Gamma_p^{vv} &= \frac{1}{\Delta} K_p \, \left[\frac{Y}{\sqrt{p}}
                  - \rI_p\right]~\text{with}~~K_p=\frac{\left[ W
                  W^\intercal \right]}k = \Sigma  \approx \frac{1}{n}\sum_\alpha
                  \bv^\alpha (\bv^\alpha)^\intercal \, ,
\end{align}
where the last two equalities come from the fact that for the
model (\ref{gen_single}) with linear activation and Gaussian
separable $P_z$, $K_{p}$ is asymptotically equal
to the covariance matrix between samples of spikes, $\Sigma$.
Interestingly, $\Sigma$ can be estimated empirically from samples of spikes,
without the knowledge of the matrix $W$ itself. 
Analogously to the state evolution for AMP, the asymptotic performance of both
PCA and LAMP can be evaluated in a closed-form for the spiked Wigner model
with single-layer generative prior with linear activation (\ref{gen_single}). The
corresponding expressions are derived in SM sec.~\ref{appendix:derivation_lAMP}
 and plotted in
Fig.~\ref{main:bbp_lamp_amp_se} for the three considered algorithms. 

In fact, the spectral method based on the matrix in eq.~(\ref{eq:matrix_symmetric})
can also be derived linearizing AMP with a
Gaussian prior with covariance $\Sigma$. This means that we can
use the above spectral method without extensive training by simply
computing the empirical covariance of $n$ samples of spikes, $\bv^\alpha$, $\alpha=1,\dots,n$. For illustration purposes, we display
the behaviour of this spectral method on the spiked Wigner model with
spikes coming from the Fashion-MNIST dataset in
Fig.~\ref{main:experiement_mnist}. A demonstration notebook is provided in the  \href{https://github.com/sphinxteam/StructuredPrior_demo}{GitHub repository}, illustrating PCA and LAMP performances on Fashion-MNIST dataset.

\begin{figure}[tb!]
\centering
	\includegraphics[width=1.0\linewidth]{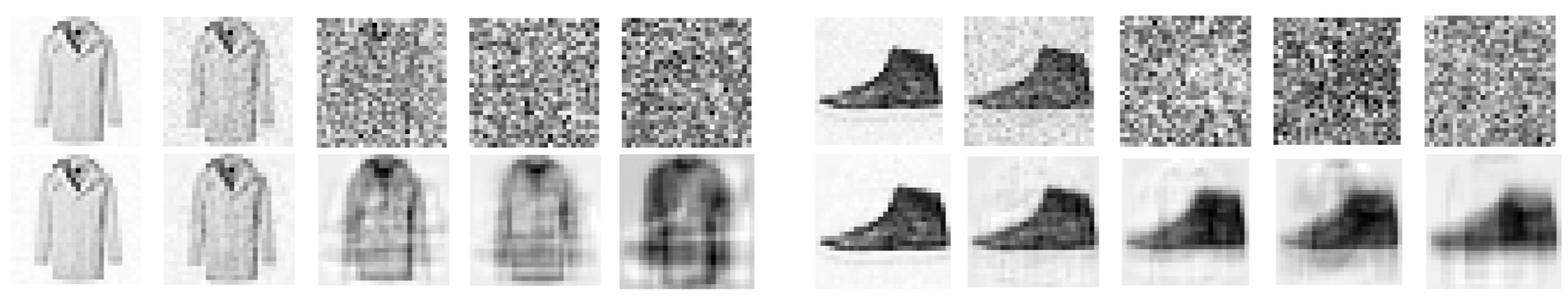}
	\caption{Illustration of canonical PCA (top line) and the LAMP
          (bottom line)
          spectral methods (\ref{eq:matrix_symmetric}) on the spiked
          Wigner model. The covariance $\Sigma$ is estimated
          empirically from the FashionMNIST database \cite{xiao2017/online}. The
          estimation of the spike is shown for two images from
          FashionMNIST, with           (from left to right), noise variance $\Delta=0.01,0.1,1,2,10$.}
	\label{main:experiement_mnist}
\end{figure}
      
Remarkably, the performance of the spectral method based on matrix (\ref{eq:matrix_symmetric}) can be
investigated independently of AMP using random matrix theory.
An analysis of the random matrix (\ref{eq:matrix_symmetric}) shows that a spectral phase transition for
generative prior with linear activations appears at 
$\Delta_c=1+\alpha$ (as for AMP).
This transition is analogous to the well-known BBP
transition \cite{baik2005phase}, but a non-GOE random matrix
(\ref{eq:matrix_symmetric}) needs to be analyzed. 
For the spiked Wigner models with linear generative
prior we prove two theorems
describing the behavior of the supremum of the bulk spectral density, the transition of the largest eigenvalue 
and the correlation of the corresponding eigenvector:
\begin{thm}[Bulk of the spectral density, spiked Wigner, linear activation]
	\label{thm:main_lambdamax_uu_case}
	Let $\alpha, \Delta > 0$, then:

        $(i)$ The spectral measure of $\Gamma_p^{vv}$ converges almost surely and in the weak sense to a compactly supported 
   	probability measure $\mu(\alpha, \Delta)$. We denote
        $\lambda_{\rm max}$ the supremum of the support of
        $\mu(\alpha, \Delta)$.
        
$(ii)$ For any $\alpha > 0$, as a function of $\Delta$, $\lambda_{\rm max}$ has a unique global maximum, reached exactly at the point $\Delta = \Delta_c(\alpha) = 1 + \alpha$.
								Moreover, $\lambda_{\rm max}(\alpha, \Delta_c(\alpha)) = 1$.
\end{thm}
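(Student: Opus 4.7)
The plan is to reduce the problem to a standard free-probability setup and then extract the edge of the bulk by an explicit Stieltjes-transform computation. For part $(i)$, I would first observe that the spike contributes only a rank-one perturbation: writing $Y/\sqrt{p} = \bv^\star (\bv^\star)^\intercal/p + \sqrt{\Delta/p}\,\xi$, the matrix $K_p \bv^\star(\bv^\star)^\intercal/(\Delta p)$ has rank one, so by the Weyl interlacing inequalities the empirical spectral distribution of $\Gamma_p^{vv}$ has the same weak limit as that of
\begin{equation*}
\widetilde{\Gamma}_p \defeq \frac{1}{\Delta}\,K_p\!\left(\sqrt{\tfrac{\Delta}{p}}\,\xi - I_p\right), \qquad K_p \defeq \frac{1}{k}WW^\intercal.
\end{equation*}
Because $K_p$ is positive semi-definite, the non-zero eigenvalues of $\widetilde{\Gamma}_p$ coincide with those of the symmetric matrix $H_p \defeq \tfrac{1}{\Delta} K_p^{1/2}(\sqrt{\Delta/p}\,\xi - I_p)K_p^{1/2}$, with the remaining eigenvalues being zero (of limiting mass $\max(0,1-1/\alpha)$). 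As $p,k\to\infty$ with $p/k = \alpha$, $K_p$ tends to Marchenko--Pastur with parameter $\alpha$ and $\xi/\sqrt{p}$ to the standard semicircle; asymptotic freeness between the two (which follows from orthogonal invariance of the GOE $\xi$ and independence of $W$, e.g.\ by Voiculescu's theorem) yields almost-sure weak convergence of the empirical spectral measure of $H_p$, and hence of $\Gamma_p^{vv}$, to a deterministic measure $\mu(\alpha,\Delta)$, compactly supported because both the Marchenko--Pastur law and the shifted semicircle are.

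For part $(ii)$, I would compute the limiting Stieltjes transform $g(z) = \int (\lambda - z)^{-1}\,\mathrm{d}\mu(\alpha,\Delta)(\lambda)$ by the subordination/$S$-transform machinery for the free multiplicative convolution $\mu_{\mathrm{MP}(\alpha)} \boxtimes (\mu_{\mathrm{sc}} - \Delta^{-1/2})$, suitably rescaled by $\Delta^{-1/2}$. Alternatively (and probably more robustly here, because the right-hand factor is a shifted semicircle whose $S$-transform requires care), I would derive the same equation by a direct resolvent computation: set $R(z) = (zI - \widetilde{\Gamma}_p)^{-1}$ and compute $\mathbb{E}[\tfrac{1}{p}\mathrm{Tr}\,R(z)]$ via Gaussian integration by parts in both $W$ and $\xi$, closing the resulting system by concentration of the normalized trace. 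This yields a polynomial self-consistent equation $P_{\alpha,\Delta}(g,z) = 0$ whose real branch describes $\mu(\alpha,\Delta)$, and whose upper spectral edge $\lambda_{\max}(\alpha,\Delta)$ is identified as the point where the equation develops a square-root singularity (i.e.\ the discriminant in $g$ vanishes).

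With an explicit expression for $\lambda_{\max}(\alpha,\Delta)$ at hand, the last step is elementary calculus in $\Delta$: differentiate, show that $\partial_\Delta \lambda_{\max} = 0$ has a unique positive root $\Delta_c(\alpha) = 1+\alpha$, and substitute back to verify $\lambda_{\max}(\alpha,1+\alpha) = 1$. A useful consistency check is that as $\alpha \to 0$ the matrix $K_p \to I_p$ and one recovers the classical BBP threshold $\Delta_c = 1$ of the pure Wigner model with $\lambda_{\max} = 1$.

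The main obstacle is the edge analysis of the free multiplicative convolution of Marchenko--Pastur with a \emph{non-centered} semicircle, which is not tabulated in closed form in the standard references. Concretely, obtaining the polynomial equation for $g(z)$ in a workable form, then locating and verifying that the square-root branch point occurs at the claimed edge, is the delicate part; once this is done, the uniqueness and location of the maximum over $\Delta$ follow by a direct computation.
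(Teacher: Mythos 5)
Your plan is sound and, suitably fleshed out, would give the claimed result. However, it is a genuinely different route from the paper's, and the comparison is worth spelling out.

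The paper's derivation avoids free multiplicative convolution entirely. After the same reduction you make---discarding the rank-one spike from $Y/\sqrt{p}$ and noting that $\Gamma_p^{vv}$ and $\Gamma_k^{vv}=\frac{1}{k}W^\intercal\bigl[\frac{\xi}{\sqrt{\Delta p}}-\frac{1}{\Delta}\rI_p\bigr]W$ have the same spectrum up to zeros---it invokes the Silverstein--Bai theorem for matrices of the form $\frac{1}{k}W^\intercal T_p W$ with $T_p$ independent of $W$. This gives the self-consistent equation
\begin{equation*}
g_\nu(z)=-\Bigl[z-\alpha\int\rho_\Delta(\dd t)\,\tfrac{t}{1+t\,g_\nu(z)}\Bigr]^{-1},
\end{equation*}
valid without any positivity assumption on $T_p$, which is exactly the point you correctly flag as "the main obstacle" of the $S$-transform route (the shifted semicircle is not positive). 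The paper then characterizes $\lambda_{\rm max}$ not through a discriminant or square-root singularity argument but through the classical Silverstein--Choi description of the support edges: $s_{\rm edge}$ is the unique critical point of the (explicit) inverse $g_\nu^{-1}(s)=-1/s+\alpha\int\rho_\Delta(\dd t)\,t/(1+st)$ in the allowed interval, and $\lambda_{\rm max}=g_\nu^{-1}(s_{\rm edge})$. This is equivalent to your "branch point where the polynomial equation develops a square-root singularity", but is computationally cleaner because everything is closed-form. The final calculus step ($\partial_\Delta\lambda_{\rm max}=0\Leftrightarrow s_{\rm edge}=-1\Leftrightarrow\Delta=1+\alpha$, then $\lambda_{\rm max}(1+\alpha)=1$) is the same in both approaches.

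Two points you should tighten if you pursue your version. First, the paper also handles the regime $\Delta\le 1/4$ as a separate case (there the support of $\rho_\Delta$ lies in $\bbR_-$, the whole bulk is non-positive, and there is no positive edge to discuss); your proposal silently assumes a positive right edge, which is fine for the $\Delta$-optimization but should be stated. Second, the interlacing argument for discarding the rank-one spike needs to be run on a symmetrized version: $\Gamma_p^{vv}$ and $\widetilde\Gamma_p$ are products of a PSD matrix with a symmetric one and hence are not symmetric themselves, so apply Weyl to $K_p^{1/2}(\cdot)K_p^{1/2}$, which carries the same non-zero spectrum, and note that the spike maps to a symmetric rank-one perturbation there (and that when $\alpha>1$, $K_p$ is singular, so the conjugation is by $K_p^{1/2}$ rather than a true similarity). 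With these repairs your free-probability/resolvent route is a valid alternative to the paper's Silverstein--Bai route, trading an appeal to the generalized Marchenko--Pastur theorem for an appeal to asymptotic freeness plus a direct resolvent computation; the algebra you would produce is essentially a re-derivation of the same Silverstein equation.
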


\begin{thm}[Transition of the largest eigenvalue and eigenvector,
  spiked Wigner, linear activation]
	\label{thm:main_transition_uu_case}
	Let $\alpha > 0$.
	We denote $\lambda_1 \geq \lambda_2$ the first and second eigenvalues of $\Gamma^{vv}_p$.
If $\Delta \geq \Delta_c(\alpha)$, then as $p \to \infty$ we have a.s.
$\lambda_1 {\to} \lambda_{\rm max}$ and $\lambda_2
{\to} \lambda_{\rm max}$.       If $\Delta \leq
\Delta_c(\alpha)$, then as $p \to \infty$ we have a.s. $\lambda_1
{\to} 1$ and $\lambda_2 {\to}
\lambda_{\rm max}$. Further, denoting $\tilde{\bv}$ a normalized
($\norm{\tilde{\bv}}^2 = p$ ) eigenvector of $\Gamma^{vv}_p$ with
eigenvalue $\lambda_1$, then $|\tilde{\bv}^\intercal \bv^\star |^2
/p^2 {\to} \epsilon(\Delta)$ a.s., where 
$\epsilon(\Delta) = 0$ for all $\Delta \geq \Delta_c(\alpha)$,
		$\epsilon(\Delta) > 0$ for all $\Delta < \Delta_c(\alpha)$ and $\lim_{\Delta \to 0}\epsilon(\Delta) =1$.
\end{thm}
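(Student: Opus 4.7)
The plan is to reduce $\Gamma_p^{vv}$ to a symmetric matrix and then carry out the classical BBP strategy: bulk rigidity, a secular equation for any outlier, and a resolvent formula for the top eigenvector, invoking Theorem~\ref{thm:main_lambdamax_uu_case} for the bulk throughout. Since $K_p = W W^\intercal / k$ is positive semi-definite, the non-zero eigenvalues of $\Gamma^{vv}_p = \Delta^{-1} K_p (Y/\sqrt p - \rI_p)$ coincide with those of the symmetric $k\times k$ matrix
\[
\Gamma_k \;\equiv\; \tfrac{1}{\Delta k}\,W^\intercal \bigl(Y/\sqrt p - \rI_p\bigr) W ,
\]
the remaining $p-k$ eigenvalues of $\Gamma^{vv}_p$ being trivial zeros on $\ker W^\intercal$. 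Substituting $Y = \bv^\star (\bv^\star)^\intercal/\sqrt p + \sqrt\Delta\, \xi$ and $\bv^\star = W\bz^\star / \sqrt k$ splits $\Gamma_k = \Gamma_k^{\mathrm{bulk}} + \Gamma_k^{\mathrm{spike}}$ with
\[
\Gamma_k^{\mathrm{spike}} = \tfrac{1}{\Delta p}\,(K_k'\bz^\star)(K_k'\bz^\star)^\intercal, \qquad \Gamma_k^{\mathrm{bulk}} = \tfrac{1}{k\sqrt{\Delta p}}\,W^\intercal \xi\, W \;-\; \tfrac{1}{\Delta}\,K_k',
\]
where $K_k' = W^\intercal W/k$; the spike is now a genuine rank-one symmetric perturbation to which the standard BBP toolbox applies.

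I would first establish bulk rigidity, $\lambda_{\max}(\Gamma_k^{\mathrm{bulk}}) \to \lambda_{\max}$ a.s., ruling out spurious outliers generated by the noise itself. Using the SVD $W = U\Sigma V^\intercal$ and the orthogonal invariance of the GOE, $\tilde\eta \equiv U^\intercal \xi U / \sqrt k$ is (up to a negligible diagonal correction) an independent $k\times k$ standard GOE, and a short computation shows that $\Gamma_k^{\mathrm{bulk}}$ is orthogonally conjugate to $(\Delta\alpha)^{-1/2} D^{1/2}\tilde\eta D^{1/2} - D/\Delta$ with $D = \Sigma^2/k$ having a Marchenko--Pastur$(\alpha)$ limit. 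Edge rigidity for this deformed-Wishart-type model follows from a standard deterministic-equivalent argument for the resolvent, complementing the weak convergence already granted by Theorem~\ref{thm:main_lambdamax_uu_case}.

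For any outlier $\lambda > \lambda_{\max}$, symmetric rank-one perturbation theory yields the secular equation
\[
1 \;=\; \frac{1}{\Delta p}\,(\bz^\star)^\intercal K_k'\bigl(\lambda \rI_k - \Gamma_k^{\mathrm{bulk}}\bigr)^{-1} K_k'\, \bz^\star .
\]
Since $\bz^\star$ is independent of $(W,\xi)$ with i.i.d.\ entries of variance $\rho_z$, Hanson--Wright concentration reduces the right-hand side to a deterministic scalar function $g_\Delta(\lambda) = \rho_z(\Delta\alpha)^{-1} \lim_{k} k^{-1} \mathrm{tr}\bigl[K_k' (\lambda \rI_k - \Gamma_k^{\mathrm{bulk}})^{-1} K_k'\bigr]$, which is computable through the subordination equation for the Stieltjes transform of $(\Delta\alpha)^{-1/2} D^{1/2}\tilde\eta D^{1/2} - D/\Delta$ with $D \sim \mathrm{MP}(\alpha)$ free from $\tilde\eta$. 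I would then prove that $g_\Delta$ is strictly decreasing on $(\lambda_{\max}, \infty)$, that the edge identity $g_\Delta(\lambda_{\max}) = 1$ holds iff $\Delta = \Delta_c(\alpha) = 1+\alpha$, and that for every $\Delta \in (0, \Delta_c]$ the unique solution of $g_\Delta(\lambda)=1$ is $\lambda = 1$; the claim $\lambda_2 \to \lambda_{\max}$ in both regimes then follows from Weyl interlacing applied to the rank-one perturbation, while $\lambda_1 \to \lambda_{\max}$ above $\Delta_c$ expresses that no $\lambda > \lambda_{\max}$ solves the secular equation there.

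For the eigenvector, whenever the outlier is isolated the Riesz projector representation gives $\tilde\bv \propto (\lambda_1 \rI_p - \Gamma_p^{\mathrm{bulk}})^{-1} K_p \bv^\star$, hence
\[
\frac{|\tilde\bv^\intercal \bv^\star|^2}{p^2} \;=\; \frac{\bigl|(\bv^\star)^\intercal (\lambda_1 \rI_p - \Gamma_p^{\mathrm{bulk}})^{-1} K_p \bv^\star\bigr|^2}{p^2 \,\bigl\|(\lambda_1 \rI_p - \Gamma_p^{\mathrm{bulk}})^{-1} K_p \bv^\star\bigr\|^2} + o(1),
\]
and both bilinear forms concentrate by Hanson--Wright to explicit expressions in $g_\Delta(\lambda_1)$ and $g_\Delta'(\lambda_1)$, producing an explicit formula for $\epsilon(\Delta)$. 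The limit $\epsilon \to 1$ as $\Delta \to 0^+$ is immediate since $\Gamma_p^{\mathrm{bulk}} \sim -K_p/\Delta$ then forces the outlier eigenvector into the range of $W$ aligned with $\bv^\star$, while the square-root vanishing $\epsilon(\Delta) \sim \sqrt{\Delta_c - \Delta}$ at $\Delta_c^-$ follows from the standard edge expansion of the subordination equation. The principal obstacle, to my mind, is the identification $\lambda_1 \equiv 1$ on the entire subcritical interval: this requires either an explicit verification of the algebraic identity $g_\Delta(1) \equiv 1$ for all $\Delta \in (0, \Delta_c]$ (via Marchenko--Pastur moments together with $\Delta_c = 1+\alpha$), or, more conceptually, an argument exploiting the fact that $\Gamma_p^{vv}$ is by construction the linearisation of AMP at its non-informative fixed point, so that the neutral mode of the linearised map must carry eigenvalue exactly one. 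A secondary technical hurdle is that $W^\intercal \xi W$ and $K_k'$ are correlated, which means the deterministic equivalent of the resolvent of $\Gamma_k^{\mathrm{bulk}}$ requires either an anisotropic local law for this deformed Wishart model or a subordination computation carried out uniformly up to the spectral edge, e.g.\ through a regularisation $\lambda \mapsto \lambda + \mathrm i\varepsilon$ with $\varepsilon\downarrow 0$ taken after $k \to \infty$.
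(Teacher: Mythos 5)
Your overall strategy coincides with the paper's (SM section~\ref{appendix:rmt}): reduce $\Gamma_p^{vv}$ to the symmetric $k\times k$ matrix $\Gamma_k^{(0)}$ plus the rank-one perturbation $\frac{1}{\Delta p}(K_k'\bz^\star)(K_k'\bz^\star)^\intercal$, write the secular equation for an outlier, concentrate by Hanson--Wright, and express the top-eigenvector overlap via the resolvent of the bulk. You also correctly identify the main conceptual point, namely that one must verify $\lambda_1\equiv 1$ on the whole subcritical interval, which the paper does by an explicit algebraic identity on $T^{(2)}(s)$ via Lemma~\ref{lemma_app:evalue_transition_vv}. One stylistic difference: the paper derives all deterministic equivalents by a Sherman--Morrison cavity argument à la Silverstein--Bai, while you propose to repackage $\Gamma_k^{\mathrm{bulk}}$ as $(\Delta\alpha)^{-1/2}D^{1/2}\tilde\eta D^{1/2} - D/\Delta$ and use free subordination; both routes lead to the same Silverstein-type equation~\eqref{eq_app:silverstein} for the one-resolvent trace, so the secular-equation part of your plan is sound.

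However, there is a genuine gap in the eigenvector step. You claim that the numerator and denominator in $|\tilde\bv^\intercal\bv^\star|^2/p^2$ ``concentrate by Hanson--Wright to explicit expressions in $g_\Delta(\lambda_1)$ and $g_\Delta'(\lambda_1)$''. That is the form of the answer only when the spike direction is \emph{independent} of the bulk, as in classical BBP. Here the spike direction $K_k'\bz^\star$ contains $W$, which also builds the bulk, and after tracing out $\bz^\star$ the denominator becomes the two-resolvent quantity
\begin{align*}
\frac{1}{k}\,\mathrm{Tr}\Bigl[\,K_k'\,(\lambda\,\rI_k-\Gamma_k^{(0)})^{-1}\,K_k'\,(\lambda\,\rI_k-\Gamma_k^{(0)})^{-1}\,K_k'\,\Bigr]
\;=\;
S_k^{(1,2)}(\lambda),
\end{align*}
with a $K_k'$ sandwiched between the two resolvents. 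This is \emph{not} $-\partial_\lambda$ of the secular-equation trace $S_k^{(2)}(\lambda)$ (that derivative would be $S_k^{(2,0)}(\lambda)$, with the two resolvents adjacent). Following your plan literally would therefore produce a wrong formula for $\epsilon(\Delta)$. The paper's Lemma~\ref{lemma_app:hierarchy_Sk} is precisely the extra ingredient: it derives, by the same cavity argument, a separate self-consistent expression for $S^{(1,2)}(\lambda)$ in terms of $g_\nu$, $\partial_\lambda S^{(1)}$ and integrals against $\rho_\Delta$ -- and this is the bulk of the technical work. You do flag the correlation between $W^\intercal\xi W$ and $K_k'$ as a ``secondary hurdle'', but you mislocate its effect: it is not an issue of uniformity up to the edge for the one-resolvent deterministic equivalent, it is that the eigenvector overlap requires a new two-resolvent trace identity that does not reduce to $g_\Delta$ and $g_\Delta'$. (As a smaller point, the paper does not claim the square-root vanishing $\epsilon(\Delta)\sim\sqrt{\Delta_c-\Delta}$; it only proves $\epsilon(\Delta_c)=0$, $\epsilon>0$ below, and $\epsilon\to1$ as $\Delta\to0$, the last by a non-trivial expansion of $T^{(1,2)}$ rather than an ``immediate'' observation.)
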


Thm.~\ref{thm:main_lambdamax_uu_case} and
Thm.~\ref{thm:main_transition_uu_case} are illustrated in
Fig.~\ref{main:bbp_lamp_amp_se}. 
The proof gives the value of $\epsilon(\Delta)$, which turns out to
lead to the same MSE as in Fig.~\ref{main:bbp_lamp_amp_se} in the linear case. 
We state the theorems counterparts for the $\bu \bv^\intercal$ linear case in SM sec.~\ref{appendix:rmt}.
The proofs of the theorems and the precise arguments used to derive the 
eigenvalue density, the transition of $\lambda_1$ and the computation of $\epsilon(\Delta)$ are given in SM sec.~\ref{appendix:rmt}, and a Mathematica demonstration notebook is provided in the \href{https://github.com/sphinxteam/StructuredPrior_demo}{GitHub repository} is also provided.
We also describe in SM the difficulties to circumvent to generalize the analysis to a non-linear
activation function with random matrix theory.

\section{Acknowledgements}
This work is supported by the ERC under the European Union’s Horizon
2020 Research and Innovation Program 714608-SMiLe, as well as by the
French Agence Nationale de la Recherche under grant
ANR-17-CE23-0023-01 PAIL. 

We gratefully acknowledge the support of
NVIDIA Corporation with the donation of the Titan Xp GPU used for this
research. We thank Google Cloud for providing us access to their platform through the Research Credits Application program. 

We would also like to thank the Kavli Institute for Theoretical Physics (KITP)  for welcoming us during part of this research, with the support of the National Science Foundation under Grant No. NSF PHY-1748958.

We thank Ahmed El Alaoui for insightful discussions about
the proof of the Bayes optimal performance, and Remi Monasson for his
insightful lecture series that inspired partly this work.

\newpage
\appendix
\section*{Appendix}
\section{Definitions and notations}
\label{appendix:definitions_notations}

In this section we recall the models introduced in the main body of the article, and introduce the notations used throughout the Supplementary Material.

%%%%%%%%%%%%%%%%%%%%%%%
\subsection{Models}
%%%%%%%%%%%%%%%%%%%%%%%
\paragraph{Spiked Wigner model ($\bv \bv^\intercal$):}
 Consider an unknown vector (the spike) $\bv^\star \in \bbR^{p}$ drawn from a distribution $P_v$,  we observe a matrix $Y \in \bbR^{ p \times p}$ such that:
\begin{align}
	Y = \frac{1}{\sqrt{p}} {\bv^\star} {\bv^\star}^\intercal + \sqrt{\Delta} \xi \, ,\label{app:Wigner}
\end{align}
\noindent with symmetric noise $\xi \in \bbR^{p \times p}$ drawn from $\xi_{ij} \underset{\iid}{\sim} \mN\(0,1\)$ and $\Delta > 0$. The aim is to find back the hidden spike ${\bv^\star}$ from the observation of $Y$.

\paragraph{Spiked Wishart (or spiked covariance) model ($\bu \bv^\intercal$):}
Consider two unknown vectors ${\bu}^{\star}\in \bbR^{n}$  and ${\bf v}^{\star} \in \bbR^{p}$ drawn from distributions $P_u$ and $P_v$, we observe $Y \in \bbR^{n \times p}$ such that
\begin{align}
	Y = \frac{1}{\sqrt{p}} {\bu^\star} {\bv^\star}^\intercal + \sqrt{\Delta} \xi \,,\label{app:Wishart}
\end{align}
\noindent with noise $\xi \in \bbR^{n \times p}$ drawn $\xi_{i\mu} \underset{\iid}{\sim} \mN\(0,1\)$, $\Delta > 0$, and the goal is to find back the hidden spikes ${\bu}^{\star}$ and ${\bv}^\star$ from the observation of $Y$. We define the ratio between the spike dimensions $\beta = n/p$.

In either models, we are interested in the case where $\bv^{\star}$ is given by a generative model. In the setting studied here the generative model is a fully-connected single-layer neural network (a.k.a. generalised linear model) with Gaussian random weights $W\in\mathbb{R}^{p\times k}$, $W_{il}\underset{\iid}{\sim}\mathcal{N}(0,1)$ and latent variable $\bz^{\star}\in\mathbb{R}^{k}$ drawn from a given factorised distribution $P_{z}$,
\begin{align}
  \bv^{\star} = \varphi\left(\frac{1}{\sqrt{k}}W\bz^{\star}\right) \hhspace \text{ with } \hhspace z^{\star}_{l}\underset{\iid}{\sim} P_{z},
  \label{sec:model:glm}
\end{align}
\noindent where $\varphi:\mathbb{R}\to\mathbb{R}$ is the activation function, a real-valued function acting component-wise on $\mathbb{R}^{p}$ that can be deterministic or stochastic. An equivalent formulation of eq.~\eqref{sec:model:glm} is
\begin{align}
    \bv^{\star} {\sim} P_{\rm \out}\left(\cdot\Big|\frac{1}{\sqrt{k}}W\bz^{\star}\right).
    \label{sec:model:glm_2}
\end{align}
For instance, a deterministic layer with activation $\varphi$ is written in this formulation as $P_{\rm \out}(v|x) = \delta(v-\varphi(x))$. We define the compression rate of the signal as $\alpha = p/k$.

Although we will mainly focus on the single-layer model, some of our results apply more broadly to any generative prior with a well-defined free energy density in the thermodynamic limit. In particular, we will mention the example of a fully-connected multi-layer generative prior, given by
\begin{align}
    \bv^{\star} = \varphi^{(L)}\left(\frac{1}{\sqrt{k}}W^{(L)}\cdots\varphi^{(1)}\left(W^{(1)}\bz\right)\right) \hhspace \text{ with } \hhspace z^{\star}_{l}\underset{\iid}{\sim} P_{z}\label{sec:model:mlglm}
\end{align}
\noindent where now $\{\varphi^{(l)}\}_{1\leq l \leq L}$ are a family of real-valued component-wise activation functions and $W^{(l)}_{\nu_{l}\nu_{l-1}}\underset{\iid}{\sim}\mathcal{N}(0,1)$ are independently drawn random weights. The equivalent probabilistic formulation of the multi-layer case is 
\begin{align}
    \bv&\sim P^{(L)}_{\out}\left(\cdot\Big|\frac{1}{\sqrt{k_{L}}}W^{(L)}\textbf{h}^{(L)}\right), & \bv\in\bbR^p \notag\\
    \bh^{(L)}&\sim P^{(L-1)}_{\out}\left(\cdot\Big|\frac{1}{\sqrt{k_{L-1}}}W^{(L-1)}\textbf{h}^{(L-1)}\right),& \bh^{(L)}\in\bbR^{k_{L}} \notag\\
    &\hspace{3cm}\vdots &\notag\\
    \bh^{(2)} &\sim P_{\rm \out}^{(1)}\left(\cdot\Big|\frac{1}{\sqrt{k_{1}}}W^{(1)}\bz\right),& \bh^{(2)} \in \bbR^{k_2}\notag\\
    \bz &\underset{\iid}{\sim} P_{z},& \bz\in\bbR^{k_{1}}
\end{align}
\noindent where we introduced the hidden variables $\textbf{h}^{(l)}\in\mathbb{R}^{k_{l}}$ for $2\leq l\leq L$ and the family of densities $\left\{P_{\rm \out}^{(l)} \right\}_{1\leq l\leq L}$. In this case, we define the compression rate as the ratio between the dimensions of the latent variable in the first layer $\bz\in\mathbb{R}^{k_{1}}$ and the signal $\bv\in\mathbb{R}^{p}$, $\alpha = p/k_1$. It is also useful to define the compression at each layer, $\alpha_{l}=k_{l}/k_{1}$. The thermodynamic limit for this generative model is defined by taking $p\to\infty$ while keeping all $\alpha, \alpha_{l} \sim O(1)$, $1\leq l \leq L$. As one might expect, the single-layer generative prior is a particular case with $L=1$.

%%%%%%%%%%%%%%%%%%%%%%%%%%%%%%%%%%%%%%%%%%%%%%%
\subsection{Bayesian inference and posterior distribution}
%%%%%%%%%%%%%%%%%%%%%%%%%%%%%%%%%%%%%%%%%%%%%%%
Since the information about the generative model $P_{v}$ of the spike is given, the optimal estimator for $\bv^{\star}$ is the mean of its posterior distribution, $\hat{\bv}^{\text{opt}} = \mathbb{E}_{P(\bv^{\star}|Y)}\bv$, which in general reads
\begin{align}
    P(\bv^{\star}|Y) =\frac{1}{P(Y)} P_{v}(\bv^{\star})\prod\limits_{1\leq i<j\leq p}\frac{1}{\sqrt{2\pi\Delta}}e^{-\frac{1}{2\Delta}\left(Y_{ij}-\frac{v_{i}^{\star}v^{\star}_{j}}{\sqrt{p}}\right)^2}\label{eq:app:defs:posterioruu}\,,
\end{align}
\noindent for the $\bv\bv^{\intercal}$ model and by
\begin{align}
    P(\bv^{\star}|Y) =\frac{1}{P(Y)} P_{v}(\bv^{\star})\int_{\mathbb{R}^{n}}\dd\bu~P_{u}(\bu)\prod\limits_{1\leq i \leq p , 1 \leq \mu \leq n}\frac{1}{\sqrt{2\pi\Delta}}e^{-\frac{1}{2\Delta}\left(Y_{\mu i}-\frac{u_{\mu}^\star v_{i}^{\star}}{\sqrt{p}}\,,\right)^2}\label{eq:app:defs:posterioruv}
\end{align}
\noindent for the $\bu\bv^{\intercal}$ model. In both cases the evidence $P(Y)$ is fixed as the normalisation of the posterior. In the specific case of a single-layer generative model from eq.~\eqref{sec:model:glm}, we can be more explicit and write the prior for $\bv^{\star}$ explicitly
\begin{align}
    P_v(\bv^{\star}) = \int_{\mathbb{R}^{k}}\dd\bz^{\star}P_{z}\left(\bz^{\star}\right)\prod\limits_{i=1}^{p}P_{\rm \out}\left(v^{\star}_{i}\Big|\frac{1}{\sqrt{k}}\sum\limits_{l=1}^{k}W_{il}z_{l}^{\star}\right).\label{eq:app:intro:generative_prior}
\end{align}
The multi-layer case is written similarly by integrating over the intermediate hidden variables and their respective distributions. It is important to stress that we assume the structure of the generative model is known, i.e. $(P_{z},P_{\rm \out}, W)$ (and $P_u$ in the $\bu\bv^{\intercal}$  case) are given and the only unknowns of the problem are the spike $\bv^{\star}$ and the corresponding latent variable $\bz^{\star}$. This setting, in which the Bayesian estimator is optimal, is commonly refereed as the \emph{Bayes-optimal inference}.

In principle eqs.~\eqref{eq:app:defs:posterioruu} and \eqref{eq:app:defs:posterioruv} are of little use, since sampling from these high-dimensional distributions is a hard problem. Luckily, physicists have been dealing with high-dimensional distributions - such as the Gibbs measure in statistical physics - for a long time. The \emph{replica trick} and the \emph{approximate message passing} (AMP) algorithm presented in the main body of the paper are two of the statistical physics inspired techniques we borrow  to circumvent the hindrance of dimensionality.

\paragraph{Summary of the Supplementary Material:} A detailed account of the derivation of eq.~\eqref{main:free_entropy_uu} from the replica method is given in Section \ref{sec:appendix:replicafreeen}. Although the replica calculation is not mathematically rigorous, it gives a constructive method to compute the mutual information. The final expression can be made rigorous using an interpolation method, which we detail in Section \ref{appendix:proof_uu}. The sketch for the derivation of the AMP algorithm \ref{main:AMP_vv} and its associated spectral algorithm in eq.~\eqref{eq:spectral:gammas} are discussed respectively in Section \ref{appendix:amp_derivation} and \ref{appendix:lamp_derivation}. We detail the stability analysis of the state evolution equations leading to the transition point for generic activation function in Section \ref{sec:app:stability}, and finally we present a rigorous proof for the transition in the case of linear activation in Section \ref{appendix:rmt}.

%%%%%%%%%%%%%%%%%%%%%%%%%%%%%%%%%%%%%%
\subsection{Notation and conventions}
%%%%%%%%%%%%%%%%%%%%%%%%%%%%%%%%%%%%%%
\paragraph{Index convention:} In the whole paper, we use the convention that indices $\mu$, $i$ and $l$ correspond respectively to variables $\bu$, $\bv$ and $\bz$ such that $\mu\in [1:n]$, $i\in[1:p]$ and $l\in[1:k]$.

Unless otherwise stated, $\xi,\eta \in \bbR$ denote independent random variables variables distributed according to $\mN(0,1)$.

\paragraph*{Normalised second moments}
We define $\rho_{v}$ as the normalised second moments of the priors $P_{v}, P_{u}$ and $P_{z}$ respectively,
\begin{align}
  \rho_{v} =\lim\limits_{p\to\infty}\mathbb{E}_{P_{v}}\left[\frac{\bv^\intercal\bv}{p}\right], && \rho_{u} =\lim\limits_{n\to\infty}\mathbb{E}_{P_{u}}\left[\frac{\bu^\intercal\bu}{n}\right],
  &&\rho_{z} =\lim\limits_{z\to\infty}\mathbb{E}_{P_{z}}\left[\frac{\bz^\intercal\bz}{k}\right].
\end{align}
In the case we consider $P_z(\bz) = \prod\limits_{l=1}^{k}P_{z}(z_{l})$, $\rho_{z}$ is simply the one-dimensional second moment of $P_{z}$
\begin{align}
  \rho_{z} = \mathbb{E}_{P_{z}}z^2.
\end{align}
In the case $P_v$ is the single-layer generative model in eq.~\eqref{eq:app:intro:generative_prior} with $W_{il}\underset{\iid}{\sim}\mathcal{N}(0,1)$ and $z_{l}\underset{\iid}{\sim}P_{z}$, $\rho_{v}$ is self-averaging in the thermodynamic limit and is given by
\begin{align}
  \rho_{v} = \mathbb{E}_{Q_{\out}^{0}}v^2\,,
\end{align}
\noindent where $Q_{\out}^{0}$ is defined below in eq.~\eqref{eq:app:intro:defQ}.

\paragraph*{Denoising distributions}
The upshot of the replica calculation is that the high dimensional mutual information between the spike and the data $I(Y,\bv^{\star})$ is given by a simple one-dimensional expression, c.f. the right-hand side of the main part eq.~\eqref{main:free_entropy_uu}. This expression can be interpreted as the mutual information of a one-dimensional denoising problem.

Below we introduce the one-dimensional probability densities appearing in the factorised mutual information, from which the free energy and the AMP update equations are derived from:
\begin{align}
		Q_u(u; B,A) &\equiv \displaystyle \frac{1}{\Z_{u} (B,A)} P_u(u) e^{ -\frac{1}{2} A  u^2  + Bu} \, , \spacecase
		Q_z(\ud{z}; \gamma,\Lambda) &\equiv \displaystyle \frac{1}{\mZ_z (\gamma,\Lambda)} P_z(\ud{z}) e^{ - \frac{1}{2} \Lambda\ud{z}^2  + \gamma\ud{z}  } \, , \spacecase
		Q_{\rm out} (v,x;B,A,\ud{\omega},V) &\equiv  \displaystyle\frac{1}{\Z_{\rm out}(B,A,\ud{\omega},V)} e^{ -\frac{1}{2} Av^2 + Bv} P_{\rm out}\(v|x\)  e^{ -\frac{1}{2}V^{-1}  \(x - \ud{\omega}\)^2  } \, , \spacecase
		Q_{\rm out}^0 (v,x; \rho_z) &\equiv  Q_{\rm out} (v,x;0,0,0, \rho_z)  =  \displaystyle\frac{1}{\Z_{\rm out}^0} P_{\rm out}\(v|x\)  e^{ -\frac{1}{2\rho_z}x^2  } \, .
		\label{eq:app:intro:defQ}
\end{align}

\paragraph*{Free entropy terms}
\label{eq:app:defs:psis}
The mutual information density can be written in terms of the partition functions of the denoising distributions above as:
\begin{align}
	\Psi_{u} (x) & \equiv \EE_{\xi} \[ \mZ_{u}\( x^{1/2} \xi , x  \) \log \( \mZ_u\( x^{1/2} \xi ,x  \) \) \] \,, \spacecase
	\Psi_z (x) & \equiv \EE_{\xi} \[ \mZ_z\( x^{1/2} \xi ,x  \) \log \( \mZ_z\( x^{1/2} \xi ,x  \) \) \] \,, \spacecase
	\Psi_{\rm out} (x,y) & \equiv \EE_{\xi, \eta} \[\mZ_{\rm out}\( x^{1/2} \xi , x , y^{1/2} \eta  , \rho_z - y \) \log\( \mZ_{\rm out}\( x^{1/2} \xi , x , y^{1/2} \eta  , \rho_z - y \) \) \] \,.
\end{align}

\paragraph*{AMP update functions}
\label{appendix:definition:updates_amp}
Similarly, the update functions appearing in AMP are also given in terms of the moments of the above denoising distributions:
\begin{align}
	f_u(B,A) &\equiv \partial_B \log\(\mZ_u\) = \EE_{Q_u} \[ u \]\,,  \hhspace
	\partial_B f_u (B,A) \equiv \EE_{Q_u} \[ u^2  \] - (\ud{f}_{u})^2 \spacecase
	f_z(\gamma ,\Lambda) &\equiv  \partial_\gamma \log\(\mZ_z\) = \EE_{Q_z} \[ \ud{z} \]\,, \hhspace 
	\partial_\gamma f_z (\gamma ,\Lambda) \equiv  \EE_{Q_z} \[\ud{z}^2 \] - (f_z)^2 \spacecase	
	f_{v}(B,A,\ud{\omega},v) &\equiv \partial_B \log\(\mZ_{\rm out}\) = \mE_{Q_{\rm out}} \[ v \]\,, \hhspace
	\partial_B f_v(B,A,\ud{\omega},v) \equiv \mE_{Q_{\rm out}} \[ v^2  \] - (f_v)^2 \spacecase
	f_{\rm out} (B,A,\ud{\omega},v) &\equiv \partial_\omega \log \( \mZ_{\rm out} \) =V^{-1}\EE_{Q_{\rm out}} \[ x- \omega\] \,, \hhspace
	\partial_{\omega} f_{\rm out} (B,A,\ud{\omega},v) \equiv \displaystyle \frac{\partial f_{\rm out}}{\partial \omega}
	\label{eq:app:defs:fvfout}
\end{align}

 %\newpage

\section{Mutual information from the replica trick}
\label{sec:appendix:replicafreeen}
In this section we give a derivation for the mutual information formula in main part~eq.~\eqref{main:free_entropy_uu} from the \emph{replica trick}. The derivation is detailed for the symmetric $\bv\bv^{\intercal}$ model, since the derivation for the asymmetric $\bu\bv^{\intercal}$ model follows exactly the same steps. In both cases, it closely follows the calculation of the replica free energy of the spiked matrix model with factorized prior in \cite{lesieur2017constrained}.

Before diving into the derivation, we note that the formula in main part~eq.~\eqref{main:free_entropy_uu} actually holds for any channel of the form
\begin{align}
	P(Y|\omega) = \prod\limits_{1\leq i<j\leq p}e^{g\left(Y_{ij},\omega_{ij}\right)}\,,
	\label{appendix:replicas:P_Y}
\end{align}
\noindent where $\omega\in\mathbb{R}^{p\times p}$ is a matrix with components $\omega_{ij}\equiv \frac{v_{i}v_{j}}{\sqrt{p}}$ and $g:\mathbb{R}^{2}\to\mathbb{R}$ is any two-dimensional real function such that $P(Y|\omega)$ is properly normalised. The gaussian noise in eq.~\eqref{Wigner} is a particular case given by $g(Y,\omega) = -\frac{1}{2\Delta}(Y-\omega)^2-\frac{1}{2}\log{2\pi\Delta}$.

The first step in the derivation is to note that the mutual information $I(Y,\bv^{\star})$ between the observed data $Y$ and the spike $\bv^{\star}$ can be writen as
\begin{align}
	I(Y,\bv^{\star}) = \frac{1}{4\Delta}\mathbb{E}_{P_{v}}\left[{\bv^{\star}}^\intercal\bv^{\star}\right]^2 - \mathbb{E}_{Y}\log{\mathcal{Z}(Y)}\label{eq:app:replicas:mutualinfo}\,,
\end{align}
\noindent where
\begin{align}
	\mathcal{Z}(Y) = \int_{\mathbb{R}^{p}}\dd \bv~P_{v}(\bv)\prod\limits_{1\leq i<j\leq p}e^{g(Y_{ij},\omega_{ij})-g(Y_{ij},0)}.
\end{align}
Note that since the data is generated from a planted spike $\bv^{\star}$, we have $Y=Y(\bv^{\star})$, and therefore the partition function $\mathcal{Z}$ depends on $\bv^{\star}$ implicitly through $Y$.

\subsection{Derivation of the replica free energy for the $\bv\bv^{\intercal}$ model}
The partition function $\mathcal{Z}$ is a $p$-dimensional integral, and computing the average over $Y$ (a $p\times p$ integral) of $\log{\mathcal{Z}}$ seems hopeless. The replica trick is a way to surmount this hindrance. It consists of writing
\begin{align}
	\mathbb{E}_{Y}\log{Z} = \lim\limits_{r\to 0^{+}}\frac{1}{r}\left(\mathbb{E}_{Y}\mathcal{Z}^{r}-1\right).
\end{align}
Note that $\mathcal{Z}^{r}$ is the partition function of $r$ non-interacting copies (named in the physics literature and hereafter \emph{replicas}) of the initial system. The average over the \emph{replicated partition function} $\mathcal{Z}^{r}$ can be conveniently written as
\begin{align}
	\mathbb{E}_{Y}\mathcal{Z}^{r}
	&=\int\prod\limits_{1\leq i<j\leq p}\dd Y_{ij}~e^{g(Y_{ij},0)}\int_{\mathbb{R}^{p\times (r+1)}}\prod\limits_{a=0}^{r}\dd\bv^a P_{v}\left(\bv^{a}\right)\prod\limits_{a=0}^{r}\prod\limits_{1\leq i<j\leq p} e^{g\left(Y_{ij},\omega_{ij}^{a}\right)-g\left(Y_{ij},0\right)}\,,\label{eq:app:replicas:avgZ}
\end{align}
\noindent where in the second line we have defined
\begin{align}
	\bv^{a} =
		\begin{cases}
			\bv^{\star} & \text{ for } a=0\\
			\bv^{a} & \text{ for } 1\leq a\leq r\,.
		\end{cases}
\end{align}

%%%%%%%%%%%%%%%%%%%%%%%%%%%%%%%%%%%
\paragraph*{Averaging over $Y$}
%%%%%%%%%%%%%%%%%%%%%%%%%%%%%%%%%%%
The key observation to simplify the integrals in eq.~\eqref{eq:app:replicas:avgZ} is to note that $\omega_{ij}$ is of order $1/\sqrt{p}$, and therefore in the large-$p$ limit of interest, we can keep only terms of order $1/p$,
\begin{align}
   \exp \( \displaystyle \sum\limits_{a=0}^{r}\left[g(Y_{ij},\omega^{a}_{ij}) - g(Y_{ij},0)\right] \) &= 1+\sum\limits_{a=0}^{r}\left(\partial_{\omega}g\right)_{\omega=0}\omega^{a}_{ij}+\frac{1}{2}\sum\limits_{a=0}^{r}\left(\partial^2_{\omega}g\right)_{\omega=0}\left(\omega^{a}_{ij}\right)^2\notag\\
    &\hspace{1cm}+\frac{1}{2}\sum\limits_{a,b=0}^{r}\left(\partial_{\omega}g\right)^2_{\omega=0}\omega^{a}_{ij}\omega^{b}_{ij}+\mO\left(p^{-3/2}\right)\label{eq:app:replica:expansion}
\end{align}
From the normalisation condition of $P(Y|\omega)$, we can derive the following relations
\begin{align}
\int\prod\limits_{1\leq i<j\leq p}\dd Y_{ij}~e^{g(Y_{ij},0)} &= 1,\notag\\
\int\prod\limits_{1\leq i<j\leq p}\dd Y_{ij}~e^{g(Y_{ij},0)}\left(\partial_{\omega}g\right)_{\omega=0} &= 0,\notag\\
\int\prod\limits_{1\leq i<j\leq p}\dd Y_{ij}~e^{g(Y_{ij},0)}\left[\partial^2_{\omega}g+\left(\partial_{\omega}g\right)^2\right]_{\omega=0} &= 0.
\label{appendix:replica:bayes_relations}
\end{align}
Further defining
\begin{align}
    \Delta^{-1} = \int\prod\limits_{1\leq i<j\leq p}\dd Y_{ij}~e^{g(Y_{ij},0)}\left(\partial_{\omega}g\right)^2_{\omega=0},
\end{align}
\noindent allows us to evaluate the integral over $Y$ term by term in the expansion in eq.~\eqref{eq:app:replica:expansion},
\begin{align}
\mathbb{E}_{Y}\mathcal{Z}^{r} &= \int_{\mathbb{R}^{p\times (r+1)}}\prod\limits_{a=0}^{r}\dd\bv^{a}~P_{v}\left(\bv^{a}\right)\prod\limits_{1\leq i<j\leq p}\left[1+\frac{1}{2\Delta}\sum\limits_{0\leq a<b\leq r}\omega^{a}_{ij}\omega^{b}_{ij}+\mO\left(p^{-3/2}\right)\right]
\notag\\
&=\int_{\mathbb{R}^{p\times (r+1)}}\prod\limits_{a=0}^{r}\dd\bv^{a}~P_{v}\left(\bv^{a}\right)\prod\limits_{1\leq i<j\leq p}e^{\frac{1}{2\Delta}\sum\limits_{0\leq a<b\leq r}\omega^{a}_{ij}\omega^{b}_{ij}}+\mO\left(p^{-3/2}\right).
\end{align}
The upshot of this expansion is that on the large-$p$ limit $\Delta$ is the only relevant parameter we need from the channel. Therefore, from the perspective of the mutual information density, a channel with parameter $\Delta$ is completely equivalent to a Gaussian channel with variance $\Delta$. This property is known as \emph{channel universality} \cite{lesieur2017constrained}.
%%%%%%%%%%%%%%%%%%%%%%%%%%%%%%%%%%%%%%%%%%%%%%%%%%%
\paragraph*{Rewritting as a saddle-point problem}
Note that we can rewrite
%%%%%%%%%%%%%%%%%%%%%%%%%%%%%%%%%%%%%%%%%%%%%%%%%%%
\begin{align}
    \sum\limits_{1\leq i<j\leq p}\omega^{a}_{ij}\omega_{ij}^{b} = \frac{1}{p}\sum\limits_{1\leq i<j\leq n}v_{i}^{a}v_{j}^{a}v_{i}^{b}v_{j}^{b}=\frac{p}{2}~\left(q_{v}^{ab}\right)^2\,,
\end{align}
\noindent where we defined the overlap between two replicas as $q_{v}^{ab}=p^{-1}\sum\limits_{i=1}^{p}v_{i}^{a}v_{i}^{b}$. This allows us to write the average over the replicated partition function as a function of a set of order parameters $q_v^{ab}$, and therefore to factorise all the index $i$ dependence of the exponential,
\begin{align}
\mathbb{E}_{Y}\mathcal{Z}^{r} &= \int_{\mathbb{R}^{p\times (r+1)}}\prod\limits_{a=0}^{r}\dd\bv^{a}~P_{v}\left(\bv^{a}\right)e^{\frac{p}{4\Delta}\sum\limits_{0\leq a<b\leq r}\left(q_{v}^{ab}\right)^2}.\label{eq:app:partialZ}
\end{align}
Since the expression above only depends on $q_{v}^{ab}$ now, we exchange the integral over the spike for an integral over this order parameter by introducing
\begin{align}
    1 &\propto \int_{\mathbb{R}^{(r+1) \times (r+1)}}\prod\limits_{0\leq a< b\leq r}\dd q_{v}^{ab}\prod\limits_{0\leq a< b\leq r}\delta\left(\sum\limits_{i=1}^{p}q_{v}^{ab}-p q^{ab}_{v}\right)\notag\\
    &\propto \int_{\mathbb{R}^{(r+1) \times (r+1)}}\prod\limits_{0\leq a< b\leq r}\dd q_{v}^{ab}\int_{\left(i\mathbb{R}\right)^{(r+1) \times (r+1)}}\prod\limits_{0\leq a< b\leq r}\hat{q}_{v}^{ab} e^{-p\sum\limits_{0\leq a<b\leq r}\hat{q}^{ab}_{v}q^{ab}_{v}+\sum\limits_{0\leq a<b\leq r}\hat{q}^{ab}_{v}\sum\limits_{i=1}^{p}v^{a}_{i}v_{i}^{b}}
\end{align}
Note that we neglected some constants and made a rotation to the complex axis over the Fourier integral. These will not be important for the argument that follows.

Inserting this identity in eq.~\eqref{eq:app:partialZ} yields
\begin{align}
    \mathbb{E}_{Y}\mathcal{Z}^{r} &\propto \int_{\mathbb{R}^{(r+1) \times (r+1)}}\prod\limits_{0\leq a< b\leq r}\dd q_{v}^{ab}\int_{\left(i\mathbb{R}\right)^{(r+1) \times (r+1)}}\prod\limits_{0\leq a< b\leq r}\hat{q}_{v}^{ab}~e^{p\Phi^{(r)}\left(q^{ab},\hat{q}^{ab}\right)}\notag\,,\\
    \Phi^{(r)}(q_{v}^{ab},\hat{q}_{v}^{ab}) &= \frac{1}{4\Delta}\sum\limits_{0\leq a<b\leq r}\left(q_{v}^{ab}\right)^2-\sum\limits_{0\leq a< b\leq r}\hat{q}_{v}^{ab}q_{v}^{ab}+\Psi_{v}^{(r)}(\hat{q}_{v}^{ab})\,,
\end{align}
\noindent where $\Psi_{v}^{(r)}(\hat{q}_{v}^{ab})$ contains all the information about the prior $P_{v}$:
\begin{align}
   \Psi_{v}^{(r)}(\hat{q}_{v}^{ab}) = \frac{1}{p}\log{\int_{\mathbb{R}^{p\times (r+1)}}\prod\limits_{a=0}^{r}\dd\bv^{a}~P_{v}\left(\bv^{a}\right)}\prod\limits_{i=1}^{p}e^{\sum\limits_{0\leq a<b\leq p}v_{i}^{a}\hat{q}_{v}^{ab}v_{i}^{b}}\,.
\end{align}
Note that when the prior factorises, $P_{v}(\bv) = \prod\limits_{i=1}^{p}P_{v}(v_{i})$, $\Psi_{v}^{(r)}$ is given by a simple one-dimensional integral. However in the case of a generative model for $\bv$, $P_{v}$ is kept general.

We are interested in the mutual information density in the thermodynamic limit. According to eq.~\eqref{eq:app:replicas:mutualinfo}, this is given by
\begin{align}
    \lim\limits_{p\to\infty}i_{p}(Y,\bv^{\star}) &= \lim\limits_{p\to\infty} \frac{1}{p}I(Y,\bv^{\star}) =\frac{1}{4\Delta} \lim\limits_{p\to\infty}\mathbb{E}_{P_{v}}\left[\frac{{\bv^\star}^\intercal\bv^{\star}}{p}\right]-\lim\limits_{p\to\infty}\frac{1}{p}\mathbb{E}_{Y}\log{\mZ}\\
    &=\frac{\rho_{v}^2}{4\Delta} -\lim\limits_{r\to 0^{+}}\frac{1}{r}\left(\lim\limits_{p\to\infty}\frac{1}{p}\mathbb{E}_{Y}\mathcal{Z}^{r}\right).
\end{align}
\noindent where we assumed that $\rho_{v}$, the re-scaled second moment of $P_{v}$, remains finite and that we can commute the $r\to 0^{+}$ and the $p\to\infty$ limit. Since $\mathbb{E}_{Y}\mathcal{Z}^{r}$ is given in terms of an integral weighted by $e^{p\Psi^{(r)}}$, in the limit $p\to\infty$ the integral will be dominated by the configurations of $(q_{v}^{ab},\hat{q}^{ab})$ that extremise the potential $\Psi^{(r)}$. This extremality condition, known as the Laplace method, yields the following \emph{saddle-point equations},
\begin{align}
    \hat{q}^{ab}_{v} = \frac{1}{2\Delta}q_{v}^{ab}\,, && q_{v}^{ab} = \lim\limits_{p\to\infty}\partial_{\hat{q}_{v}}\Psi_{v}^{(r)}(\hat{q}^{ab}_{v}).
\end{align}
\noindent where we also assume that $P_{v}$ is such that $\Psi_{v}^{(r)}$ remains well defined in the limit $p\to\infty$.

%%%%%%%%%%%%%%%%%%%%%%%%%%%%%%%%%%%%%%%%%%%
\paragraph*{Replica symmetric solution}
%%%%%%%%%%%%%%%%%%%%%%%%%%%%%%%%%%%%%%%%%%%
Enforcing the first saddle-point equation allow us to write
\begin{align}
    \lim\limits_{p\to\infty}\frac{1}{p}\mathbb{E}_{Y}\mathcal{Z}^{r} = \underset{q_{v}^{ab}}{\extr}\left[-\frac{1}{2\Delta}\sum\limits_{0\leq a<b\leq r}\left(q^{ab}_{v}\right)^2+\lim\limits_{p\to\infty}\Psi_{v}^{(r)}\left(\frac{q_{v}^{ab}}{\Delta}\right)\right]\label{eq:app:replicas:partialresult}
\end{align}
Solving this extremisation problem for general matrices is cumbersome. We therefore restrict ourselves to solutions that are \emph{replica symmetric}
\begin{align}
    q^{ab}_{v} = q_{v} \qquad\text{ for }\qquad 0\leq a\leq r.
\end{align}
The replica symmetry assumption might seen restrictive, but it is justified in the Bayes-optimal case under consideration - see \cite{nishimori2001statistical}. Replica symmetry allow us to factor the $r$ dependence
explicitly for each term,
\begin{align}
    \sum\limits_{0\leq a<b\leq r}\left(q_{v}^{ab}\right)^2 = \frac{r(r+1)}{2}q_{v}^2, && \sum\limits_{0\leq a<b\leq r}v_{i}^{a}q_{v}^{ab}v_{i}^{b}=q_{v} v^{\star}\sum\limits_{a=1}^{r}v^{a}_{i}+q_{v}\sum\limits_{a,b=1}^{r}v^{a}_{i}v^{b}_{i}
\end{align}
\noindent the last sum that couples $a,b$ can be decoupled using
\begin{align}
    e^{\frac{q_{v}}{2}\sum\limits_{a,b=1}^{r}v_{i}^{a}v_{i}^{b}} =\mathbb{E}_{\xi}\left[e^{-\sqrt{q_{v}}\xi\sum\limits_{a=1}^{r}\left(v_i^{a}\right)^2}\right]
\end{align}
\noindent where $\xi\sim\mathcal{N}(0,1)$. This transformation factorise $\Psi^{(r)}_{v}$ in replica space,
\begin{align}
    \Psi^{(r)}_{v}(q_{v}) &= \frac{1}{p}\log\int_{\mathbb{R}^{p}}\dd\bv^{\star}~P_{v}\left(\bv^{\star}\right)\int_{\mathbb{R}}\frac{\dd\xi}{\sqrt{2\pi}}e^{-\frac{1}{2}\xi^2}\left[\int_{\mathbb{R}^{p}}\dd\bv~P_{v}(\bv)~\prod\limits_{i=1}^{p}e^{-\frac{q_{v}}{2\Delta}v_{i}^2+\left(\frac{q_v}{\Delta}v_{i}^{\star}+\sqrt{\frac{q}{\Delta}}\xi\right) v_{i}}\right]^{r}\notag\\
    &\underset{r\to 0^{+}}{=}\frac{r}{p}\mathbb{E}_{\xi,P_{v}(v^{\star})}\log{\int_{\mathbb{R}^{p}}\dd\bv~P_{v}(\bv)~\prod\limits_{i=1}^{p}e^{-\frac{q_{v}}{2\Delta}v_{i}^2+\left(\frac{q_v}{\Delta}v_{i}^{\star}+\sqrt{\frac{q}{\Delta}}\xi\right) v_{i}}}+\mO\left(r^{2}\right).
\end{align}
\noindent allowing us to take the $r\to 0^{+}$ limit explicitly, and giving the following partial result
\begin{align}
    \lim\limits_{p\to\infty}i_{p}(Y,\bv^{\star}) = \frac{\rho_{v}^2}{4\Delta}+\underset{q_{v}}{\extr}\left[\frac{1}{4\Delta}q_{v}^2-\lim\limits_{p\to\infty}\Psi_{v}\left(\frac{q_{v}}{\Delta}\right)\right]\,,\label{eq:app:finalvvlayer}
\end{align}
\noindent where
\begin{align}
    \Psi_{v}\left(\frac{q_{v}}{\Delta}\right) =\lim\limits_{r\to 0^{+}}\Psi_{v}^{(r)} = \frac{1}{p} \mathbb{E}_{\xi, P_{v}(\bv^{\star})}\log{\mathbb{E}_{P_{v}(\bv)}\left[\prod\limits_{i=1}^{p}e^{-\frac{q_{v}}{2\Delta}v_{i}^2+\left(\frac{q_v}{\Delta}v_{i}^{\star}+\sqrt{\frac{q_{v}}{\Delta}}\xi\right) v_{i}}\right]}.\label{eq:app:replicas:Psiv}
\end{align}

%%%%%%%%%%%%%%%%%%%%%%%%%%%%%%%%%%%%%%%%%%%%%%%%%%%%%%
\paragraph*{Interpretations of $\Psi_{v}$ as a mutual information:}
%%%%%%%%%%%%%%%%%%%%%%%%%%%%%%%%%%%%%%%%%%%%%%%%%%%%%%
The prior term $\Psi_{v}$ in the free energy has an interesting interpretation as the mutual information of an effective denoising problem over $\bv$. To see this, we complete the square in the exponential of eq.~\eqref{eq:app:replicas:Psiv},
\begin{align}
    \Psi_{v}\left(x\right) &= \frac{1}{p}\mathbb{E}_{\xi,P_{v}(\bv^{\star})}\log{\int_{\mathbb{R}^{p}}\dd\bv~P_{v}(\bv)\prod\limits_{i=1}^{p}e^{-\frac{x}{2}\left[v_{i}-\left(v_{i}^{\star}+x^{-1/2}\xi\right)\right]^2+\frac{x}{2}\left(v_{i}^{\star}+x^{-1/2}\xi\right)^2}},\notag\\
    &=\frac{x}{2p}\mathbb{E}_{\xi,P_{v}(\bv^\star)}\sum\limits_{i=1}^{p}\left(v_{i}^{\star}+x^{-1/2}\xi\right)^2+ \frac{1}{p}\mathbb{E}_{\xi,P_{v}(v^{\star})}\log{\int_{\mathbb{R}^{p}}\dd\bv~P_{v}(\bv)\prod\limits_{i=1}^{p}e^{-\frac{x}{2}\left[v_{i}-\left(v_{i}^{\star}+x^{-1/2}\xi\right)\right]^2}},\notag\\
    &=\frac{x}{2}\mathbb{E}_{P_{v}}\left[\frac{\bv^\intercal\bv}{p}\right]+\frac{1}{2}+\frac{1}{p}\mathbb{E}_{\xi,P_{v}(\bv^{\star})}\log{\int_{\mathbb{R}^{p}}\dd\bv~P_{v}(\bv)\prod\limits_{i=1}^{p}e^{-\frac{x}{2}\left[v_{i}-\left(v_{i}^{\star}+x^{-1/2}\xi\right)\right]^2}}.\label{eq:app:replicas:Psivcompletesquare}
\end{align}
The last integral is a convolution between the prior $P_{v}$ and a un-normalised Gaussian. Up to an aditive constant it admits a natural representation as the mutual information of a denoising problem,
\begin{align}
    \frac{1}{p}\mathbb{E}_{\xi,P_{v}(\bv^{\star})}\log{\int_{\mathbb{R}^{p}}\dd\bv~P_{v}(\bv)\prod\limits_{i=1}^{p}e^{-\frac{x}{2}\left[v_{i}-\left(v_{i}^{\star}+x^{-1/2}\xi\right)\right]^2}} = -\frac{1}{p}I(\bv^{\star};\bv^{\star}+x^{-1/2}\xi)-\frac{1}{2}\,.
\end{align}
Putting together with eq.~\eqref{eq:app:replicas:Psivcompletesquare} and taking the limit,
\begin{align}
    \lim\limits_{p\to\infty}\Psi_{v}\left(\frac{q_{v}}{\Delta}\right) = \frac{q_{v}\rho_{v}}{2\Delta}-\lim\limits_{p\to\infty}\frac{1}{p}I\left(\bv^{\star};\bv^{\star}+\sqrt{\frac{\Delta}{q_{v}}}\xi\right).
\end{align}
Together with eq.~\eqref{eq:app:finalvvlayer}, this representation lead to eq.~\eqref{eq:information_theory:limip} in the main article.

Interestingly, the signal to noise ratio in the effective denoising problem is proportional to $\Delta$ and inversely proportional to the overlap $q_{v}$. This is quite intuitive: when $\Delta\gg 1$ (or the overlap with the ground truth is small), denoising is hard. On the other hand, when $\Delta=0$ the mutual information reaches its upper bound, given by the entropy of $P_{v}$.
%%%%%%%%%%%%%%%%%%%%%%%%%%%%%%%%%%%%%%%%%%%%%%%%%%%%%
\subsection{Free energy for the $\bu\bv^{\intercal}$ model}
\label{sec:app:replicas:wishart}
%%%%%%%%%%%%%%%%%%%%%%%%%%%%%%%%%%%%%%%%%%%%%%%%%%%%%
The exact same steps outlined above can be followed for the spiked Wishart model with spikes $\bu^{\star}\in\mathbb{R}^{n}$ and $\bv^{\star}\in\mathbb{R}^{p}$ drawn from non-factorisable priors $P_{u}$ and $P_{v}$ respectively. In this case, the free energy density associated with the following partition function
\begin{align}
    \mathcal{Z}^{uv}(Y) = \int_{\mathbb{R}^{p}}\dd\bv~P_{v}\left(\bv\right)\int_{\mathbb{R}^{n}}\dd\bu~P_{u}\left(\bu\right)\prod\limits_{\mu=1}^{n}\prod\limits_{i=1}^{p}e^{g\left(Y_{\mu i},\frac{u_{\mu}v_{i}}{\sqrt{p}}\right)-g\left(Y_{\mu i},0\right)}
\end{align}
\noindent is given by
\begin{align}
    \lim\limits_{p\to\infty}\frac{1}{p}\mathbb{E}_{Y}\log{\mathcal{Z}^{uv}} = \underset{q_{u},q_{v}}{\extr}\left[\frac{\beta}{2\Delta}q_{u}q_{v}-\lim\limits_{p\to\infty}\Psi_{v}\left(\beta\frac{ q_{u}}{\Delta}\right)-\beta\lim\limits_{n\to\infty}\Psi_{u}\left(\frac{q_{v}}{\Delta}\right)\right]
\end{align}
\noindent with $\beta = n/p$ fixed. The functions $\Psi_{v},\Psi_{u}$ are given by
\begin{align}
    \Psi_{u}\left(\beta\frac{q_{v}}{\Delta}\right) &= \frac{1}{n}\mathbb{E}_{\xi, P_{u}(\bu^{\star})}\log\int_{\mathbb{R}^{n}}\dd\bu~P_{u}(\bu)\prod\limits_{\mu=1}^{n}e^{-\beta\frac{q_{v}}{2\Delta}u_{\mu}^2+\left(\beta\frac{q_{u}}{\Delta}u^{\star}_{\mu}+\sqrt{\beta\frac{q_{v}}{\Delta}}\xi\right)u_{\mu}}\notag\\
    \Psi_{v}\left(\frac{q_{u}}{\Delta}\right) &= \frac{1}{p}\mathbb{E}_{\xi, P_{v}(\bv^{\star})}\log\int_{\mathbb{R}^{p}}\dd\bv~P_{v}(\bv)\prod\limits_{i=1}^{p}e^{-\frac{q_{u}}{2\Delta}v_{i}^2+\left(\frac{q_{v}}{\Delta}v^{\star}_{i}+\sqrt{\frac{q_{u}}{\Delta}}\xi\right)v_{\mu}}
\end{align}
%%%%%%%%%%%%%%%%%%%%%%%%%%%%%%%%%%%%%%%%%%%%%%%%%
\subsection{Application to generative priors}
\label{sec:app:replicas:application}
%%%%%%%%%%%%%%%%%%%%%%%%%%%%%%%%%%%%%%%%%%%%%%%%%

%%%%%%%%%%%%%%%%%%%%%%%%%%%%%%%%%%%%%%%%%%%%%%%%%
\paragraph*{Generalised linear model prior}
%%%%%%%%%%%%%%%%%%%%%%%%%%%%%%%%%%%%%%%%%%%%%%%%%
The expression we derived for the mutual information density in the $\bv\bv^{\intercal}$ model is valid for any prior $P_v$ as long as $\Psi_{v}$ is well defined in the thermodynamic limit. For the specific case when
\begin{align}
    P_{v}(\bv) = \int_{\mathbb{R}^{k}}\left(\prod\limits_{l=1}^{k}\dd z_{l}~P_{z}(z_{l})\right)\prod\limits_{i=1}^{p}P_{\out}\left(v_{i}\Big|\frac{1}{\sqrt{k}}\sum\limits_{l=1}^{k}W_{il}z_{l}\right)\,,
\end{align}
\noindent with $W_{il}\underset{\iid}{\sim}\mathcal{N}(0,1)$, $\Psi_{v}$ is, up to a global $1/\alpha$ scaling, the Bayes-optimal free energy of a generalised linear model with channel given by
\begin{align}
    \tilde{P}_{\out}\left(v|x;\xi,q_{v}\right) = P_{\out}(v|x)e^{-\frac{q_{v}}{2\Delta}v^2+\sqrt{\frac{q_{v}}{\Delta}}\xi v}\,,
\end{align}
\noindent and factorised prior $P_z$. The expression for this free energy is well known - see for example \cite{Barbier2017c} for a derivation and \cite{Barbier2017c} for a proof - and reads
\begin{align}
    \lim\limits_{p\to\infty}\Psi_{v} = \frac{1}{\alpha}\underset{q_{z},\hat{q}_{z}}{\extr}\left[-\frac{1}{2}q_{z}\hat{q}_{z}+\alpha \Psi_{\out}\left(\frac{q_{v}}{\Delta},q_{z}\right)+\Psi_{z}\left(\hat{q}_{z}\right)\right]
\end{align}
\noindent where the functions $\Psi_{\out}$ and $\Psi_{z}$ are defined in eq.~\eqref{eq:app:defs:psis}. Inserting this expression in our general formula for the mutual information density eq.~\eqref{eq:app:finalvvlayer} give us
\begin{align}
    \lim\limits_{p\to\infty} i_{p} =\frac{\rho_{v}}{4\Delta}+ \underset{q_{v},q_z,\hat{q}_{z}}{\extr}\left[\frac{1}{4\Delta}q_{v}^2+\frac{1}{2\alpha}\hat{q}_{z}q_{z}-\Psi_{\out}\left(\frac{q_{v}}{\Delta},q_{z}\right)-\frac{1}{\alpha}\Psi_{z}(\hat{q}_{z})\right]
    \label{eq:app:replicas:mutualinfo}
\end{align}
\noindent which is precisely the result from eq.~\eqref{main:free_entropy_uu}. The extremisation problem in eq.~\eqref{eq:app:replicas:mutualinfo} is solved by looking for the directions $(q_{v},\hat{q}_{z},q_{z})$ of zero gradient of the potential $\Psi_{v}$. These saddle-point equations are known in this context as \emph{state evolution equations}, and they can be conveniently written in terms of the auxiliary function we defined in Section \ref{eq:app:defs:psis}, equations  (\ref{eq:app:intro:defQ}-\ref{eq:app:defs:fvfout}) as
\begin{align}
    q_{v} &= 2\partial_{q_{v}}\Psi_{\out}\left(\frac{q_{v}}{\Delta},q_{z}\right) = \mathbb{E}_{\xi,\eta}\left[\mathcal{Z}_{\out}\left(\sqrt{\frac{q_{v}}{\Delta}}\xi,\frac{q_{v}}{\Delta},\sqrt{q_{z}}\eta,\rho_{z}-q_{z}\right)f_{v}\left(\sqrt{\frac{q_{v}}{\Delta}}\xi,\frac{q_{v}}{\Delta},\sqrt{q_{z}}\eta,\rho_{z}-q_{z}\right)^2\right]\notag\\
    \hat{q}_{z} &= 2\alpha\partial_{q_{z}}\Psi_{\out}\left(\frac{q_{v}}{\Delta},q_{z}\right) = \mathbb{E}_{\xi,\eta}\left[\mathcal{Z}_{\out}\left(\sqrt{\frac{q_{v}}{\Delta}}\xi,\frac{q_{v}}{\Delta},\sqrt{q_{z}}\eta,\rho_{z}-q_{z}\right)f_{\out}\left(\sqrt{\frac{q_{v}}{\Delta}}\xi,\frac{q_{v}}{\Delta},\sqrt{q_{z}}\eta,\rho_{z}-q_{z}\right)^2\right]\notag\\
    q_{z} &= 2\partial_{\hat{q}_{z}}\Psi_{z}\left(\hat{q}_{z}\right) = \mathbb{E}_{\xi}\left[\mathcal{Z}_{z}\left(\sqrt{\hat{q}_{z}}\xi,\hat{q}_{z}\right)f_{z}\left(\sqrt{\hat{q}_{z}}\xi,\hat{q}_{z}\right)^2\right]
    \label{eq:app:replicas:SE}
\end{align}
%%%%%%%%%%%%%%%%%%%%%%%%%%%%%%%%%%%%%%%%%%%%%%%%%
\paragraph*{Multi-layer prior}
%%%%%%%%%%%%%%%%%%%%%%%%%%%%%%%%%%%%%%%%%%%%%%%%%
The multi-layer prior can be conveniently written as
\begin{align}
    P_{v}(\bv) = \int \prod\limits_{l=1}^{L}\prod\limits_{\nu_{l}=1}^{k_{l}}\dd h^{(l)}_{\nu_{l}}P_{\out}^{(l-1)}\left(h^{(l)}_{\nu_{l}}\Big|\frac{1}{\sqrt{k_{l-1}}}\sum\limits_{\nu_{l-1}=1}^{k_{l-1}}W^{(l-1)}_{\nu_{l}\nu_{l-1}}h_{\nu_{l-1}}\right)\prod\limits_{i=1}^{p}P^{(L)}_{\out}\left(v_{i}\Big|\frac{1}{\sqrt{k_{L}}}\sum\limits_{\nu_{L}=1}^{k_{L}}W_{i \nu_{L}}h_{L}\right)\,,
\end{align}
\noindent where we define $\textbf{h}^{(1)} \equiv \bz\in\mathbb{R}^{k_{1}}$ and $P_{\out}^{(0)}\equiv P_{z}$. As in the single-layer case, the Bayes-optimal free energy of $P_{v}$ has been computed in \cite{manoel2017multi}, and in our notation it is written as
\begin{align}
    \lim\limits_{p\to\infty}\Psi_{v} = \frac{1}{\alpha}\underset{\{\hat{q}_{l},q_{l}\}_{1\leq l\leq L}}{\extr}\left[-\frac{1}{2}\sum\limits_{l=1}^{L}\alpha_{l}\hat{q}_{l}q_{l}+\alpha\Psi_{\out}\left(\frac{q_{v}}{\Delta},q_{L}\right)+\sum\limits_{l=2}^{L}\alpha_{l}\Psi_{\out}\left(\hat{q}_{l},q_{l-1}\right)+\Psi_{z}\left(\hat{q}_{1}\right)\right]\,,
\end{align}
\noindent where in this case $\alpha = p/k_1$ and we defined $\alpha_{l} = k_{l}/k_{1}$ for $1\leq l\leq L$ (note in particular that $\alpha_1 = 1$). The $(\hat{q}_{l},q_{l})$ are the overlaps of the hidden variables $\textbf{h}^{(l)}$ at each layer, and to be consistent with the shorthand notation introduced we have $(\hat{q}_{1},q_{1})=(\hat{q}_{z},q_{z})$. Inserting this expression in our general formula for the mutual information density eq.~\eqref{eq:app:finalvvlayer}:
\begin{align}
    \lim\limits_{p\to\infty}i_{p} = \frac{\rho_{v}}{4\Delta}+\underset{q_{v},\{\hat{q}_{l},q_{l}\}_l}{\extr}\left[\frac{1}{4\Delta}q_{v}^2+\frac{1}{2\alpha}\sum\limits_{l=1}^{L}\alpha_{l}\hat{q}_{l}q_{l}-\frac{1}{\alpha}\sum\limits_{l=2}^{L}\alpha_{l}\Psi_{\out}\left(\hat{q}_{l},q_{l-1}\right)-\Psi_{\out}\left(\frac{q_{v}}{\Delta},q_{L}\right)-\frac{1}{\alpha}\Psi_{z}\left(\hat{q}_{1}\right)\right].
\end{align}
% \newpage

\section{Proof of the mutual information for the $\bv  \bv^\intercal$ case}
\label{appendix:proof_uu}
In this section, we present a proof of the theorem~\ref{theorem_uu} in the main part,
for the mutual information of Wigner model eq.~\eqref{app:Wigner} with structured prior 
\begin{align}
  Y = \frac{1}{\sqrt{p}} {\bv^\star} {\bv^\star}^\intercal + \sqrt{\Delta} \xi \, ,
\end{align}
\noindent where the spike $\bv^{\star}\in\mathbb{R}^{p}$ is drawn from
$P_{v}$. The proof is based on Guerra Interpolation \cite{Guerra2003,korada2009exact}.

\subsection{Notations, free energies, and Gibbs average}
The mutual information being invariant to reparametrization, we shall
work instead inside this section with the following notations:
\begin{align}
  Y = {\sqrt{\frac{\lambda}p}} {\bv^\star} {\bv^\star}^\intercal + \xi \, ,
\end{align}
where $\lambda$ is the signal to noise ratio. Up to the
reparametrization, it corresponds to our model with
$\lambda=\Delta^{-1}$. Our aim is to compute $\frac{I(Y;\bv )}p$.

While the information theoretic notation is convenient in stating the
theorem, it is more convinient to use statistical physics notation and
"free energies" for the proof, that relies heavily on concepts from
mathematical physics. Let us first translate one into the other.  The
mutual information between the observation ${Y}$ and the unknown $\bv$
is defined using the entropy as $I(Y;\bv) = H(Y)-H(Y|\bv)$. Using
Bayes theorem one obtains $H(Y)=\E_{Y} \{{\log E_{P_v} P_Y(Y|\bv) \}}$
and a straightforward computation shows that the mutual information
per variable is then expressed as
\begin{equation}\frac{I(Y;\bv )}p = f_p +
\lambda    \frac{\E[\bv^\intercal \bv]}{4 p}\, ,
\label{def:fnrg}
\end{equation}
where, using again statistical physics terms, $f_p=-E_{{Y}} \left[ \log \mathcal{Z}_{p}(Y) \right]/p$ is the so called free energy density and $\mathcal{Z}_{p}(Y)$ the partition function defined by
\begin{equation}
  \mathcal{Z}_{p}(Y) \equiv \int_{\bbR^p} \dd \bv~P_v(\bv) \exp \left(
    \sum_{i < j} \left( -\lambda  \frac{v^2_i
        v^2_j}{2p} + \sqrt{\lambda}\frac{v_i v_j Y_{ij}} {\sqrt{p}}
    \right) \right)\,. \label{partsum}
\end{equation}
Notice that the sum does not includes the diagonal term in
(\ref{partsum}). Different conventions can be used dependning on
whether or not one suppose the diagonal terms to be measured, but
these yields only order $1/p$ differences in the free energies, and
thus does not affect the limit $p \to \infty$.
Correspondingly, we thus define the Hamiltonian:
\begin{align}
-H(\bv) &\equiv\sum_{i < j}  \sqrt{\frac{\lambda}{p}}Y_{ij}v_i v_j -\frac{\lambda}{2p} v_i^2v_j^2 =  \sum_{i < j}  \sqrt{\frac{\lambda}{p}}\xi_{ij}v_iv_j + \frac{\lambda}{p}v_iv_jv_i^\star v_j^\star  -\frac{\lambda}{2p} v_i^2v_j^2.\nonumber
\end{align}
so that the partition function (\ref{partsum}) is associated with the Gibbs-Boltzmann measure $e^{-H}/\mathcal{Z}_{p}(Y)$.

Consider now the term $I\left(\bv;\bv+\bz/{\sqrt{q_v \lambda}}\right)$ that enters the expression to be proven eq.~\eqref{eq:information_theory:limip}.  This is the mutual information for another denoising problem, in which we assume one
observes a noisy version of the vector ${\bv^\star }$, denoted $\tilde{\bf y}$ such that
\begin{equation}
\tilde{\bf y} = \frac 1{\sigma} {\bv^\ast} + \bz,
\label{eq:denoising}
\end{equation}
where $\bz \sim \mN(\bzero_p,\rI_p)$ and $\sigma=1/\sqrt{q_v \lambda}$, where we shall assume that the limit exists. Again, it is easier to work with free energies. We thus write the corresponding posterior distribution as
\be
P(\bv |\tilde{\bf y} )  = \frac{1}{\mathcal{Z}_{0}(\tilde{\bf y},\sigma) } P_v( \bv )
\exp \left( -\frac{\| \bv \|_2^2}{2 \sigma^2} + \frac{\bv^\intercal {\tilde{{\bf y}}}}{\sigma}\right)\, ,
\ee
where $\mathcal{Z}_{0}(\tilde{\bf y} )$ is the normalization factor. For this denoising problem, the averaged free energy per variables reads
\begin{align}
f^0_p(\sigma)  \equiv - \frac{1}{p} \E_{\tilde{\bf y}} [ \log \mathcal{Z}_{0}(\tilde{\bf y},\sigma)]
\label{eq:frng_den},
\end{align}
and a short computation shows that
\begin{align*}
	 I\(\bv;\bv+\frac{1}{\sqrt{q_v \lambda}} \bz\) = f_p^0 \(\frac{1}{\sqrt{\lambda q_v}}  \) + \frac{\rho_v \lambda q_v}{2}
\end{align*}

Putting all the pieces together, this means that we need to prove the following statement on the free energy $f_p$:
the free energy $f_p=-\mathbb{E}_{Y} \left[ \log{\mathcal{Z}_{p}(Y)} \right]/p$ is given, as $p \to \infty$ by
\begin{align}
 \lim\limits_{p \to \infty} f_p = \min \phi_{\rm RS}\left(\frac 1{\sqrt{q_v \lambda}} \right) \text{with}~~\phi_{\rm RS}\left(r\right) \equiv
 \lim\limits_{p \to \infty} f_p^0 \left(r\right) + \frac {\lambda q_v^2}{4}  \, .
\end{align}
This statement is  equivalent to theorem \ref{theorem_uu}, and we
shall present a proof for the case where the prior over $\bv$ has a
"good" limit: {\bf we shall assume that the limiting free energy
  exists and concentrates over the disorder, and that the distribution
  over each $v_i$ is bounded}. These hypothesis will be explicitly
given when needed.

Finally, it will be useful to consider Gibbs averages, and to work with $r$ copies of the same system. For any $g : (\mathbb{R}^{p})^{r+1} \mapsto \mathbb{R}$, we  define the Gibbs average as
 \begin{align}\label{gibbs_average_one}
&\left\langle g(\bv^{(1)},\cdots,\bv^{(r)},\bv^\star )\right\rangle \equiv\frac{\int g(\bv^{(1)},\cdots,\bv^{(r)},\bv^\star ) \prod_{l=1}^r e^{- H(\bv^{(l)})} \rmd P_v(\bv^{(l)})}{\left(\int e^{- H(\bv^{(l)})}  \rmd P_v(\bv^{(l)})\right)^r}. %\nonumber
 \end{align}
This is the average of $g$ with respect to the posterior distribution of $r$ copies $\bv^{(1)},\cdots,\bv^{(r)}$ of $\bv^\star $. The variables $\{\bv^{l}\}_{l=1...r}$ are called \emph{replicas}, and are interpreted as random variables independently drawn from the posterior. When $r=1$ we simply write $g(\bv,\bv^\star )$ instead of $g(\bv^{(1)},\bv^\star )$. Finally we shall denote the overlaps between two replicas as follows: for $l,l'=1...r$, we let
\begin{equation}
  R_{l,l'} \equiv\bv^{(l)} \cdot \bv^{(l')} = \frac{1}{p} \sum_{i=1}^p v_i^{(l)}v_i^{(l')}\, .
\end{equation}

A simple but useful consequence of Bayes rule is that the
$(r+1)$-tuples $(\bv^{(1)},\cdots,\bv^{(r+1)})$ and
$(\bv^{(1)},...,\bv^{(r)},\bv^{*})$ have the same under under the
expectation $\E \langle \cdot \rangle$ (see \cite{krzakala_mutual_2016} or proposition $16$ in~\cite{lelarge2019fundamental}). This bears the name of {\em the Nishimori property} in the spin glass literature~\cite{nishimori2001statistical}.

\subsection{Guerra Interpolation for the upper bound}
We start by using the Guerra interpolation to prove an exact formula for the free energy. 

Let $t \in [0,1]$ and let $q_v$ be a non-negative variable. We now consider an interpolating Hamiltonian
\begin{align}\label{interpolating_hamiltonian}
-H_t(\bv) &\equiv\sum_{i  <  j}  \sqrt{\frac{t\lambda}{p}} \xi_{ij}v_iv_j + \frac{t\lambda}{p}v_iv_i^\star v_jv_j^\star  -\frac{t\lambda}{2p}v_i^2v_j^2 \nonumber\\
&~~+\sum_{i=1}^p  \sqrt{(1-t)\lambda q_v} z_{i}v_i +  (1-t)\lambda q_v v_i v_i^\star  -\frac{(1-t)\lambda q_v}{2} v_i^2\,. \nonumber
\end{align}
The Gibbs states associated with this Hamiltonian $-H_t$ correspond to an estimation problem  given an augmented set of observations
\begin{align}%\label{augmented_observations}
\begin{cases}
Y_{ij} &= \sqrt{\frac{t\lambda}{p}} v^\star _iv^\star _j + \xi_{ij}, \quad 1 \le i \le j \le p, \nonumber
\\
\td{y}_i &= \sqrt{(1-t)\lambda q_v}v^\star _i + z_i, \quad 1 \le i \le p.
\nonumber
\end{cases}
\nonumber
\end{align}
Reproducing the argument of~\cite{krzakala_mutual_2016}, we prove
using Guerra's interpolation~\cite{Guerra2003} and the Nishimori
property the following:
\begin{proposition}[Upper bound on the Free energy]: Assume the
  elements of $\bv$ are bounded. Then there exists a constant $K>0$
  such that for all $q_v \in \R$ we have:
\begin{equation}
f_p \le  f_p^0(1/\sqrt{\lambda q_v}) + \frac {\lambda q_v^2}{4} + \frac{K}{p} \, .
\end{equation}
\label{prop_upper}
\end{proposition}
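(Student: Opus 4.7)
The plan is to use a Guerra-type interpolation between the matrix model at $t=1$ and the scalar denoising model at $t=0$. Define
\begin{equation}
\phi_p(t) \equiv -\frac{1}{p}\,\E\left[\log \int e^{-H_t(\bv)}\, \rmd P_v(\bv)\right], \qquad t\in[0,1],
\end{equation}
with $H_t$ the interpolating Hamiltonian written just above. By construction $\phi_p(1)=f_p$ and $\phi_p(0)=f_p^0(1/\sqrt{\lambda q_v})$, so the proposition follows once one controls $\phi_p'(t)$ uniformly in $t$ and integrates.

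To compute $\phi_p'(t)$, I would differentiate $-H_t$ in $t$ under the Gibbs measure, producing three contributions from the matrix block and three from the scalar block. The two terms that are linear in the Gaussian noises $\xi_{ij}$ and $z_i$ are prefactored by $1/(2\sqrt{t})$ and $1/(2\sqrt{1-t})$ respectively, but Stein's lemma (Gaussian integration by parts) produces compensating $\sqrt{t}$ and $\sqrt{1-t}$ from the derivative of the Gibbs measure, so the integrand is regular. Concretely, $\E[\xi_{ij}\langle v_iv_j\rangle_t] = \sqrt{t\lambda/p}\,\E\langle (v_iv_j)^2-v^{(1)}_iv^{(1)}_jv^{(2)}_iv^{(2)}_j\rangle_t$, and analogously for $z_i$.

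Collecting all six contributions and applying the Nishimori identity (in the form $\E\langle g(\bv,\bv^\star)\rangle = \E\langle g(\bv^{(1)},\bv^{(2)})\rangle$ for $g$ symmetric in its arguments) converts every $\bv^\star$-moment into a two-replica moment. All single-replica second moments then cancel exactly against the quadratic penalty terms in $-H_t$, and one is left, up to diagonal corrections $\frac{1}{p^2}\sum_i(v^{(1)}_i)^2(v^{(2)}_i)^2$ which the boundedness assumption bounds by $K/p$ uniformly in $t$, with
\begin{equation}
\phi_p'(t) \;=\; -\frac{\lambda}{4}\,\E\!\left\langle (R_{1,2}-q_v)^2\right\rangle_t \;+\; \frac{\lambda q_v^2}{4} \;+\; O(1/p).
\end{equation}
The first term is manifestly non-positive, so $\phi_p'(t)\le \lambda q_v^2/4 + K/p$ uniformly in $t\in[0,1]$; integrating on $[0,1]$ yields $f_p = \phi_p(1) \le \phi_p(0) + \lambda q_v^2/4 + K/p$, which is the claim.

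The main obstacle is not conceptual but bookkeeping: one must line up the six derivative terms, apply Stein's lemma cleanly despite the $1/\sqrt{t}$ and $1/\sqrt{1-t}$ factors (whose apparent singularities cancel exactly), and use Nishimori to combine the $\bv^\star$-terms with the replica-pair terms so that the completion of the square $R_{1,2}^2-2q_v R_{1,2}+q_v^2 = (R_{1,2}-q_v)^2$ emerges with the correct sign. The boundedness hypothesis on the single-site distribution of $v_i$ enters twice: to legitimize differentiation under the expectation that defines $\phi_p'(t)$, and to bound uniformly in $p$ the diagonal corrections generated when passing from $\sum_{i<j}v^{(1)}_iv^{(1)}_jv^{(2)}_iv^{(2)}_j$ to $\tfrac12 p^2 R_{1,2}^2$.
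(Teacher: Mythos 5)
Your proposal follows precisely the paper's own argument: Guerra interpolation between the same two endpoints, Stein's lemma on the Gaussian noises, the Nishimori identity to replace $\bv^\star$-pairs by replica pairs, cancellation of single-replica moments against the quadratic penalties, boundedness to control the $O(1/p)$ diagonal remainder, and a final non-positive quadratic term dropped before integrating in $t$. The only cosmetic difference is that you write the residual square as $\E\langle(R_{1,2}-q_v)^2\rangle_t$ where the paper writes $\E\langle(R_{1,*}-q_v)^2\rangle_t$; these coincide after Nishimori, so the two proofs are effectively identical.
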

%
%\begin{equation}
 % f_p \le \phi_{\rm RS}(\lambda) + \frac{K}{p}.
 % \end{equation}
  %for some constant $K$.

The proof is a {\it verbatim} reproduction of the argument
of~\cite{krzakala_mutual_2016} for non-factorized prior. We define
\begin{equation}
  \varphi(t) \equiv- \frac{1}{p}\E\log \int e^{-H_t({\bv})}  \rmd P_v({\bv}).
    \end{equation}
 A simple calculation based on Gaussian integration by parts (in
 technical terms, Stein's lemma) applied on the gaussian variebles $\xi$
 and $z$ shows that (see \cite{krzakala_mutual_2016} for details
{\small
\begin{align*}
\varphi'(t) = & \frac{\lambda}{4} \E\left\langle (R_{1,2}-q_v)^2\right\rangle_{t} - \frac{\lambda}{4} q_v^2 - \frac{\lambda}{4p^2} \sum_{i=1}^p \E\left\langle {v_i^{(1)}}^2{v_i^{(2)}}^2\right\rangle_{t} \\
&-\frac{\lambda}{2} \E\left\langle (R_{1,*}- q_v)^2\right\rangle_{t} + \frac{\lambda}{2}q_v^2 + \frac{\lambda}{2p^2} \sum_{i=1}^p \E\left\langle {v_i}^2{v_i^{*}}^2\right\rangle_{t},
\end{align*}
}
We now use the Nishimori property, and the expressions involving the pairs $({\bv},{\bv}^\star )$ and $({\bv}^{(1)},{\bv}^{(2)})$ become equal. We thus obtain
\begin{equation}
  \varphi'(t) = - \frac{\lambda}{4} \E\left\langle (R_{1,*}-q_v)^2\right\rangle_{t} + \frac{\lambda}{4} q_v^2 +
  \frac{\lambda}{4p^2} \sum_{i=1}^p \E\left\langle {v_i}^2{v_i^{*}}^2\right\rangle_{t}.
  \end{equation}
Observe that the last term is $\bigo\left(1/p\right)$ since the variables $v_i$ are bounded. Moreover, the first term is always non-negative so we obtain
\begin{equation}
  \varphi'(t) \le \frac{\lambda}{4} q_v^2 + \frac{K}{p}.
    \end{equation}
Since $\varphi(1) =f_p$ and $\varphi(0) = f_p^0(1/\sqrt{\lambda q_v})$, integrating over $t$, we obtain for all $q_v \ge0$,
$f_p \le \phi_{\rm RS} (\lambda, q_v) + \frac{K}{p},$
and this concludes the proof of the upper bound of proposition.

\subsection{A bound of the Franz-Parisi Potential}
To attack the lower bound, we shall adapt the argument of \cite{AlaouiKrzakala}, that uses the Franz-Parisi potential \cite{franz1995recipes}, and this will require additional concentration properties on the prior model. For $\bv^\star  \in \mathbb{R}^p$ fixed, $m \in \mathbb{R}$ and $\epsilon>0$ we follow \cite{AlaouiKrzakala} and define
\begin{equation}\label{free_energy_fixed_overlap}
\Phi^p_{\epsilon}(m,\bv^\star ) \equiv- \frac{1}{p}\mathbb{E}\log \int_{\bbR^p} \indi\{ R_{1,*} \in [m,m+\epsilon)\} e^{-H(\bv)}  \rmd P_v(\bv)\, .
\end{equation}
This is simply the free energy with configurations forced to be at a distance $m$ (to precision $\epsilon$) from the ground truth. Note that since the measure is limited to a subset of configurations, it is clear that $\E_{\bv^\star }  \Phi^p_{\epsilon}(m,\bv^\star ) \ge  f_p$.

We are now going to prove an interpolating bound for the Franz-Parisi
Potential:
\begin{proposition}[Lower bound on the Franz-Parisi potential]: Assume
  the elements of $\bv$ are bounded. Then there exists $K>0$ such that
  for any $m=q_v$ and $\epsilon>0$ we have
\begin{equation}
\Phi^p_{\epsilon}(m=q_v,\bv^\star ) \ge
f_p^0\left(1/\sqrt{\lambda q_v} ,\bv^\star \right) + \frac {\lambda q_v^2}{4}  - \frac{\lambda}{2} \epsilon^2 + \frac{K}{p}.
\end{equation}
\end{proposition}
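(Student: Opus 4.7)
My plan is to adapt the Guerra interpolation argument used for Proposition \ref{prop_upper}, now applied to the restricted partition function defining the Franz--Parisi potential. The crucial new ingredient is that the constraint $R_{1,*}\in[q_v,q_v+\epsilon)$ pins the overlap between the Gibbs-sampled $\bv$ and the planted $\bv^\star$ to be essentially $q_v$. This replaces the role of the Nishimori identity in the upper-bound proof -- which no longer holds under the restriction -- and, as we shall see, produces an inequality in the opposite direction.

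Concretely, I would introduce the constrained interpolating free energy
\begin{equation*}
    \phi(t) \equiv -\frac{1}{p}\E\log\int_{\bbR^p}\indi\{R_{1,*}\in[m,m+\epsilon)\}\,e^{-H_t(\bv)}\,\rmd P_v(\bv),
\end{equation*}
using the same interpolating Hamiltonian $H_t$ as in the upper-bound subsection, so that $\phi(1)=\Phi^p_\epsilon(m,\bv^\star)$ and $\phi(0)$ is a constrained denoising free energy at effective noise $\sigma=1/\sqrt{\lambda q_v}$. Let $\langle\cdot\rangle_{t,\epsilon}$ denote the corresponding restricted Gibbs average. Since the indicator depends on neither of the Gaussian noises $\xi,z$, the Stein-lemma integration by parts of Proposition \ref{prop_upper} goes through unchanged; introducing a second replica $\bv^{(2)}$ and rewriting the resulting quartic and quadratic sums in terms of the overlaps $R_{1,*}$ and $R_{1,2}$, one obtains
\begin{equation*}
    \phi'(t) = -\frac{\lambda}{2}\E\langle (R_{1,*}-q_v)^2\rangle_{t,\epsilon} + \frac{\lambda}{4}\E\langle (R_{1,2}-q_v)^2\rangle_{t,\epsilon} + \frac{\lambda}{4}q_v^2 + O(1/p),
\end{equation*}
where the $O(1/p)$ gathers diagonal boundary terms $\sum_i\E\langle v_i^{(1)\,2}v_i^{(2)\,2}\rangle$ uniformly controlled by the boundedness of the entries of $\bv$.

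Two elementary inequalities then close the argument: $\E\langle(R_{1,2}-q_v)^2\rangle_{t,\epsilon}\ge 0$ trivially, and the constraint together with $m=q_v$ forces $|R_{1,*}-q_v|\le\epsilon$ pointwise, hence $\E\langle(R_{1,*}-q_v)^2\rangle_{t,\epsilon}\le\epsilon^2$. Substituting gives $\phi'(t)\ge \tfrac{\lambda}{4}q_v^2-\tfrac{\lambda}{2}\epsilon^2+O(1/p)$ uniformly in $t\in[0,1]$. Integrating from $0$ to $1$ and observing that dropping the indicator only enlarges the $t=0$ partition function -- so that $\phi(0)\ge f_p^0(1/\sqrt{\lambda q_v},\bv^\star)$ pointwise in $\bv^\star$ -- yields the claimed bound.

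The step requiring most care is uniform control of the $O(1/p)$ remainder. In the unrestricted setting the diagonal contributions are tamed using the bound $|v_i|\le C$; under the indicator constraint the same pointwise bound survives, but one must additionally ensure that the restricted partition function is almost surely non-vanishing (so that $\langle\cdot\rangle_{t,\epsilon}$ is well-defined) and that differentiation under the $\log$ commutes with the expectation along the interpolation. These are standard points in Franz--Parisi proofs, e.g.\ \cite{AlaouiKrzakala}, and do not introduce genuine new difficulties once the boundedness hypothesis on the support of $P_v$ is in force.
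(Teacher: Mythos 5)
Your proof is correct and follows essentially the same Guerra-interpolation argument as the paper's: you differentiate the constrained interpolating free energy, note that the indicator is noise-independent so Stein's lemma applies verbatim, keep the $R_{1,2}$ and $R_{1,*}$ terms separate because Nishimori fails under the overlap restriction, drop the nonnegative replica term, bound $\E\langle (R_{1,*}-q_v)^2\rangle \le \epsilon^2$ by the constraint, and use that the restriction only increases $\phi(0)$. The sole cosmetic difference is that you specialize to $m=q_v$ from the outset, whereas the paper writes the interpolating Hamiltonian with a decoupled parameter $m$ in the cross term $(1-t)\lambda m\,v_iv_i^\star$ and sets $m=q_v$ only at the end, which yields an identical computation for the stated proposition.
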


The proof proceeds very similarly. Let $t \in [0,1]$ and consider a
slightly different interpolating Hamiltonian
\begin{align}\label{interpolating_hamiltonian}
-H_t(\bv) &\equiv\sum_{i < j}  \sqrt{\frac{t\lambda}{p}} \xi_{ij}v_iv_j + \frac{t\lambda}{p}v_iv_i^\star v_jv_j^\star  -\frac{t\lambda}{2p}v_i^2v_j^2 \nonumber\\
&~~+\sum_{i=1}^p  \sqrt{(1-t)\lambda q_v} z_{i}v_i +  (1-t) \lambda m v_iv_i^\star  -\frac{(1-t)\lambda q_v}{2} v_i^2, \nonumber
\end{align}
Notice the subtle change: in front of the term $(1-t)v_i v_i^\star $ we replace the $q_v$ from the former section by $m$. We define now
\begin{equation}
  \varphi_{\epsilon,m}(t) \equiv- \frac{1}{p}\E\log \int_{\bbR^p} e^{-H_t({\bv})} \indi\{ R_{1,*} \in [m,m+\epsilon)\} \rmd P_v({\bv}).
    \end{equation}
Denoting now the Gibbs average with the additional constraint $\indi\{ R_{1,*} \in [m,m+\epsilon)\}$ as $\langle \rangle_t^{m,\epsilon}$, we find when we repeat the former computation:
{\small
\begin{align*}
 \varphi_{\epsilon,m}'(t) = & \frac{\lambda}{4} \E\left\langle (R_{1,2}-q_v)^2\right\rangle^{m,\epsilon}_{t} - \frac{\lambda}{4} q_v^2
+ \frac{\lambda}{2}m^2 - \frac{\lambda}{2}
\E\left\langle (R_{1,*}- m)^2\right\rangle^{m,\epsilon}_{t} + o\left(1\right)
\end{align*}
}
The trick is now to notice that, by construction, the  $\EE\left\langle (R_{1,*}- m)^2\right\rangle^{m,\epsilon}_{t} \le \epsilon^2$ given the overlap restriction, and therefore
{\small
\begin{align*}
 \varphi_{\epsilon,m}'(t) \ge  & \frac{\lambda}{4} \E\left\langle (R_{1,2}-q_v)^2\right\rangle^{m,\epsilon}_{t} - \frac{\lambda}{4} q_v^2
+\frac{\lambda}{2}m^2  - \frac{\lambda \epsilon^2}{2}  + o\left(1\right) \,,
\end{align*}
and
\begin{align*}
 \varphi_{\epsilon,m}'(t)
\ge   - \frac{\lambda}{4} q_v^2 +\frac{\lambda}{2}m^2 - \frac{\lambda \epsilon^2}{2}  + o\left(1\right)\,. \\
\end{align*}
}
We now denote
\begin{align}
f^0_p(\sigma,\bv^\star )  \equiv - \frac{1}{N} \E_{z} [ \log \mathcal{Z}_{0}(\tilde{\bf y},\sigma)]
\label{eq:frng_den_xstar},
\end{align}
with the previous $f_p^0$ being the expectation $f^0_p(\sigma)  \equiv
\E_{\bv^\star } [ f^0_p(\sigma,\bv^\star ) ]$. Then, since
$\varphi_{\epsilon,m}(1) =\Phi^p_{\epsilon}(m,\bv^\star )$ and
$\varphi_{\epsilon,m}(0) \ge f_p^0(1/\sqrt{\lambda q_v})$ (again, this
is an obvious consequence  of the restriction in the sum) integrating
over $t$, we obtain a bound for the Parisi-Franz potential for any
$q_v$ and $m$. Using, in particular, the value $m=q_v$, this yields
yields the final result.

\subsection{From the Potential to a Lower bound on the free energy}
It remains to connect the Franz-Parisi potential to the actual free
energy. This is done by proving a Laplace-like result between the free
energy and the Franz-Parisi free energy, again following the technics
used in the separable case in \cite{AlaouiKrzakala}:
\begin{proposition}\label{laplace_one}
There exists $K>0$ such that for all $\epsilon>0$, we have
\begin{equation}
f_p \ge \E_{\bv^\star}\Big[\min_{ l \in \Z , |l| \le K/\epsilon  }\Phi^p_{\epsilon}(l\epsilon,\bv^\star )\Big] -  \frac{\log(K/\epsilon)}{\sqrt{p}}.
\end{equation}
\end{proposition}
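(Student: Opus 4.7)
The argument is a discretization of the overlap $R_{1,*}$ combined with Gaussian concentration of the constrained free energies, following the template of \cite{AlaouiKrzakala}. Since the components of $\bv$ are bounded, say $|v_i|\le C$, one has $|R_{1,*}|\le K$ almost surely for some $K>0$. Partition this admissible interval into $N=O(K/\epsilon)$ sub-intervals $I_l=[l\epsilon,(l+1)\epsilon)$ indexed by $|l|\le K/\epsilon$, and set
$$ \mathcal{Z}_{p,l}(\xi,\bv^\star) \;=\; \int \indi\{R_{1,*}\in I_l\}\,e^{-H(\bv)}\,\dd P_v(\bv), \qquad F_l(\xi,\bv^\star) \;=\; -\tfrac{1}{p}\log \mathcal{Z}_{p,l}, $$
so that by the definition in \eqref{free_energy_fixed_overlap} one has $\Phi^p_\epsilon(l\epsilon,\bv^\star)=\mathbb{E}_\xi F_l(\xi,\bv^\star)$. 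Since $\mathcal{Z}_p=\sum_l \mathcal{Z}_{p,l}$, the elementary bound $\log\sum_l a_l\le \log N + \max_l \log a_l$ yields pointwise
$$ -\tfrac{1}{p}\log \mathcal{Z}_p \;\ge\; \min_{|l|\le K/\epsilon} F_l(\xi,\bv^\star) \;-\; \tfrac{\log N}{p}. $$

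\textbf{Concentration step.} For fixed $\bv^\star$, view $F_l$ as a function of the Gaussian noise $\xi=(\xi_{ij})_{i<j}$. A direct computation gives $\partial_{\xi_{ij}}F_l = -p^{-1}\sqrt{\lambda/p}\,\langle v_i v_j\rangle_l$, where $\langle \cdot\rangle_l$ denotes the Gibbs average constrained by $R_{1,*}\in I_l$. The bound $|v_i|\le C$ then gives the Lipschitz estimate $\|\nabla_\xi F_l\|^2 \le \lambda C^4/(2p)$, so the Tsirelson--Ibragimov--Sudakov inequality yields
$$ \mathbb{P}_\xi\bigl(|F_l(\xi,\bv^\star)-\Phi^p_\epsilon(l\epsilon,\bv^\star)|>t\bigr) \;\le\; 2\exp\!\bigl(-pt^2/(\lambda C^4)\bigr). $$
A union bound over the $N$ indices together with integration of the tail produces
$$ \mathbb{E}_\xi \max_{|l|\le K/\epsilon} \bigl|F_l - \Phi^p_\epsilon(l\epsilon,\bv^\star)\bigr| \;\le\; C'\sqrt{\tfrac{\log N}{p}} \;\le\; \tfrac{\log(K/\epsilon)}{\sqrt{p}} $$
for $p$ large enough (with the bound in the statement being loose by a $\sqrt{\log}$ factor). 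Hence $\mathbb{E}_\xi \min_l F_l \ge \min_l \Phi^p_\epsilon(l\epsilon,\bv^\star) - \log(K/\epsilon)/\sqrt{p}$.

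\textbf{Conclusion and main obstacle.} Taking expectation over $\bv^\star$, combining with the first display, and absorbing the lower-order remainder $\log N/p=O(\log(1/\epsilon)/p)$ into the target error, one concludes
$$ f_p \;\ge\; \mathbb{E}_{\bv^\star}\Bigl[\min_{|l|\le K/\epsilon}\Phi^p_\epsilon(l\epsilon,\bv^\star)\Bigr] \;-\; \tfrac{\log(K/\epsilon)}{\sqrt{p}}. $$
The main technical point is the Lipschitz-in-$\xi$ bound, which hinges on the boundedness assumption on $\bv$ explicitly invoked in the statement; for bounded activations (sign, saturating nonlinearities) it is immediate, while for priors with unbounded support (linear or ReLU generative models) one must truncate $P_v$ and separately control the truncation error through tail bounds on $\bz\sim P_z$ and $W$. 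Everything else is a direct adaptation of the separable-prior Laplace/concentration template of \cite{AlaouiKrzakala}.
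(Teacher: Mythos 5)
Your proof is correct and follows essentially the same route as the paper's: discretize the overlap into $O(K/\epsilon)$ bins using boundedness of $\bv$, apply the log-sum $\le \log N + \max\log$ bound, show each constrained free energy is $O(1/\sqrt p)$-Lipschitz in the Gaussian noise so that Tsirelson--Ibragimov--Sudakov applies, and control the max via a union bound. The only cosmetic difference is that you obtain $\mathbb{E}_\xi\max_l|F_l-\Phi^p_\epsilon|$ by integrating a tail bound, whereas the paper bounds the same quantity through the moment generating function (Lemma \ref{sub_gaussian_awkward}) with a non-optimal choice $\gamma=\sqrt p$, which accounts for the $\sqrt{\log}$ slack you correctly noticed; both give a bound subsumed by $\log(K/\epsilon)/\sqrt{p}$.
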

Combining this proposition with the bound on the Franz-Parisi
potential, we see that
\begin{equation}
f_p \ge \E_{\bv^\star } \hspace{-.1cm}\Big[\min_{q_v=l\epsilon \atop |l|\le
  K/\epsilon} f_p^0\left(1/\sqrt{\lambda q_v} ,\bv^\star \right) + \frac {\lambda
  q_v^2}{4} \Big] -  \frac{\lambda}{2} \epsilon^2 -  \frac{\log(K/\epsilon)}{\sqrt{p}}.
\end{equation}
At this point, we need to push the expectation with respect to the
spike {\it inside} the minimum. This is the {\it only assumption} that we
  are going to require over the generative model: {\it that its free energy
    concentrates over the distribution of spikes}. This
  finally leads to following result:
\begin{proposition}[Laplace principle]\label{laplace_two}
  Assume that the free energy $f_p^0(\bv^\star )$ concentrates such that
\begin{equation}
  \E\left[\left|f_p^0(\frac{1}{\sqrt{\lambda q_v}},\bv^\star
      )-\E\left[f_p^0(\frac{1}{\sqrt{\lambda q_v}},\bv^\star
        )\right]\right|\right]<C/\sqrt{p}
\end{equation}
  for some constant $C$ for all $q_v$ in $[0,\rho_v)$, then:
\begin{equation}
  f_p \ge \min_{q_v}\Big[ f_p^0\left(1/\sqrt{\lambda q_v} \right) + \frac {\lambda
    q_v^2}{4} \Big] +o\left(\frac{\log p}{\sqrt{p}}\right).
\end{equation}
\end{proposition}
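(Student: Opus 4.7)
Combining Propositions 3 and 4 eliminates the Franz--Parisi potential in favour of the denoising free energy with a spike-dependent offset. Writing $\bar A(q_v, \bv^\star) \equiv f_p^0(1/\sqrt{\lambda q_v}, \bv^\star) + \lambda q_v^2/4$ and $\bar A_p(q_v) \equiv f_p^0(1/\sqrt{\lambda q_v}) + \lambda q_v^2/4$, the starting point is
\begin{align}
f_p \ge \E_{\bv^\star}\Big[\min_{l \in \bbZ,\, |l|\le K/\epsilon} \bar A(l\epsilon, \bv^\star)\Big] - \tfrac{\lambda}{2}\epsilon^2 - \frac{\log(K/\epsilon)}{\sqrt p} + O(1/p),
\end{align}
and the whole task reduces to exchanging the expectation and the minimum so that $\bar A(\cdot, \bv^\star)$ is replaced by $\bar A_p(\cdot)$.

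The exchange is performed by the concentration hypothesis applied on the grid $\mathcal{G}_\epsilon = \{l\epsilon : |l|\le K/\epsilon\}$, whose cardinality is at most $2K/\epsilon + 1$. For each fixed $q_v\in\mathcal{G}_\epsilon$, Markov's inequality applied to the assumption $\E|f_p^0(\cdot,\bv^\star) - f_p^0(\cdot)| \le C/\sqrt p$ gives
\begin{align}
\bbP\big[|\bar A(q_v, \bv^\star) - \bar A_p(q_v)| > \delta\big] \le \frac{C}{\delta\sqrt p},
\end{align}
and a union bound produces an event $E$ of probability at least $1 - CK/(\epsilon\delta\sqrt p)$ on which $\bar A(q_v, \bv^\star) \ge \bar A_p(q_v) - \delta$ uniformly over $q_v\in\mathcal{G}_\epsilon$. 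Consequently $\min_l \bar A(l\epsilon, \bv^\star) \ge \min_l \bar A_p(l\epsilon) - \delta$ on $E$. On $E^c$, the assumed boundedness of the $v_i$ together with the nonnegativity of the denoising mutual information in eq.~\eqref{eq:denoising} provide a deterministic lower bound $\bar A(q_v, \bv^\star) \ge -M$. Integrating then yields
\begin{align}
\E_{\bv^\star}\!\big[\min_l \bar A(l\epsilon, \bv^\star)\big] \ge \min_l \bar A_p(l\epsilon) - \delta - \frac{CK}{\epsilon\delta\sqrt p}\,(M + |\min_l \bar A_p|).
\end{align}

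The final step passes from the grid to the continuum. Using the I--MMSE identity for the denoising channel of eq.~\eqref{eq:denoising}, the derivative of $\sigma\mapsto f_p^0(\sigma)$ is a bounded moment of the posterior mean, so $\bar A_p$ is uniformly Lipschitz in $q_v$ on any compact subinterval of $(0,\rho_v]$ (with the behaviour near $q_v=0$ handled separately using $f_p^0(\sigma)\to$ entropy of $P_v$ as $\sigma\to\infty$). This contributes only a discretization error $O(\epsilon)$, giving $\min_l \bar A_p(l\epsilon) \ge \min_{q_v}\bar A_p(q_v) - L\epsilon$.

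The main obstacle is purely quantitative: one must choose $(\epsilon,\delta)$ so that the accumulated errors
\begin{align}
\tfrac{\lambda}{2}\epsilon^2 + \frac{\log(K/\epsilon)}{\sqrt p} + \delta + \frac{CK(M+|\min\bar A_p|)}{\epsilon\delta\sqrt p} + L\epsilon
\end{align}
all vanish at the prescribed rate. The choice $\epsilon = p^{-1/10}$, $\delta = p^{-1/5}$ balances the competing terms: the Franz--Parisi residual $\epsilon^2$, the discretization $L\epsilon$, and the concentration residual $CK/(\epsilon\delta\sqrt p)$ are all $O(p^{-1/10})$, while the Laplace remainder remains $O(\log p/\sqrt p)$, which establishes the claim.
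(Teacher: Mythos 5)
Your proof is correct but takes a longer route than the paper's. Both start from the bound obtained by combining Proposition~\ref{laplace_one} with the Franz--Parisi estimate, namely $f_p \ge \E_{\bv^\star}\big[\min_l \bar A(l\epsilon,\bv^\star)\big] - \tfrac{\lambda}{2}\epsilon^2 - \tfrac{\log(K/\epsilon)}{\sqrt p}$, and the whole task is to exchange $\E_{\bv^\star}$ with the grid minimum. The paper does this with a one-line first-moment bound: writing $\tilde X_l = -\bar A(l\epsilon,\bv^\star)$, it uses $\E_{\bv^\star}\big|\max_l(\tilde X_l - \E\tilde X_l)\big| \le \sum_l \E_{\bv^\star}|\tilde X_l - \E\tilde X_l| \le (2K/\epsilon+1)\,C/\sqrt p$, which immediately gives $\E[\max_l\tilde X_l] \le \max_l\E\tilde X_l + K'/(\epsilon\sqrt p)$ with a single tuning parameter $\epsilon$. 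Your route --- Markov's inequality, a union bound, conditioning on a high-probability event $E$, and a deterministic lower bound on $\bar A$ to control the integral over $E^c$ --- reaches the same destination but is longer, introduces an extra parameter $\delta$ to balance, and invokes the a.s.\ boundedness of $\bar A$ to control the tail, where the paper's direct bound needs only the $L^1$ concentration hypothesis and nothing else. Nothing is incorrect, but the first-moment bound is noticeably more economical.

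Two minor points. Your final ``grid to continuum'' step via Lipschitz continuity, $\min_l\bar A_p(l\epsilon) \ge \min_{q_v}\bar A_p(q_v) - L\epsilon$, is unnecessary: the grid is a subset of $[0,\rho_v]$, so $\min_l\bar A_p(l\epsilon) \ge \min_{q_v}\bar A_p(q_v)$ already holds with $L = 0$, and the paper simply keeps the grid minimum in its final display. Dropping that term helps, but with $\epsilon=p^{-1/10}$ and $\delta=p^{-1/5}$ your dominant residual is still $O(p^{-1/5})$, which is polynomially larger than $\log p/\sqrt p$; for what it is worth, the paper's own choice $\epsilon=p^{-1/4}$ leaves a residual $K'/(\epsilon\sqrt p)=O(p^{-1/4})$ with the same issue, so the literal $o(\log p/\sqrt p)$ in the statement is not achieved by either choice. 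Both arguments do establish a vanishing error, which is all that the subsequent $p\to\infty$ limit requires; just do not present your $(\epsilon,\delta)$ as establishing the stated asymptotic rate.
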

which gives us the needed converse bound. To conclude this section, let us prove these propositions.

\begin{proofof}{Proposition~\ref{laplace_one}}
  This is prooven in \cite{AlaouiKrzakala}, and we breifly repeat the
  arguement here. Let $\epsilon>0$. Since the prior $P_v$ has bounded
  support, we can grid the set of the overlap values $R_{1,*}$ by
  $2K/\epsilon$ many intervals of size $\epsilon$ for some $K
  >0$. This allows the following discretisation, where $l$ runs over
  the finite range $\{-K/\epsilon,\cdots,K/\epsilon\}$:
\begin{align}
- f_{p} &= \frac{1}{p} \E\log \sum_{l} \int_{\bbR^p} \indi\{R_{1,*} \in [l\epsilon, (l+1)\epsilon)\} e^{-H(\bv)}  \rmd P_v(\bv)\nonumber\\
&\le \frac{1}{p} \E\log \frac{2K}{\epsilon}\max_{l} \int_{\bbR^p} \indi\{R_{1,*} \in [l\epsilon, (l+1)\epsilon)\} e^{-H(\bv)}  \rmd P_v(\bv)\nonumber\\
&= \frac{1}{p} \E \max_{l} \log\int_{\bbR^p} \indi\{R_{1,*} \in [l\epsilon, (l+1)\epsilon)\} e^{-H(\bv)}  \rmd P_v(\bv) + \frac{\log (2K/\epsilon)}{p}.\label{crude_upperBound}
\end{align}
Note that in the above, the expectation $\E$ is taken with respect to both
the noise matrix ${\xi}$ and the spike ${\bv}^\star $. We shall now
use concentration of measure to push the expectation over ${\xi}$ to the
other side of the maximum in order to recover the Franz-Parisi
potential as defined in the previous section.

 Let
\begin{equation}
 Z_l \equiv\int_{\bbR^p}   \indi\{R_{1,*} \in [l\epsilon, (l+1)\epsilon)\}
 e^{-H(\bv)}  \rmd P_v(\bv).
 \end{equation}
One can show that each term $X_l= \frac{1}{p}\log Z_l$ individually
concentrates around its expectation with respect to the random
variable $\xi$. This follows from the following lemma
\begin{lemma}\label{sub_gaussian_awkward}[from \cite{AlaouiKrzakala}]
There exists a constant $K >0$ such that for all $\gamma \ge 0$ and all $l$,
\begin{equation}
  \E_{\xi} e^{\gamma(X_l-\E_{\xi} [X_l])} \le \frac{K
    \gamma}{\sqrt{p}}e^{K\gamma^2/p}.
  \end{equation}
\end{lemma}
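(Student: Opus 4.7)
The plan is to use Gaussian Lipschitz concentration applied to $X_l$ viewed as a function of the noise matrix $\xi$.

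First I would isolate the dependence of $X_l$ on $\xi$. The only $\xi$-dependent contribution to $-H(\bv)$ is the linear term $\sqrt{\lambda/p}\sum_{i<j}\xi_{ij}v_iv_j$, and the slab indicator $\indi\{R_{1,*}\in[l\epsilon,(l+1)\epsilon)\}$ is independent of $\xi$. Differentiating under the integral and dividing by $Z_l$,
\[
\partial_{\xi_{ij}}X_l \;=\; \frac{1}{p}\sqrt{\frac{\lambda}{p}}\,\langle v_i v_j\rangle^{l,\epsilon}_\xi,
\]
where $\langle\cdot\rangle^{l,\epsilon}_\xi$ denotes the Gibbs average against the (properly normalised) measure $\indi\{R_{1,*}\in[l\epsilon,(l+1)\epsilon)\}\,e^{-H(\bv)}\,\rmd P_v(\bv)$.

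Next, I would use the standing assumption that the coordinates of $\bv$ are uniformly bounded, say $|v_i|\le M$ almost surely under $P_v$. Then $|\langle v_iv_j\rangle^{l,\epsilon}_\xi|\le M^2$ and
\[
\|\nabla_\xi X_l\|_2^2 \;=\; \sum_{i<j}\bigl(\partial_{\xi_{ij}}X_l\bigr)^2 \;\le\; \frac{\lambda M^4}{p^3}\cdot\binom{p}{2}\;\le\; \frac{K_0}{p},
\]
so $X_l$ is $L$-Lipschitz in $\xi$ with $L^2\le K_0/p$, uniformly in $l$ (this uniformity is the main reason we grid the overlap).

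Then I would apply the Tsirelson–Ibragimov–Sudakov inequality to conclude $\E_\xi e^{\gamma(X_l-\E_\xi X_l)}\le e^{\gamma^2 L^2/2}\le e^{K\gamma^2/p}$. To recover the extra prefactor $K\gamma/\sqrt{p}$ claimed in the statement, I would use Gaussian integration by parts: writing $F(\gamma)=\E_\xi e^{\gamma(X_l-\E_\xi X_l)}$, Stein's lemma gives
\[
F'(\gamma)\;=\;\E_\xi\bigl[(X_l-\E_\xi X_l)\,e^{\gamma(X_l-\E_\xi X_l)}\bigr]\;=\;\gamma\,\E_\xi\bigl[\|\nabla_\xi X_l\|_2^2\,e^{\gamma(X_l-\E_\xi X_l)}\bigr]\;\le\;\frac{K_0\gamma}{p}\,F(\gamma).
\]
Since $F(0)=1$ and $F'(0)=0$, integrating yields $F(\gamma)-1=\int_0^\gamma F'(s)\,ds\le (K_0\gamma^2/p)\,e^{K_0\gamma^2/(2p)}$, which, together with the crude bound $F(\gamma)\le e^{K\gamma^2/p}$, gives the claimed form with a (possibly larger) constant $K$.

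The main obstacle is that $X_l=p^{-1}\log Z_l$ is ill-defined on the event $\{Z_l=0\}$ and the slab indicator is non-smooth in $\bv$. The non-smoothness in $\bv$ is harmless because we only differentiate in $\xi$, and the indicator passes through as a fixed weight. The event $\{Z_l=0\}$ is avoided by a standard truncation/regularisation: one replaces the indicator by a smooth approximation $\chi_\delta$ with $\mathrm{supp}(\chi_\delta)\subset[l\epsilon-\delta,(l+1)\epsilon+\delta]$, runs the above argument (the Lipschitz bound in $\xi$ is unchanged), and then sends $\delta\to 0$ after taking the expectation; the boundedness of $\bv$ and the assumption that $P_v$ charges a neighbourhood of the grid value $l\epsilon$ ensure that $Z_l>0$ almost surely in the regime of overlaps that survive the maximum in~\eqref{crude_upperBound}. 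This step is carried out exactly as in \cite{AlaouiKrzakala}; the only ingredient specific to our structured prior is the uniform boundedness of the coordinates of $\bv$, which is our standing hypothesis.
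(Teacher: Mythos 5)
Your main argument — isolate the $\xi$-dependence in the Hamiltonian, compute $\partial_{\xi_{ij}}X_l = p^{-1}\sqrt{\lambda/p}\,\langle v_iv_j\rangle^{l,\epsilon}_\xi$, bound the Gibbs average by the standing boundedness hypothesis to get an $\mathcal{O}(1/\sqrt{p})$ Lipschitz constant uniform in $l$, and then invoke Tsirelson--Ibragimov--Sudakov — is exactly what the paper is pointing to when it cites this lemma as a ``direct consequence of the Tsirelson-Ibragimov-Sudakov inequality'' via \cite{AlaouiKrzakala}, Lemma~7. The paper gives no further detail, so your calculation is the right unpacking of that one-line justification, including the observation that the overlap indicator is $\xi$-independent and just passes through as a weight, and that the non-negativity of $Z_l$ can be handled by the usual smoothing of the indicator.

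There are, however, two genuine problems with the second half of your argument. First, the ``Stein's lemma'' identity you use,
\begin{equation*}
\E_\xi\bigl[(X_l-\E_\xi X_l)\,e^{\gamma(X_l-\E_\xi X_l)}\bigr]\;=\;\gamma\,\E_\xi\bigl[\|\nabla_\xi X_l\|_2^2\,e^{\gamma(X_l-\E_\xi X_l)}\bigr],
\end{equation*}
is false for $X_l$ that is not affine in $\xi$ (which it is not, since $X_l$ is a log-partition function). Gaussian integration by parts applies to $\E[\xi_{ij}f(\xi)]$, not to $\E[G\,e^{\gamma G}]$ with a nonlinear $G$; the correct route (Herbst's argument via the Gaussian log-Sobolev inequality) gives the inequality $\gamma\,\E[G e^{\gamma G}] \le F(\gamma)\log F(\gamma) + (\gamma^2/2)\,\E[\|\nabla G\|^2 e^{\gamma G}]$, which is not the identity you wrote and which, when integrated, simply recovers the TIS bound $F(\gamma)\le e^{\gamma^2 L^2/2}$ — with no multiplicative prefactor in $\gamma$. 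A quick sanity check with $X_l=\xi_1^2$ shows the two sides of your claimed identity differ already in this simple case. Second, even granting your identity, the output $F(\gamma)\le 1 + (K_0\gamma^2/p)e^{K_0\gamma^2/(2p)}$ is not of the form $\frac{K\gamma}{\sqrt{p}}e^{K\gamma^2/p}$: your bound has the additive $1$ and the lemma's does not.

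The deeper issue is that the lemma as printed cannot hold for all $\gamma\ge 0$: at $\gamma=0$ the left-hand side is $1$ while the right-hand side vanishes. The lemma is only ever used at $\gamma=\sqrt{p}$ (see Proposition~\ref{laplace_one}), where both the printed form and the honest TIS bound $\E e^{\gamma(X_l-\E X_l)}\le e^{K\gamma^2/p}$ yield the same $\frac{\log(K/\epsilon)}{\sqrt{p}}$ conclusion after taking the union bound over $l$ and optimizing in $\gamma$. You should therefore stop at your TIS step: $\E_\xi e^{\gamma(X_l-\E_\xi X_l)}\le e^{K\gamma^2/p}$ is the correct statement, it is what the citation is actually appealing to, and it suffices for the downstream use. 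Attempting to manufacture the $K\gamma/\sqrt{p}$ prefactor with an incorrect integration-by-parts step weakens an otherwise correct argument.
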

that is a direct consequence of the Tsirelson-Ibragimov-Sudakov
inequality~\cite{boucheron2013concentration}, see
\cite{AlaouiKrzakala}, Lemma 7.

Given that all $X_l$ concentrates, the expectation of the maximum concentrates as well:
\begin{align*}
\E_{\xi}\max_{l}(X_l-\E_{\xi}[X_l])  &\le \frac{1}{\gamma} \log \E_{\xi} \exp\left(\gamma\max_{l}(X_l-\E_{\xi}[X_l])\right)\\
&= \frac{1}{\gamma} \log \E_{\xi} \max_{l}e^{\gamma(X_l-\E'[X_l])}\\
&\le \frac{1}{\gamma} \log \E_{\xi} \sum_{l}e^{\gamma(X_l-\E_{\xi}[X_l])}\\
&\le \frac{1}{\gamma} \log \left(\frac{2K}{\epsilon}\frac{\gamma K}{\sqrt{p}}e^{\gamma^2K/p}\right)\\
&= \frac{\log (2K/\epsilon)}{\gamma} + \frac{1}{\gamma}\log\frac{\gamma K}{\sqrt{p}} + \frac{\gamma K}{p}.
\end{align*}
We set $\gamma = \sqrt{p}$ and obtain
\begin{equation}
\E_{\xi}\max_{l} (X_l-\E_{\xi}[X_l]) \le
\frac{\log(K/\epsilon)}{\sqrt{p}}.
\end{equation}
Therefore, inserting the above estimates into~\eqref{crude_upperBound}, we obtain
\begin{align*}
-f_p &\le \E_{\bv^\star }\max_{l}  \E_{\xi} X_l + \frac{\log
       (K/\epsilon)}{\sqrt{p}} +  \frac{\log (K/\epsilon)}{p} \le  \E_{\bv^\star }\max_l \Phi_{\epsilon}(l\epsilon,\bv^\star )  + 2\frac{\log (K/\epsilon)}{\sqrt{p}}
\end{align*}
so that finally
\begin{align*}
f_p &\ge \E_{\bv^\star }\min_l \Phi_{\epsilon}(l\epsilon,\bv^\star )  - \frac{\log (K/\epsilon)}{\sqrt{p}},
\end{align*}
for some constant $K$.
\end{proofof}

\begin{proofof}{Proposition~\ref{laplace_two}}
  Here we need to pay attention to the fact that the prior is not
  separable, and thus at this point the proof differs from form \cite{AlaouiKrzakala},
We wish to push the expectation with respect to $\bv^\star $ inside the
minimum. We start by using again $q_v=l\epsilon$ and defining the
following random (in $\bv^\star $ variable):
\begin{equation}
\tilde X_l = - \left(f_p^0\left(1/\sqrt{\lambda l \epsilon} ,\bv^\star \right) + \frac {\lambda
  q_v^2}{4}\right)
\end{equation}
and start from Proposition~\ref{laplace_one}:
\begin{equation}
-  f_p \le \E_{\bv^\star } \hspace{-.1cm}\Big[\max_{q_v=l\epsilon \atop |l|\le
  K/\epsilon} \tilde X_l \Big] +  \frac{\lambda}{2} \epsilon^2 +
\frac{\log(K/\epsilon)}{\sqrt{p}}. \label{eql21}
\end{equation}
We now wish to push the max inside. We proceed as follow:
\begin{align}
\E_{\bv^\star } \Big[| \max_l \left(\tilde X_l - \E [\tilde X_l] \right)|
\Big ]  &\le \E_{\bv^\star } \Big[ \sum_l | \left(\tilde X_l - \E [\tilde X_l] \right)|
              \Big ] \\
  &= \sum_l  \E_{\bv^\star } \Big[ | \left(\tilde X_l - \E [\tilde X_l] \right)|
      \Big ] \\
  &\le \sum_l  \frac{C}{\sqrt{p}} = \frac{K}{\epsilon \sqrt{p}}
\end{align}
Inserting this in eq.\eqref{eql21} we find that
\begin{equation}
-  f_p \le \hspace{-.1cm}   \max_{q_v=l\epsilon \atop |l|\le
  K/\epsilon} \Big[ \E_{\bv^\star } \tilde X_l \Big] +  \frac{\lambda}{2}
\epsilon^2 + \frac{K'}{\epsilon\sqrt{p}}
+ \frac{\log(K/\epsilon)}{\sqrt{p}},
\end{equation}
and therefore,
\begin{equation}
  f_p \ge  \min_{q_v=l\epsilon \atop |l|\le
  K/\epsilon} \Big[ -\E_{\bv^\star } \tilde X_l \Big] -  \frac{\lambda}{2}
\epsilon^2 - \frac{K'}{\epsilon\sqrt{p}}
- \frac{\log(K/\epsilon)}{\sqrt{p}},
\end{equation}
so that choosing finally $\epsilon=p^{-1/4}$ we reach
\begin{equation}
  f_p \ge  \min_{q_v=l\epsilon \atop |l|\le
    K/\epsilon} \Big[
f_p^0\left(1/\sqrt{\lambda q_v} \right) + \frac {\lambda
  q_v^2}{4}
  \Big]
+o\left(\frac{\log p}{\sqrt{p}}\right).
\end{equation}
\end{proofof}
\subsection{Main theorem}
We can now combine the upper and lower bound to reach the statement of
the main theorem, presented in the main as theorem \ref{theorem_uu}:
\begin{theorem}\label{theorem_uu_detail}[Mutual information and MMSE for the
  spiked Wigner model with structured spike]
  Assume the spikes $\bv^\star$ come from a sequence (of growing dimension) of
  generic structured prior $P_v$ on $\bbR^p$, such that
  \begin{enumerate}
    \item The elements of $\bv$ are bounded by a constant.
  \item The free energy $f^0_p(\lambda q_v) = - \frac{1}{N} \E_{\tilde{\bf
        y}} [ \log \mathcal{Z}_{0}(\tilde{\bf y},1/\sqrt{\lambda q_v})] $ has a limit
    $f_0(\lambda q_v)$ for all $q_v \in [0,\rho_v]$ as    $p \to \infty$.
   \item The free energy $f_p^0(\bv^\star )$ concentrates such that
  $\E\left[|f_p^0(1/\sqrt{\lambda  q_v},\bv^\star )-\E\left[f_p^0(1/\sqrt{\lambda  q_v},\bv^\star )\right]|\right]<C/\sqrt{p}$ for some constant $C$ for all $q_v \in [0,\rho_v]$ as   $p \to \infty$:
    \end{enumerate}
  then
  \begin{equation} \lim\limits_{p \to \infty} i_p \equiv \lim\limits_{p\to \infty}
    \frac {I(Y;\bv^\star)}p = \inf_{\rho_v \ge q_v \ge 0} {i}_{\rm
      RS}(\Delta,q_v), \end{equation} with \begin{equation} i_{\rm
      RS}(\Delta,q_v) =
    \frac{(\rho_v-q_v)^2}{4\Delta} + \lim\limits_{p \to \infty}
    \frac{I\left(\bv;\bv+\sqrt{\frac{\Delta}{q_v}} \bz \right)}p
    \, \end{equation}
with $z$ being a
  Gaussian vector with zero mean, unit diagonal variance and
  $\rho_v=\lim\limits_{p \to \infty} \E_{P_v}[\bv^\intercal\bv]/p$.
\end{theorem}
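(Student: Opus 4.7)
The plan is to follow the Guerra-Toninelli interpolation strategy and the Franz--Parisi potential method, adapting the arguments of~\cite{krzakala_mutual_2016, AlaouiKrzakala} to the non-separable prior setting. First I would reduce the theorem to a statement about free energies: using Bayes' rule and a short computation, the mutual information per variable equals
\begin{equation*}
\frac{I(Y;\bv^\star)}{p} = f_p + \frac{\lambda}{4p}\E[\bv^{\star\intercal}\bv^\star],
\end{equation*}
and similarly the scalar denoising term $I(\bv;\bv + \bxi/\sqrt{q_v\lambda})/p$ relates to the denoising free energy $f_p^0(1/\sqrt{\lambda q_v})$. The theorem then becomes the two-sided bound $\lim_p f_p = \min_{q_v}\bigl[f_0(1/\sqrt{\lambda q_v}) + \lambda q_v^2/4\bigr]$.

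For the upper bound, I would introduce the interpolating Hamiltonian $H_t$ that couples the Wigner observations (weight $t$) and a scalar Gaussian side channel with SNR $\lambda q_v$ (weight $1-t$). Differentiating $\varphi(t) = -\E\log\Z_t/p$ in $t$, Gaussian integration by parts on the $\xi_{ij}$ and $z_i$ produces overlap-squared terms; the Nishimori identity (which holds in the Bayes-optimal setting and is what makes $(\bv^{(1)},\bv^{(2)})$ and $(\bv,\bv^\star)$ interchangeable under $\E\langle\cdot\rangle$) collapses these into a single non-negative term $\E\langle(R_{1,*}-q_v)^2\rangle_t \geq 0$ plus $-\lambda q_v^2/4 + O(1/p)$. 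Dropping the non-negative term yields $\varphi'(t)\leq \lambda q_v^2/4 + K/p$, and integrating from $0$ to $1$ with $\varphi(0) = f_p^0(1/\sqrt{\lambda q_v})$ gives $f_p \leq \phi_{\rm RS}(q_v) + K/p$ for every $q_v$.

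For the lower bound I would work with the Franz--Parisi potential $\Phi^p_\epsilon(m,\bv^\star)$, the free energy restricted to configurations whose overlap with $\bv^\star$ lies in $[m,m+\epsilon)$. Running the same interpolation but now with the coupling coefficient $m$ in the side channel and keeping the non-negative overlap-squared term (which is no longer free to vanish, but whose restriction enforces $\E\langle(R_{1,*}-m)^2\rangle_t\leq\epsilon^2$), I obtain $\Phi^p_\epsilon(q_v,\bv^\star) \geq f_p^0(1/\sqrt{\lambda q_v},\bv^\star) + \lambda q_v^2/4 - \lambda\epsilon^2/2 + O(1/p)$. Then I would connect $\Phi^p_\epsilon$ back to $f_p$ via a discretization: grid the overlap values into $2K/\epsilon$ bins, use the Tsirelson--Ibragimov--Sudakov (Gaussian concentration) inequality to get a sub-Gaussian tail on each $X_l = (1/p)\log Z_l$, and control $\E\max_l(X_l - \E X_l)$ by $\log(K/\epsilon)/\sqrt{p}$. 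This yields $f_p \geq \E_{\bv^\star}\min_l \Phi^p_\epsilon(l\epsilon,\bv^\star) - \log(K/\epsilon)/\sqrt{p}$.

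The main obstacle, and the only place where the structured prior differs essentially from the separable case in~\cite{AlaouiKrzakala}, is the final step of pushing the expectation over $\bv^\star$ inside the minimum over $l$. In the separable case one uses explicit concentration of the denoising free energy by independence; here I would instead \emph{assume} the hypothesis of the theorem, namely $\E|f_p^0(1/\sqrt{\lambda q_v},\bv^\star) - \E f_p^0|\leq C/\sqrt{p}$ uniformly in $q_v\in[0,\rho_v]$, and use a union bound over the $O(1/\epsilon)$ grid points to exchange $\E_{\bv^\star}\min_l$ with $\min_l\E_{\bv^\star}$ up to an error $K'/(\epsilon\sqrt{p})$. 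Choosing $\epsilon = p^{-1/4}$ balances the errors and yields the matching lower bound. Verifying this concentration hypothesis for specific priors (separable, Gaussian with covariance $\Sigma$, single-layer and multi-layer generative models) is what allows the abstract theorem to be applied in each case of interest cited after the statement.
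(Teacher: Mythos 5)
Your proposal reproduces the paper's argument step by step: the same reduction of mutual information to free energies, the same Guerra interpolation for the upper bound (with Stein's lemma plus the Nishimori identity), the same Franz--Parisi potential and restricted-overlap interpolation for the lower bound, the same discretization of overlap values with the Tsirelson--Ibragimov--Sudakov concentration of each $X_l$, and crucially the same identification of the exchange $\E_{\bv^\star}\min_l \leftrightarrow \min_l \E_{\bv^\star}$ as the only place where the structured prior demands a new ingredient, handled via the assumed concentration of $f_p^0(\cdot,\bv^\star)$ and the choice $\epsilon = p^{-1/4}$. This is essentially the paper's own proof.
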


\subsection{Mean-squared errors}
It remains to deduce the optimal mean squared errors from the mutual
information. These are actually simple application of known results
which we reproduce here briefly for completeness. It is instructive to
distinguish between the reconstruction of the spike and the
reconstruction of the rank-one matrix.

Let us first focus on the denoising problem, where one aim to
reconstruct the rank-one matrix $X^\star =\bv^\star  {\bv^\star }^\intercal$. In this case the
mean squared error between an estimate $\hat X(Y)$ and the hidden one
$X^\star $ reads
\begin{equation}
 {\rm Matrix-mse}(\hat X,Y)=\frac 1{p^2} \|\bv^\star  {\bv^\star }^\intercal- \hat X(Y) \|_2^2
\end{equation}
It is well-known \cite{cover2012elements} that the mean
squared error is minimized by using the conditional expectation
of the signal given the observation, that is the posterior mean. The
minimal mean square error is thus given by
\begin{equation}
 {\rm Matrix-MMSE}(Y)=\frac 1{p^2} \|\bv^\star  {\bv^\star }^\intercal-\E[\bv \bv^\intercal |Y] \|_F^2
\end{equation}
We can now state the result:
\begin{theorem}\label{matrix-mmse}[Matrix MMSE, from \cite{deshpande2014information,deshpande2016asymptotic,barbier2016mutual}]
  The matrix-MMSE is asymptotically given by
  \begin{equation}
    \lim\limits_{p\to \infty} {\rm Matrix-MMSE}(Y) = \rho_v^2-(q_v^\star )^2
  \end{equation}
  where $q_v^\star$ is the optimizer of the function
  $i_{\rm RS}\(\Delta , q_v\)$.
\end{theorem}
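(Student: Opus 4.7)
The plan is to deduce Theorem~\ref{matrix-mmse} from Theorem~\ref{theorem_uu} via the I--MMSE identity of Guo, Shamai and Verdu~\cite{GuoShamaiVerdu_IMMSE}, following the strategy already used in the separable-prior case~\cite{deshpande2016asymptotic, barbier2016mutual, AlaouiKrzakala}.

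\textbf{Step 1: relate matrix-MMSE to a derivative of mutual information.} I would reparametrize the channel as $Y = \sqrt{\lambda/p}\,\bv^\star {\bv^\star}^\intercal + \xi$ with $\lambda \equiv 1/\Delta$, so that each off-diagonal entry $Y_{ij}$, $i<j$, is a scalar Gaussian channel at per-entry SNR $\lambda/p$ with input $v_i^\star v_j^\star$. Applying the scalar I--MMSE identity entry-wise and summing over pairs yields
\begin{equation*}
\frac{d}{d\lambda}\,I(Y;\bv^\star) \;=\; \frac{1}{2p}\sum_{i<j}\E\bigl[(v_i^\star v_j^\star - \E[v_iv_j\mid Y])^2\bigr],
\end{equation*}
which, up to an $O(1/p)$ diagonal correction that vanishes after normalization, equals $\tfrac{1}{4p}\,\E\|\bv^\star {\bv^\star}^\intercal - \E[\bv\bv^\intercal\mid Y]\|_F^2$. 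Dividing by $p$ and passing to the limit,
\begin{equation*}
\lim_{p\to\infty}\,{\rm Matrix-MMSE}(Y) \;=\; 4\,\frac{d}{d\lambda}\lim_{p\to\infty}\frac{I(Y;\bv^\star)}{p}.
\end{equation*}

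\textbf{Step 2: differentiate the replica-symmetric formula.} Theorem~\ref{theorem_uu} gives $\lim p^{-1} I(Y;\bv^\star) = i_{RS}(\Delta, q_v^\star(\Delta))$, where $q_v^\star$ achieves the infimum. By the envelope theorem, at any $\Delta$ where the minimizer is unique,
\begin{equation*}
\frac{d}{d\Delta}\,i_{RS}(\Delta,q_v^\star(\Delta)) \;=\; \partial_\Delta\,i_{RS}(\Delta,q_v)\big|_{q_v=q_v^\star}.
\end{equation*}
Only two terms of $i_{RS}$ depend explicitly on $\Delta$: the prefactor $(\rho_v-q_v)^2/(4\Delta)$ and the effective noise level $\sqrt{\Delta/q_v}$ of the scalar denoising problem $\bv+\sqrt{\Delta/q_v}\,\bxi$. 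Applying I--MMSE to the latter (whose SNR is $q_v/\Delta$) and identifying its asymptotic MMSE with $\rho_v - q_v^\star$ via eq.~\eqref{eq:MMSE}, a direct computation gives
\begin{equation*}
\partial_\Delta\,i_{RS}\big|_{q_v^\star} \;=\; -\frac{(\rho_v-q_v^\star)^2}{4\Delta^2} \;-\; \frac{q_v^\star(\rho_v-q_v^\star)}{2\Delta^2} \;=\; -\frac{\rho_v^2-(q_v^\star)^2}{4\Delta^2}.
\end{equation*}
Converting to $\lambda$ via $d\lambda/d\Delta = -\Delta^{-2}$ then yields $(d/d\lambda)\lim p^{-1}I = (\rho_v^2-(q_v^\star)^2)/4$, and combining with Step~1 produces $\lim_{p\to\infty}{\rm Matrix-MMSE}(Y) = \rho_v^2 - (q_v^\star)^2$, as claimed.

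\textbf{Main obstacle.} The delicate point is justifying the exchange of $p\to\infty$ with the derivative in $\lambda$. I would argue in the standard way: for every $p$, the map $\lambda \mapsto p^{-1}\,I(Y;\bv^\star)$ is concave (an immediate consequence of I--MMSE together with the monotonicity of MMSE in SNR), hence so is its pointwise limit $\phi(\lambda)$. A classical result on concave functions then ensures that the derivatives of the finite-$p$ mutual informations converge to $\phi'(\lambda)$ at every point where $\phi$ is differentiable, i.e.\ outside an at-most-countable subset of $(0,\infty)$. Outside this negligible set of potential ``first-order phase transitions'' --- which, as the analysis in Section~2.2 suggests, is actually empty in the settings we treat --- the minimizer $q_v^\star(\Delta)$ is unique and continuous, the envelope theorem applies, and the exchange of limit and derivative is rigorous. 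This is the standard regime in which identities of type~\eqref{eq:MMSE} are stated.
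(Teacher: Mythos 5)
Your proposal follows essentially the same route as the paper's proof: both combine the I--MMSE identity $\frac{d}{d\lambda}\frac{I}{p}=\frac{1}{4}{\rm Matrix\text{-}MMSE}$ (with $\lambda=\Delta^{-1}$) with the envelope theorem applied to the replica formula, and then evaluate $\partial_\lambda i_{\rm RS}$ at the optimizer using the stationarity condition. The only cosmetic difference is in the last algebraic step: the paper notes that $\mathcal I(\lambda,q_v)=\lim p^{-1}I(\bv;\bv+\bz/\sqrt{\lambda q_v})$ depends on $(\lambda,q_v)$ only through their product, so $\partial_{q_v}\mathcal I=\frac{\lambda}{q_v}\partial_\lambda\mathcal I$, then inserts the stationarity equation; you instead apply I--MMSE once more to the denoising channel at SNR $q_v/\Delta$ and substitute ${\rm mmse}_{\rm denoise}=\rho_v-q_v^\star$. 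These two computations are algebraically identical — the identity ${\rm mmse}_{\rm denoise}|_{q_v^\star}=\rho_v-q_v^\star$ is exactly what the stationarity condition $\partial_{q_v}i_{\rm RS}|_{q^\star}=0$ says, once I--MMSE for the denoising channel is invoked. One small imprecision: you attribute this last identity to eq.~\eqref{eq:MMSE}, which in the paper refers to the vector MMSE of the full spiked model; strictly speaking the quantity you need is the MMSE of the effective scalar denoising problem, and the cleaner (and self-contained) justification is to derive it from the stationarity condition you already invoked via the envelope theorem, rather than importing eq.~\eqref{eq:MMSE} which the paper proves by a separate Nishimori/overlap-concentration argument. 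Finally, your discussion of the interchange of $p\to\infty$ and $d/d\lambda$ via concavity of $\lambda\mapsto p^{-1}I$ (hence a.e.\ convergence of derivatives outside an at-most-countable set) is more explicit than the paper, which simply cites prior references; this is a worthwhile addition.
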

\begin{proof}
  This is a simple application of the I-MMSE theorem
  \cite{GuoShamaiVerdu_IMMSE}, that has been used in this context
  multiple-times (see e.g
  \cite{deshpande2014information,deshpande2016asymptotic,barbier2016mutual}). Indeed, the
  I-MMSE theorem states that, denoting $\lambda=\Delta^{-1}$:
  \begin{equation}
    \frac{d}{d\lambda} \frac Ip =\frac 14 {\rm Matrix-MMSE}(Y) 
  \end{equation}
We thus need to compute the derivative of the mutual information:
  \begin{align}
    \frac{d}{d\lambda} i_{\rm RS}(q_v^\star ,\Delta=1/\lambda) &=
    {\partial}_{\lambda} i_{\rm RS}(q_v^\star ,\Delta=1/\lambda)  +
    {\partial}_{q_v}i_{\rm RS}(q_v,\Delta=1/\lambda)|_{q_v^\star }{\partial}_{\lambda}(q_v^\star )\\
    &={\partial}_{\lambda} i_{\rm RS}(q_v^\star ,\Delta=1/\lambda) 
  \end{align}
  where we used
  $ {\partial}_{q_v} i_{\rm RS}(q,\Delta=1/\lambda)|_{q_v^\star }=0$.  Denoting
  then ${\cal I}(\lambda_v,q_v)=\lim\limits_{p \to \infty}
  \frac{I\left(\bv;\bv+\sqrt{\frac{1}{\lambda q_v}} \bz \right)}p$ we find  
  \begin{align}
    {\partial}_{\lambda} i_{\rm RS}(q_v^\star ,\Delta=1/\lambda) =
    \frac{(\rho_v-q_v)^2}4 + {\partial}_{\lambda} {\cal I}(\lambda_v,q_v) |_{q_v^\star }
  \end{align}
  We now use the fact that the derivate of the replica mutual
  information is zero at $q^\star $. This implies
   \begin{align}
 \frac {\lambda} 2 (\rho_v-q^\star _v) = \partial_{q_v}  {\cal I}(\lambda_v,q_v)|_{q_v^\star }
     = \frac{\lambda} {q_v^\star } {\partial}_{\lambda} {\cal I}(\lambda_v,q_v) |_{q_v^\star }
   \end{align}
   so that
   \begin{align}
     {\partial}_{\lambda} i_{\rm RS}(q_v^\star ,\Delta=1/\lambda)
     =\frac{(\rho_v-q_v)^2}4  +  \frac 1 2 (\rho_v-q^\star _v) q_v^\star  =
     \frac 14 \left(  \rho_v^2 - (q_v^\star )^2  \right)
   \end{align}
   which proves the claim.
 \end{proof}
 
We now consider the problem of reconstruction the spike itself. In this
case the mean square error reads
\begin{align}
  {\rm Vector-mse}(\hat X,Y) &=\frac 1{p} \|\bv- \hat\bv(Y) \|_2^2 \\
   {\rm Vector-MMSE}(Y) &=\frac 1{p} \|\bv- \E[\bv|Y] \|_2^2 \\
\end{align}
Taking the square and averaging, we thus find that the asymptotic
vector MMSE reads
\begin{align}
     {\rm Vector-MMSE}(Y) &=\rho_v + \frac{\|\E[\bv|Y] \|_2^2}p - 2
                            \frac{\E[ \bv^\intercal \bv^\star |Y] }p = \rho_v - \frac{\E[\bv^\intercal \bv^\star|Y] }p
                           	\label{appendix:proof_mmse}
\end{align}
where we have use the Nishimori identity. In order to show that the
MMSE is given by $\rho_v-q^\star _v$, we thus needs to show that $q_v^\star $ is
indeed equal to $ \frac{\E[\bv^\intercal \bv^\star|Y] }p$.

Fortunately, this is easy done by using Theorem $7$ in
\cite{alaoui2017finite}, which apply in our case since it only depends
on the free energy and the Franz-Parisi bound, that we have reproduced
in the coupled cases in the present section. This proposition states
the convergence in probability of the overlaps:
\begin{theorem}[Convergence in probability of the overlap, from
  \cite{alaoui2017finite}]
  Informally, for the Wigner-Spikel model:
  \begin{equation}
    \lim\limits_{p \to \infty} \E \langle \indi(|R_{1,*}| - q_v^\star | \ge \epsilon) \rangle \to 0
  \end{equation}
Note that the absolute value is necessary here, because if the prior
is symmetric, is it impossible to distinguish between $\bv^\star $ and
$-\bv^\star $. If the prior is not symmetric, then the absolute value can
be removed.
  \end{theorem}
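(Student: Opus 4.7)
The plan is to deduce the overlap concentration from the Franz-Parisi potential estimates just established, together with the uniqueness of the minimiser of $\phi_{\rm RS}$. Writing
\begin{equation*}
\mathbb{E}\left\langle \indi\{\,||R_{1,*}|-q_v^\star|\geq\epsilon\,\}\right\rangle = \mathbb{E}\left[\frac{1}{\mathcal{Z}_p(Y)}\int \indi\{\,||R_{1,*}|-q_v^\star|\geq\epsilon\,\}\,e^{-H(\bv)}\,dP_v(\bv)\right],
\end{equation*}
I partition the range of $R_{1,*}$ into $O(1/\epsilon)$ windows $[\ell\epsilon,(\ell+1)\epsilon)$, exactly as in the proof of Proposition~\ref{laplace_one}, and bound the contribution of each ``bad'' window $Z_\ell/\mathcal{Z}_p(Y)$ separately before summing.

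For a single window centred at $m = \ell\epsilon$, the Franz-Parisi lower bound proved above gives
\begin{equation*}
-\frac{1}{p}\mathbb{E}\log Z_\ell = \Phi^p_\epsilon(m,\bv^\star) \geq \phi_{\rm RS}(m) - \frac{\lambda\epsilon^2}{2} + o(1),
\end{equation*}
while the Guerra upper bound evaluated at the global minimiser gives $f_p \leq \phi_{\rm RS}(q_v^\star)+o(1)$. Subtracting,
\begin{equation*}
-\frac{1}{p}\mathbb{E}\log\frac{Z_\ell}{\mathcal{Z}_p(Y)} \geq \phi_{\rm RS}(m)-\phi_{\rm RS}(q_v^\star) - \frac{\lambda\epsilon^2}{2} + o(1).
\end{equation*}
A Gaussian-concentration argument (the Tsirelson-Ibragimov-Sudakov inequality, already used in Lemma~\ref{sub_gaussian_awkward}) promotes this expected-log bound to a high-probability bound on the ratio itself. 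Setting $\delta(\epsilon) \equiv \inf_{\,||q|-q_v^\star|\geq\epsilon}[\phi_{\rm RS}(q)-\phi_{\rm RS}(q_v^\star)]$ and summing over the $O(1/\epsilon)$ bad windows yields an overall bound of order $\epsilon^{-1}\exp(-p[\delta(\epsilon)-\lambda\epsilon^2/2]+o(p))$, which tends to $0$ once $\epsilon$ is chosen small enough that $\delta(\epsilon)>\lambda\epsilon^2/2$.

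The main obstacle is establishing the strict separation $\delta(\epsilon) > 0$, i.e.\ that $q_v^\star$ is the unique minimiser of $\phi_{\rm RS}$ up to the sign ambiguity forced on us by a symmetric prior (which is precisely why the absolute value appears in the statement). At isolated critical values of $\Delta$ where first-order phase transitions occur, $\phi_{\rm RS}$ can exhibit coexisting minima and $\delta(\epsilon)$ collapses. Following Alaoui-Krzakala, I would exploit the concavity of $\lambda \mapsto \lim_p f_p(\lambda)$ (a consequence of the I-MMSE identity together with monotonicity of the MMSE in the SNR $\lambda=\Delta^{-1}$), which forces differentiability of the limiting free energy off a countable set; at every such point the envelope theorem pins down $q_v^\star$ uniquely, so $\delta(\epsilon)>0$ by compactness of the admissible overlap range and continuity of $\phi_{\rm RS}$, and the exceptional set of $\lambda$'s is absorbed by a one-sided-limit argument using monotonicity of $q_v^\star(\lambda)$. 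Under the running assumption of the present paper---a unique stable fixed point of \eqref{main:SE_uu}, as observed in all phase diagrams of Section~\ref{sec:examples}---the uniqueness step is automatic and strict separation follows immediately.
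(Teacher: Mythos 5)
Your proposal reconstructs the Alaoui--Krzakala strategy that the paper invokes only by citation --- Guerra upper bound plus Franz--Parisi lower bound per overlap window, Gaussian concentration to convert the expected-log gap into an exponentially small Gibbs weight, and uniqueness of the minimiser (up to the $\pm\bv^\star$ symmetry) to separate the bad set. The overall strategy is the right one and is indeed what the paper has in mind when it says ``this is easy done by using Theorem~7 in \cite{alaoui2017finite}, which apply in our case since it only depends on the free energy and the Franz--Parisi bound.''

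There is, however, a quantitative gap. You take the Franz--Parisi window width equal to the theorem's threshold $\epsilon$ and then require $\delta(\epsilon) > \lambda\epsilon^2/2$, where $\delta(\epsilon) = \inf_{||q|-q_v^\star|\ge\epsilon}\bigl[\phi_{\rm RS}(q)-\phi_{\rm RS}(q_v^\star)\bigr]$. But both sides of that inequality are $\Theta(\epsilon^2)$ as $\epsilon\to 0$: near a smooth minimiser, $\delta(\epsilon)\approx\tfrac{1}{2}\phi_{\rm RS}''(q_v^\star)\,\epsilon^2$, while the Franz--Parisi loss is exactly $\tfrac{\lambda}{2}\epsilon^2$, so you are in effect demanding $\phi_{\rm RS}''(q_v^\star)>\lambda$. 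This is not automatic and can fail. For instance, for a separable standard Gaussian prior one computes $\phi_{\rm RS}(q)=\tfrac{1}{2}\log(1+\lambda q)-\tfrac{\lambda q}{2}+\tfrac{\lambda q^2}{4}$, whose non-trivial minimiser (for $\lambda>1$) is $q_v^\star=1-1/\lambda$ with curvature $\phi_{\rm RS}''(q_v^\star)=(\lambda-1)/2<\lambda$; consequently $\delta(\epsilon)<\lambda\epsilon^2/2$ for every small $\epsilon$, and the exponent in your final bound has the wrong sign. ``Choosing $\epsilon$ small enough'' therefore does not close the argument. The correct resolution, exactly as in Proposition~\ref{laplace_two} of this paper (and in \cite{AlaouiKrzakala,alaoui2017finite}), is to decouple the two scales: take the grid width $\epsilon_0=p^{-1/4}\to 0$, so the Franz--Parisi loss $\lambda\epsilon_0^2/2=O(p^{-1/2})$ vanishes, while the theorem's $\epsilon$ stays fixed and the excess potential over the bad windows converges to the strictly positive $\delta(\epsilon)$. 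A smaller, related imprecision is that you write $\Phi^p_\epsilon(m,\bv^\star)\ge\phi_{\rm RS}(m)-\lambda\epsilon^2/2+o(1)$, but the Franz--Parisi bound produces the $\bv^\star$-dependent quantity $f_p^0(\cdot,\bv^\star)$, and passing to the deterministic limit $\phi_{\rm RS}$ requires exactly the concentration hypothesis (item~3 of Theorem~\ref{theorem_uu_detail}) imposed on the generative prior --- which is also why the theorem statement is labelled ``informal.''
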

%

% \newpage
\section{Heuristic derivation of AMP from the two simples AMP algorithms}\label{appendix:AMP_derivation}
\label{appendix:amp_derivation}
In this section we present the derivation of the AMP algorithm described in sec.~\ref{main:sec_amp} of the main part. The idea is to simplify the Belief Propagation (BP) equations by expanding them in the large $n, p, k$ limits. Together with a Gaussian ansatz for the distribution of BP messages, this yields a set of $\mO\left(k^2\right)$ simplified equations known as \emph{relaxed BP} (rBP) equations. The last step to get the AMP algorithm is to remove the target dependency of the messages that further reduces the number of iterative equations to $\mO\left(k\right)$. 

Our derivation is closely related to the derivation of AMP for a series of statistical inference problems with factorised priors, see for example \cite{lesieur2017constrained} and references therein. In the interest of the reader, instead of repeating these steps in detail here we describe how two AMP algorithms derived for independent inference problems can be composed into a single AMP for a structured inference problem. In particular, this is illustrated for the case of interest in this manuscript, namely a spiked-matrix estimation with single-layer generative model prior. In this case, the underlying inference problems are the \emph{rank-one matrix  factorization} (MF) \cite{lesieur2017constrained} and the \emph{generalized linear model} (GLM)\cite{Barbier2017c}. We focus the derivation on the more general Wishart model ($\bu\bv^\intercal$) as the result for the Wigner model ($\bv\bv^\intercal$) flows directly from it.

\paragraph{Factor graph:}
In order to compose AMP algorithms, the idea is to replace the separable prior $P_v$ of the variable $\bv$ in the low-rank MF model by a non-separable prior coming from a GLM model with channel $P_{\rm out}$ (see definition in eq.~\eqref{sec:model:glm_2}), while keeping separable distributions $P_u$ and $P_z$ for the variables $\bu\in\mathbb{R}^{n}$ and $\bz\in\mathbb{R}^{k}$.\footnote{Note that differently from the replica calculation in sec.~\ref{sec:appendix:replicafreeen}, to write down the factor graph and derive the associated AMP algorithm we need to fix beforehand the structure of the prior distribution.} Hence to obtain the factor graph of the $\bu\bv^\intercal$ model, we connect the factor graphs of the MF (in green) and GLM (in red) models together by means of $P_{\rm out}$ (in black) (See Fig.~\ref{appendix:factor_graph_vu}).
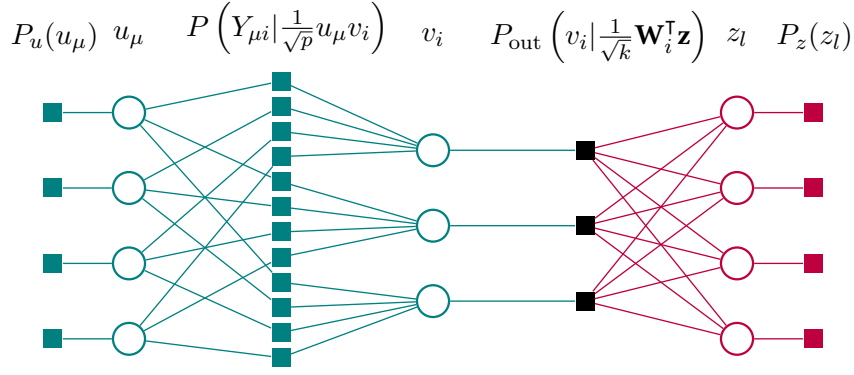
\begin{figure}[!h]
\centering
	\begin{tikzpicture}
	\tikzstyle{factor}=[rectangle,fill=black,minimum size=7pt,inner sep=1pt]
    \tikzstyle{factorMF}=[rectangle,fill=teal,minimum size=7pt,inner sep=1pt]
    \tikzstyle{factorGLM}=[rectangle,fill=purple,minimum size=7pt,inner sep=1pt]
    \tikzstyle{latentMF}=[circle,draw=teal,line width=0.3mm,minimum size=12pt]
    \tikzstyle{latentGLM}=[circle,draw=purple,line width=0.3mm,minimum size=12pt]
    \tikzstyle{latent}=[circle,draw=black,line width=0.3mm,minimum size=12pt]
    \tikzstyle{annot} = [text width=2.5cm, text centered]
    \tikzstyle{edgeMF} = [draw=teal,line width=0.5]
    \tikzstyle{edgeGLM} = [draw=purple,line width=0.5]
    \tikzstyle{edge} = [draw=black,line width=0.5]
	\def\Nv{4}
	\def\Nu{3}
	\def\Nf{12}
		\foreach \name / \y in {1,...,\Nv}
        	\node[latentMF] (V-\name) at (0,-\y) {};
        \foreach \name / \y in {1,...,\Nv}
        	\node[factorMF] (PV-\name) at (-1,-\y) {};
       	\foreach \name / \y in {1,...,\Nf}
        	\node[factorMF] (F-\name) at (2,-0.25-\y/3) {};
        \foreach \name / \y in {1,...,\Nu}
        	\node[latentMF] (U-\name) at (4,-0.5-\y) {};
         \foreach \name / \y in {1,...,\Nu}
        	\node[factor] (F2-\name) at (6,-0.5-\y) {};
        \foreach \name / \y in {1,...,\Nv}
        	\node[latentGLM] (W-\name) at (8,-\y) {};
        \foreach \name / \y in {1,...,\Nv}
        	\node[factorGLM] (PW-\name) at (9,-\y) {};

	\edge [-,edgeMF]{V-1}{F-1};
	\edge [-,edgeMF]{V-2}{F-2};
	\edge [-,edgeMF]{V-3}{F-3};
	\edge [-,edgeMF]{V-4}{F-4};
	\edge [-,edgeMF]{V-1}{F-5};
	\edge [-,edgeMF]{V-2}{F-6};
	\edge [-,edgeMF]{V-3}{F-7};
	\edge [-,edgeMF]{V-4}{F-8};
	\edge [-,edgeMF]{V-1}{F-9};
	\edge [-,edgeMF]{V-2}{F-10};
	\edge [-,edgeMF]{V-3}{F-11};
	\edge [-,edgeMF]{V-4}{F-12};

    \foreach \j in {1,...,4}
        	\edge[-,edgeMF]{F-\j}{U-1} ;
    \foreach \j in {5,...,8}
        	\edge[-,edgeMF]{F-\j}{U-2};
    \foreach \j in {9,...,12}
        	\edge[-,edgeMF]{F-\j}{U-3};
    \foreach \j in {1,...,\Nu}
        	\edge[-,edgeMF]{U-\j}{F2-\j};
    \foreach \i in {1,...,\Nu}
    	 \foreach \j in {1,...,\Nv}
        	\edge[-,edgeGLM]{F2-\i}{W-\j};
    \foreach \i in {1,...,\Nv}
        \edge[-,edgeMF]{PV-\i}{V-\i};

    \foreach \j in {1,...,\Nv}
    	\edge[-,edgeGLM]{PW-\j}{W-\j};

	\node[annot,above of=PV-1, node distance=1cm] {$P_u(u_\mu)$};
	\node[annot,above of=V-1, node distance=1cm] {$u_\mu $};
	\node[annot,above of=F-1, node distance=0.7cm] {$P\(Y_{\mu i }|\frac{1}{\sqrt{p}} u_\mu\ud{v}_{i}\)$};
	\node[annot,above of=U-1, node distance=1.5cm] {$\ud{v_{i}} $};
	\node[annot,above of=F2-1, node distance=1.5cm] {$P_{\rm out}\(\ud{v_{i}}|\frac{1}{\sqrt{k}}\textbf{W}_{i}^\intercal \bz\)$};
	\node[annot,above of=W-1, node distance=1cm] (hl) {$\ud{z}_l$};
	\node[annot,above of=PW-1, node distance=1cm] {$P_z(\ud{z}_l)$};
\end{tikzpicture}
\caption{Factor graph corresponding to a low-rank matrix factorization layer (green) with a prior coming from a GLM (red). We stress that in the classical low-rank layer, red part does not exist and black nodes $P_{\rm out}(v_i | . )$ are replaced by separable prior $P_v(v_i)$.}
\label{appendix:factor_graph_vu}
\end{figure}

%%%%%%%%%%%%%%%%%%%%%%%%%%%%%%%%%%%%
\subsection{Heuristic Derivation}
%%%%%%%%%%%%%%%%%%%%%%%%%%%%%%%%%%%%
We recall the AMP equations for the two modules and we will explain how to plug them together.
%%%%%%%%%%%%%%%%%%%%%%%%%%%%%%%%%%%%%%%%%%%%%%%%%%%%%%%%%%%%%%%%%%%%%%%%
\paragraph{AMP equations for the MF layer (variables $\bv$ and $\bu$):}
%%%%%%%%%%%%%%%%%%%%%%%%%%%%%%%%%%%%%%%%%%%%%%%%%%%%%%%%%%%%%%%%%%%%%%%%
Consider the low-rank matrix factorization model eq.~\eqref{app:Wishart} with separable priors $P_u$ and $P_v$ for the variables $\bu$ and $\bv$. The corresponding non-Bayes-optimal AMP equations, given in \cite{lesieur2017constrained}, read:
\begin{equation}
	\begin{cases}
	\hat{\bu}^{t+1} = \ud{f}_u(\bB_u^t , A_u^t)   \, , \spacecase
	\hat{\bc}_u^{t+1} = \partial_{B}\du{f}_u(\bB_u^t , A_u^t)  \, , \spacecase
	\hat{\bv}^{t+1} = \ud{f}_v(\bB_v^t , A_v^t)   \, , \spacecase
	\hat{\bc}_v^{t+1} = \partial_{B}\du{f}_v(\bB_v^t , A_v^t)  \, , \spacecase
	\end{cases}
\andcase
\begin{cases}
	\bB_v^{t} &= \frac{1}{\sqrt{p}} S^\intercal \hat{\bu}^t  - \frac{1}{p} (S^2)^\intercal  \hat{\bc}_u^t \rI_p \hat{\bv}^{t-1} \, , \spacecase
	A_v^t &= \[\frac{1}{p} (S^2)^\intercal (\hat{\bu}^t)^2 - \frac{1}{p} R^\intercal \(\hat{\bc}_u^t + (\hat{\bu}^t)^2\)\]\rI_p \, , \spacecase
	\bB_u^{t} &= \frac{1}{\sqrt{p}} S \hat{\bv}^t  - \frac{1}{p} S^2  \hat{\bc}_v^t \rI_n \hat{\bu}^{t-1} \, , \spacecase
	A_u^t &= \[ \frac{1}{p} S^2 (\hat{\bv}^t)^2 - \frac{1}{p} R \(\hat{\bc}_v^t + (\hat{\bv}^t)^2\)\] \rI_n \, , \spacecase
\end{cases}
\label{appendix:amp:mf}
\end{equation}
with matrices $S$ and $R$ defined as
\begin{align}
	S_{\mu i} = \frac{Y_{\mu i}}{\Delta} \andcase R_{\mu i } = - \frac{1}{\Delta} + S_{\mu i}^2 \, ,
\end{align}
and the operation $\left(\cdot\right)^2$ is taken component-wise. The update function $f_u$ is the mean of $Q_u$, defined in sec.~\ref{appendix:definition:updates_amp}, and $f_v$ is the mean of the distribution $Q_v(\ud{v}; \ud{B},\du{A}) \equiv \displaystyle \frac{1}{\Z_v (\ud{B},\du{A})} P_v(\ud{v}) e^{ -\frac{1}{2} \du{A}  \ud{v}^2  + \ud{B}\ud{v} }$.
%%%%%%%%%%%%%%%%%%%%%%%%%%%%%%%%%%%%%%%%%%%%%%%%%%%%%%%%%%%%%%
\paragraph{AMP equations for the GLM layer (variable $\bz$):}
%%%%%%%%%%%%%%%%%%%%%%%%%%%%%%%%%%%%%%%%%%%%%%%%%%%%%%%%%%%%%%
On the other hand, the non-Bayes-optimal AMP equations for the GLM model in eq.~\eqref{sec:model:glm}, given in \cite{Barbier2017c}, read
\begin{equation}
\begin{cases}
	\hat{\bz}^{t+1} = f_z({\boldsymbol \gamma}^t , \Lambda^t)   \spacecase
	\hat{\bc}_z^{t+1} = \partial_\gamma f_z({\boldsymbol \gamma}^t , \Lambda^t)\spacecase
	\bg^{t} = f_{\rm out}\(\bv^\star, {\boldsymbol \omega^{t}} ,V^{t} \)
	\end{cases}
	\andcase
	\begin{cases}
	\Lambda^t =  - \frac{1}{k} (W^2)^\intercal \partial_\omega\bg^t \rI_k  \andcase {\boldsymbol \gamma}^t =  \frac{1}{\sqrt{k}} W^\intercal \bg^{t} + \Lambda^t \hat{\bz}^t  \spacecase
	V^t =\frac{1}{k} (W^2)\hat{\bc}_z^{t} \rI_p  \andcase  {\boldsymbol \omega}^t =  \frac{1}{\sqrt{k}} W \hat{\bz}^{t} - V^t \bg^{t-1}
	\end{cases}
	\label{appendix:amp:glm}
\end{equation}
where $f_z$ is the mean of $Q_z$ defined in sec.~\ref{appendix:definition:updates_amp} and $f_{\rm out}$ is the mean of $V^{-1}(x-\omega)$ with respect to $Q_{\rm out}\(x;v^\star,\omega, V \) =\frac{P_{\rm out}\(v^\star | x \)}{\Z_{\rm out}(v^\star, \omega,V)}   e^{ -\frac{1}{2}V^{-1}  \(x - \omega\)^2  } $.

%%%%%%%%%%%%%%%%%%%%%%%%%%%%%
\paragraph{Plug and play:}
%%%%%%%%%%%%%%%%%%%%%%%%%%%%%
In principle composing the AMP equations for the inference problems above is complicated, and require analyzing the BP equations on the composed factor graph in Fig.~\ref{appendix:factor_graph_vu}. However, the upshot of this cumbersome computation is rather simple: the AMP equations for the composed model are equivalent to coupling the MF eqs.~\eqref{appendix:amp:mf} and the GLM eqs.~\eqref{appendix:amp:glm} by replacing $Q_v(v;B,A)$ and $Q_{\rm out}(x;\omega, V)$ with the following joint distribution:
\begin{align}
	Q_{\rm out}(v,x;B,A,\omega,\ud{V}) &\equiv  \displaystyle\frac{1}{\Z_{\rm out}(B,A,\omega,\ud{V})} e^{ -\frac{1}{2} A v^2 + B v } P_{\rm out}\(v | x \)  e^{ -\frac{1}{2}V^{-1}  \(x - \omega\)^2  } \,.
	\label{appendix:Qout_joint}
\end{align}
The associated update functions $f_v$, $f_{\rm out}$ are thus replaced by the mean of $v$ and $V^{-1}(x-\omega)$ with respect to this new joint distribution $Q_{\rm out}$. Replacing this distribution in both AMP algorithms eq.~\eqref{appendix:amp:mf}-\eqref{appendix:amp:glm}, we obtain the AMP algorithm of the structured model, summarized in the next section.

%%%%%%%%%%%%%%%%%%%%%%%%%%%%%%%%%%%%%%%%%%%%%%%%%%%%%%%%%%%%%%%%%%%%%%%
\subsection{Summary of the AMP algorithms - $\bv\bv^\intercal$ and $\bu\bv^\intercal$}
\label{appendix:amp:composed}
%%%%%%%%%%%%%%%%%%%%%%%%%%%%%%%%%%%%%%%%%%%%%%%%%%%%%%%%%%%%%%%%%%%%%%%
Replacing the separable distributions $Q_u$ and $Q_{\rm out}$ by the joint distribution eq.~\eqref{appendix:Qout_joint} and corresponding update functions as described above, we obtain the following AMP algorithm for the Wishart model:
%%%%%%%%%%%%%%%%%%%%%%%%%%%%%%%%%%%%%%%%%%%%%%%%%%%
\paragraph{Wishart model ($\bu\bv^\intercal$):}
%%%%%%%%%%%%%%%%%%%%%%%%%%%%%%%%%%%%%%%%%%%%%%%%%%%
\begin{algorithmic}
    \STATE {\bfseries Input:} vector $Y \in \bbR^{n \times  p}$ and matrix $W\in \bbR^{p \times k}$:
    \STATE \emph{Initialize to zero:} $( \bg, \hat{\bu}, \hat{\bv}, \bB_{u} , A_{u}, \bB_{v} , A_{v})^{t=0}$
	\STATE \emph{Initialize with:} $\hat{\bu}^{t=1}=\mN(0,\sigma^2)$, $\hat{\bv}^{t=1}=\mN(0,\sigma^2)$, $\hat{\bz}^{t=1}=\mN(0,\sigma^2)$,\\
	\hspace{2.2cm}$\hat{\bc}^{t=1}_u = \id_n$, $\hat{\bc}^{t=1}_v = \id_p$, $\hat{\bc}^{t=1}_z=\id_k$. $t=1$
    \REPEAT
    \STATE \emph{Spiked layer:}
    \STATE  $\bB_u^{t} = \frac{1}{\sqrt{p}} S \hat{\bv}^t  - \frac{1}{p} S^2  \hat{\bc}_v^t \rI_n \hat{\bu}^{t-1}$ \andcase $A_u^t = \[ \frac{1}{p} S^2 (\hat{\bv}^t)^2 - \frac{1}{p} R \(\hat{\bc}_v^t + (\hat{\bv}^t)^2\)\] \rI_n$ \spacecase
    \STATE  $\bB_v^{t} = \frac{1}{\sqrt{p}} S^\intercal \hat{\bu}^t  - \frac{1}{p} (S^2)^\intercal  \hat{\bc}_u^t \rI_p \hat{\bv}^{t-1}$ \andcase $A_v^t = \[\frac{1}{p} (S^2)^\intercal (\hat{\bu}^t)^2 - \frac{1}{p} R^\intercal \(\hat{\bc}_u^t + (\hat{\bu}^t)^2\)\]\rI_p$ \spacecase
    \STATE \emph{Generative layer:}
    \STATE $V^t =\frac{1}{k} (W^2)\hat{\bc}_z^{t} \rI_p  \andcase  {\boldsymbol \omega}^t =  \frac{1}{\sqrt{k}} W \hat{\bz}^{t} - V^t \bg^{t-1}$  \andcase ${\bg}^{t} = f_{\rm out}\({\bB}^{t}_v , A^{t}_v , {\boldsymbol \omega}^{t} , V^{t} \) $  \spacecase
    \STATE $\Lambda^t =  - \frac{1}{k} (W^2)^\intercal \partial_\omega\bg^t \rI_k  \andcase {\boldsymbol \gamma}^t =  \frac{1}{\sqrt{k}} W^\intercal \bg^{t} + \Lambda^t \hat{\bz}^t $\spacecase
    \STATE \emph{Update of the estimated marginals:}
    \STATE $\hat{\bu}^{t+1} = \ud{f}_u(\bB_u^t , A_u^t) \hhspace \andcase  \hhspace \hat{\bc}_u^{t+1} = \partial_{B}\du{f}_u(\bB_u^t , A_u^t) $\spacecase
    \STATE $\hat{\bv}^{t+1} = \ud{f}_v(\bB_v^t , A_v^t,{\boldsymbol \omega}^{t} , V^{t} ) \hhspace  \andcase \hhspace  \hat{\bc}_v^{t+1} = \partial_{B}\du{f}_v(\bB_v^t , A_v^t,{\boldsymbol \omega}^{t} , V^{t}) $\spacecase
    \STATE $\hat{\bz}^{t+1} = f_z({\boldsymbol \gamma}^t , \Lambda^t) \andcase \hat{\bc}_z^{t+1} = \partial_\gamma f_z({\boldsymbol \gamma}^t , \Lambda^t)$ \spacecase
    \STATE ${t} = {t} + 1$
    \UNTIL{Convergence}
    \STATE {\bfseries Output: $\hat{\bu}, \hat{\bv}, \hat{\bz}$}
\end{algorithmic}

%%%%%%%%%%%%%%%%%%%%%%%%%%%%%%%%%%%%%%%%%%%%%%%%%
\paragraph{Wigner model ($\bv\bv^\intercal$):} 
%%%%%%%%%%%%%%%%%%%%%%%%%%%%%%%%%%%%%%%%%%%%%%%%%
The AMP algorithm for the Wigner model can be easily obtained from the one above by simply taking taking $\bu^t=\bv^t$, and removing redundant equations:
\begin{algorithmic}
    \STATE {\bfseries Input:} vector $Y \in \bbR^{p \times  p}$ and matrix $W\in \bbR^{p \times k}$:
    \STATE \emph{Initialize to zero:} $( \bg, \hat{\bv}, \bB_{v} , A_{v})^{t=0}$
	\STATE \emph{Initialize with:} $\hat{\bv}^{t=1}=\mN(0,\sigma^2)$, $\hat{\bz}^{t=1}=\mN(0,\sigma^2)$,\\
	\hspace{2.2cm}$\hat{\bc}^{t=1}_v = \id_p$, $\hat{\bc}^{t=1}_z=\id_k$. $t=1$
    \REPEAT
    \STATE \emph{Spiked layer:}
    \STATE  $\bB_v^{t} = \frac{1}{\sqrt{p}} S \hat{\bv}^t  - \frac{1}{p} S^2  \hat{\bc}_v^t \rI_p \hat{\bv}^{t-1}$ \andcase $A_v^t = \[\frac{1}{p} S^2 (\hat{\bv}^t)^2 - \frac{1}{p} R \(\hat{\bc}_v^t + (\hat{\bv}^t)^2\)\]\rI_p$ \spacecase
    \STATE \emph{Generative layer:}
    \STATE $V^t =\frac{1}{k} (W^2)\hat{\bc}_z^{t} \rI_p  \andcase  {\boldsymbol \omega}^t =  \frac{1}{\sqrt{k}} W \hat{\bz}^{t} - V^t \bg^{t-1}$  \andcase ${\bg}^{t} = f_{\rm out}\({\bB}^{t}_v , A^{t}_v , {\boldsymbol \omega}^{t} , V^{t} \) $  \spacecase
    \STATE $\Lambda^t =  - \frac{1}{k} (W^2)^\intercal \partial_\omega\bg^t \rI_k  \andcase {\boldsymbol \gamma}^t =  \frac{1}{\sqrt{k}} W^\intercal \bg^{t} + \Lambda^t \hat{\bz}^t $\spacecase
    \STATE \emph{Update of the estimated marginals:}
    \STATE $\hat{\bv}^{t+1} = \ud{f}_v(\bB_v^t , A_v^t,{\boldsymbol \omega}^{t} , V^{t} ) \hhspace  \andcase \hhspace  \hat{\bc}_v^{t+1} = \partial_{B}\du{f}_v(\bB_v^t , A_v^t,{\boldsymbol \omega}^{t} , V^{t}) $\spacecase
    \STATE $\hat{\bz}^{t+1} = f_z({\boldsymbol \gamma}^t , \Lambda^t) \andcase \hat{\bc}_z^{t+1} = \partial_\gamma f_z({\boldsymbol \gamma}^t , \Lambda^t)$ \spacecase
    \STATE ${t} = {t} + 1$
    \UNTIL{Convergence}
    \STATE {\bfseries Output: $\hat{\bu}, \hat{\bv}, \hat{\bz}$}
\end{algorithmic}

\subsection{Simplified algorithms in the Bayes-optimal setting}
In the Bayes-optimal setting, it can be shown using Nishimori property (see sec.~\ref{appendix:proof_uu} or \cite{lesieur2017constrained}) that:
\begin{align}
	 \langle R \rangle = 0 \iff \langle S^2\rangle = \frac{1}{\Delta}, &&
	\langle \partial_\omega\bg^t \rangle = -\langle (\bg^t)^2 \rangle\,
\end{align}
where $\langle\cdot\rangle$ denotes the average with respect to the posterior distribution in eq.~\eqref{eq:app:defs:posterioruv}. 

Note that the AMP algorithm derived above is also valid for arbitrary weight matrix $W\in\mathbb{R}^{p\times k}$. In the case of interest where $W_{il}\underset{\iid}{\sim}\mathcal{N}(0,1)$,
we can further simplify $\EE\[W_{il}^2\]=1$. Together, these simplifications give:
%%%%%%%%%%%%%%%%%%%%%%%%%%%%%%%%%%%%%%%%%%%%%%%%%%%%%%%%%%%%%%%%%%%%
\paragraph{Wishart model ($\bv\bv^\intercal$) - Bayes-optimal} 
%%%%%%%%%%%%%%%%%%%%%%%%%%%%%%%%%%%%%%%%%%%%%%%%%%%%%%%%%%%%%%%%%%%%
%\begin{algorithm}
\begin{algorithmic}
    \STATE {\bfseries Input:} vector $Y \in \bbR^{n \times  p}$ and matrix $W\in \bbR^{p \times k}$:
    \STATE \emph{Initialize to zero:} $( \bg, \hat{\bu}, \hat{\bv}, \bB_{v} , A_{v}, \bB_{u} , A_{u})^{t=0}$
	\STATE \emph{Initialize with:} $\hat{\bu}^{t=1}=\mN(0,\sigma^2)$, $\hat{\bv}^{t=1}=\mN(0,\sigma^2)$, $\hat{\bz}^{t=1}=\mN(0,\sigma^2)$,\\
	\hspace{2.2cm}$\hat{\bc}^{t=1}_u = \id_n$, $\hat{\bc}^{t=1}_v = \id_p$, $\hat{\bc}^{t=1}_z=\id_k$. $t=1$
    \REPEAT
    \STATE \emph{Spiked layer:}
    \STATE  $\bB_u^{t} = \frac{1}{\Delta} \frac{Y}{\sqrt{p}} \hat{\bv}^t  - \frac{1}{\Delta}  \frac{\id_p^\intercal \hat{\bc}_v^t}{p} \rI_n \hat{\bu}^{t-1}$ \andcase $A_u^t =  \frac{1}{\Delta} \frac{\|\hat{\bv}^t\|_2^2}{p}  \rI_n$ \spacecase
    \STATE  $\bB_v^{t} = \frac{1}{\Delta} \frac{Y^\intercal}{\sqrt{p}}  \hat{\bu}^t  - \frac{1}{\Delta}\frac{\id_n^\intercal \hat{\bc}_u^t}{p}   \rI_p \hat{\bv}^{t-1}$ \andcase $A_v^t =  \frac{1}{\Delta} \frac{\|\hat{\bu}^t\|_2^2}{p}  \rI_p$ \spacecase
    \STATE \emph{Generative layer:}
    \STATE $V^t =\frac{1}{k} \(\id_k^\intercal\hat{\bc}_z^{t} \) \rI_p  \andcase  {\boldsymbol \omega}^t =  \frac{1}{\sqrt{k}} W \hat{\bz}^{t} - V^t \bg^{t-1}$  \andcase ${\bg}^{t} = f_{\rm out}\({\bB}^{t}_v , A^{t}_v , {\boldsymbol \omega}^{t} , V^{t} \) $  \spacecase
    \STATE $\Lambda^t =  \frac{1}{k} \|\bg^t\|_2^2 \rI_k  \andcase {\boldsymbol \gamma}^t =  \frac{1}{\sqrt{k}} W^\intercal \bg^{t} + \Lambda^t \hat{\bz}^t $\spacecase
    \STATE \emph{Update of the estimated marginals:}
    \STATE $\hat{\bu}^{t+1} = \ud{f}_u(\bB_u^t , A_u^t) \hhspace \andcase  \hhspace \hat{\bc}_u^{t+1} = \partial_{B}\du{f}_u(\bB_u^t , A_u^t) $\spacecase
    \STATE $\hat{\bv}^{t+1} = \ud{f}_v(\bB_v^t , A_v^t,{\boldsymbol \omega}^{t} , V^{t} ) \hhspace  \andcase \hhspace  \hat{\bc}_v^{t+1} = \partial_{B}\du{f}_v(\bB_v^t , A_v^t,{\boldsymbol \omega}^{t} , V^{t}) $\spacecase
    \STATE $\hat{\bz}^{t+1} = f_z({\boldsymbol \gamma}^t , \Lambda^t) \andcase \hat{\bc}_z^{t+1} = \partial_\gamma f_z({\boldsymbol \gamma}^t , \Lambda^t)$ \spacecase
    \STATE ${t} = {t} + 1$
    \UNTIL{Convergence}
    \STATE {\bfseries Output: $\hat{\bu}, \hat{\bv}, \hat{\bz}$}
\end{algorithmic}
%\caption{AMP algorithm for ${\bu}{\bv}^\intercal$ in the Bayes-optimal case.}
\label{appendix:AMP_uv_bayes}
%\end{algorithm}

%%%%%%%%%%%%%%%%%%%%%%%%%%%%%%%%%%%%%%%%%%%%%%%%%%%%%%%%%%%%%%%%%%%%%%%%
\paragraph{Wigner model ($\bv\bv^\intercal$) - Bayes-optimal} 
%%%%%%%%%%%%%%%%%%%%%%%%%%%%%%%%%%%%%%%%%%%%%%%%%%%%%%%%%%%%%%%%%%%%%%%%
%\begin{algorithm}[H]
\begin{algorithmic}
    \STATE {\bfseries Input:} vector $Y \in \bbR^{p \times  p}$ and matrix $W\in \bbR^{p \times k}$:
    \STATE \emph{Initialize to zero:} $( \bg, \hat{\bv}, \bB_{v} , A_{v})^{t=0}$
	\STATE \emph{Initialize with:} $\hat{\bv}^{t=1}=\mN(0,\sigma^2)$, $\hat{\bz}^{t=1}=\mN(0,\sigma^2)$,\\
	\hspace{2.2cm}$\hat{\bc}^{t=1}_v = \id_p$, $\hat{\bc}^{t=1}_z=\id_k$. $t=1$
    \REPEAT
    \STATE \emph{Spiked layer:}
    \STATE  $\bB_v^{t} = \frac{1}{\Delta} \frac{Y}{\sqrt{p}} \hat{\bv}^t  - \frac{1}{\Delta}    \frac{\id_p^\intercal \hat{\bc}_v^t}{p} \hat{\bv}^{t-1}$ \andcase $A_v^t = \frac{1}{\Delta} \frac{\|\hat {\bv}^t \|_2^2}{p} \rI_p$ \spacecase
    \STATE \emph{Generative layer:}
    \STATE $V^t =\frac{1}{k} \(\id_k^\intercal\hat{\bc}_z^{t}\) \rI_p  \andcase  {\boldsymbol \omega}^t =  \frac{1}{\sqrt{k}} W \hat{\bz}^{t} - V^t \bg^{t-1}$  \andcase ${\bg}^{t} = f_{\rm out}\({\bB}^{t}_v , A^{t}_v , {\boldsymbol \omega}^{t} , V^{t} \) $  \spacecase
    \STATE $\Lambda^t =  \frac{1}{k} \|\hat {\bg}^t \|_2^2 \rI_k  \andcase {\boldsymbol \gamma}^t =  \frac{1}{\sqrt{k}} W^\intercal \bg^{t} + \Lambda^t \hat{\bz}^t $\spacecase
    \STATE \emph{Update of the estimated marginals:}
    \STATE $\hat{\bv}^{t+1} = \ud{f}_v(\bB_v^t , A_v^t,{\boldsymbol \omega}^{t} , V^{t} ) \hhspace  \andcase \hhspace  \hat{\bc}_v^{t+1} = \partial_{B}\du{f}_v(\bB_v^t , A_v^t,{\boldsymbol \omega}^{t} , V^{t}) $\spacecase
    \STATE $\hat{\bz}^{t+1} = f_z({\boldsymbol \gamma}^t , \Lambda^t) \andcase \hat{\bc}_z^{t+1} = \partial_\gamma f_z({\boldsymbol \gamma}^t , \Lambda^t)$ \spacecase
    \STATE ${t} = {t} + 1$
    \UNTIL{Convergence}
    \STATE {\bfseries Output: $\hat{\bv}, \hat{\bz}$}
\end{algorithmic}
\label{appendix:AMP_vv_bayes}

\subsection{Derivation of the state evolution equations}
\label{appendix:derivation_SE_from_AMP}
The AMP algorithms above are valid for any large but finite sizes $k,n,p$. A central object of interest are the \emph{state evolution equations} (SE) that predict the algorithm's behaviour in the infinite size limit $k\to \infty$. We show in this section the derivation of these equations, directly from the algorithm to explicitly show that it provides the same set of equations as the saddle point equations obtained from the replica free entropy eq.~\eqref{eq:app:replicas:SE}. As before, we focus on the derivation of the more general Wishart model $\bu\bv^{\intercal}$, and quote the result for the symmetric $\bv\bv^{\intercal}$. We first derive the SE equations without loss of generality in the non-Bayes-optimal case, and we will state them in their simplified formulation in the Bayes-optimal case.

The idea is to compute the average distributions of the messages involved in the AMP algorithm updates in sec.~\eqref{appendix:amp:composed}, namely $\bB_u,A_u,\bB_v,A_v, \bomega,V, \bgamma$ and $V$. The usual derivation starts with rBP equations that we did not present here, see \cite{lesieur2017constrained}. However this equations are roughly equivalent to AMP messages if we remove the Onsager terms containing messages with delayed time indices $(\cdot)^{t-1}$. 

\paragraph{Definition of the overlap parameters:}
We first define the order parameters, called overlaps in the physics literature, that will measure the correlation of the Bayesian estimator with the ground truth signals
\begin{align}
\label{appendix:amp:overlap_definition}
	m_u^t &\equiv \EE_{\bu^\star} \lim_{n\to \infty} \frac{(\hat{\bu}^t)^\intercal \bu^\star}{n}\,,\hhspace q_u^t \equiv \EE_{\bu^\star} \lim_{n\to \infty} \frac{(\hat{\bu}^t)^\intercal \hat{\bu}^t}{n}\,, \hhspace \Sigma_u^t \equiv \EE_{\bu^\star}\lim_{n\to \infty} \frac{\id_n^\intercal  \hat{\bc}^{u,t}}{n} \nonumber\,, \spacecase
	m_v^t &\equiv \EE_{\bv^\star} \lim_{p\to \infty} \frac{(\hat{\bv}^t)^\intercal \bv^\star}{p}\,,\hhspace q_v^t \equiv \EE_{\bv^\star} \lim_{p\to \infty} \frac{(\hat{\bv}^t)^\intercal \hat{\bv}^t}{p}\,, \hhspace \Sigma_v^t \equiv \EE_{\bv^\star} \lim_{p\to \infty} \frac{\id_p^\intercal \hat{\bc}^{v,t}}{p}\,,  \spacecase 
	m_z^t &\equiv \EE_{\bz^\star} \lim_{k\to \infty} \frac{(\hat{\bz}^t)^\intercal \bz^\star}{k}\,,\hhspace q_z^t \equiv  \EE_{\bz^\star}\lim_{k\to \infty} \frac{(\hat{\bz}^t)^\intercal \hat{\bz}^t}{k}\,, \hhspace \Sigma_z^t \equiv\EE_{\bz^\star} \lim_{k\to \infty} \frac{\id_k^\intercal \hat{\bc}^{z,t}}{k}\,. \nonumber
\end{align}
As the algorithm performance, such as the mean squared error or the generalization error, can be computed directly from these overlap parameters, our goal is to derive the their average distribution in the infinite size limit.

\paragraph{Messages distributions:}
As stressed above, we compute the average distribution of the messages, taking the average over variables $W$, $\xi$, the planted solutions $\bv^\star, \bu^\star, \bz^\star$ and taking the limit $k \to \infty$. Note that we use the BP independence assumption over the messages and keep only dominant terms in the $1/p$ expansion.

\paragraph{$\bullet$ $B_u,A_u$:}
Starting with the AMP update equations for the $\bu\bv^{\intercal}$ model in sec.~\eqref{appendix:amp:composed}, we obtain
\begin{align}
	\EE\[ \bB_u^t \] &= \frac{1}{\sqrt{p}\Delta} \EE\[ Y \hat{\bv}^t \] = \frac{1}{\sqrt{p}\Delta} \EE\[ \(\frac{\bu^\star(\bv^\star)^\intercal}{\sqrt{p}}  + \sqrt{\Delta}\xi \) \hat{\bv}^t \] \underlim{p}{\infty}  \frac{m_v^t}{\Delta} \bu^\star \,, \spacecase
	\EE\[ \bB_u^t (\bB_u^t)^\intercal \] &= \frac{1}{p\Delta^2} \EE\[ Y \hat{\bv}^t(\hat{\bv}^t)^\intercal Y^\intercal \]  = \frac{1}{\Delta}  \frac{1}{p} \EE\[ \xi \hat{\bv}^t(\hat{\bv}^t)^\intercal \xi^\intercal \]  + o\(1/p\) \underlim{p}{\infty} \frac{q_v^t}{\Delta} \rI_n \,,\spacecase
	\EE \[ A_u^t \] &= \EE\[ \frac{1}{p} S^2 (\hat{\bv}^t)^2 - \frac{1}{p} R \(\hat{\bc}_v^t + (\hat{\bv}^t)^2\)\] \rI_n \underlim{p}{\infty} \frac{q_v^t}{\Delta} \rI_n  - \bar{R}\Sigma_v^t \rI_n \,.
\end{align}
where we defined, see \cite{lesieur2017constrained},
\begin{align*}
	\bar{R}&= \EE_{P\(Y|\omega\)}\[ \partial^2_{\omega}g+\left(\partial_{\omega}g\right)^2\right]_{\omega=0} = \int\prod\limits_{1\leq i \leq p, 1\leq \mu \leq n}\dd Y_{\mu i }~e^{g^\star(Y_{\mu i},0)}\left[\partial^2_{\omega}g+\left(\partial_{\omega}g\right)^2\right]_{Y,\omega=0}\,
\end{align*}
with $P\(Y|\omega\)$, $g$ defined in eq.~\eqref{appendix:replicas:P_Y} and $g^\star$ the ground truth channel function. Note that in the Bayes-optimal case, $g^\star = g$ that yields $\bar{R} = 0$ as mentioned in eq.~\eqref{appendix:replica:bayes_relations}.

\paragraph{$\bullet$ $B_v,A_v$:}
Similarly,
\begin{align}
	\EE\[ \bB_v^t \] &= \frac{1}{\sqrt{p}\Delta} \EE\[ Y^\intercal \hat{\bu}^t \] = \frac{1}{\sqrt{p}\Delta} \EE\[ \(\frac{\bu^\star(\bv^\star)^\intercal}{\sqrt{p}}  + \sqrt{\Delta}\xi \)^\intercal \hat{\bu}^t \] \underlim{p}{\infty}  \beta\frac{m_u^t}{\Delta} \bv^\star \,, \spacecase
	\EE\[ \bB_v^t (\bB_v^t)^\intercal \] &= \frac{1}{p\Delta^2} \EE\[ Y^\intercal \hat{\bu}^t(\hat{\bu}^t)^\intercal Y \] \underlim{p}{\infty} \beta \frac{q_u^t}{\Delta} \rI_p \,, \spacecase
	\EE \[ A_v^t \] &= \EE\[ \frac{1}{p} (S^\intercal)^2 (\hat{\bu}^t)^2 - \frac{1}{p} R \(\hat{\bc}_u^t + (\hat{\bu}^t)^2\)\] \rI_p \underlim{p}{\infty} \beta \( \frac{q_u^t}{\Delta} -\bar{R}\Sigma_u^t \)\rI_p \,.
\end{align}
\paragraph{$\bullet$ $\omega,V$:}
\begin{align}
	\EE\[\bomega^t\] &= \EE\[ \frac{1}{\sqrt{k}} W \hat{\bz}^{t} \] = \bzero_p \,, \spacecase
	\EE\[\bomega^t (\bomega^t)^\intercal \] &= \EE\[ \frac{1}{k} W \hat{\bz}^{t} (\hat{\bz}^{t})^\intercal W^\intercal \] \underlim{n}{\infty} q_z^t \rI_p \,, \spacecase
	\EE \[ V \] &=  \EE\[\frac{1}{k} (W^2)\hat{\bc}_z^{t} \rI_p\] \underlim{k}{\infty} \Sigma_z^t \rI_p \,.
\end{align}

\paragraph{Conclusion:}
Finally we conclude that to leading order:
\begin{align}
	\bB_u &\sim  \frac{m_v^t}{\Delta} \bu^\star + \sqrt{\frac{q_v^t}{\Delta}} \bxi_u \,, && A_u^t \sim \frac{q_v^t}{\Delta} \rI_n  - \bar{R}\Sigma_v^t \rI_n \,,\spacecase
	\bB_v &\sim  \beta\frac{m_u^t}{\Delta} \bv^\star + \sqrt{\beta\frac{q_u^t}{\Delta}} \bxi_v \,,&& A_v^t \sim  \beta \( \frac{q_u^t}{\Delta} -\bar{R}\Sigma_u^t \)\rI_p \,, \spacecase
	 \bomega &\sim  \sqrt{q_z^t} {\boldsymbol \eta} \,,&& V \sim \Sigma_z^t \rI_p \,,
	 \label{appendix:amp:se_from_amp:distributions}
\end{align}
with $\bxi_u \sim \mN\( \bzero_n, \rI_n \), \bxi_v\sim \mN\( \bzero_n, \rI_n \), {\boldsymbol \eta} \sim \mN\( \bzero_p, \rI_p  \)$.

\paragraph{State evolution - Non Bayes-optimal case}
With the averaged limiting distributions of all the messages, we can now compute the state evolution of the overlaps. Using the definition of the overlaps eq.~\eqref{appendix:amp:overlap_definition} and distributions in eq.~\eqref{appendix:amp:se_from_amp:distributions}, we obtain:
\paragraph{Variable $\bu$:}
\begin{align}
	q_u^{t+1} &\equiv  \EE_{\bu^\star} \lim_{n\to\infty} \frac{1}{n} (\hat{\bu}^{t+1})^\intercal	\hat{\bu}^{t+1}	= \EE_{\bu^\star} \lim_{n\to\infty} \frac{1}{n} \ud{f}_u(\bB_u^t , A_u^t)^\intercal	\ud{f}_u(\bB_u^t , A_u^t) \label{appendix:amp:se_from_amp:se_nb_qu} \\
	&= \EE_{u^\star,\xi}\[ \ud{f}_u\(\frac{m_v^t}{\Delta} u^\star + \sqrt{\frac{q_v^t}{\Delta}} \xi , \frac{q_v^t}{\Delta} - \bar{R}\Sigma_v^t \)^2 \] \nonumber  \spacecase
	m_u^{t+1} &\equiv \EE_{\bu^\star} \lim_{n\to\infty} \frac{1}{n} (\hat{\bu}^{t+1})^\intercal	\bu^{\star}	= \EE_{\bu^\star}\lim_{n\to\infty} \frac{1}{n} \ud{f}_u(\bB_u^t , A_u^t)^\intercal	 \bu^\star \label{appendix:amp:se_from_amp:se_nb_mu} \\
	&= \EE_{u^\star,\xi}\[ \ud{f}_u\(\frac{m_v^t}{\Delta} u^\star + \sqrt{\frac{q_v^t}{\Delta}} \xi , \frac{q_v^t}{\Delta} - \bar{R}\Sigma_v^t \) u^\star \] \nonumber\spacecase
	\Sigma_u^{t+1} &\equiv \EE_{\bu^\star} \lim_{n\to\infty} \frac{1}{n} 	\id_n^\intercal \hat{\bc}^{u,t+1} = \EE_{\bu^\star} \lim_{n\to\infty} \frac{1}{n} \partial_B \ud{f}_u(\bB_u^t , A_u^t)^\intercal	\id_n \label{appendix:amp:se_from_amp:se_nb_su} \\
	&=\EE_{u^\star,\xi}\[ \partial_B\ud{f}_u\(\frac{m_v^t}{\Delta} u^\star + \sqrt{\frac{q_v^t}{\Delta}} \xi , \frac{q_v^t}{\Delta} - \bar{R}\Sigma_v^t \)^2 \]\nonumber
\end{align}

\paragraph{Variable $\bv$:}
\begin{align}
	q_v^{t+1} &=  \EE_{\bv^\star} \lim_{p\to\infty} \frac{1}{p} (\hat{\bv}^{t+1})^\intercal	\hat{\bv}^{t+1}	= \EE_{\bv^\star} \lim_{p\to\infty} \frac{1}{p} \ud{f}_v(\bB_v^t , A_v^t,{\boldsymbol \omega}^{t} , V^{t} )^\intercal	f_v(\bB_v^t , A_v^t,{\boldsymbol \omega}^{t} , V^{t} )  \label{appendix:amp:se_from_amp:se_nb_qv}\\
	&= \EE_{v^\star,\xi,\eta}\[ \ud{f}_v\(\frac{\beta m_u^t}{\Delta} v^\star  + \sqrt{\frac{ \beta q_u^t}{\Delta}} \xi , \beta \( \frac{q_u^t}{\Delta} -\bar{R}\Sigma_u^t \), \sqrt{q_z^t}\eta , \Sigma_z^t\)^2 \] \nonumber \spacecase
	m_v^{t+1} &=  \EE_{\bv^\star} \lim_{p\to\infty} \frac{1}{p} (\hat{\bv}^{t+1})^\intercal	\hat{\bv}^{t+1}	= \EE_{\bv^\star} \lim_{p\to\infty} \frac{1}{p} \ud{f}_v(\bB_v^t , A_v^t,{\boldsymbol \omega}^{t} , V^{t} )^\intercal	\bv^\star  \label{appendix:amp:se_from_amp:se_nb_mv}\\
	&= \EE_{v^\star,\xi,\eta}\[ \ud{f}_v\(\frac{\beta m_u^t}{\Delta} v^\star  + \sqrt{\frac{ \beta q_u^t}{\Delta}} \xi , \beta \( \frac{q_u^t}{\Delta} -\bar{R}\Sigma_u^t \), \sqrt{q_z^t}\eta , \Sigma_z^t \) v^\star \] \nonumber\spacecase
	\Sigma_v^{t+1} &=  \EE_{\bv^\star} \lim_{p\to\infty} \frac{1}{p} \id_p^\intercal \hat{\bc}^{z,t+1}	= \EE_{\bv^\star} \lim_{p\to\infty} \frac{1}{p} \partial_\gamma f_v(\bB_v^t , A_v^t,{\boldsymbol \omega}^{t} , V^{t} )^\intercal	\id_p  \label{appendix:amp:se_from_amp:se_nb_sv}\\
	&= \EE_{v^\star,\xi,\eta}\[ \partial_\gamma f_v\(\frac{\beta m_u^t}{\Delta} v^\star  + \sqrt{\frac{ \beta q_u^t}{\Delta}} \xi , \beta \( \frac{q_u^t}{\Delta} -\bar{R}\Sigma_u^t \), \sqrt{q_z^t}\eta , \Sigma_z^t \)^2 \] \nonumber
\end{align}

\paragraph{Variable $\hat{\bz}$:}
We define intermediate \emph{hat} overlap parameters\footnote{These variables appear as well in the replica computation through Dirac delta Fourier representation.} that will be useful in the following. The hat overlaps don't have as much physical meaning as the standard overlaps that quantify the reconstruction performances. Though we might notice anyway that all the overlap parameters are built similarly as function of the update functions $f_u, f_v, f_z$ and $f_{\rm out}$ (see eq.~\eqref{eq:app:defs:fvfout}
\begin{align}
	\hat{q}_z^t &\equiv \alpha \EE_{v^\star,\xi,\eta}\[ f_{\rm out}\(\frac{\beta m_u^t}{\Delta} v^\star  + \sqrt{\frac{ \beta q_u^t}{\Delta}} \xi , \beta \( \frac{q_u^t}{\Delta} -\bar{R}\Sigma_u^t \), \sqrt{q_z^t}\eta , \Sigma_z^t \)^2 \] \label{appendix:amp:se_from_amp:se_nb_qhat} \spacecase
	\hat{m}_z^t & \equiv \alpha \EE_{v^\star,\xi,\eta}\[ \partial_x f_{\rm out}\(\frac{\beta m_u^t}{\Delta} v^\star  + \sqrt{\frac{ \beta q_u^t}{\Delta}} \xi , \beta \( \frac{q_u^t}{\Delta} -\bar{R}\Sigma_u^t \), \sqrt{q_z^t}\eta , \Sigma_z^t \) v^\star \] \label{appendix:amp:se_from_amp:se_nb_mhat} \spacecase
	\hat{\Sigma}_z^t & \equiv \alpha \EE_{v^\star,\xi,\eta}\[ - \partial_\omega f_{\rm out}\(\frac{\beta m_u^t}{\Delta} v^\star  + \sqrt{\frac{ \beta q_u^t}{\Delta}} \xi , \beta \( \frac{q_u^t}{\Delta} -\bar{R}\Sigma_u^t \), \sqrt{q_z^t}\eta , \Sigma_z^t \)  \]\label{appendix:amp:se_from_amp:se_nb_shat}
\end{align}

\paragraph{Variable $\bz$:}
Averages are explicitly expressed as a function of the \emph{hat} overlaps introduced just above: 
\begin{align}
	\EE\[\bgamma^t\] & \sim \hat{m}_z^t  \bz^\star \spacecase
	\EE\[\bgamma^t (\bgamma^t)^\intercal \] &\sim   \hat{q}_z^t \rI_k \spacecase
	\EE\[ \Lambda^t \] &\sim  \hat{\Sigma}_z^t \rI_k
\end{align}
And we conclude that at the leading order:
\begin{align}
	\bgamma^t &\sim \hat{m}_z^t  \bz^\star + \sqrt{\hat{q}_z^t} \bxi \,, && \Lambda^t \sim \hat{\Sigma}_z^t \rI_k\,.
\end{align}
with $\bxi \sim \mN\( \bzero_k, \rI_k \)$.

From these later equations, we obtain
\begin{align}
	q_z^{t+1} &=  \EE_{\bz^\star} \lim_{k\to\infty} \frac{1}{k} (\hat{\bz}^{t+1})^\intercal	\hat{\bz}^{t+1}	= \EE_{\bz^\star} \lim_{k\to\infty} \frac{1}{k} \ud{f}_z(\bgamma^t, \Lambda^t)^\intercal\ud{f}_z(\bgamma^t, \Lambda^t) \label{appendix:amp:se_from_amp:se_nb_qz}\\
	&= \EE_{z^\star,\xi}\[ \ud{f}_z\( \hat{m}_z^t  z^\star + \sqrt{\hat{q}_z^t} \xi , \hat{\Sigma}_z^t \)^2 \] \nonumber\spacecase
	m_z^{t+1} &=  \EE_{\bz^\star} \lim_{k\to\infty} \frac{1}{k} (\hat{\bz}^{t+1})^\intercal	\bz^\star	= \EE_{\bz^\star} \lim_{k\to\infty} \frac{1}{k} \ud{f}_z(\bgamma^t, \Lambda^t)^\intercal\bz^\star
	\label{appendix:amp:se_from_amp:se_nb_mz}\\
	&= \EE_{z^\star,\xi}\[ \ud{f}_z\( \hat{m}_z^t  z^\star + \sqrt{\hat{q}_z^t} \xi , \hat{\Sigma}_z^t \) z^\star \] \nonumber\spacecase
	\Sigma_z^{t+1} &=  \EE_{\bz^\star} \lim_{k\to\infty} \frac{1}{k} \id_k^\intercal \hat{\bc}^{z,t+1}		= \EE_{\bz^\star} \lim_{k\to\infty} \frac{1}{k} \id_k^\intercal \partial_\gamma\ud{f}_z(\bgamma^t, \Lambda^t) \label{appendix:amp:se_from_amp:se_nb_sz} \\
	&= \EE_{z^\star,\xi}\[ \partial_\gamma \ud{f}_z\( \hat{m}_z^t  z^\star + \sqrt{\hat{q}_z^t} \xi , \hat{\Sigma}_z^t \) \] \nonumber
\end{align}

Equations (\ref{appendix:amp:se_from_amp:se_nb_qu}-
	\ref{appendix:amp:se_from_amp:se_nb_shat}, 
	\ref{appendix:amp:se_from_amp:se_nb_qz}-\ref{appendix:amp:se_from_amp:se_nb_sz}) constitute the closed set of AMP \emph{state evolution equations} in the non-Bayes-optimal case.
	
\paragraph{State evolution - Bayes-optimal case}
In the Bayes-optimal case, the Nishimori property (See sec.~\ref{app:Wigner}) implies $m_u=q_u$, $m_z=q_z$, $m_v=q_v$ and $\hat{m}_z=\hat{q}_z$, $\bar{R}=0$ and we also note that $\Sigma_z^t = \rho_z - q_z^t$, $\hat{\Sigma}_z^t=\hat{q}_z^t$. The set of twelve state evolution equations reduce to only four, and they can be rewritten using a change of variable.
\paragraph{Wishart model}
\begin{align}
	\label{appendix:se_from_amp_bayes_uv}
	q_u^{t+1} &= \displaystyle \EE_{\xi}\[ \mZ_u \(\sqrt{\frac{q_v^t}{\Delta}} \xi , \frac{q_v^t}{\Delta} \) \ud{f}_u\(\sqrt{\frac{q_v^t}{\Delta}} \xi , \frac{q_v^t}{\Delta} \)^2 \]	 \\
	&= \displaystyle  2\partial_{q_{v}}\Psi_{u}\left(q_{v}^t\right) \,, \nonumber \spacecase
	q_z^{t+1} &= \displaystyle \EE_{\xi}\[ \mZ_z\(\sqrt{\hat{q}_z^t} \xi , \hat{q}_z^t \) \ud{f}_z\(\sqrt{\hat{q}_z} \xi , \hat{q}_z^t \)^2\] \\
	&= \displaystyle  2\partial_{\hat{q}_{z}}\Psi_{z}\left(\hat{q}_{z}^t\right) \,, \nonumber\spacecase
	\hat{q}_z^t &= \alpha \EE_{\xi,\eta}\[ \mZ_{\rm out}\( \sqrt{\frac{ \beta q_u^t}{\Delta}} \xi , \beta\frac{q_u^t}{\Delta} , \sqrt{q_z^t}\eta , \rho_z - q_z^t \) f_{\rm out}\( \sqrt{\frac{ \beta q_u^t}{\Delta}} \xi , \beta\frac{q_u^t}{\Delta} , \sqrt{q_z^t}\eta , \rho_z - q_z^t \)^2 \]  \\
	&= \displaystyle 2\alpha\partial_{q_{z}}\Psi_{\out}\left(\frac{\beta q_{u}^t}{\Delta},q_{z}^t\right) \,, \nonumber \spacecase
	q_v^{t+1} &= \EE_{\xi,\eta}\[ \mZ_{\rm out} \(\sqrt{\frac{ \beta q_u^t}{\Delta}} \xi , \beta\frac{q_u^t}{\Delta}, \sqrt{q_z^t}\eta , \rho_z - q_z^t\) \ud{f}_v\(\sqrt{\frac{ \beta q_u^t}{\Delta}} \xi , \beta\frac{q_u^t}{\Delta}, \sqrt{q_z^t}\eta , \rho_z - q_z^t\)^2 \]  \\
	&= \displaystyle 2 \partial_{q_{u}}\Psi_{\out}\left(\frac{\beta q_{u}^t}{\Delta},q_{z}^t\right)\,. \nonumber
\end{align}

\paragraph{Wigner model}
The state evolution for the Wigner model ($\bv\bv^{\intercal}$) is a particular case of the state evolution of the Wishart model discussed above, obtained by simply restricting $q_u = q_v$ and $\beta=1$. It finally reads
\begin{align}
\label{appendix:se_from_amp_bayes_vv}
	q_z^{t+1} &= \displaystyle \EE_{\xi}\[ \mZ_z\(\sqrt{\hat{q}_z^t} \xi , \hat{q}_z^t \) \ud{f}_z\(\sqrt{\hat{q}_z^t} \xi , \hat{q}_z^t \)^2\] \\
	&= \displaystyle  2\partial_{\hat{q}_{z}}\Psi_{z}\left(\hat{q}_{z}^t\right) \,, \nonumber \spacecase
	\hat{q}_z^t &= \displaystyle\alpha \EE_{\xi,\eta}\[ \mZ_{\rm out}\( \sqrt{\frac{q_v^t}{\Delta}} \xi , \frac{q_v^t}{\Delta} , \sqrt{q_z^t}\eta , \rho_z - q_z^t \) f_{\rm out}\( \sqrt{\frac{q_v^t}{\Delta}} \xi , \frac{q_v^t}{\Delta} , \sqrt{q_z^t}\eta , \rho_z - q_z^t \)^2 \]  \\
	&= \displaystyle 2\alpha\partial_{q_{z}}\Psi_{\out}\left(\frac{q_{v}^t}{\Delta},q_{z}^t\right) \,, \nonumber \spacecase
	q_v^{t+1} &= \displaystyle\EE_{\xi,\eta}\[ \mZ_{\rm out} \(\sqrt{\frac{q_v^t}{\Delta}} \xi , \frac{q_v^t}{\Delta}, \sqrt{q_z^t}\eta , \rho_z - q_z^t\) \ud{f}_v\(\sqrt{\frac{q_v^t}{\Delta}} \xi ,\frac{q_v^t}{\Delta}, \sqrt{q_z^t}\eta , \rho_z - q_z^t\)^2 \] \\
	&= \displaystyle 2\partial_{q_{v}}\Psi_{\out}\left(\frac{q_{v}^t}{\Delta},q_{z}^t\right) \,,\nonumber
\end{align}
\noindent which are precisely the state evolution equations derived from the replica trick in sec.~\ref{sec:appendix:replicafreeen}, eq.~\eqref{eq:app:replicas:SE}, except that the algorithm provides the correct time indices in which the iterations should be taken.

% \newpage

\section{Heuristic derivation of LAMP }
\label{appendix:derivation_lAMP}
\label{appendix:lamp_derivation}
We present in this section the derivation of the linearized-AMP (LAMP) spectral algorithm. This method, pioneered in \cite{krzakala_spectral_2013}, relies on the existence of the non-informative fixed point of the SE equations eq.~\eqref{main:SE_uu}, $q_v=0$ that translates to $\hat{\bv}=0$ in the AMP equations. Linearizing the Bayes-optimal AMP equations for the Wigner and Wishart models sec.~\ref{appendix:AMP_vv_bayes} around this trivial fixed point will lead to the LAMP spectral method. First, we detail the calculation for the simpler Wigner model, and then generalize the spectral algorithm in the Wishart case. Finally, we derive the state evolution associated to spectral method in the case of linear activation function.
\subsection{Wigner model: $\bv\bv^\intercal$}
We start deriving the existence conditions of the trivial non-informative fixed point in the Wigner model eq.~\eqref{app:Wigner}, that refers to eq.~\eqref{trivial_fixed} in the main part. These conditions can be alternatively derived from the SE eqs.~\eqref{eq:condition:one}-\eqref{eq:condition:two} - see sec.~\ref{sec:app:stability}.
\paragraph{Existence of the uninformative fixed point:}
Consider $\hat{\bv}=\bzero$. We obtain easily from the algorithm~\ref{appendix:AMP_vv_bayes}, $(\bB_v, A_v) = (\bzero,0)$, leading to $\bg = f_{\rm out}\(\bzero, 0, \bomega, V \) = \EE_{Q_{\rm out}^0}[ (\bx - {\bomega ) } ] = \bzero$, and $(\bgamma, \Lambda) =  (\bzero,0)$. Finally, inserting these values in the update functions $f_{\rm \out}$ and $f_v$, defined in eq.~\eqref{eq:app:defs:fvfout}, we obtain sufficient conditions to get the trivial fixed point in the Wigner model:
\begin{align}
		(\hat{\bv}, \hat{\bz}) = ( \bzero, \bzero ) \hhspace \textrm{ if } \mC \equiv \left\{ \hhspace
			\EE_{Q_{\rm out}^0} \[ v \] = 0 \andcase
			\EE_{P_v} \[ z \] = 0 \right \} \,. 
\end{align}
\paragraph{Linearization:}
To lighten notation, we denote with $|_{\star}$ quantities that are evaluated at $(\bB_v, A_v, \bomega, V, \bgamma, \Lambda) = (\bzero, 0, \bzero, \rho_z\rI_p, \bzero ,0)$, and we linearize the equations of the AMP algorithm~\ref{appendix:AMP_vv_bayes} around the fixed point
\begin{align}
	&(\hat{\bv}, \hat{\bc}_v)= (\bzero, \rho_v \rI_p  ),\hhspace (\hat{\bz}, \hat{\bc}_z) = ( \bzero, \rho_z \rI_k), \hhspace  \spacecase
	&(\bB_v,A_v) = (\bzero ,0),\hhspace (\bgamma,\Lambda) = (\bzero, 0), \hhspace (\bomega, V, \bg) = (\bzero, \rho_z \rI_p, \bzero )\,.
\end{align}
In a scalar formulation, the linearization yields
\begin{align}
		\delta \hat{\bv}_i^{t+1} &= \partial_B f_v|_{\star} \delta \bB^{v,t}_{i}   + \partial_A f_v|_{\star}  \delta A^{v,t}_{i}  + \partial_\omega f_v|_{\star} \delta \bomega^{t}_{i}   + \partial_V f_v|_{\star}  \delta V^{t}_{i} \,, \label{eq:lamp_uu:v} \spacecase
		\delta \hat{c}_{i}^{v,t+1} &= \partial_{B,B}^2 f_v|_{\star}  \delta \bB^{v,t}_{i}  + \partial_{A,B}^2 f_v|_{\star}  \delta A^{v,t}_{i}  + \partial_{\omega,B}^2 f_v|_{\star}  \delta \bomega^{t}_{i}   + \partial_{V,B}^2 f_v|_{\star}  \delta V^{t}_{i} \,, \label{eq:lamp_uu:cv}
			\spacecase
		\delta \hat{\bz}_{l}^{t+1} &= \partial_\bgamma f_z|_{\star}   \delta \bgamma_l^t  + \partial_\Lambda f_z|_{\star} \delta \Lambda_l^t \,, \label{eq:lamp_uu:z}
			\spacecase
		\delta \hat{c}_{i}^{z,t+1} &=  \partial_{\bgamma,\bgamma}^2 f_z|_{\star}  \delta \bgamma_l^t  + \partial_{\Lambda,\bgamma}^2 f_z|_{\star}  \delta \Lambda_l^t \,, \label{eq:lamp_uu:cz}\spacecase
		\delta \bg_i^{t} &=  \partial_B f_{\rm out}|_{\star}   \delta \bB^{v,t}_{i}  + \partial_A f_{\rm out}|_{\star} \delta A^{v,t}_{i}  + \partial_\omega f_{\rm out}|_{\star} \delta \bomega^{t}_{i}  + \partial_V f_{\rm out}|_{\star}  \delta V^{t}_{i} \, , \label{eq:lamp_uu:g}
\end{align}
with
\begin{align}
		\delta \bB^{v,t}_{i} &= \frac{1}{\Delta}	 \sum_{j=1}^p \frac{Y_{j i}}{\sqrt{p}}  \delta \hat{\bv}_{j}^{t}  -  \frac{1}{\Delta} \( \displaystyle \sum_{j=1}^p  \frac{\hat{c}_{j}^{v,t}|_\star}{p}  \) \delta\hat{\bv}_{i}^{t-1} -  \frac{1}{\Delta} \( \displaystyle \sum_{j=1}^p  \frac{ \delta\hat{c}_{j}^{v,t}}{p}  \) \hat{\bv}_{i}^{t-1}|_\star \,, \label{eq:lamp_uu:B}
			\spacecase
		\delta A^{v,t} &=  \frac{2}{\Delta} \displaystyle \sum_{j=1}^p \frac{\hat{\bv}_{j}^t|_\star \delta \hat{\bv}_{j}^t}{p} = 0 \,, \label{eq:lamp_uu:A}
			\spacecase
		\delta \bomega_{i}^t &= \frac{1}{\sqrt{k}}\displaystyle \sum_{l=1}^k W_{i l} \delta \hat{\bz}_{l}^{t} - \delta V_{i}^t g_i^{t-1}|_\star - V_{i}^t|_\star \delta \bg_i^{t-1} \,, \label{eq:lamp_uu:omega}
			\spacecase
		\delta V^t & = \frac{1}{k} \displaystyle   \sum_{l=1}^k \delta \hat{c}^{z,t}_{l}\,, \label{eq:lamp_uu:V}
			\spacecase
		\delta \Lambda^t &=  \frac{2}{k} \sum_{i=1}^p \bg_i^{t}|_\star  \delta \bg_i^{t} = 0 \,, \label{eq:lamp_uu:Lambda}
			\spacecase 
		\delta \bgamma_{l}^t &= \frac{1}{\sqrt{k}} \sum_{i=1}^p  W_{i l} \delta \bg_i^{t} + \delta\Lambda_{l}^t \hat{\bz}_{l}^t|_\star + \Lambda_{l}^t|_\star \delta\hat{\bz}_{l}^t \label{eq:lamp_uu:gamma} \,.
\end{align}
These equations can be simplified and closed over three vectorial variables $\hat{\bv} \in \bbR^{p}$, $\hat{\bz} \in \bbR^{k}$ and $\bomega \in \bbR^{p}$, where we used the existence condition $\mC$ that leads to $\partial_\omega f_{\rm out}|_\star = \partial_V f_{\rm out}|_\star = 0$. Finally, injecting eq.~\eqref{eq:lamp_uu:B}-\eqref{eq:lamp_uu:gamma} in \eqref{eq:lamp_uu:v}, \eqref{eq:lamp_uu:z}, \eqref{eq:lamp_uu:omega} we obtain
\begin{align}
	\delta \hat{\bv}^{t+1} =& \displaystyle  \frac{1}{\Delta} \partial_B f_v|_{\star} \( \displaystyle 	 \frac{Y}{\sqrt{p}}  \delta \hat{\bv}^{t}  - \partial_B f_{v}|_\star \rI_{p} \delta\hat{\bv}^{t-1} \)  + \partial_\omega f_v|_{\star} \rI_p \delta \bomega^{t} + \frac{\partial_V f_v|_{\star} \partial_{\bgamma,\bgamma}^2 f_z|_{\star}}{\partial_\bgamma f_z|_{\star}}  \frac{\id_p \id_k^\intercal }{k} \delta \hat{\bz}^{t} \label{eq:lamp_uu:v_final} \,, \spacecase
	\delta \hat{\bz}^{t+1} =& \displaystyle \frac{1}{\Delta} \partial_\bgamma f_z|_{\star}\partial_B f_{\rm out}|_{\star}   \frac{W^\intercal}{\sqrt{k}}     \[	  \frac{Y}{\sqrt{p}}  \delta \hat{\bv}^{t}  - \partial_B f_{v}|_\star \rI_p \delta \hat{\bv}^{t-1} \]  \,,
	\label{eq:lamp_uu:z_final}
		\spacecase
	\delta \bomega^{t+1} =& \displaystyle  \frac{1}{\Delta}  \(\displaystyle \partial_\bgamma f_z|_{\star}\partial_B f_{\rm out}|_{\star}   \frac{WW^\intercal }{k}     \[	  \frac{Y}{\sqrt{p}}  \delta \hat{\bv}^{t}  - \partial_B f_v|_\star \rI_p \delta\hat{\bv}^{t-1} \] \) - \label{eq:lamp_uu:omega_final} \\
	& \hspace{1cm} \partial_\bgamma f_z|_\star \partial_B f_{\rm out}|_{\star} \[ \displaystyle	 \frac{Y }{\sqrt{p}}  \delta \hat{\bv}^{t-1}  -   \partial_B f_v|_\star \rI_p \delta\hat{\bv}^{t-2} \] \nonumber\,.
\end{align}

\paragraph{Conclusion:}
This set of equations involves partial derivatives of $f_v$, $f_z$ and $f_{\rm out}$ that can be simplified using the condition $\mC$, and rewritten as moments of the distributions $P_z$ and $Q_{\rm out}$:
\begin{align}
\begin{cases}
	\partial_\bgamma f_z|_\star  &= \EE_{P_{z}} \[z^2\] = \rho_z \,,\spacecase
	\partial_{\bgamma,\bgamma}^2 f_z|_\star &= -2\partial_\Lambda f_z|_\star  =  \EE_{P_{z}} \[z^3\] \,, \spacecase
	\partial_\omega f_{\rm out}|_\star &= \partial_V f_{\rm out}|_\star = 0 \,,\spacecase
\end{cases}
\andcase
\begin{cases}
	\partial_B f_v|_\star &= \EE_{Q_{\rm out}^0} [v^2] = \rho_v \,, \spacecase
	\partial_\omega f_v|_\star &= \partial_B f_{\rm out}|_\star = \rho_z^{-1} \EE_{Q_{\rm out}^0}[v x] \,, \spacecase
	\partial_V f_v|_\star &= \frac{1}{2} \rho_z^{-2} \EE_{Q_{\rm out}^0}[v x^2]  \,.\spacecase
\end{cases}
\end{align}

Injecting eq.~\eqref{eq:lamp_uu:omega_final}-\eqref{eq:lamp_uu:z_final} in \eqref{eq:lamp_uu:v_final}, we finally obtain a closed equation over $\hat{\bv}$. Forgetting time indices, it leads the definition of the LAMP operator as
\begin{align}
	\Gamma^{vv}_p &=  \frac{1}{\Delta}  \( (a-b) \rI_p  +  b  \frac{W W^\intercal}{k} + c  \frac{\id_p \id_k^\intercal}{k} \frac{W^\intercal}{\sqrt{k} }  \) \times \( \frac{Y}{\sqrt{p}}  -  a \rI_p  \) \,, 
\end{align}
with
\begin{align}
		a \equiv \EE_{Q_{\rm out}^0} [v^2] = \rho_v\,,\hhspace 
		b \equiv \rho_z^{-1} \EE_{Q_{\rm out}^0} [v x]^2 \,,\hhspace
		c \equiv \frac{1}{2} \rho_z^{-3} \EE_{P_{z}} \[z^3\]  \EE_{Q_{\rm out}^0}[v x^2] \EE_{Q_{\rm out}^0}[v x]\,.
\end{align}
Note that in most of the cases we studied, the parameter $c$, taking into account the skewness of the variable $\bz$, is zero, simplifying considerably the structured matrix as discussed in the main part. Taking the leading eigenvector of the operator $\Gamma_p^{vv}$ leads to the LAMP algorithm.

\paragraph{Applications: }
Consider a gaussian $P_z = \mN_z\(0,1\)$ or binary $P_z = \frac{1}{2} \(\delta(z-1) + \delta(z+1)\)$ prior, for which $\rho_z=1$. Taking a noiseless channel $P_{\rm out} (v|x) = \delta\(v - \varphi(x)\)$, condition $\mC$ is verified, and we obtain simple and explicit coefficients
\begin{itemize}
	\item Linear activation ($\varphi(x) = x$):  $\(a,b,c\) = \(1,1,0\)$\,.
	\item Sign activation  ($\varphi(x) = \textrm{sgn}(x)$): $\(a,b,c\) = \(1,2/\pi,0\)$\,.
\end{itemize}  

\subsection{Wishart model: $\bu \bv^\intercal$}
In this section, we generalize the previous derivation of the LAMP spectral algorithm for the Wishart model in eq.~\eqref{app:Wishart}. The strategy is exactly the same: it follows from linearizing the AMP algorithm~\ref{appendix:AMP_uv_bayes} in its Bayes-optimal version around the trivial fixed point. Except that in this case there are more equations to deal with.
\paragraph{Existence of the uninformative fixed point:}
Consider $\(\hat{\bu},\hat{\bv}\)=\(\bzero,\bzero\)$. Injecting this condition in the algorithm's equations, we simply obtain $\(\bB_u, A_u, \bB_v, A_v \) = \(\bzero, 0, \bzero, 0\)$. However, we now need $\EE_{P_u} \[u\]=0$ for this to be consistent with the update equation for $\hat{\bu}^{t+1}$. Besides, this also implies $\bg = f_{\rm out}\(\bzero, 0, \bomega, V \) = \EE_{Q_{\rm out}^0}[ (\bx - {\bomega ) } ] = \bzero$, and $(\bgamma, \Lambda) =  (\bzero,0)$. Finally, putting all conditions together in the update equations involving $f_v$, $f_u$ and $f_{\rm out}$, defined in eq.~\eqref{eq:app:defs:fvfout}, we arrive at the following sufficient conditions for the existence of the uninformative fixed point in the Wishart model: 
\begin{align}
		(\hat{\bv}, \hat{\bz}) = ( \bzero, \bzero ) \hhspace \textrm{ if } \mC \equiv \left\{ \hhspace
			\EE_{Q_{\rm out}^0} \[ v \] = 0\,, \hhspace
			\EE_{P_v} \[ z \] = 0 \andcase \EE_{P_u} \[u\]=0 \right \} \,. 
\end{align}

\paragraph{Linearization:}
As previously, to lighten notations we denote $|_{\star}$ quantities that are evaluated at 
\begin{align*}
	(\bB_u, A_u, \bB_v, A_v, \bomega, V, \bgamma, \Lambda) = (\bzero, 0, \bzero, 0, \bzero, \rho_z\rI_p, \bzero ,0).
\end{align*}
We linearize AMP equations algorithm~\ref{appendix:AMP_uv_bayes} around the fixed point
\begin{align}
	&(\hat{\bu}, \hat{\bc}_u)= (\bzero, \rho_u \rI_n  ),\hhspace (\hat{\bv}, \hat{\bc}_v)= (\bzero, \rho_v \rI_p  ),\hhspace (\hat{\bz}, \hat{\bc}_z) = ( \bzero, \rho_z \rI_k), \hhspace  \spacecase
	&(\bB_u,A_u) = (\bzero ,0),\hhspace (\bB_v,A_v) = (\bzero ,0),\hhspace (\bgamma,\Lambda) = (\bzero, 0), \hhspace (\bomega, V, \bg) = (\bzero, \rho_z \rI_p, \bzero )\,.
\end{align}
In a scalar formulation, linearization yields four additional equations over the $\bu$ variable:
\begin{align}
		\delta \hat{\bu}_\mu^{t+1} &= \partial_B f_u|_{\star} \delta \bB^{u,t}_{\mu}   + \partial_A f_u|_{\star}  \delta A^{u,t}_{\mu}  \,, \label{eq:lamp_uv:u} \spacecase
		\delta \hat{c}_{\mu}^{u,t+1} &= \partial_{B,B}^2 f_u|_{\star}  \delta \bB^{u,t}_{\mu}  + \partial_{A,B}^2 f_u|_{\star}  \delta A^{u,t}_{\mu} \,, \label{eq:lamp_uv:cu}\spacecase
		\delta \hat{\bv}_i^{t+1} &= \partial_B f_v|_{\star} \delta \bB^{v,t}_{i}   + \partial_A f_v|_{\star}  \delta A^{v,t}_{i}  + \partial_\omega f_v|_{\star} \delta \bomega^{t}_{i}   + \partial_V f_v|_{\star}  \delta V^{t}_{i} \,, \label{eq:lamp_uv:v} \spacecase
		\delta \hat{c}_{i}^{v,t+1} &= \partial_{B,B}^2 f_v|_{\star}  \delta \bB^{v,t}_{i}  + \partial_{A,B}^2 f_v|_{\star}  \delta A^{v,t}_{i}  + \partial_{\omega,B}^2 f_v|_{\star}  \delta \bomega^{t}_{i}   + \partial_{V,B}^2 f_v|_{\star}  \delta V^{t}_{i} \,, \label{eq:lamp_uv:cv}
			\spacecase
		\delta \hat{\bz}_{l}^{t+1} &= \partial_\bgamma f_z|_{\star}   \delta \bgamma_l^t  + \partial_\Lambda f_z|_{\star} \delta \Lambda_l^t \,, \label{eq:lamp_uv:z}
			\spacecase
		\delta \hat{c}_{i}^{z,t+1} &=  \partial_{\bgamma,\bgamma}^2 f_z|_{\star}  \delta \bgamma_l^t  + \partial_{\Lambda,\bgamma}^2 f_z|_{\star}  \delta \Lambda_l^t \,, \label{eq:lamp_uv:cz}\spacecase
		\delta \bg_i^{t} &=  \partial_B f_{\rm out}|_{\star}   \delta \bB^{v,t}_{i}  + \partial_A f_{\rm out}|_{\star} \delta A^{v,t}_{i}  + \partial_\omega f_{\rm out}|_{\star} \delta \bomega^{t}_{i}  + \partial_V f_{\rm out}|_{\star}  \delta V^{t}_{i} \, , \label{eq:lamp_uv:g}
\end{align}
and
\begin{align}
		\delta \bB^{u,t}_{\mu} &= \frac{1}{\Delta}	 \sum_{i=1}^p \frac{Y_{\mu i}}{\sqrt{p}}  \delta \hat{\bv}_{i}^{t}  -  \frac{1}{\Delta} \( \displaystyle \sum_{i=1}^p  \frac{\hat{c}_{i}^{v,t}|_\star}{p}  \) \delta\hat{\bu}_{\mu}^{t-1} -  \frac{1}{\Delta} \( \displaystyle \sum_{i=1}^p  \frac{ \delta\hat{c}_{i}^{v,t}}{p}  \) \hat{\bu}_{\mu}^{t-1}|_\star \,, \label{eq:lamp_uv:Bu}
			\spacecase
		\delta A^{u,t} &=  \frac{2}{\Delta} \displaystyle \sum_{i=1}^p \frac{\hat{\bv}_{i}^t|_\star \delta \hat{\bv}_{i}^t}{p} = 0 \,, \label{eq:lamp_uv:Au}
			\spacecase
		\delta \bB^{v,t}_{i} &= \frac{1}{\Delta}	 \sum_{\mu=1}^n \frac{Y_{\mu i}}{\sqrt{p}}  \delta \hat{\bu}_{\mu}^{t}  -  \frac{1}{\Delta} \( \displaystyle \sum_{\mu=1}^n  \frac{\hat{c}_{\mu}^{u,t}|_\star}{p}  \) \delta\hat{\bv}_{i}^{t-1} -  \frac{1}{\Delta} \( \displaystyle \sum_{\mu=1}^n  \frac{ \delta\hat{c}_{\mu}^{u,t}}{p}  \) \hat{\bv}_{i}^{t-1}|_\star \,, \label{eq:lamp_uv:Bv}
			\spacecase
		\delta A^{v,t} &=  \frac{2}{\Delta} \displaystyle \sum_{\mu=1}^n \frac{\hat{\bu}_{\mu}^t|_\star \delta \hat{\bu}_{\mu}^t}{p} = 0 \,, \label{eq:lamp_uv:Av}
			\spacecase
		\delta \bomega_{i}^t &= \frac{1}{\sqrt{k}}\displaystyle \sum_{l=1}^k W_{i l} \delta \hat{\bz}_{l}^{t} - \delta V_{i}^t \bg_i^{t-1}|_\star - V_{i}^t|_\star \delta \bg_i^{t-1} \,, \label{eq:lamp_uv:omega}
			\spacecase
		\delta V^t & = \frac{1}{k} \displaystyle   \sum_{l=1}^k \delta \hat{c}^{z,t}_{l}\,, \label{eq:lamp_uv:V}
			\spacecase
		\delta \Lambda^t &=  \frac{2}{k} \sum_{i=1}^p g_i^{t}|_\star  \delta \bg_i^{t} = 0 \,, \label{eq:lamp_uv:Lambda}
			\spacecase 
		\delta \bgamma_{l}^t &= \frac{1}{\sqrt{k}} \sum_{i=1}^p  W_{i l} \delta \bg_i^{t} + \delta\Lambda_{l}^t \hat{\bz}_{l}^t|_\star + \Lambda_{l}^t|_\star \delta\hat{\bz}_{l}^t \label{eq:lamp_uv:gamma} \,.
\end{align}
These equations can be closed over four vectorial variables $\hat{\bu} \in \bbR^n, \hat{\bv} \in \bbR^{p}$, $\hat{\bz} \in \bbR^{k}$ and $\bomega \in \bbR^{p}$, where we used the existence condition $\mC$ leading again to $\partial_\omega f_{\rm out}|_\star = \partial_V f_{\rm out}|_\star = 0$. Finally, injecting eq.~\eqref{eq:lamp_uv:Bu}-\eqref{eq:lamp_uv:gamma} in \eqref{eq:lamp_uv:u}, \eqref{eq:lamp_uv:v}, \eqref{eq:lamp_uv:z}, \eqref{eq:lamp_uv:omega} we obtain:
\begin{align}
	\delta \hat{\bu}^{t+1} =& \displaystyle  \frac{1}{\Delta} \partial_B f_u|_{\star} \( \displaystyle 	 \frac{Y}{\sqrt{p}}  \delta \hat{\bv}^{t}  - \partial_B f_{v}|_\star \rI_{n} \delta\hat{\bu}^{t-1} \)\,,  \label{eq:lamp_uv:u_final} \spacecase
	\delta \hat{\bv}^{t+1} =& \displaystyle  \frac{1}{\Delta} \partial_B f_v|_{\star} \( \displaystyle 	 \frac{Y^\intercal}{\sqrt{p}}  \delta \hat{\bu}^{t}  - \beta \partial_B f_{u}|_\star \rI_{p} \delta\hat{\bv}^{t-1} \)  + \partial_\omega f_v|_{\star} \rI_p \delta \bomega^{t} + \frac{\partial_V f_v|_{\star} \partial_{\bgamma,\bgamma}^2 f_z|_{\star}}{\partial_\bgamma f_z|_{\star}}  \frac{\id_p \id_k^\intercal }{k} \delta \hat{\bz}^{t} \,,\label{eq:lamp_uv:v_final} \spacecase
	\delta \hat{\bz}^{t+1} =& \displaystyle \frac{1}{\Delta} \partial_\bgamma f_z|_{\star}\partial_B f_{\rm out}|_{\star}   \frac{W^\intercal}{\sqrt{k}}     \[	  \frac{Y^\intercal}{\sqrt{p}}  \delta \hat{\bu}^{t}  - \beta\partial_B f_{u}|_\star \rI_p \delta \hat{\bv}^{t-1} \]  \,,
	\label{eq:lamp_uv:z_final}
		\spacecase
	\delta \bomega^{t+1} =& \displaystyle  \frac{1}{\Delta}  \(\displaystyle \partial_\bgamma f_z|_{\star}\partial_B f_{\rm out}|_{\star}   \frac{W^\intercal}{\sqrt{k}}     \[	  \frac{Y^\intercal}{\sqrt{p}}  \delta \hat{\bu}^{t}  - \beta\partial_B f_{u}|_\star \rI_p \delta \hat{\bv}^{t-1} \] \) - \label{eq:lamp_uv:omega_final} \\
	& \hspace{4cm} \partial_\bgamma f_z|_\star \partial_B f_{\rm out}|_{\star} \[	  \frac{Y^\intercal}{\sqrt{p}}  \delta \hat{\bu}^{t-1}  - \beta\partial_B f_{u}|_\star \rI_p \delta \hat{\bv}^{t-2} \] \,.\nonumber
\end{align}

\paragraph{Conclusion:}
This set of equations involves partial derivatives of $f_u$, $f_v$, $f_{\rm out}$ that can be simplified using the condition $\mC$ and rewritten as moments of distributions $P_u, P_z$ and $Q_{\rm out}$:
\begin{align}
\begin{cases}
	\partial_\bgamma f_z|_\star  &= \EE_{P_{z}} \[z^2\] = \rho_z \,,\spacecase
	\partial_{\bgamma,\bgamma}^2 f_z|_\star &= -2\partial_\Lambda f_z|_\star  =  \EE_{P_{z}} \[z^3\] \,, \spacecase
	\partial_\omega f_{\rm out}|_\star &= \partial_V f_{\rm out}|_\star = 0 \,,\spacecase
	\partial_B f_u|_\star &= \EE_{P_u} [u^2] = \rho_u \,, \spacecase
\end{cases}
\andcase
\begin{cases}
	\partial_B f_v|_\star &= \EE_{Q_{\rm out}^0} [v^2] = \rho_v \,, \spacecase
	\partial_\omega f_v|_\star &= \partial_B f_{\rm out}|_\star = \rho_z^{-1} \EE_{Q_{\rm out}^0}[v x] \,, \spacecase
	\partial_V f_v|_\star &= \frac{1}{2} \rho_z^{-2} \EE_{Q_{\rm out}^0}[v x^2]  \,.\spacecase
\end{cases}
\end{align}

Injecting eq.~\eqref{eq:lamp_uv:omega_final},\eqref{eq:lamp_uv:z_final}-\eqref{eq:lamp_uv:u_final} in \eqref{eq:lamp_uv:v_final}, we finally obtain a self-consistent equation over $\hat{\bv}$ that, forgetting time indices, leads to define the following LAMP structured matrix, from which we need to compute the top eigenvector: 
\begin{align}
	\Gamma^{uv}_p =  \frac{1}{\Delta}  \( (a-b) \rI_p  +  b  \frac{W W^\intercal}{k} + c  \frac{\id_p \id_k^\intercal}{k} \frac{W^\intercal}{\sqrt{k} }  \) \times \( \frac{1}{a + \frac{\Delta}{d}} \frac{Y^\intercal Y}{p}  -  d \beta \rI_p  \) \,,
\end{align}
with
\begin{align}
	a \equiv \rho_v\,,  \hhspace c \equiv \frac{1}{2} \rho_z^{-3} \EE_{P_{z}} \[z^3\]  \EE_{Q_{\rm out}^0}[v x^2] \EE_{Q_{\rm out}^0}[v x], \hhspace
	b \equiv  \rho_z^{-1} \EE_{Q_{\rm out}^0} [v x]^2 \,, \hhspace d \equiv \rho_u \,.
\end{align}

\paragraph{Applications:}
Consider a gaussian $P_z, P_u = \mN\(0,1\)$ or binary $P_z, P_u = \frac{1}{2} \(\delta(z-1) + \delta(z+1)\)$ prior, for which $\rho_z=\rho_u=1$. For a noiseless channel $P_{\rm out} (v|x) = \delta\(v - \varphi(x)\)$, we obtain the following simple and explicit coefficients:
\begin{itemize}
	\item Linear, $\varphi(x) = x$:  $\(a,b,c, d\) = \(1,1,0,1\)$
	\item Sign, $\varphi(x) = \textrm{sgn}(x)$: $\(a,b,c,d\) = \(1,2/\pi,0,1\)$
\end{itemize}

\subsection{State evolution equations of LAMP and PCA - linear case}
\label{appendix:subsec:SE_lamp}
In this section we describe how to obtain the limiting behaviour of the LAMP spectral method for the Wigner model in the large size limit $p \to \infty$. We will show that in the linear case, mean squared errors of LAMP and PCA are directly obtained from the optimal overlap performed by AMP or its state evolution. Recall that the numerical simulations of LAMP and PCA are compared with their state evolution in Fig.~\ref{main:bbp_lamp_amp_se}, with green and red lines respectively. 

\paragraph{LAMP:}
For the noiseless linear channel $P_{\rm out} (v|x) = \delta\(v - x\)$, the set of eqs.~(\ref{eq:lamp_uu:v_final}-\ref{eq:lamp_uu:omega_final}) are already linear, and do not require linearizing as above. Hence the LAMP spectral method flows directly from the AMP eqs.~\eqref{appendix:AMP_vv_bayes}. As a consequence, this means that the state evolution equations associated to the spectral method are simply dictated by the set of AMP state evolution equations from sec.~\ref{appendix:se_from_amp_bayes_vv}. However, it is worth stressing that the LAMP MSE is not given by the AMP mean squared error, as LAMP returns a normalized estimator. We now compute the overlaps and mean squared error performed by this spectral algorithm.

Recall that $m_v$ and $q_v$ are the parameters defined in eq.~\eqref{appendix:amp:overlap_definition}, that respectively measure the overlap between the ground truth $\bv^\star$ and the estimator $\hat{\bv}$, and the norm of the estimator. In eq.~\eqref{appendix:proof_mmse}, the MSE is given by:
\begin{align}
     {\rm MSE}_v &= \rho_v + \EE_{\bv^\star} \lim_{p\to \infty} \frac{1}{p} \|\hat{\bv}\|_2^2 - 2 \EE_{\bv^\star} \lim_{p\to \infty} \frac{1}{p} \hat{\bv}^\intercal \bv^\star\\
                            &=\rho_v + q_v - 2 m_v \,,
\end{align} 
However the LAMP spectral method computes the normalized top eigenvector of the structured matrix $\Gamma_p$. Hence the norm of the LAMP estimator is $\|\hat{\bv}\|_{\textrm{LAMP}}^2 = q_{v,\textrm{LAMP}} = 1$, while the Bayes-optimal AMP estimator is not normalized with $\|\hat{\bv}\|_{\textrm{AMP}}^2=q_{v,\textrm{AMP}}^\star=m_{v,\textrm{AMP}}^\star \ne 1$, solutions of eq.~\eqref{appendix:se_from_amp_bayes_vv}. As the non-normalized LAMP estimator follows AMP state evolutions in the linear case, the overlap with the ground truth is thus given by:
\begin{align}
	m_{v,\textrm{LAMP}} &\equiv  \EE_{\bv^\star} \lim_{p\to \infty} \frac{1}{p} \hat{\bv}_{\textrm{LAMP}}^\intercal \bv^\star  = \EE_{\bv^\star} \lim_{p\to \infty} \frac{1}{p} \(\frac{\hat{\bv}_{\textrm{AMP}}}{\|\hat{\bv}\|_{\textrm{AMP}}}\)^\intercal \bv^\star \\
	&= \frac{m_{v,\textrm{AMP}}^\star}{\(q_{v,\textrm{AMP}}^\star\)^{1/2}} = \(m_{v,\textrm{AMP}}^\star\)^{1/2}\,.
\end{align}
Finally the mean squared error performed by the LAMP method is easily obtained from the optimal overlap reached by the AMP algorithm and yields
\begin{align}
     {\rm MSE}_{v,\rm LAMP} = \rho_v + 1 - 2 \(q_{v,\textrm{AMP}}^\star\)^{1/2} \,.
\end{align}

\paragraph{PCA:}
Similarly, in the noiseless linear channel case, we note that at $\alpha=0$, LAMP reduces exactly to PCA, i.e. it consists in finding the top eigenvector of $Y$, instead $\Gamma_p$. As LAMP follows AMP in this case, we can simply state that the mean squared error performed by PCA is computed using the optimal overlap reached by AMP at $\alpha=0$:
\begin{align}
     {\rm MSE}_{v,\rm PCA} = \rho_v + 1 - 2 \(q_{v,\textrm{AMP}}^\star|_{\alpha=0}\)^{1/2} \,.
\end{align}

%\newpage

\section{Transition from state evolution - stability}
\label{sec:app:stability}
\label{}
In this section we derive sufficient conditions for the existence of the uninformative fixed point $(q_{v}, \hat{q}_{z},q_{z}) = (0,0,0)$ from the state evolution eqs.~\eqref{main:SE_AMP_uu}. In the case $(0,0,0)$ is a fixed point, we derive its stability, obtaining the Jacobian in eq.~\eqref{eq:phasetransition:jacobian}. Its eigenvalues determine the regions for which $(0,0,0)$ is stable and unstable, and therefore the critical point $\Delta_{c}$ where the transition occurs.

For the purpose of our analysis we define the following shorthand notation for the update functions,
\begin{align}
    \textbf{f}\left(r,t,s\right) \equiv
    \begin{pmatrix}
        f_{1}\left(r,s\right)\\
        f_{2}\left(r,s\right)\\
        f_{3}\left(t\right)
    \end{pmatrix}
\end{align}
\noindent where $(f_1,f_2,f_3)$ are explicitly given by
\begin{align}
  f_{1}(r,s) &= 2\partial_{r}\Psi_{\out}(r,s) =  \mathbb{E}_{\xi,\eta}\left[\frac{\left(\int\dd v ~e^{-\frac{r}{2}v^2+\sqrt{r}v\xi}\int\frac{\dd x}{\sqrt{2\pi (\rho_{z}-s)}}e^{-\frac{1}{2}\frac{(x-\sqrt{s}\eta)^2}{\rho_{z}-s}}P_{\out}(v|x) v\right)^2}{\int\dd v ~e^{-\frac{r}{2}v^2+\sqrt{r}v\xi}\int\frac{\dd x}{\sqrt{2\pi (\rho_{z}-s)}}e^{-\frac{1}{2}\frac{(x-\sqrt{s}\eta)^2}{\rho_{z}-s}}P_{\out}(v|x)}\right]\notag\\
 f_{2}(r,s) &= 2\alpha\partial_{s}\Psi_{\out}(r,s) =\alpha \mathbb{E}_{\xi,\eta}\left[\frac{\left(\int\dd v ~e^{-\frac{r}{2}v^2+\sqrt{r}v\xi}\int\frac{\dd x}{\sqrt{2\pi (\rho_{z}-s)}}e^{-\frac{1}{2}\frac{(x-\sqrt{s}\eta)^2}{\rho_{z}-s}}P_{\out}(v|x) (x-\sqrt{s}\eta)\right)^2}{\int\dd v ~e^{-\frac{r}{2}v^2+\sqrt{r}v\xi}\int\frac{\dd x}{\sqrt{2\pi (\rho_{z}-s)}}e^{-\frac{1}{2}\frac{(x-\sqrt{s}\eta)^2}{\rho_{z}-s}}P_{\out}(v|x)}\right]\notag\\
  f_{3}(t) &= 2\partial_{t}\Psi_{z}(t)=\mathbb{E}_{\xi}\left[\frac{\left(\int\dd x~P_{z}(z)e^{-\frac{t}{2}z^2+\sqrt{t}z\xi}z\right)^2}{\int\dd x~P_{z}(z)e^{-\frac{t}{2}z^2+\sqrt{t}\xi z}}\right]\label{eq:se:qhat}
\end{align}
In terms of these, the right-hand side of the state evolution equations is given by evaluating $(r,t, s)=\left(\frac{q_{v}}{\Delta},\hat{q}_{z},q_{z}\right)$.

\subsection{Conditions for fixed point}
Note that the denominator in the first two state evolution equations is actually constant at $r=0$,
\begin{align}
\int\dd v\int\frac{\dd x}{\sqrt{2\pi \rho_{z}}}e^{-\frac{1}{2\rho_{z}}x^2}P_{\out}(v|x) =\int\frac{\dd x}{\sqrt{2\pi \rho_{z}}}e^{-\frac{1}{2\rho_{z}}x^2} \left(\int\dd v~P_{\out}(v|x) =\int\frac{\dd x}{\sqrt{2\pi \rho_{z}}}e^{-\frac{1}{2\rho_{z}}x^2}\right) = 1.
\end{align}
And in particular, this means that
\begin{align}
    f_{2}(0,s) &= \mathbb{E}_{\xi,\eta}\left(\int\dd v\int\frac{\dd x}{\sqrt{2\pi \rho_{z}}}e^{-\frac{1}{2\rho_{z}}x^2}P_{\out}(v|x)\left(x-\sqrt{s}\eta\right)\right)^2 \notag\\
    &= \mathbb{E}_{\xi,\eta}\left(\int\frac{\dd x}{\sqrt{2\pi \rho_{z}}}e^{-\frac{1}{2\rho_{z}}x^2}\left(x-\sqrt{s}\eta\right)\int\dd v~ P_{\out}(v|x)\right)^2 \notag\\
    &= \mathbb{E}_{\xi,\eta}\left(\int\frac{\dd x}{\sqrt{2\pi \rho_{z}}}e^{-\frac{1}{2\rho_{z}}x^2}\left(x-\sqrt{s}\eta\right)\right)^2 = 0
\end{align}
\noindent for any value of $s\in\mathbb{R}$. In terms of the overlaps, this means that if $q_{u}$ is a fixed point, we necessarily have $\hat{q}_{z} = 0$. What is the implication for $q_{z}$? We need to look at $f_{3}(\hat{q}_{z}=0)$, which is simply given by
\begin{align}
    f_3(0) = \mathbb{E}_{\xi}\left(\int\dd x~P_{z} z \right)^2.
\end{align}
This means that if $q_{u} = 0$ and $P_{z}$ has zero mean, then $q_{z} =0$. It remains to check what is a sufficient condition for $q_{u}=0$ to be a fixed point. This is the case if
\begin{align}
    f_{1}(0,0) = \mathbb{E}_{\xi,\eta}\left(\int\dd v\int\frac{\dd x}{\sqrt{2\pi \rho_{z}}}e^{-\frac{1}{2\rho_{z}}x^2}P_{\out}(v|x)v\right)^2 \overset{!}{=} 0
\end{align}
\noindent implying
\begin{align}
\int\dd v\int\frac{\dd x}{\sqrt{2\pi \rho_{z}}}e^{-\frac{1}{2\rho_{z}}x^2}P_{\out}(v|x)v = \int\frac{\dd x}{\sqrt{2\pi \rho_{z}}}e^{-\frac{1}{2\rho_{z}}x^2}\left(\int\dd v~P_{\out}(v|x)v \right) \overset{!}{=} 0
\end{align}
Therefore a set of sufficient conditions for $(q_{u},\hat{q}_{z},q_{z})=(0,0,0)$ to be a fixed point of the state evolution equations are
\begin{align}
    \mathbb{E}_{P_{z}}z &= \int\dd x~P_{z}(z) z = 0\label{eq:condition:one}\\
    \mathbb{E}_{Q^{0}_{\out}}v &= \int\dd v\int \frac{\dd x}{\sqrt{2\pi \rho_{z}}}e^{-\frac{1}{2\rho_{z}}x^2}~P_{\out}(v|x) v =  0\label{eq:condition:two}
\end{align}
\noindent note that the last condition is equivalent to requiring the function $m(x) = \mathbb{E}_{P_{\out}}v$ to be odd.

%%%%%%%%%%%%%%%%%%%%%%%%%%%%%%%%%%
\subsection{Stability analysis}
%%%%%%%%%%%%%%%%%%%%%%%%%%%%%%%%%%
We now study the stability of the fixed point $(r,t,s)=(0,0,0)$, which is determined by the linearisation of the state evolution equations. But before, to help in the analysis we introduce notation.

%%%%%%%%%%%%%%%%%%%%%%%%%%%%%%%%%%
\paragraph{Some notation}
%%%%%%%%%%%%%%%%%%%%%%%%%%%%%%%%%%
It will be useful to introduce the following notation for the denoising functions in eq.~\eqref{eq:app:intro:defQ} evaluated at the overlaps:
\begin{align}
    Q_{\out}^{(r,s)}(v,x;\xi,\eta) &=\frac{1}{\mathcal{Z}_{\out}^{(r,s)}(\xi,\eta)} e^{-\frac{r}{2}u^2+\sqrt{r}\xi u}\frac{1}{\sqrt{2\pi(\rho_{z}-s)}}e^{-\frac{1}{2}\frac{(x-\sqrt{s}\eta)^2}{\rho_{z}-s}}P_{\out}(v|x) \\
    Q_{z}^{t}(z;\xi) &= \frac{1}{\mathcal{Z}_{z}^{t}(\xi)}e^{-\frac{t}{2}z^2+\sqrt{t}\xi z}P_{z}(z)
\end{align}
\noindent where $\mathcal{Z}^{(r,s)}_{\out}$ and $\mathcal{Z}_{z}$ are the normalisation of the distributions, given explicitly by
\begin{align}
    \mathcal{Z}_{\out}^{(r,s)}(\xi,\eta) &= \int \dd v ~e^{-\frac{r}{2}v^2+\sqrt{r}v\xi}\int\frac{\dd x}{\sqrt{2\pi (\rho_{z}-s)}}e^{-\frac{1}{2}\frac{(x-\sqrt{s}\eta)^2}{\rho_{z}-s}}P_{\out}(v|x)\notag\\
    \mathcal{Z}_{z}^{t}(\xi) &= \int\dd x~Q_{z}^{t}(z;\xi) = \int\dd x~P_{z}(z)e^{-\frac{t}{2}z^2+\sqrt{t}\xi z}
\end{align}
Note that $Q_{\out}$ is a family of joint distributions over $(v,x)$, indexed by $r,s\in\mathbb[0,1]$. It will be useful to have in mind the following particular cases,
\begin{align}
    Q_{\out}^{(0,s)}(v,x;\eta) &= \frac{1}{\sqrt{2\pi(\rho_{z}-s)}}e^{-\frac{1}{2}\frac{\left(x-\sqrt{s}\eta\right)^2}{\rho_{z}-s}}P_{\out}(v|x)\\
    Q_{\out}^{(r,0)}(v,x;\xi) &=\frac{1}{\mathcal{Z}_{\out}^{(r,0)}(\xi,\eta)}e^{-\frac{r}{2}v^2+\sqrt{r}v\xi}\frac{1}{\sqrt{2\pi \rho_{z}}}e^{-\frac{1}{2\rho_{z}}x^2}
\end{align}
\noindent where we have used that $\mathcal{Z}_{\out}^{(0,s)}(\eta,\xi) = 1$ (as shown above). It is also useful to define short hands to the associated distributions when we evaluate both $(r,s)=(0,0)$,
\begin{align}
    Q_{\out}^{0}(v,x) = Q_{\out}^{(0,0)}(v,x;\xi,\eta) &= \frac{1}{\sqrt{2\pi \rho_{z}}}e^{-\frac{1}{2\rho_{z}}x^2}P_{\out}(v|x)
\end{align}
\noindent while $Q_{z}^{0}(z;\xi) = P_{z}(z)$. Note that they are indeed independent of the noises, and that in particular we have $\mathcal{Z}_{z}^{0}(\xi) = 1$.

In this notation the condition in eq.~\eqref{eq:condition:two} simply reads that $v$ has mean zero with respect to the $Q_{\out}^{0}$,
\begin{align}
    \mathbb{E}_{Q_{\out}^{0}} v = 0\label{eq:condition:marginal}
\end{align}

%%%%%%%%%%%%%%%%%%%%%%%%%%%%%%%%%%%%%%%%%%%%%%%
\subsection*{Expansion around the fixed point}
%%%%%%%%%%%%%%%%%%%%%%%%%%%%%%%%%%%%%%%%%%%%%%%
We now suppose $(r,t,s)=(0,0,0)$ is a fixed point of the state evolution equations, i.e. that the conditions in eqs.~\eqref{eq:condition:one} and \eqref{eq:condition:two} hold. We are interested in the leading order expansion of the update functions $(f_1, f_2, f_3)$ around this point.

%%%%%%%%%%%%%%%%%%%%%%%%%%%%%%%%%%%%
\paragraph{Expansion of $f_{1}$:}
%%%%%%%%%%%%%%%%%%%%%%%%%%%%%%%%%%%%
Since $(f_1,f_2)$ are functions of $(r,s)$ only, we look them separately first. Instead of expanding around $(r,s)=(0,0)$ together, we first expand around $r=0$ keeping $s$ fixed. This allow us to take the average over $\xi$ explicitly simplifying the expansion considerably,
\begin{align}
    f_{1}(r,s) \underset{r\ll 1}{=} \mathbb{E}_{\eta}\left\{\left(\mathbb{E}_{Q_{\out}^{(0,s)}}v\right)^2+\left[\left(\mathbb{E}_{Q_{\out}^{(0,s)}}v\right)^4+\left(\mathbb{E}_{Q_{\out}^{(0,s)}}v^2\right)^2-2\left(\mathbb{E}_{Q_{\out}^{(0,s)}}v\right)^2\mathbb{E}_{Q_{\out}^{(0,s)}}v^2\right]r+O\left(r^{3/2}\right)\right\}\label{eq:f1:xexpansion}
\end{align}
We can now focus on the leading order expansion around $s=0$. Note we have,
\begin{align}
    \mathbb{E}_{Q_{\out}^{(0,s)}}v &= \int\dd v\int\frac{\dd x}{\sqrt{2\pi(\rho_{z}-s)}}e^{-\frac{1}{2}\frac{(x-\sqrt{s}\eta)^2}{\rho_{z}-s}}P_{\out}(v|x)~v\notag\\
    &\underset{s\ll 1}{=} \mathbb{E}_{\rho_{0}^{v}}v+\frac{\sqrt{s}\eta}{\rho_{z}}  \mathbb{E}_{Q_{\out}^{0}}vx-\frac{s}{2}\frac{\eta^2-1}{\rho_{z}^2}\left(\rho_{z}\mathbb{E}_{Q_{\out}^{0}}v-\mathbb{E}_{Q_{\out}^{0}}x^2v\right)+O\left(s^{3/2}\right)\\
    &=\frac{\sqrt{s}\eta}{\rho_{z}}  \mathbb{E}_{Q_{\out}^{0}}vx+\frac{s}{2}\frac{\eta^2-1}{\rho_{z}^2}\mathbb{E}_{Q_{\out}^{0}}x^2v+O\left(s^{3/2}\right)
\end{align}
\noindent where we used the consistency condition in eq.~\eqref{eq:condition:marginal} that ensures $(r,s) =(0,0)$ is indeed a fixed point. Moreover, the leading order term in the expansion of $\mathbb{E}_{Q_{\out}^{(0,s)}}v$ is $O(s^{1/2})$, therefore ${\left(\mathbb{E}_{Q_{\out}^{(0,s)}}v\right)^2 \sim O(s)}$ and $\left(\mathbb{E}_{Q_{\out}^{(0,s)}}v\right)^4 \sim O\left(s^2\right)$. Expanding now eq.~\eqref{eq:f1:xexpansion} to leading order in $y$ gives
\begin{align}
    f_{1}(r,s) &\underset{r, s\ll 1}{=} \mathbb{E}_{\eta}\left[\frac{s}{\rho_{z}^2}\eta^2\left(\mathbb{E}_{Q_{\out}^{0}}vx\right)^2+r\left(\mathbb{E}_{\rho_{0}^{v}}v^2\right)^2+O\left(r^{3/2},s^{3/2}\right)\right]\notag\\
    & = \frac{s}{\rho_{z}^2}\left(\mathbb{E}_{Q_{\out}^{0}}vx\right)^2+r\left(\mathbb{E}_{\rho_{0}^{v}}v^2\right)^2+O\left(r^{3/2},s^{3/2}\right)
\end{align}
From this expansion we read the first two entries of the Jacobian,
\begin{align}
    \partial_{r}f_{1}|_{(0,0)} = \left(\mathbb{E}_{Q_{\out}^{0}}v^2\right)^2 && \partial_{s}f_{1}|_{(0,0)} =\frac{1}{\rho_{z}^2} \left(\mathbb{E}_{Q_{\out}^{0}}vx\right)^2
\end{align}

%%%%%%%%%%%%%%%%%%%%%%%%%%%%%%%%%%%%
\paragraph{Expansion of $f_{2}$:}
%%%%%%%%%%%%%%%%%%%%%%%%%%%%%%%%%%%%
For $f_2$, we start by expanding with respect to $s$, allowing us to take the average with respect to $\eta$ explicitly,
\begin{align}
    f_{2}(r,s) &\underset{s\ll 1}{=}\alpha \mathbb{E}_{\xi}\left\{\left(\mathbb{E}_{Q_{\out}^{(r,0)}}x\right)^2+\frac{s}{2\rho_{z}^2}\left[2\left(\mathbb{E}_{Q_{\out}^{(r,0)}}x\right)^4-4\left(\mathbb{E}_{Q_{\out}^{(r,0)}}x\right)^2\mathbb{E}_{Q_{\out}^{(r,0)}}x^2+2\left(\mathbb{E}_{Q_{\out}^{(r,0)}}x^2-\rho_{z}\right)^2\right]\right\}\label{eq:expansionx:f2}
\end{align}
We can now focus on the leading order expansion around $r=0$. Note that
\begin{align}
   \mathbb{E}_{Q_{\out}^{(r,0)}}x &\underset{r\ll 1}{=} \mathbb{E}_{Q_{\out}^{0}}x +\sqrt{r}\xi  \mathbb{E}_{Q_{\out}^{0}}xv+\frac{r}{2}(\xi^2-1)\mathbb{E}_{Q_{\out}^{0}}xv^2+O\left(r^{3/2}\right)\\
   &=\sqrt{r}\xi  \mathbb{E}_{Q_{\out}^{0}}xv+\frac{r}{2}(\xi^2-1)\mathbb{E}_{Q_{\out}^{0}}xv^2+O\left(r^{3/2}\right)
\end{align}
\noindent since
\begin{align}
    \mathbb{E}_{Q_{\out}^{0}}x = \int\dd v\int\frac{\dd x}{\sqrt{2\pi \rho_{z}}}e^{-\frac{1}{2\rho_{z}}x^2}P_{\out}(v|x) x = \int\frac{\dd x}{\sqrt{2\pi \rho_{z}}}e^{-\frac{1}{2\rho_{z}}x^2} x = 0.
\end{align}
Therefore the leading order term is of order $O(r^{1/2})$, and $\left(\mathbb{E}_{Q_{\out}^{0}}x\right)^2 \sim O(s)$, $\left(\mathbb{E}_{Q_{\out}^{0}}x\right)^4 \sim O(s^2)$. Expanding now eq.~\eqref{eq:expansionx:f2} in $r\ll 1$,
\begin{align}
    f_{2}(r,s)&\underset{x,s\ll 1}{=}\alpha\mathbb{E}_{\xi} \left[x\xi^2 \left(\mathbb{E}_{Q_{\out}^{0}}vx\right)^2 +\frac{s}{\rho_{z}^2}\left( \mathbb{E}_{Q_{\out}^{0}}x^2-\rho_{z}\right)^2\right]+O\left(r^{3/2},s^{3/2}\right)\\
    &=r\alpha\left(\mathbb{E}_{Q_{\out}^{0}}vx\right)^2 +\frac{s}{\rho_{z}^2}\alpha\left( \mathbb{E}_{Q_{\out}^{0}}x^2-\rho_{z}\right)^2+O\left(r^{3/2},s^{3/2}\right)
\end{align}
From this expansion we can read the second two entries of the Jacobian,
\begin{align}
    \partial_{r}f_{2}|_{(0,0)} =\alpha \left(\mathbb{E}_{Q_{\out}^{0}}vx\right)^2 && \partial_{s}f_{2}|_{(0,0)} =\frac{\alpha}{\rho_{z}^2}\left(\mathbb{E}_{Q_{\out}^{0}}x^2-\rho_{z}\right)^2
\end{align}

%%%%%%%%%%%%%%%%%%%%%%%%%%%%%%%%%%
\paragraph{Expansion of $f_{3}$:}
%%%%%%%%%%%%%%%%%%%%%%%%%%%%%%%%%%
Note that $f_3$ is independent of $(r,s)$, so it can be treated separately. Expanding in $t\ll 1$ gives
\begin{align}
    f_{3}(t) = \mathbb{E}_{\xi}\left[\frac{1}{\mathcal{Z}^{t}_{z}}\left( \int\dd x~P_{z}(z) e^{-\frac{t}{2}z^2+\sqrt{t} z\xi} z\right)^2\right]\underset{t\ll 1}{=}\left(\mathbb{E}_{P_{z}}z^2\right)^2 t + O(t^{3/2})
\end{align}
\noindent where we have used the consistency condition in eq.~\eqref{eq:condition:one}. Therefore
\begin{align}
    \partial_{t}f_{3}|_{t=0} = \left(\mathbb{E}_{P_{z}}z^2\right)^2
\end{align}

%%%%%%%%%%%%%%%%%%%%%%%%%%%%%%%%%%%%%%%%%%
\subsection*{Bringing the overlaps back}
%%%%%%%%%%%%%%%%%%%%%%%%%%%%%%%%%%%%%%%%%%
In our problem, we have
\begin{align}
    r = \frac{q_{u}}{\Delta} && t = \hat{q}_{z} && s = q_{z} 
\end{align}
\noindent and therefore the partial derivatives have to be re-scaled,
\begin{align}
    \partial_{r} = \Delta\partial_{q_{u}} && \partial_{t} = \partial_{\hat{q}_{z}} &&  \partial_{s} = \partial_{q_{z}}
\end{align}
And therefore the Jacobian of the problem is
\begin{align}
    \dd \textbf{f}(0,0,0) =
    \begin{pmatrix}
        \frac{1}{\Delta}\left(\mathbb{E}_{Q_{\out}^{0}}v^2\right)^2 & 0 & \frac{1}{\rho_{z}^{2}}\left(\mathbb{E}_{Q_{\out}^{0}}vx\right)^2 \\
        \frac{\alpha}{\Delta}\left(\mathbb{E}_{Q_{\out}^{0}}vx\right)^2 & 0 & \frac{\alpha}{\rho_{z}^2}\left(\mathbb{E}_{Q_{\out}^{0}}x^2-\rho_{z}\right)^2 \\ 
        0 & \left(\mathbb{E}_{P_{z}}z^2\right)^2 & 0
    \end{pmatrix}
\end{align}

%%%%%%%%%%%%%%%%%%%%%%%%%%%%%%%%%%%%%%%%%%%%%%%%%%%%%%%%%%
\subsection*{Jacobian for the $\bu\bv^{\intercal}$ model}
%%%%%%%%%%%%%%%%%%%%%%%%%%%%%%%%%%%%%%%%%%%%%%%%%%%%%%%%%%
The main difference in the Wishart model is that the state evolution is given in terms of four variables $(p,r,t,s) \equiv \left(\frac{q_{u}}{\Delta},\beta \frac{q_{v}}{\Delta},q_{z},\hat{q}_{z}\right)$, with the update functions given by
\begin{align}
\textbf{f}(p,r,t,s) = 
\begin{pmatrix}
f_{0}(r)\\
f_{1}(p,s)\\	
f_{2}(p,s)\\	
f_{3}(t)
\end{pmatrix}=2
\begin{pmatrix}
\partial_{r}\Psi_{u}(r)	\\
\partial_{p}\Psi_{\out}(p,s)\\
\alpha\partial_{s}\Psi_{\out}(p,s)\\
\partial_{t}\Psi_{z}(t)
\end{pmatrix}.
\end{align}
Note that $(f_{1},f_{2},f_{3})$ are exactly as before, with the only difference that $(f_1,f_2)$ are now evaluated at $p$ instead of $r$. The only new function is $f_0$, which depends only on $r$. This means that the new column in the Jacobian is orthogonal to all the other columns, with a single non-zero entry given by $\partial_{r}f_{0}|_{r=0}$. An easy expansion of $f_{0}$ to first order together with the definitions of $(p,r,t,s)$ yield
\begin{align}
    \dd \textbf{f}(0,0,0,0) =
    \begin{pmatrix}
    	0 & \frac{1}{\Delta}\left(\mathbb{E}_{P_{u}}u^2\right)^2 & 0 & 0 \\
        \frac{\beta}{\Delta}\left(\mathbb{E}_{Q_{\out}^{0}}v^2\right)^2 & 0 & 0 &\frac{1}{\rho_{z}^{2}}\left(\mathbb{E}_{Q_{\out}^{0}}vx\right)^2 \\
        \frac{\beta\alpha}{\Delta}\left(\mathbb{E}_{Q_{\out}^{0}}vx\right)^2 &  0 & 0 & \frac{\alpha}{\rho_{z}^2}\left(\mathbb{E}_{Q_{\out}^{0}}x^2-\rho_{z}\right)^2 
        \\ 0 & 0 & \left(\mathbb{E}_{P_{z}}z^2\right)^2 & 0
    \end{pmatrix}.
\end{align}

%%%%%%%%%%%%%%%%%%%%%%%%%%%%%%%%%%%%%%%%%%%%%%%%%%%%%%%%
\subsection*{Transition points for specific activations}
\label{appendix:stability_threshold:wishart}
%%%%%%%%%%%%%%%%%%%%%%%%%%%%%%%%%%%%%%%%%%%%%%%%%%%%%%%%
The transition point $\Delta_{c}$ is defined as the point in which the uninformative point goes from being stable to unstable. The stability is determined in terms of the eigenvalues of the Jacobian: a fixed point is stable when the eigenvalues are smaller than one, and is unstable when the leading eigenvalue becomes greater than one.

It is instructive to look at $\Delta_{c}$ in specific cases. We let $P_{u} = P_{z} = \mathcal{N}(0,1)$ together with $P_{\out}(v|x) = \delta\left(v-\varphi(x)\right)$ and look at different (odd) activation functions $\varphi$.

\paragraph{Linear activation:} Let $\varphi(x)=x$. In this case the transition is $\Delta_{c} = \alpha+1$ in the Wigner model ($\bv\bv^{\intercal}$) and $\Delta_{c} = \sqrt{\beta(\alpha+1)}$ in the Wishart model ($\bu\bv^{\intercal}$)

\paragraph{Sign activation:} Let $\varphi(x)=\sgn(x)$. In this case the transition is $\Delta_{c} = 1+\frac{4}{\pi^2}\alpha$ in the Wigner model ($\bv\bv^{\intercal}$) and $\Delta_{c} = \sqrt{\beta\left(1+\frac{4}{\pi^2}\alpha\right)}$ in the Wishart model ($\bu\bv^{\intercal}$).
% \newpage

\section{Random matrix analysis of the transition}\label{appendix:rmt}
In this section, we describe how we can derive the value $\Delta_c$ at which
a transition appears in the recovery for a linear activation function, for both the symmetric $\bv \bv^\intercal$ and non-symmetric $\bu \bv^\intercal$ case, purely from a
random matrix theory analysis.
This transition is in essence similar to the celebrated Baik-Ben Arous-P\'ech\'e (BBP)
transition of the largest eigenvalue of a spiked Wishart (or Wigner) matrix \cite{baik2005phase}.

\subsection{A reminder on the Stieltjes transform}

Let $\bbC_+ = \{z \in \bbC, \, \mathrm{Im}\, z > 0\}$. For any probability measure $\nu$ on $\bbR$, and any $z \in \bbC \backslash \mathrm{supp}\, \nu$,
we can define the Stieltjes transform of $\nu$ as:
\begin{align*}
g_\nu(z) \equiv \EE_{\nu} \frac{1}{X-z}.
\end{align*}
Note that $g_{\nu}(z)$ is a one-to-one mapping of $\bbC_+$ on itself. 
The Stieltjes transform has proven to be a very useful tool
from random matrix theory. One of its important features, that we will use to compute the bulk density (see Fig.~(\ref{main:bbp_lamp_amp_se}) of the main material)
is the Stieltjes-Perron inversion formula, that we state here (see Theorem~X.6.1 of \cite{dunford1967linear}):
\begin{theorem}[Stieltjes-Perron]\label{thm_app:stieltjes}
	Assume that $\nu$ has a continuous density on $\bbR$ with respect to the Lebesgue measure.
	Then:
	\begin{align*}
		\forall x \in \bbR, \quad \frac{\mathrm{d}\nu}{\mathrm{d}x} = \lim_{\epsilon \to 0^+} \frac{1}{\pi} \mathrm{Im} \, g_{\nu}(x + i\epsilon).
	\end{align*}
\end{theorem}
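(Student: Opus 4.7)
The plan is to reduce the inversion formula to a standard approximation-of-identity argument, using the Poisson kernel on the upper half-plane. The starting point is the explicit computation
\begin{align*}
\frac{1}{\pi}\,\mathrm{Im}\,g_{\nu}(x+i\epsilon)
= \frac{1}{\pi}\int_{\bbR} \mathrm{Im}\!\left(\frac{1}{t-x-i\epsilon}\right) d\nu(t)
= \int_{\bbR} P_{\epsilon}(t-x)\, d\nu(t),
\end{align*}
where $P_{\epsilon}(y) \equiv \tfrac{1}{\pi}\,\tfrac{\epsilon}{y^{2}+\epsilon^{2}}$ is the Poisson (Cauchy) kernel. Thus the right-hand side of the theorem is nothing but the convolution of $\nu$ with $P_{\epsilon}$ evaluated at $x$, and the claim reduces to showing that $(P_{\epsilon}*\nu)(x) \to f(x)$ as $\epsilon\to 0^{+}$, where $f = d\nu/dx$ denotes the continuous density.

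The next step is to record the two basic properties of $P_{\epsilon}$: first, $\int_{\bbR} P_{\epsilon}(y)\, dy = 1$ for every $\epsilon>0$ (by direct integration, using $\arctan$); second, for every $\delta>0$, $\int_{|y|>\delta} P_{\epsilon}(y)\, dy \to 0$ as $\epsilon \to 0^{+}$ (the family concentrates at the origin). These two facts make $\{P_{\epsilon}\}$ an approximation to the identity in the usual sense.

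Given these properties, I would conclude as follows. Using $f$ to write $d\nu(t) = f(t)\, dt$ and the normalization of $P_{\epsilon}$, one has
\begin{align*}
(P_{\epsilon}*\nu)(x) - f(x) = \int_{\bbR} P_{\epsilon}(y)\bigl[f(x+y)-f(x)\bigr]\, dy.
\end{align*}
Fix $x\in\bbR$ and $\eta>0$. By continuity of $f$ at $x$, pick $\delta>0$ such that $|f(x+y)-f(x)|<\eta$ whenever $|y|\le\delta$; the contribution of this region is bounded by $\eta$. For $|y|>\delta$, the density $f$ is bounded on a neighborhood of $x$ that we split off, while at infinity $f$ is integrable so $P_{\epsilon}(y)f(x+y)$ can be controlled by the uniform bound $P_{\epsilon}(y)\le \tfrac{\epsilon}{\pi\delta^{2}}$ for $|y|>\delta$ (and by the tail mass of $P_{\epsilon}$ outside $[-\delta,\delta]$ for the $f(x)$ term). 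Letting $\epsilon\to 0^{+}$ kills the tail contributions and yields $\limsup_{\epsilon\to 0^{+}}|(P_{\epsilon}*\nu)(x)-f(x)|\le \eta$; since $\eta$ is arbitrary, this gives the pointwise convergence claimed.

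The argument has no real obstacle; the only point requiring a little care is the decomposition into near and far regions, to ensure that the far contribution vanishes despite the density only being assumed continuous (not integrable on all of $\bbR$ with additional decay). This is handled by splitting $\{|y|>\delta\}$ further into a bounded annulus, where $f$ is bounded by continuity and $P_\epsilon$ has vanishing tail mass, and the complement $\{|y|>R\}$, where we use that $\nu$ is a finite measure (so the total mass of $f$ outside $[x-R,x+R]$ is small for large $R$) combined with $P_{\epsilon}(y)\le \tfrac{\epsilon}{\pi R^{2}}$ uniformly there.
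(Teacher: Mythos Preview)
Your proof is correct: this is exactly the standard approximation-to-the-identity argument via the Poisson kernel, and your care in handling the far region (splitting into a bounded annulus where continuity gives a local bound on $f$, and a tail where finiteness of $\nu$ controls the mass) is appropriate given that the hypothesis is merely continuity of the density.

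There is nothing to compare against, however: the paper does not prove this statement. It is quoted as a classical tool (the Stieltjes--Perron inversion formula), with a reference to Dunford--Schwartz, Theorem~X.6.1, and is used only as background for the random matrix analysis in Section~\ref{appendix:rmt}. So your proposal supplies a proof where the paper simply cites one; the argument you give is the standard one and would be accepted without issue.
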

Informally, one has to think that the knowledge of the Stieltjes transform above the real line uniquely determines the measure $\nu$.
The Stieltjes transform is particulaly useful in random matrix theory. Consider a (random) symmetric matrix $M$ of size $n$, with real eigenvalues $\{\lambda_i\}$.
Then the empirical spectral measure of $M$ is defined as:
\begin{align}\label{eq_app:empirical_spectral_measure}
	\nu_n &\equiv \frac{1}{n} \sum_{i=1}^n \delta_{\lambda_i}.
\end{align}
For some random matrix ensembles, the (random) probability measure $\nu_n$ will converge almost surely and in the weak sense to a deterministic probability measure 
$\nu$ as $n \to \infty$. In this case, we will call $\nu$ the \emph{asymptotic spectral measure} of $M$.

%%%%%%%%%%%%%%%%%%%%%%%%%%%%%%%%%%%%%%%%%%%%%%%%%%%%%%%%%%%%%%%
\subsection{RMT analysis of the LAMP operator}
%%%%%%%%%%%%%%%%%%%%%%%%%%%%%%%%%%%%%%%%%%%%%%%%%%%%%%%%%%%%%%%

%%%%%%%%%%%%%%%%%%%%%%%%%%%%%%%%%%%%%%%%%%%%%%%%%%%%%%%%%%%%%%%
\subsubsection{The symmetric $\bv \bv^\intercal$ linear case}\label{subsubsec_app:rmt_vv}
%%%%%%%%%%%%%%%%%%%%%%%%%%%%%%%%%%%%%%%%%%%%%%%%%%%%%%%%%%%%%%%
In this setting, the stationary AMP equations can be reduced on the vector $\hat{\bv}$ as:
\begin{align}
	\hat{\bv} = \left[\frac{1}{k} W W^\intercal \right] \, \left[\frac{1}{\sqrt{\Delta p}} \xi + \frac{1}{\Delta} \frac{\bv \bv^\intercal}{p} - \frac{1}{\Delta}\rI_p\right] \, \hat{\bv}.
\end{align}
We assume in the following that $\rho_v = 1$ to simplify the analysis (in this linear problem, it does not imply any loss of generality).
Here $\xi / \sqrt{p}$ is a matrix from the Gaussian Orthogonal Ensemble, i.e. $\xi$ is a real symmetric matrix with entries drawn independently from a Gaussian distribution with zero mean and variance
$\EE \, \xi_{ij}^2 = (1+\delta_{ij})$. We denote:
\begin{align}\label{eq_app:gammap_vv}
	\Gamma^{vv}_p &\equiv \left[\frac{1}{k} W W^\intercal \right] \, \left[\frac{1}{\sqrt{\Delta p}} \xi + \frac{1}{\Delta} \frac{\bv \bv^\intercal}{p} - \frac{1}{\Delta}\rI_p\right].
\end{align}

From the state evolution analysis we expect that the eigenvector of $\Gamma_p^{vv}$ associated to its largest eigenvalue has a non-zero overlap with $\bv$ in the large $p$ limit
as soon as $\Delta < \Delta_c(\alpha) \equiv 1+ \alpha$.
In this section, we show this fact using only random matrix theory.

Informally, we first demonstrate that the supremum of the support of the asymptotic spectral measure of $\Gamma^{vv}_p$
touches $1$ exactly for $\Delta = \Delta_c(\alpha)$. 
Then, for $\Delta \leq \Delta_c(\alpha)$, the largest eigenvalue of $\Gamma_p^{vv}$ will converge to $1$, 
which is separated from the bulk of the asymptotic spectral density.
The corresponding eigenvector is also positively correlated with $\bv$. 
This gives more detail to the mechanisms of the transition.
We show first the following characterization of the asymptotic spectral density of $\Gamma^{vv}_p$:
\begin{theorem}\label{thm_app:rmt_vv} 
	For any $\alpha, \Delta > 0$, as $p \to +\infty$, the spectral measure of $\Gamma_p^{vv}$ converges almost surely and in the weak sense 
	to a well-defined and compactly supported probability measure $\mu(\alpha, \Delta)$, and we denote $\mathrm{supp}\, \mu$ its support.	
	We separate two cases:
	\begin{itemize}
		\item[$(i)$] If $\Delta \leq \frac{1}{4}$, then $\mathrm{supp}\, \mu \subseteq \bbR_-$.
		\item[$(ii)$] Assume now  $\Delta > \frac{1}{4}$ and denote $z_1(\Delta) \equiv -\Delta^{-1} + 2 \Delta^{-1/2} > 0$.
		Let $\rho_\Delta$ be the probability measure on $\bbR$ with density
		\begin{align}\label{eq_app:def_rho_Delta}
			\rho_\Delta(\mathrm{d}t) &= \frac{\sqrt{\Delta}}{2\pi} \sqrt{4 - \Delta \left(t+\frac{1}{\Delta} \right)^2} \mathds{1}\left\{\left|t+\frac{1}{\Delta}\right| \leq \frac{2}{\sqrt{\Delta}}\right\} \, \mathrm{d}t.
		\end{align}
		Note that the supremum of the support of $\rho_\Delta$ is $z_1(\Delta)$.
		The following equation admits a unique solution for $s \in (-z_1(\Delta)^{-1},0)$:
		\begin{align}\label{eq_app:se_vv}
			\alpha \int \rho_\Delta(\mathrm{d}t) \left(\frac{st}{1+ st}\right)^2 &= 1.
		\end{align}
		We denote this solution as $s_{\rm edge}(\alpha, \Delta)$ (or simply $s_{\rm edge}$).
		The supremum of the support of $\mu(\alpha,\Delta)$ is denoted $\lambda_{\rm max}(\alpha, \Delta)$ (or simply $\lambda_{\rm max}$).
		It is given by:
		\begin{align}
			\lambda_{\rm max} &=
			\begin{dcases}
				-\frac{1}{s_{\rm edge}} + \alpha \int \rho_\Delta(\mathrm{d}t) \frac{t}{1 + s_{\rm edge}t} \quad & \text{ if } \alpha \leq 1,\\
				\max\left(0,-\frac{1}{s_{\rm edge}} + \alpha \int \rho_\Delta(\mathrm{d}t) \frac{t}{1 + s_{\rm edge}t} \right)\quad & \text{ if } \alpha > 1.
			\end{dcases}
		\end{align}
	\end{itemize}
\end{theorem}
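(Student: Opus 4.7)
The strategy is to reduce $\Gamma_p^{vv}$ to a product of asymptotically free random matrices, identify the limit via free multiplicative convolution, and extract the edge of its support through the subordination equation for the Stieltjes transform. First, the rank-one term $\bv \bv^\intercal/(\Delta p)$ inside the bracket contributes only a rank-one perturbation to $\Gamma_p^{vv}$, which by Weyl's interlacing inequality does not affect the limiting empirical spectral measure. It therefore suffices to study $\Sigma_p \widetilde M_p$ with $\Sigma_p \equiv W W^\intercal/k$ and $\widetilde M_p \equiv \xi/\sqrt{\Delta p} - \rI_p/\Delta$. The matrix $\Sigma_p$ is a standard Wishart whose spectrum converges to the Marchenko--Pastur distribution $\mu_{\rm MP, \alpha}$; the matrix $\widetilde M_p$ is a shifted rescaled GOE whose spectrum converges to $\rho_\Delta$ defined in eq.~\eqref{eq_app:def_rho_Delta}. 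Since $W$ and $\xi$ are independent and $\widetilde M_p$ is orthogonally invariant, Voiculescu's asymptotic freeness theorem ensures that $\Sigma_p$ and $\widetilde M_p$ are asymptotically free. As $\Sigma_p \succeq 0$, the product $\Sigma_p \widetilde M_p$ shares its nonzero eigenvalues with the symmetric matrix $\Sigma_p^{1/2} \widetilde M_p \Sigma_p^{1/2}$, whose empirical spectral measure therefore converges almost surely to the free multiplicative convolution $\mu(\alpha, \Delta) \equiv \mu_{\rm MP, \alpha} \boxtimes \rho_\Delta$, which inherits compact support from its factors (together with an atom at $0$ when $\alpha > 1$, arising from the null space of $\Sigma_p$, which does not alter the upper edge).

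Claim $(i)$ then follows quickly: if $\Delta \leq 1/4$ then $z_1(\Delta) = -1/\Delta + 2/\sqrt{\Delta} \leq 0$, so $\rho_\Delta$ is supported in $\bbR_{-}$ and $\widetilde M_p$ is asymptotically negative semidefinite; consequently $\Sigma_p^{1/2} \widetilde M_p \Sigma_p^{1/2} \preceq 0$ in the limit, giving $\mathrm{supp}\, \mu \subseteq \bbR_{-}$. For claim $(ii)$, I would derive the implicit equation for the Stieltjes transform $g_\mu$ of $\mu(\alpha,\Delta)$ via the subordination relations for $\boxtimes$. For $z$ above the support, writing $s \equiv g_\mu(z)$, the inverse can be expressed as $z = F(s;\alpha,\Delta)$ with $F(s;\alpha,\Delta) = -1/s + \alpha \int \rho_\Delta(\mathrm{d}t)\, t/(1+st)$, valid on $s \in (-1/z_1(\Delta), 0)$ and reflecting the scale $\alpha$ of the Marchenko--Pastur factor. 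The supremum $\lambda_{\max}$ of the support of $\mu$ is then the critical value of $F$: $\lambda_{\max} = F(s_{\rm edge};\alpha,\Delta)$ where $s_{\rm edge}$ solves $\partial_s F = 0$. A direct differentiation $\partial_s F(s) = 1/s^2 - \alpha \int \rho_\Delta(\mathrm{d}t)\, t^2/(1+st)^2$ shows that this vanishing condition is precisely eq.~\eqref{eq_app:se_vv}; existence and uniqueness of $s_{\rm edge}$ in $(-1/z_1(\Delta), 0)$ follow from monotonicity of $s \mapsto \partial_s F(s)$ together with its limits at the endpoints of the interval.

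The main obstacle will be rigorously justifying the subordination formula for $F(s;\alpha,\Delta)$ in this slightly non-standard setting (free multiplicative convolution of a positive measure with a signed one whose support can straddle $0$), and verifying that the critical point of $F$ corresponds to a simple square-root edge of $\mu$ rather than a more singular point. An elegant alternative route is to work with the $S$-transform: using the known $S_{\rm MP,\alpha}(z) = 1/(1+\alpha z)$ together with the $S$-transform of $\rho_\Delta$, and the multiplicativity $S_{\mu}(z) = S_{\rm MP,\alpha}(z)\, S_{\rho_\Delta}(z)$, one directly recovers $F$ by inversion. The $\alpha > 1$ case finally requires a small amount of separate bookkeeping for the atom at $0$, which can raise $\lambda_{\max}$ to $0$ when the non-zero bulk lies entirely in $\bbR_{-}$; this is precisely the scenario captured by the $\max(0,\cdot)$ in the theorem statement.
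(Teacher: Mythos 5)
Your derivation is correct in substance but follows a genuinely different route from the paper. You pass through free probability: identify the limit as the free multiplicative convolution $\mu_{\rm MP,\alpha}\boxtimes\rho_\Delta$ of the Marchenko--Pastur law (from $WW^\intercal/k$) with the shifted semicircle (from $\xi/\sqrt{\Delta p}-\rI_p/\Delta$), then invert the Stieltjes transform via subordination or the $S$-transform to obtain $z = F(s) = -1/s + \alpha\int\rho_\Delta(\mathrm{d}t)\,t/(1+st)$ and locate $\lambda_{\rm max}$ at the critical point $\partial_s F = 0$, which reproduces eq.~\eqref{eq_app:se_vv}. The paper instead rotates to the $k\times k$ matrix $\Gamma_k^{vv} = \frac{1}{k}W^\intercal T_p W$ (same spectrum up to zeros) and applies the Silverstein--Bai theorem for sample-covariance-type matrices with a \emph{general symmetric} population $T_p$, which directly yields the self-consistent equation~\eqref{eq_app:silverstein} and the support characterization of \cite{silverstein1995analysis,lee2016tracy}, with almost-sure convergence built in. Both give the same inverse-Stieltjes formula, and your PSD/NSD argument for part $(i)$ and your bookkeeping of the atom at $0$ for $\alpha>1$ are both sound. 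What each route buys: the paper's approach sidesteps entirely the technical obstacle you correctly flag -- the $S$-transform and free multiplicative subordination theory are not classical for a product where one factor (here $\rho_\Delta$, whose support straddles $0$ when $\Delta>1/4$) is indefinite -- because Silverstein--Bai was proved precisely for indefinite $T_p$. Indeed the paper explicitly raises this very issue in Section~\ref{subsec_app:nonlinear_rmt} (for nonlinear activations, where \emph{both} factors are indefinite and even Silverstein--Bai no longer applies directly), noting that standard $S$-transform results break down; your approach would face a milder version of this already in the linear case, and would also need supplementary concentration arguments to upgrade convergence in moments to a.s.\ weak convergence, which the Silverstein--Bai theorem delivers for free. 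Conversely, your free-probability framing is conceptually cleaner and would generalize more gracefully if $W$ were replaced by a more general bi-unitarily invariant ensemble. One small inaccuracy worth noting: your $F(s)$ is actually the inverse Stieltjes transform of the $k\times k$ limit $\nu$, not of the $p\times p$ limit $\mu$ you set up (they differ by the mass at $0$); since both share the same positive support edge, this does not affect your conclusion, but the two should not be conflated.
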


Before proving Theorem~\ref{thm_app:rmt_vv}, we state a very interesting corollary:
\begin{corollary}\label{corollary_app:lambdamax_vv}
	Let $\alpha > 0$. As a function of $\Delta$, $\lambda_{\rm max}$ (see Theorem~\ref{thm_app:rmt_vv}) has a unique global maximum, reached exactly at the point $\Delta = \Delta_c(\alpha) = 1 + \alpha$.
	Moreover, $\lambda_{\rm max}(\alpha, \Delta_c(\alpha)) = 1$.
\end{corollary}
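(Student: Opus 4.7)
The plan is to exploit the hidden variational principle in Theorem~\ref{thm_app:rmt_vv}. Differentiating $\lambda(s, \Delta) \equiv -1/s + \alpha \int \rho_\Delta(\mathrm{d}t)\, t/(1+st)$ in $s$ yields
\begin{align*}
\frac{\partial \lambda}{\partial s} = \frac{1}{s^2}\left[1 - \alpha \int \rho_\Delta(\mathrm{d}t)\left(\frac{st}{1+st}\right)^2\right],
\end{align*}
so the defining relation~\eqref{eq_app:se_vv} for $s_{\rm edge}$ is precisely the stationarity condition $\partial_s \lambda = 0$. Consequently $\lambda_{\rm max}(\Delta) = \lambda(s_{\rm edge}(\Delta), \Delta)$ is a stationary value of $\lambda(\cdot,\Delta)$, and by the envelope theorem
\begin{align*}
\frac{\mathrm{d}\lambda_{\rm max}}{\mathrm{d}\Delta} = \frac{\partial \lambda}{\partial \Delta}\bigg|_{s = s_{\rm edge}} = \frac{\alpha}{\sigma}\,\frac{\partial G(\sigma,\Delta)}{\partial \Delta}\bigg|_{\sigma = -s_{\rm edge}},
\end{align*}
where $G(\sigma,\Delta) \equiv \int \rho_\Delta(\mathrm{d}t)/(1-\sigma t)$ is (up to rescaling) the Stieltjes transform of $\rho_\Delta$ at $1/\sigma$. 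An elementary manipulation also rewrites $\lambda$ in the compact form $\lambda = (1-\alpha+\alpha G)/\sigma$.

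I would then compute $G$ in closed form using the standard formula for the Stieltjes transform of the shifted semicircle: writing $A = \Delta/\sigma^2$ and $B = 1/\sigma$, one gets
\begin{align*}
G(\sigma,\Delta) = \tfrac{1}{2}\bigl[A + B - \sqrt{(A+B)^2 - 4A}\bigr].
\end{align*}
Direct differentiation together with squaring shows that $\partial_\Delta G = 0$ exactly when $\sqrt{(A+B)^2-4A} = A+B-2$, i.e.\ (after checking the sign constraint $A+B \geq 2$) when $B = 1$ and $A \geq 1$, or equivalently $\sigma = 1$ and $\Delta \geq 1$. Hence any interior critical point of $\lambda_{\rm max}$ must have $\sigma_{\rm edge} = 1$ with $\Delta > 1$. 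Substituting $\sigma=1$ into the closed form gives $G(1,\Delta) = 1$, and a short computation yields $\partial_\sigma G(1,\Delta) = 1/(\Delta-1)$. Using the identity $\sigma^2 \int t^2/(1-\sigma t)^2 \mathrm{d}\rho = \sigma G' - G + 1$ (which follows from $\sigma t/(1-\sigma t) = 1/(1-\sigma t)-1$), the constraint~\eqref{eq_app:se_vv} reads $\alpha(\sigma G'-G+1)=1$, which at $\sigma=1$ reduces to $\alpha/(\Delta-1)=1$, giving the unique solution $\Delta = 1+\alpha$. Plugging back into $\lambda = (1-\alpha+\alpha G)/\sigma$ yields $\lambda_{\rm max}(\alpha, 1+\alpha) = 1$.

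Finally, to promote this unique interior critical point to a global maximum, I would analyze the boundary behavior of $\lambda_{\rm max}$ on $(1/4,\infty)$: by part $(i)$ of Theorem~\ref{thm_app:rmt_vv}, $\lambda_{\rm max} \leq 0$ for $\Delta \leq 1/4$; and a direct asymptotic argument (as $\Delta \to \infty$, $\rho_\Delta$ concentrates on $\delta_0$ while the constraint forces $\sigma_{\rm edge} \sim \sqrt{\Delta/\alpha}$) gives $\lambda_{\rm max}(\Delta) \to 0$. Together with the continuity of $\lambda_{\rm max}$ and the absence of any other interior critical point, this singles out $\Delta = 1+\alpha$ as the unique global maximum. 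The main technical obstacle is the careful case analysis of the square-root formula: the equation $\partial_\Delta G = 0$ is, after squaring, equivalent to $B=1$, but the sign condition $A+B \geq 2$ must be enforced by hand to rule out spurious critical points — in particular, in the regime $\Delta < 1$ one must check directly that $\partial_\Delta G > 0$ strictly, so that no critical point can occur. Handling the mild non-smoothness near $\Delta = 1$ (where the bulk edge $z_1$ attains its maximum value $1$, making the admissibility constraint $\sigma < 1/z_1$ degenerate) is the related delicate point that must be addressed in justifying the envelope-theorem reasoning uniformly on $(1/4,\infty)$.
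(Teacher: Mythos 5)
Your proof is correct and follows essentially the same route as the paper's: both use the envelope identity $\mathrm{d}\lambda_{\rm max}/\mathrm{d}\Delta = \partial_\Delta\lambda\big|_{s=s_{\rm edge}}$ (valid because $s_{\rm edge}$ solves $\partial_s g_\nu^{-1}=0$), find that this partial vanishes only at $s_{\rm edge}=-1$ with $\Delta>1$, feed $s_{\rm edge}=-1$ back into the defining constraint to get $\Delta=1+\alpha$, and anchor the global maximum using the sign at $\Delta\le 1/4$ and the decay as $\Delta\to\infty$. The difference is cosmetic: you package the computation through $G(\sigma,\Delta)=\int\rho_\Delta(\mathrm{d}t)/(1-\sigma t)$ and the closed form $G=\tfrac12[A+B-\sqrt{(A+B)^2-4A}]$, which makes the step ``squaring $\Rightarrow B=1$, plus sign constraint $A+B\ge2$'' quite transparent, whereas the paper writes out $\partial z_{\rm edge}/\partial\Delta$ directly and reads off the same condition. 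Both are substantively the same derivation.

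One small correction to your final caveat: the worry about non-smoothness near $\Delta=1$ is a non-issue. For $\Delta<\Delta_c(\alpha)=1+\alpha$ the paper's Lemma~\ref{lemma_app:sedge_1} gives $s_{\rm edge}>-1$, i.e.\ $\sigma_{\rm edge}<1=1/z_1(1)$, so the admissibility constraint $\sigma<1/z_1(\Delta)$ is strict at $\Delta=1$ and $g_\nu^{-1}$ stays smooth along the optimizing path. The only degenerate configuration $(\sigma,\Delta)=(1,1)$ is never reached as $s_{\rm edge}(\Delta)$, so the envelope argument is uniformly valid on $(1/4,\infty)$ without special treatment near $\Delta=1$.
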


We can then state the transition result. Its method of proof is very much inspired by \cite{benaych2011eigenvalues}
\footnote{Note that while all the calculations are justified, refinements would be needed in order to be completely rigorous.
These refinements would follow exactly some proofs of \cite{silverstein1995empirical} and \cite{benaych2011eigenvalues}, 
so we will refer to them when necessary.}.
\begin{theorem}\label{thm_app:transition_vv}
	Let $\alpha, \Delta > 0$.
	Let us denote $\lambda_1 \geq \lambda_2$ the first and second eigenvalues of $\Gamma_p^{vv}$.
	Then we have:
	\begin{itemize}
		\item If $\Delta \geq \Delta_c(\alpha)$, then as $p \to \infty$ we have $\lambda_1 \underset{a.s.}{\to} \lambda_{\rm max}$ and $\lambda_2 \underset{a.s.}{\to} \lambda_{\rm max}$.
		\item If $\Delta \leq \Delta_c(\alpha)$, then as $p \to \infty$ we have $\lambda_1 \underset{a.s.}{\to} 1$ and $\lambda_2 \underset{a.s.}{\to} \lambda_{\rm max}$.
	\end{itemize}
	Moreover, let us denote $\tilde{\bv}$ an eigenvector of $\Gamma_p^{vv}$ with eigenvalue $\lambda_1$, normalized 
	such that $\norm{\tilde{\bv}}^2 = p$. Then:
	\begin{align}
		\frac{1}{p^2}|\tilde{\bv}^\intercal \bv|^2 \underset{a.s.}{\to} \epsilon(\Delta).
	\end{align}
		The function $\epsilon(\Delta)$ satisfies the 
		following properties: $\epsilon(\Delta) = 0$ for all $\Delta \geq \Delta_c(\alpha)$,
		$\epsilon(\Delta) > 0$ for all $\Delta < \Delta_c(\alpha)$ and $\lim_{\Delta \to 0}\epsilon(\Delta) =1$.
\end{theorem}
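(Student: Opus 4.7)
The plan is to adapt the Benaych-Georges--Nadakuditi BBP-type strategy \cite{benaych2011eigenvalues} to the non-symmetric matrix $\Gamma^{vv}_p$, while accounting for the statistical dependence of the spike $\bv = W\bz/\sqrt{k}$ on the kernel $K_p \equiv WW^\intercal/k$ that multiplies the noise. Write
\begin{align*}
\Gamma^{vv}_p \;=\; M_p \;+\; \frac{1}{\Delta p}\,(K_p\bv)\,\bv^\intercal,
\qquad
M_p \;\equiv\; K_p\!\left(\tfrac{1}{\sqrt{\Delta p}}\xi - \tfrac{1}{\Delta}\rI_p\right).
\end{align*}
Theorem~\ref{thm_app:rmt_vv} gives that the empirical spectral measure of $M_p$ converges a.s.\ to $\mu(\alpha,\Delta)$ with upper edge $\lambda_{\rm max}$, and since $\Gamma^{vv}_p$ differs from $M_p$ only by a rank-one (non-symmetric) update, a standard argument based on the resolvent identity rules out more than one outlier: this already yields $\lambda_2(\Gamma^{vv}_p)\to\lambda_{\rm max}$ a.s.\ in both regimes.

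Next I would derive the master equation satisfied by any outlier. If $\tilde{\bv}$ is an eigenvector of $\Gamma^{vv}_p$ with eigenvalue $\lambda\notin\mathrm{supp}\,\mu$ and $\bv^\intercal\tilde{\bv}\neq 0$, projecting the eigenvalue relation onto $\bv$ gives
\begin{align*}
\Phi_p(\lambda) \;\equiv\; 1 \;+\; \frac{1}{\Delta p}\,\bv^\intercal\,(M_p - \lambda\rI_p)^{-1}\,K_p\bv \;=\; 0.
\end{align*}
Using the planted structure $\bv = W\bz/\sqrt{k}$ and the cyclic identity relating the spectra of $K_p A_p$ and $A_p K_p$ (with $A_p = \xi/\sqrt{\Delta p} - \rI_p/\Delta$), I would reduce this quadratic form to a trace against $\bz\bz^\intercal/k$ controlled by isotropic local laws. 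The orthogonal invariance of $\xi$ and its independence from $(W,\bz)$ make averaging tractable: conditional on $W$, the matrix $M_p$ is similar to an orthogonally invariant perturbation of $-K_p/\Delta$, so Stieltjes-subordination against $\rho_\Delta$ yields a deterministic limit $\Phi(\lambda)$ in closed form in terms of the Stieltjes transform of $\mu(\alpha,\Delta)$ and of the Mar\v{c}enko--Pastur law for $W^\intercal W/k$. By Corollary~\ref{corollary_app:lambdamax_vv}, $\lambda_{\rm max}(\alpha,\Delta_c)=1$, and an explicit computation should show that $\Phi(1)=0$ precisely for $\Delta\le\Delta_c(\alpha)$. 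Monotonicity of $\lambda\mapsto\Phi(\lambda)$ above the bulk then identifies $\lambda=1$ as the unique outlier in that regime, while for $\Delta>\Delta_c(\alpha)$ the candidate outlier sinks into the bulk and $\lambda_1\to\lambda_{\rm max}$. Upgrading pointwise to almost-sure convergence of $\lambda_1$ follows from uniform convergence of $\Phi_p$ on compact subsets of $\mathbb{C}\setminus\mathrm{supp}\,\mu$, as in \cite{benaych2011eigenvalues}.

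For the eigenvector, the rank-one structure forces $\tilde{\bv}\propto (M_p-\lambda_1\rI_p)^{-1}K_p\bv$ whenever $\lambda_1$ is an outlier, so after the normalization $\norm{\tilde{\bv}}^2 = p$,
\begin{align*}
\frac{|\tilde{\bv}^\intercal\bv|^2}{p^2}
\;=\; \frac{\bigl|\bv^\intercal (M_p-\lambda_1)^{-1}K_p\bv\bigr|^2}{p\,\bv^\intercal K_p (M_p-\lambda_1)^{-\intercal}(M_p-\lambda_1)^{-1}K_p\bv\cdot\norm{\bv}^2/p}.
\end{align*}
The numerator is $(\Delta p)^2$ on the outlier branch by the master equation, while the denominator admits a limit obtained by differentiating $\Phi_p$ at $\lambda_1$. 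This expresses $\epsilon(\Delta)$ as a rational function of Stieltjes transforms evaluated at $1$; direct inspection will give $\epsilon(\Delta_c)=0$, $\epsilon(\Delta)>0$ for $\Delta<\Delta_c$, and $\epsilon(\Delta)\to 1$ as $\Delta\to 0$ (where $M_p\to -K_p/\Delta$ and $\tilde{\bv}$ must align with the $K_p$-image of $\bv$, which itself aligns with $\bv$ in the large-$p$ limit).

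The main obstacle is the simultaneous non-symmetry of $\Gamma^{vv}_p$ and the correlation between the spike and the kernel: classical BBP assumes a spike independent of the noise, whereas here $K_p$ and $\bv\bv^\intercal$ share the randomness of $W$. The cleanest way to handle this seems to be a two-stage averaging that uses $\xi\perp(W,\bz)$ to integrate out $\xi$ first via resolvent subordination, reducing the problem to an anisotropic local law for the product $K_p R_\xi(\lambda)$, and only afterwards averaging over $(W,\bz)$; alternatively a free-probability linearization bundling $(W,\xi)$ into an operator-valued block would serve the same purpose. Controlling the resolvent of a product of two correlated symmetric matrices in the outlier regime, with the required almost-sure rates to deduce convergence of $\lambda_1$ and of the eigenvector overlap, is the technical core of the proof.
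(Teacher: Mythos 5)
Your plan follows essentially the same route as the paper's proof: the same rank-one decomposition $\Gamma^{vv}_p = \Gamma^{(0)}_p + \frac{1}{\Delta}\frac{WW^\intercal}{k}\frac{\bv\bv^\intercal}{p}$, the same outlier master equation obtained by projecting the eigenvalue relation, the same exploitation of $\bv = W\bz/\sqrt{k}$ via the concentration $\frac{1}{k}\bz^\intercal A\bz \to \frac{1}{k}\mathrm{Tr}\,A$ and cyclicity of trace to reduce to a $k\times k$ problem, and the same resolvent representation $\tilde{\bv}\propto(\lambda_1\rI_p-\Gamma^{(0)}_p)^{-1}\frac{WW^\intercal}{k}\bv$ for the overlap. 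The only place you leave the execution abstract (proposing local laws, subordination, or operator-valued free probability) is exactly where the paper spends most of its effort: it computes a closed hierarchy of deterministic trace equivalents $S^{(r)}(\lambda)$ for $\frac{1}{k}\mathrm{Tr}[(\Gamma^{(0)}_k - \lambda\rI_k)^{-1}(W^\intercal W/k)^r]$, $r\le 3$, and the doubled objects $S^{(r,q)}(\lambda)$, via a leave-one-out Sherman--Morrison argument in the style of Silverstein--Bai (Lemma~\ref{lemma_app:hierarchy_Sk}), turning the outlier condition into the scalar equation $S^{(2)}(\lambda)=-\alpha\Delta$ and the overlap into $\epsilon(\Delta)=\frac{1}{\alpha}[S^{(2)}(1)]^2/S^{(1,2)}(1)$. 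Your observation that the difficulty is the shared $W$-randomness between the kernel and the spike is correct, and your outlined reduction (concentrate $\bz$ out, then average over $\xi\perp W$, then over $W$) is in the same spirit; the paper just performs this reduction by elementary resolvent identities rather than invoking heavier random-matrix machinery.
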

Our method of proof for Theorem~\ref{thm_app:transition_vv} allows us to 
compute numerically the squared correlation $\epsilon(\Delta)$. 
It is given, for all $\Delta < \Delta_c(\alpha)$, as
\begin{align*}
	\epsilon(\Delta) &= \frac{1}{\alpha} \frac{\left[S^{(2)}(1)\right]^2}{S^{(1,2)}(1)}.
\end{align*}
The $S^{(1,2)}$ and $S^{(2)}$ functions are defined in Lemma~\ref{lemma_app:hierarchy_Sk}, and formulas are also given that allow to compute them numerically.
A non-trivial consistency check is to verify that $\epsilon(\Delta)$ coincides with the variable $q_v$ given by the mutual information
analysis of Theorem~\ref{theorem_uu} of the main material. We show numerically that they indeed coincide in Fig.~\ref{fig_app:overlap_SE_vs_RMT}.
\begin{figure}[hbt]
	\centering
	\includegraphics[scale=0.39]{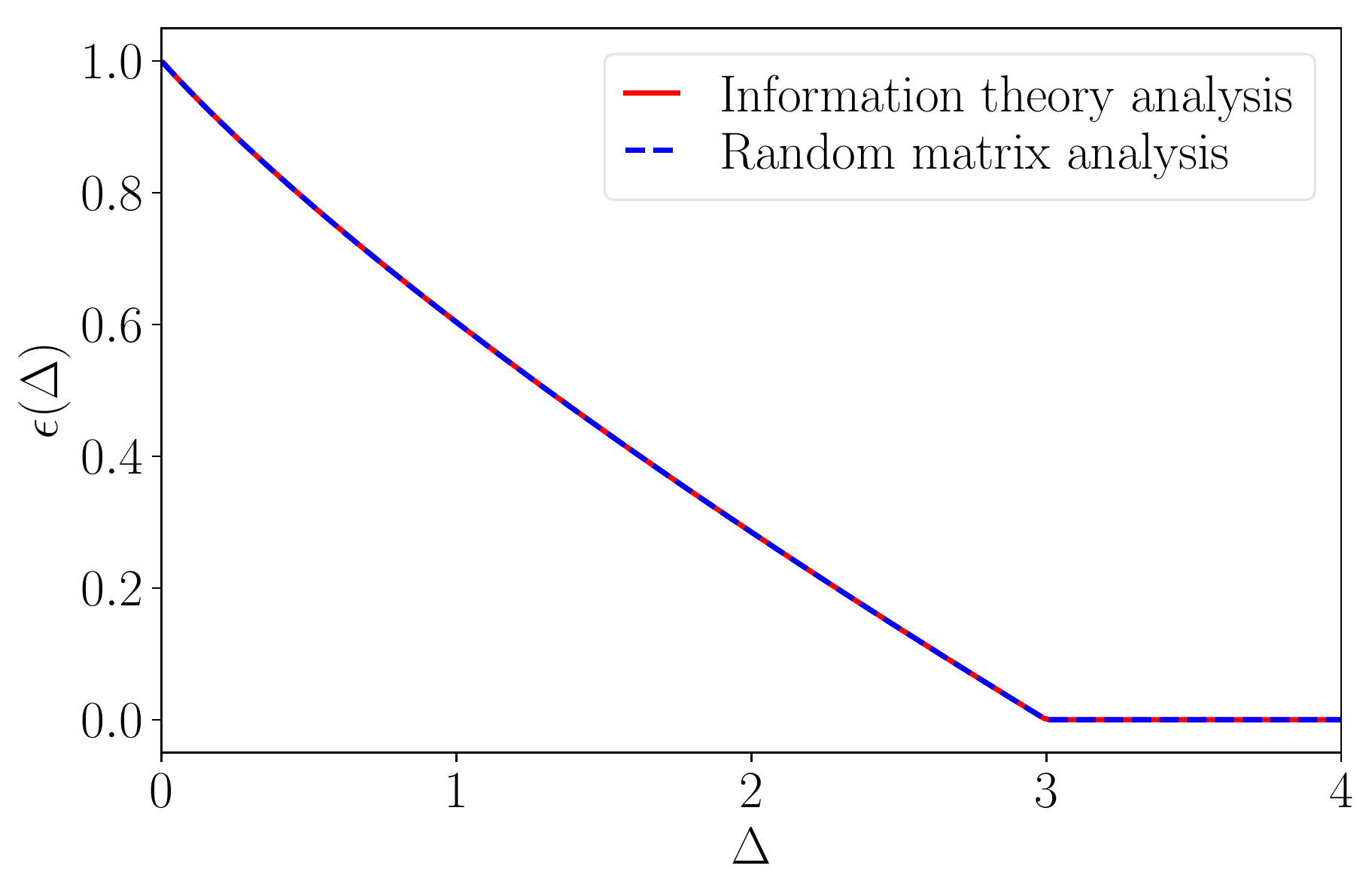}
	\caption{The function $\epsilon(\Delta)$ computed in the linear case by Theorem~\ref{theorem_uu} of the main material (information theoretic analysis) and Theorem~\ref{thm_app:transition_vv} (random matrix analysis) ($\alpha = 2$).}
	\label{fig_app:overlap_SE_vs_RMT}
\end{figure}

\begin{remark*}[The nature of the transition]
	As was already noticed in some previous works (see for instance a related remark in \cite{benaych2011eigenvalues}), the existence of a transition in the largest eigenvalue and the corresponding eigenvector
	for a large matrix of the type $M + \theta P$ (with $P$ of finite rank and $\theta > 0$) depends on the decay of the asymptotic spectral density of $M$ at the right edge of its bulk. For a power-law decay, there can 
	be either no transition, a transition in the largest eigenvalue and the corresponding eigenvector, or a transition in the largest eigenvalue but not in the corresponding eigenvector.
	The situation in our setting is somewhat more involved, as both the bulk and the spike depend on the parameter $\Delta$, and they are not independent (they are correlated via the matrix $W$).
	However, this intuition remains true: if we do not show and use it explicitely, the decay of the density of $\mu(\alpha,\Delta)$ at the right edge is of the type $(\lambda_{\rm max} - \lambda)^{1/2}$, which 
	is the hidden feature that is responsible for a transition both in the largest eigenvalue and the corresponding eigenvector, which is what we show in Theorem~\ref{thm_app:rmt_vv}.
\end{remark*}

\subsubsection{The non-symmetric $\bu \bv^\intercal$ linear case}\label{subsubsec_app:rmt_uv}

The analysis is very similar to the one of the symmetric case of Section~\ref{subsubsec_app:rmt_vv}.
The counterpart to the matrix of eq.~(\ref{eq_app:gammap_vv}) is here:
\begin{align}\label{eq_app:def_gammap_uv}
	\Gamma_p^{uv} &\equiv \frac{1}{\Delta}   \frac{\du{W} \du{W}^\intercal}{k}  \times \( \frac{1}{1+\Delta}\frac{\du{y}^\intercal\du{y}}{p}  -  \beta \,\rI_p  \) \in \bbR^{p \times p}.
\end{align}
Recall that we have here $\alpha = \frac{p}{k}$ and $\beta = \frac{n}{p}$. $W \in \bbR^{p \times k}$ is an i.i.d. standard Gaussian matrix, and the matrix $y \in \bbR^{n \times p}$ is constructed as:
\begin{align}
	y = \sqrt{\Delta} \xi + \frac{\bu \bv^\intercal}{\sqrt{p}}.
\end{align}
Here, $\xi \in \bbR^{n \times p}$ is also an i.i.d. standard Gaussian matrix, independent of $W$. 
As it will be useful for stating the theorem, we recall the Marchenko-Pastur probability measure with ratio $\beta$, denoted $\rho_{{\rm MP},\beta}$ \cite{marvcenko1967distribution}:
	\begin{subnumcases}{\label{eq:def_MP}}
	\lambda_+(\beta) = \left(1 + \frac{1}{\sqrt{\beta}}\right)^2, &\\
	\lambda_-(\beta) = \left(1 - \frac{1}{\sqrt{\beta}}\right)^2, &\\
	\frac{\mathrm{d}\rho_{{\rm MP}, \beta}}{\mathrm{d}t} \equiv (1-\beta)\, \delta(t) + \frac{\beta}{2 \pi} \frac{\sqrt{\left[\lambda_+(\beta)-t\right] \left[t-\lambda_-(\beta)\right]}}{t} \mathds{1}_{t \in (\lambda_-(\beta) , \lambda_+(\beta))}. &
	\end{subnumcases}
We can now state the couterpart to Theorem~\ref{thm_app:rmt_vv} in the $\bu \bv^\intercal$ setting:
\begin{theorem}\label{thm_app:rmt_uv} 
	For any $\alpha, \beta, \Delta > 0$, the spectral measure of $\Gamma_p^{uv}$ converges almost surely and in the weak sense 
	to a well-defined and compactly supported measure $\mu(\Delta,\alpha,\beta)$. We denote $\mathrm{supp} \, \mu$ its support.
	We introduce a function $z_1$ and a probability measure $\rho_{\beta,\Delta}$ as follows:
		\begin{align}
			z_1(\beta,\Delta) &\equiv \frac{-\beta+\Delta+2 \Delta \sqrt{\beta}}{\Delta(1+\Delta)},  \nonumber \\
			\label{eq_app:def_rho_beta_Delta}
			\frac{\mathrm{d} \rho_{\beta,\Delta}}{\mathrm{d}t} &\equiv \frac{1+\Delta}{\beta} \frac{\mathrm{d}\rho_{{\rm MP}, \beta}}{\mathrm{d}t} \left(\frac{1+\Delta}{\beta} t + \frac{1+\Delta}{\Delta}\right).
		\end{align}
		Note that $z_1(\beta,\Delta)$ is the supremum of the support of $\rho_{\beta,\Delta}$. Let finally
	\begin{align}\label{eq:Delta_pos}
		\Delta_{\rm pos}(\beta) \equiv \frac{\beta}{1 + 2\sqrt{\beta}}.	
	\end{align}
	We separate two cases:
	\begin{itemize}
		\item[$(i)$] If $\Delta \leq \Delta_{\rm pos}(\beta)$, then $z_1(\beta,\Delta) \leq 0$ and $\mathrm{supp}\, \mu \subseteq \bbR_-$.
		\item[$(ii)$] Assume now $\Delta > \Delta_{\rm pos}(\beta)$. Then $z_1(\beta,\Delta) > 0$. The following equation admits a unique solution
		for $s \in (-z_1(\beta,\Delta)^{-1},0)$:
		\begin{align}\label{eq_app:se_uv}
			\alpha \int \rho_{\beta,\Delta}(\mathrm{d}t) \left(\frac{st}{1+st}\right)^2 &= 1.
		\end{align}
		We denote this solution as $s_{\rm edge}(\alpha, \beta, \Delta)$ (or simply $s_{\rm edge}$). We denote $\lambda_{\rm max}(\alpha, \beta, \Delta)$ (or only $\lambda_{\rm max}$) the supremum of the support of $\mu(\Delta, \alpha, \beta)$.
		Then we have:
		\begin{align}
			\lambda_{\rm max} &= 
			\begin{dcases}
				-\frac{1}{s_{\rm edge}} + \alpha \int \rho_{\beta, \Delta}(\mathrm{d}t) \frac{t}{1 + s_{\rm edge}t} \quad & \text{ if } \alpha \leq 1,\\
				\max\left(0,-\frac{1}{s_{\rm edge}} + \alpha \int \rho_{\beta, \Delta}(\mathrm{d}t) \frac{t}{1 + s_{\rm edge}t} \right)\quad & \text{ if } \alpha > 1.
			\end{dcases}
		\end{align}
	\end{itemize}
\end{theorem}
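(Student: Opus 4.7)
The plan is to mirror the proof of Theorem~\ref{thm_app:rmt_vv}, replacing the semicircle law by a shifted and rescaled Marchenko--Pastur law. First, I would reduce to the bulk: since $y = \sqrt{\Delta}\,\xi + \bu\bv^\intercal/\sqrt{p}$, the matrix $y^\intercal y/p$ is a rank-at-most-three additive perturbation of $\Delta\,\xi^\intercal\xi/p$, so by the classical low-rank perturbation theorem, the asymptotic spectral measure of $\Gamma_p^{uv}$ coincides with that of $K\tilde{M}$, where $K \equiv WW^\intercal/k$ and $\tilde{M} \equiv \xi^\intercal\xi/[(1+\Delta)p] - (\beta/\Delta)\,\rI_p$. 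The low-rank perturbation matters only for a BBP-type outlier analysis, not for the bulk characterization that this theorem concerns.

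Next, I would identify the limiting spectral distributions of the two factors. Since $p/n = 1/\beta$, the Marchenko--Pastur limit $\xi^\intercal\xi/n \to \rho_{\mathrm{MP},\beta}$, combined with an affine change of variables, gives that $\tilde{M}$ has LSD $\rho_{\beta,\Delta}$ as in~\eqref{eq_app:def_rho_beta_Delta}; one checks directly that its supremum is $z_1(\beta,\Delta)$, which is nonpositive precisely when $\Delta \leq \Delta_{\mathrm{pos}}(\beta)$. In that regime $\tilde{M}$ is asymptotically nonpositive, and since $K \succeq 0$ and $\mathrm{spec}(K\tilde{M})\setminus\{0\} = \mathrm{spec}(K^{1/2}\tilde{M}K^{1/2})\setminus\{0\}$, one concludes $\mathrm{supp}\,\mu \subseteq \bbR_-$, which is case~$(i)$. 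For $K$, the LSD is $\rho_{\mathrm{MP},1/\alpha}$. By the independence of $W$ and $\xi$ and bi-orthogonal invariance of the two Gaussian ensembles, $K$ and $\tilde{M}$ are asymptotically free, so $\mu(\alpha,\beta,\Delta) = \rho_{\mathrm{MP},1/\alpha}\boxtimes \rho_{\beta,\Delta}$.

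For case~$(ii)$, I would apply the Bai--Silverstein fixed-point equation to $(1/k)W^\intercal \tilde{M} W$, which shares nonzero eigenvalues with $K\tilde M$. This yields the parametric representation
\begin{equation*}
z(s) \;=\; -\frac{1}{s} \;+\; \alpha \int \rho_{\beta,\Delta}(\mathrm{d}t)\,\frac{t}{1 + s t},
\end{equation*}
with $s$ related to the Stieltjes transform $g$ of $\mu$ by $\alpha g(z) = s + (\alpha-1)/z$. The right edge of the bulk is the value $s_{\mathrm{edge}} \in (-z_1^{-1},0)$ at which $\mathrm{d}z/\mathrm{d}s = 0$; computing this derivative and setting it to zero recovers exactly~\eqref{eq_app:se_uv}, and substituting $s_{\mathrm{edge}}$ back into $z(s)$ produces the claimed expression for $\lambda_{\max}$. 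The function $s \mapsto \alpha\int (st/(1+st))^2\,\rho_{\beta,\Delta}(\mathrm{d}t)$ is smooth and monotone on $(-z_1^{-1},0)$, vanishing at $s=0$ and diverging at $s = -z_1^{-1}$, so $s_{\mathrm{edge}}$ exists and is unique.

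The main subtlety to handle is the regime $\alpha > 1$, where $K$ carries an atom of mass $1-1/\alpha$ at $0$ that propagates to $\mu$. When $\Delta$ is small enough (but still above $\Delta_{\mathrm{pos}}(\beta)$), the value of $z(s_{\mathrm{edge}})$ may turn negative, in which case the true supremum of $\mathrm{supp}\,\mu$ is $0$ and the formula must be truncated; this is the origin of the $\max(0,\,\cdot)$ in the statement. Carefully tracking this degeneracy, together with a rigorous justification of asymptotic freeness despite the low-rank perturbation of $\tilde M$, are the only delicate points; both can be dispatched by transplanting the corresponding arguments from the proof of Theorem~\ref{thm_app:rmt_vv}, whose setup is structurally identical modulo the semicircle-versus-Marchenko--Pastur substitution.
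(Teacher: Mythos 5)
Your proposal is correct and takes essentially the same route as the paper: pass to the companion $k\times k$ matrix $\Gamma_k^{uv}=\frac{1}{\Delta k}W^\intercal T_p W$ (which shares the nonzero spectrum of $\Gamma_p^{uv}$), identify the limiting spectral law of $T_p$ (after discarding the harmless low-rank part of $y^\intercal y$) as the shifted and rescaled Marchenko--Pastur measure $\rho_{\beta,\Delta}$, and transpose the Silverstein--Bai fixed-point argument from the proof of Theorem~\ref{thm_app:rmt_vv} with $\rho_\Delta$ replaced by $\rho_{\beta,\Delta}$, including the $\max(0,\cdot)$ truncation for $\alpha>1$ and the existence/uniqueness of $s_{\rm edge}$ on $(-z_1^{-1},0)$. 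The only caveat is that your aside $\mu=\rho_{\mathrm{MP},1/\alpha}\boxtimes\rho_{\beta,\Delta}$ is not load-bearing and should be qualified, since $\rho_{\beta,\Delta}$ is not supported on $\bbR_+$ and the classical $S$-transform product formula does not directly apply --- the paper deliberately avoids the free-multiplicative-convolution framing for exactly this reason (see its remark on non-linear activations) and works straight from the Silverstein equation, as your proof in fact also does for the actual edge computation.
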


We can state the corresponding corollary to this theorem:

\begin{corollary}\label{corollary_app:lambdamax_uv} 
	Let $\alpha, \beta > 0$. Seen as a function of $\Delta$, $\lambda_{\rm max}$ (see Theorem~\ref{thm_app:rmt_vv}) has a unique global maximum, attained exactly at the point $\Delta_c(\alpha,\beta) \equiv \sqrt{\beta(1 + \alpha)}$.
	Moreover, 
	\begin{align*}
		\lambda_{\rm max}(\alpha, \beta, \Delta_c(\alpha, \beta)) &= 1.
	\end{align*} 
\end{corollary}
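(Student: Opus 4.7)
The plan is to mirror the proof of Corollary~\ref{corollary_app:lambdamax_vv} for the symmetric case, with the measure $\rho_\Delta$ replaced by $\rho_{\beta,\Delta}$ from eq.~\eqref{eq_app:def_rho_beta_Delta}. Define
\begin{equation*}
  L(s,\Delta) \equiv -\frac{1}{s} + \alpha \int \rho_{\beta,\Delta}(\mathrm{d}t)\, \frac{t}{1+st},
\end{equation*}
so that $\lambda_{\rm max}(\Delta) = L(s_{\rm edge}(\Delta),\Delta)$, where $s_{\rm edge}(\Delta) \in (-z_1(\beta,\Delta)^{-1},0)$ is the unique solution of the fixed-point equation~\eqref{eq_app:se_uv}. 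The key observation (valid for $\Delta > \Delta_{\rm pos}(\beta)$ so that $s_{\rm edge}$ exists) is an envelope identity: differentiating $L$ in $s$ gives
\begin{equation*}
  \partial_s L(s,\Delta) = \frac{1}{s^2} - \alpha \int \rho_{\beta,\Delta}(\mathrm{d}t)\, \frac{t^2}{(1+st)^2},
\end{equation*}
and dividing eq.~\eqref{eq_app:se_uv} by $s^2$ shows $\partial_s L(s_{\rm edge},\Delta) = 0$. Hence $\frac{\mathrm{d}\lambda_{\rm max}}{\mathrm{d}\Delta} = \partial_\Delta L(s,\Delta)\big|_{s=s_{\rm edge}(\Delta)}$, and the problem reduces to locating the zeros of this single partial derivative.

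The second step is to compute $\partial_\Delta L$ with $s$ held fixed. Since $\rho_{\beta,\Delta}$ is the pushforward of the Marchenko--Pastur measure $\rho_{{\rm MP},\beta}$ under the affine map $T \mapsto \frac{\beta}{1+\Delta} T - \frac{\beta}{\Delta}$, I would rewrite every integral against $\rho_{\beta,\Delta}$ as an integral against the $\Delta$-independent measure $\rho_{{\rm MP},\beta}$; the $\Delta$-dependence then sits entirely in the rational integrand $\frac{at+b}{1+s(at+b)}$ (with $a,b$ explicit functions of $\Delta$), which can be differentiated routinely. The resulting expression for $\partial_\Delta L$ is a rational combination of the three moments $M_k(s) \equiv \int \rho_{{\rm MP},\beta}(\mathrm{d}T)\, T^k/(1+s(aT+b))^2$ for $k=0,1,2$, with coefficients depending on $(s,\Delta,\alpha,\beta)$.

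Setting $\partial_\Delta L = 0$ and combining it with the fixed-point equation~\eqref{eq_app:se_uv} yields an algebraic system whose solution I expect (guided by the known AMP stability threshold in section~\ref{appendix:stability_threshold:wishart}, which already gives $\Delta_c = \sqrt{\beta(1+\alpha)}$ for the linear activation) to admit the explicit pair $\Delta^\star = \sqrt{\beta(1+\alpha)}$ together with a corresponding $s^\star$. Plugging back into $L$ then gives the claimed equality $\lambda_{\rm max}(\alpha,\beta,\Delta_c) = 1$; as a sanity check, the same value $\Delta_c$ is the one at which the linearized AMP eq.~\eqref{eq:spectral:gammas} loses its stability at the uninformative fixed point, so the coincidence is expected on general grounds.

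The main obstacle will be the last part: showing that this critical point is the \emph{unique} global maximum. I would handle it by studying the sign of $\partial_\Delta L(s_{\rm edge}(\Delta),\Delta)$ on either side of $\Delta_c$, using the monotonicity of $s_{\rm edge}(\Delta)$ as a function of $\Delta$ (which follows from implicit differentiation of eq.~\eqref{eq_app:se_uv} and the positivity of $\partial_{ss} L$ at $s_{\rm edge}$) together with boundary behaviour: as $\Delta \downarrow \Delta_{\rm pos}(\beta)$ the bulk collapses into $\mathbb{R}_-$ so $\lambda_{\rm max} \to 0$, and as $\Delta \to \infty$ a scaling analysis of $\rho_{\beta,\Delta}$ shows $\lambda_{\rm max} \to 0$ as well. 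Combined with $\lambda_{\rm max}(\Delta_c)=1$ and continuity, these boundary limits, plus the fact that every interior critical point must satisfy the above algebraic system which has the single admissible solution $\Delta_c$, force $\Delta_c$ to be the unique global maximum.
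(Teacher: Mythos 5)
Your plan is correct and tracks the paper's own proof essentially step for step. The envelope identity $\partial_s L(s_{\rm edge},\Delta)=0$ (in the paper, expressed as $\partial_s g_\nu^{-1}(s_{\rm edge})=0$, which is how $s_{\rm edge}$ is defined) is exactly the mechanism the paper uses to reduce $\dd \lambda_{\rm max}/\dd\Delta$ to $\partial_\Delta z_{\rm edge}$; the paper then evaluates the integral $\int \rho_{\beta,\Delta}(\dd t)\, t/(1+st)$ in closed form, finds that $\partial_\Delta z_{\rm edge}=0$ in the admissible range forces $s_{\rm edge}=-1$ (with $\Delta>\sqrt{\beta}$), substitutes into the self-consistency equation to get $\Delta_c=\sqrt{\beta(1+\alpha)}$, and plugs back to get $z_{\rm edge}=1$. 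Your detour through the MP moments $M_k$ via the affine pushforward is just a reparametrisation of the same explicit Stieltjes-transform computation and yields the same algebraic system; the paper works directly with the closed-form integral against $\rho_{\beta,\Delta}$. Your uniqueness argument (boundary behaviour at $\Delta_{\rm pos}(\beta)$ and at $\Delta\to\infty$, plus a single admissible interior critical point, plus $\lambda_{\rm max}(\Delta_c)=1>0$) is the same logical skeleton the paper uses; the only small imprecision is that at $\Delta=\Delta_{\rm pos}(\beta)$ the paper only asserts $\lambda_{\rm max}\le 0$, not a limit to $0$, which is all that is needed.
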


We can then describe the complete transition.
Proving this transition would follow the same main lines as the proof of the transition in the $\bv \bv^\intercal$ case (Theorem~\ref{thm_app:transition_vv}), but would be significantly heavier.
This is left for future work, so we state the transition in this setting as a conjecture:

\begin{conjecture}\label{conjecture_app:transition_uv}
	Let $\alpha, \beta, \Delta> 0$.
	Let us denote $\lambda_1 \geq \lambda_2$ the first and second eigenvalues of $\Gamma_p^{uv}$.
	Then we have:
	\begin{itemize}
		\item If $\Delta \geq \Delta_c(\alpha, \beta)$, then as $p \to \infty$ we have $\lambda_1 \underset{a.s.}{\to} \lambda_{\rm max}$ and $\lambda_2 \underset{a.s.}{\to} \lambda_{\rm max}$.
		\item If $\Delta \leq \Delta_c(\alpha, \beta)$, then as $p \to \infty$ we have $\lambda_1 \underset{a.s.}{\to} 1$ and $\lambda_2 \underset{a.s.}{\to} \lambda_{\rm max}$.
	\end{itemize}
	Let us denote $\tilde{\bv}$ an eigenvector of $\Gamma_p^{uv}$ with eigenvalue $\lambda_1$, normalized 
	such that $\norm{\tilde{\bv}}^2 = p$. Then:
	\begin{align}
		\frac{1}{p^2}|\tilde{\bv}^\intercal \bv|^2 \underset{a.s.}{\to} \epsilon(\Delta).
	\end{align}
		It satisfies $\epsilon(\Delta) = 0$ for all $\Delta \geq \Delta_c(\alpha, \beta)$,
		$\epsilon(\Delta) > 0$ for all $\Delta < \Delta_c(\alpha, \beta)$ and $\lim_{\Delta \to 0}\epsilon(\Delta) =1$.
\end{conjecture}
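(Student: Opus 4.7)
My proof proposal follows the same Benaych-Georges–Nadakuditi style strategy used for Theorem~\ref{thm_app:transition_vv} in the symmetric case, adapted to the non-Hermitian structure of $\Gamma_p^{uv}$. The plan is to split $\Gamma_p^{uv}$ as pure-noise bulk plus a finite-rank, signal-dependent perturbation and then characterize the outlier via a secular determinantal equation.

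First I would decompose. Expanding $y^\intercal y$ with $y = \sqrt{\Delta}\xi + \bu \bv^\intercal /\sqrt{p}$ gives $y^\intercal y = \Delta \xi^\intercal \xi + \sqrt{\Delta/p}\bigl(\xi^\intercal \bu \bv^\intercal + \bv \bu^\intercal \xi\bigr) + (\|\bu\|^2/p)\bv \bv^\intercal$. Define the bulk operator $M_p^{uv} \equiv \frac{1}{\Delta}\frac{WW^\intercal}{k}\bigl(\frac{\Delta}{(1+\Delta)p}\xi^\intercal \xi - \beta \rI_p\bigr)$; then $\Gamma_p^{uv} = M_p^{uv} + L_p$, where $L_p = \frac{1}{\Delta(1+\Delta)}\frac{WW^\intercal}{k}\, C_p$ with $C_p$ of rank at most three (after absorbing $\|\bu\|^2/p \to \beta$ a.s.). Theorem~\ref{thm_app:rmt_uv} already gives the almost sure weak convergence of the spectral measure of $M_p^{uv}$ to $\mu(\Delta,\alpha,\beta)$ with edge $\lambda_{\rm max}$. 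I would further need edge rigidity for the bulk (no spurious outliers of $M_p^{uv}$ itself), which follows from strong convergence of the edge for products of independent Gaussian/Wishart matrices via an isotropic local law argument.

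Next I would derive and solve the outlier equation. Writing $\det(\lambda I - \Gamma_p^{uv}) = \det(\lambda I - M_p^{uv})\det\bigl(I - (\lambda I - M_p^{uv})^{-1} L_p\bigr)$, any outlier $\lambda \notin \mathrm{supp}\,\mu$ satisfies $\det\bigl(I - R(\lambda)L_p\bigr)=0$, where $R(\lambda) \equiv (\lambda I - M_p^{uv})^{-1}$. Since $L_p$ has finite rank, this reduces to a low-dimensional determinantal equation in the quadratic and cross forms $p^{-1}\bv^\intercal R(\lambda) \frac{WW^\intercal}{k}\bv$, $p^{-3/2}\bv^\intercal R(\lambda) \frac{WW^\intercal}{k}\xi^\intercal \bu$, etc. Using rotation invariance of $W$ and $\xi$ and their independence from $(\bu,\bv)$, I would show each such quadratic form concentrates to a deterministic integral against $\mu(\Delta,\alpha,\beta)$ (supplemented by appropriate Stieltjes transforms of $\rho_{\beta,\Delta}$). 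Matching the resulting scalar equation with Corollary~\ref{corollary_app:lambdamax_uv} will identify the critical threshold as $\Delta_c(\alpha,\beta) = \sqrt{\beta(1+\alpha)}$ and pin the outlier to $\lambda_1 = 1$ whenever $\Delta < \Delta_c$. For $\Delta \geq \Delta_c$, the equation has no solution outside the bulk, so $\lambda_1$ sticks to $\lambda_{\rm max}$. The overlap follows by writing the outlier eigenvector (up to normalization) as a linear combination of the columns of $R(\lambda_1) L_p$ restricted to the signal subspace and applying a Cauchy residue argument at $\lambda = \lambda_1$; this yields a closed-form $\epsilon(\Delta)$, and direct inspection verifies $\epsilon(\Delta_c^-) = 0$ and $\epsilon(\Delta)\to 1$ as $\Delta \to 0$.

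The principal obstacle — and the reason the statement is left as a conjecture — lies in Step~3, the rigorous evaluation of the quadratic forms involving $R(\lambda)$. Unlike the $\bv\bv^\intercal$ case where the signal vector couples to a GOE matrix that is independent of everything else in the bulk, here the cross-term in $L_p$ involves $\xi^\intercal \bu$, which is not independent of the noise Wishart block $\xi^\intercal \xi$ already inside $M_p^{uv}$. Handling this correlation requires either conditioning on $\bu$ and establishing anisotropic resolvent estimates for the product $\frac{WW^\intercal}{k}\bigl(\frac{\xi^\intercal \xi}{p}-\beta \rI_p\bigr)$ along the direction $\xi^\intercal \bu$, or enlarging the matrix to restore independence and invoking an anisotropic local law à la Knowles–Yin. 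The other nuisance, milder, is that $M_p^{uv}$ is a product of two non-commuting random matrices so the Stieltjes transform of $\mu(\Delta,\alpha,\beta)$ must be treated via a subordination/free-multiplicative-convolution identity rather than a single Marchenko–Pastur fixed point; I would exploit eq.~\eqref{eq_app:se_uv} together with the explicit form of $\rho_{\beta,\Delta}$ to close these equations.
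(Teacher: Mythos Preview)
The paper does not prove this statement; it is explicitly left as a conjecture, with the remark that a proof ``would follow the same main lines as the proof of the transition in the $\bv\bv^\intercal$ case (Theorem~\ref{thm_app:transition_vv}), but would be significantly heavier'' and is ``left for future work.'' There is therefore no proof in the paper to compare your proposal against.

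Your sketch follows the natural Benaych-Georges--Nadakuditi route the paper itself suggests, and your identification of the principal obstacle is accurate: the finite-rank perturbation now involves the vector $\xi^\intercal \bu$, which is correlated with the Wishart block $\xi^\intercal\xi$ sitting inside the bulk $M_p^{uv}$, so concentration of the relevant quadratic forms cannot be read off from independence alone. Two remarks. First, the perturbation $C_p$ has rank at most two (its column space lies in $\mathrm{span}\{\bv,\,\xi^\intercal\bu\}$), not three; this only shrinks the secular determinant. Second, note that the paper's proof in the $\bv\bv^\intercal$ case already confronts a correlation issue (there $\bv = W\bz/\sqrt{k}$ is correlated with the $W$ in the bulk) and resolves it not via an anisotropic local law but via the cavity-type hierarchy of Lemma~\ref{lemma_app:hierarchy_Sk}, computing the transforms $S^{(r)}$ and $S^{(r,q)}$ recursively through the Silverstein equation and Sherman--Morrison. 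The most direct completion in the spirit of the paper would be to set up an analogous but larger hierarchy that handles both the $W$--$\bv$ and the $\xi$--$\xi^\intercal\bu$ correlations simultaneously; your alternative of invoking an anisotropic/isotropic local law for the product ensemble would also close the gap and may in fact be cleaner, at the price of importing heavier machinery than the paper uses.
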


%%%%%%%%%%%%%%%%%%%%%%%%%%%%%%%%%%%%%%%%%%%%%%%%%%%%%%%%
\subsection{Proofs}
%%%%%%%%%%%%%%%%%%%%%%%%%%%%%%%%%%%%%%%%%%%%%%%%%%%%%%%%

%%%%%%%%%%%%%%%%%%%%%%%%%%%%%%%%%%%%%%%%%%%%%%%%%%%%%%%%
\subsubsection{Proof of Theorem~\ref{thm_app:rmt_vv} and Corollary~\ref{corollary_app:lambdamax_vv}}
%%%%%%%%%%%%%%%%%%%%%%%%%%%%%%%%%%%%%%%%%%%%%%%%%%%%%%%%
\paragraph{Proof of Theorem~\ref{thm_app:rmt_vv}}
\begin{proof}[Proof of Theorem~\ref{thm_app:rmt_vv} $(ii)$]
	We begin by treating the more involved case $(ii)$, that is we assume $\Delta > \frac{1}{4}$.
 Note first that by basic linear algebra, the spectrum of $\Gamma_p^{vv}$
is, up to $0$ eigenvalues, the same as the spectrum of the following matrix $\Gamma_k^{vv}$:
\begin{align}
	\Gamma_k^{vv} &\equiv \frac{1}{k} W^\intercal \left[\frac{1}{\sqrt{\Delta p}} \xi + \frac{1}{\Delta} \frac{\bv \bv^\intercal}{p} - \frac{1}{\Delta}\rI_p\right] W \in \bbR^{k \times k},
\end{align}
More precisely, if $p \geq k$ (so $\alpha \geq 1$) we have $\mathrm{Sp}\, (\Gamma_p^{vv}) = \mathrm{Sp}\, (\Gamma_k^{vv}) \cup\{0\}^{p-k}$,
and conversely if $k > p$. These additional zero eigenvalues in the case $\alpha > 1$ explain the $\max(0,\cdot)$ term in the conclusion of Theorem~\ref{thm_app:rmt_vv}.

For the remainder of the proof we can thus consider $\Gamma^{vv}_k$ instead of $\Gamma_p^{vv}$ given the remark above.
Moreover, for simplicity we will drop the $vv$ exponent in those matrices, and just denote them $\Gamma_k, \Gamma_p$.
The bulk of $\Gamma_k$ can be studied using standard random matrix theory results. Such matrices were first studied by Marchenko and Pastur in a seminal work \cite{marvcenko1967distribution},
which was generalized (and made rigorous) later in \cite{silverstein1995empirical}.
Note finally that by the celebrated results of Wigner \cite{wigner1993characteristic}, the spectral distribution of the matrix
$\xi/\sqrt{\Delta p} -\rI_{p}/\Delta$ converges in law (and almost surely) as $p \to \infty$ to $\rho_\Delta$, given by eq.~(\ref{eq_app:def_rho_Delta}).
We can then use Theorem~1.1 of \cite{silverstein1995empirical}, that we recall here for our setting:
\begin{theorem}[Silverstein-Bai]\label{thm_app:silverstein}
	Let $p,k \to \infty$ with $p/k \to \alpha > 0$. Let $W \in \bbR^{p \times k}$ be an i.i.d. Gaussian matrix, whose elements come from
	the standard Gaussian distribution $\mathcal{N}(0,1)$. Let $T_p \in \bbR^{p \times p}$ be a random symmetric matrix, independent of $W$, such that the empirical spectral distribution
	of $T_p$ converges (almost surely) in law to a measure $\rho_T$. Then, almost surely, the empirical spectral distribution of $B_k \equiv \frac{1}{k} W^\intercal T_p W$ converges
	in law to a (nonrandom) measure $\mu_B$, whose Stieltjes transform satisfies, for every $z \in \bbC_+$:
	\begin{align}\label{eq_app:silverstein_eq_thm}
		g_{\mu_B}(z) &= -\left[z - \alpha \int \nu_T(\mathrm{d}t) \frac{t}{1+ t g_{\mu_B}(z)}\right]^{-1}.
	\end{align}
	Moreover, for every $z \in \bbC_+$, there is a unique solution to eq.~(\ref{eq_app:silverstein_eq_thm}) such that  $g_{\mu_B}(z) \in \bbC_+$.
	This equation thus characterizes unambiguously the measure $\mu_B$.
\end{theorem}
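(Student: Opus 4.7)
The plan is to prove Silverstein--Bai via the Stieltjes transform method, following the standard random matrix playbook. First, I would establish concentration of $g_{\mu_{B_k}}(z) \equiv \frac{1}{k} \Tr (B_k - zI)^{-1}$ around its expectation. Viewing $g_{\mu_{B_k}}(z)$ as a function of the $k$ independent columns $w_1, \ldots, w_k \in \bbR^p$ of $W$, I would define the Doob martingale $M_i \equiv \EE[g_{\mu_{B_k}}(z) \mid w_1, \ldots, w_i]$ and bound its differences using the rank-one resolvent identity: replacing $w_i$ by an independent copy is a rank-two update to $B_k$, which perturbs the normalized trace of its resolvent by at most $C/(k \, \mathrm{Im}(z))$. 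Azuma--Hoeffding and Borel--Cantelli then give $g_{\mu_{B_k}}(z) - \EE g_{\mu_{B_k}}(z) \to 0$ almost surely, uniformly on compact subsets of $\bbC_+$. In parallel, on the probability-one event where the spectral measure of $T_p$ converges weakly to $\rho_T$, I would condition on the realization of $T_p$ and reduce the problem to a deterministic sequence of symmetric matrices whose spectra converge to $\rho_T$.

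The heart of the proof is deriving the fixed point equation for $\EE g_{\mu_{B_k}}(z)$ using the leave-one-out (Schur complement) technique. Writing $G \equiv (B_k - zI)^{-1}$ and isolating the $i$-th row and column, standard resolvent manipulation yields an expression for $G_{ii}$ in which the dominant term is a quadratic form of the $i$-th column of $W$ in a matrix built from $T_p$, $W^{(i)}$ and $G^{(i)}$ (the analogues with column $i$ deleted). Since $w_i$ is Gaussian and independent of these, Hanson--Wright concentration replaces the quadratic form by its trace up to $o(1)$ error, and a rank-one perturbation argument shows $\frac{1}{k}\Tr G^{(i)} = \frac{1}{k}\Tr G + o(1)$. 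A further resolvent identity relating $B_k$ to its companion $\underline{B}_p \equiv \frac{1}{k} T_p^{1/2} W W^\intercal T_p^{1/2}$ (for $T_p \succeq 0$) rewrites the resulting trace as $\alpha \int \rho_T(\dd t) \, t /(1 + t\, \EE g_{\mu_{B_k}}(z)) + o(1)$. Averaging over $i$ and passing to the limit produces the claimed equation.

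For uniqueness on $\bbC_+$, I would observe that the map
\[
\Phi_z(g) \equiv -\Bigl[z - \alpha \int \rho_T(\dd t) \frac{t}{1 + t g}\Bigr]^{-1}
\]
sends $\bbC_+$ to itself: a direct computation shows $\mathrm{Im}(g)>0$ implies $\mathrm{Im}\bigl(\int \rho_T(\dd t)\, t/(1+tg)\bigr) > 0$, hence $\mathrm{Im}(z - \alpha \int \cdots) > \mathrm{Im}(z)$. Given two solutions $g, g' \in \bbC_+$, subtracting the defining equations factors $g - g'$ through $\int \rho_T(\dd t)\, t^2 / [(1+tg)(1+tg')]$; using $\mathrm{Im}(g), \mathrm{Im}(g') > 0$ together with the a priori bound $|g|, |g'| \le 1/\mathrm{Im}(z)$ (valid for any Stieltjes transform of a probability measure), one shows the resulting multiplier has modulus strictly less than $1$, forcing $g = g'$. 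The Stieltjes continuity theorem then upgrades pointwise convergence of $g_{\mu_{B_k}}(z)$ to $g_{\mu_B}(z)$ on $\bbC_+$ into weak convergence of $\mu_{B_k}$ to a deterministic $\mu_B$.

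The main obstacle is the signed, spectrally unbounded setting for $T_p$: the companion matrix trick requires $T_p \succeq 0$, and the Hanson--Wright step requires at least a uniform operator-norm bound. I would first truncate, replacing $T_p$ by $T_p^{(M)} \equiv T_p \, \indi\{\|T_p\|_{\rm op} \le M\}$, decompose $T_p^{(M)} = (T_p^{(M)})^+ - (T_p^{(M)})^-$ via its spectral decomposition, and run the companion-matrix argument on the augmented PSD matrix $\mathrm{diag}((T_p^{(M)})^+, (T_p^{(M)})^-)$, tracking signs carefully through each Sherman--Morrison step. Sending $M \to \infty$ at the end, tightness of the spectral measures together with continuity of $\Phi_z$ in $\rho_T$ under weak convergence ensure that the truncation error vanishes and the fixed point equation is inherited by the limit.
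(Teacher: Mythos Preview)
The paper does not prove this theorem. It is quoted verbatim as ``Theorem~1.1 of \cite{silverstein1995empirical}'' and used as a black box in the proof of Theorem~\ref{thm_app:rmt_vv}; there is no proof in the paper to compare your proposal against.

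That said, your outline is essentially the original Silverstein--Bai argument, so it is the right plan. Two remarks. First, the leave-one-out that actually drives the fixed-point equation runs over the $p$ \emph{rows} of $W$, not its $k$ columns: diagonalizing $T_p$ and writing $B_k = \frac{1}{k}\sum_{i=1}^p t_i\, w_i w_i^\intercal$ with $w_i \in \bbR^k$ the rows, one removes a single rank-one term and applies Sherman--Morrison. This is exactly how the paper later re-derives eq.~(\ref{eq_app:eq_gnu}) inside the proof of Lemma~\ref{lemma_app:hierarchy_Sk}, and it avoids the companion-matrix detour and the $T_p \succeq 0$ assumption entirely --- no truncation or positive/negative-part splitting is needed, since each $t_i$ simply appears as a scalar weight. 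Second, there is a sign slip in your uniqueness sketch: for $g \in \bbC_+$ one has $\mathrm{Im}\int \rho_T(\mathrm{d}t)\, t/(1+tg) \leq 0$, not $>0$; the conclusion $\mathrm{Im}\bigl(z - \alpha\int\cdots\bigr) \geq \mathrm{Im}(z)$ and hence $\Phi_z(\bbC_+) \subset \bbC_+$ still follows, but the intermediate inequality as written is backwards.
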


Applying Theorem~\ref{thm_app:silverstein} to our setting shows that we can define
$\nu(\alpha, \Delta)$ as the limit eigenvalue distribution of $\Gamma_k$, and we denote $g_\nu(z)$ its Stieltjes transform.
From the remarks above, $\mu(\alpha, \Delta)$ and $\nu(\alpha, \Delta)$ only differ by the addition of a delta distribution. For instance, if $\alpha \geq 1$:
\begin{align}
	\mu(\alpha, \Delta) &= \alpha \nu(\alpha, \Delta) + (1-\alpha) \delta_0.	
\end{align}
The main quantity of interest to us is $z_{\rm edge}$, defined as the supremum of the support of $\nu(\alpha, \Delta)$. If $z_{\rm edge} \geq 0$, then it will also be the supremum 
of the support of $\mu(\alpha, \Delta)$, and thus equal to $\lambda_{\rm max}$.
Theorem~\ref{thm_app:silverstein} shows that for every $z \in \bbC_+ \cup (\bbR \backslash \mathrm{supp} \, \nu)$, $g_\nu(z)$ is the only solution in $\bbC_+ \cup \bbR$ to the following equation:
\begin{align}\label{eq_app:silverstein}
	g_\nu(z) &= -\left[z - \alpha \int \rho_\Delta(\mathrm{d}t) \frac{t}{1+ t g_\nu(z)}\right]^{-1}.
\end{align}
The validity of the equation for $\bbR \backslash \mathrm{supp} \, \nu$ (and not only on $\bbC_+$) follows from the continuity of $g_\nu(z)$ on $\bbC_+ \cup (\bbR \backslash \mathrm{supp} \, \nu)$, a generic property of the Stieltjes transform.
It is easy to see that $g_\nu$ induces a strictly increasing
diffeomorphism $g_\nu : (z_{\rm edge},+\infty) \to (\lim_{z_\to z_{\rm edge}^+} g_\nu(z),0)$, so that we can define its inverse $g_\nu^{-1}$ and from eq.~(\ref{eq_app:silverstein}), it satisfies
for every $s \in (\lim_{z_\to z_{\rm edge}^+} g_\mu(z),0)$:
\begin{align}\label{eq_app:silverstein_inverse}
	g^{-1}_{\nu}(s) &= -\frac{1}{s} + \alpha \int \rho_\Delta(\mathrm{d}t) \frac{t}{1 + st}.
\end{align}

\paragraph{Remark} Note that this can be written in terms of the ${\cal R}$-transform of $\nu$ (an useful tool of free probability):
\begin{align*}
	{\cal R}_\nu(s) &\equiv g_\nu^{-1}(-s) - \frac{1}{s} = \alpha \int \rho_\Delta(\mathrm{d}t) \frac{t}{1 - st}. 
\end{align*}

 In order to compute $z_{\rm edge}$ from eq.~(\ref{eq_app:silverstein}), we use a result of Section~4 of
 \cite{silverstein1995empirical}, also stated for instance in \cite{lee2016tracy}, that describes the form of the support of $\nu(\alpha, \Delta)$.
 It can be stated in the following way. Recall that since $\Delta > \frac{1}{4}$, $z_{1}(\Delta) > 0$ is the maximum of the support of $\rho_\Delta$.
 Let $s_{\rm edge}$ be the unique solution in $(-z_1(\Delta)^{-1},0)$ of
 the equation $(g_\nu^{-1})'(s) = 0$, that is by eq.~(\ref{eq_app:silverstein_inverse}):
 \begin{align}\label{eq_app:sedge_vv}
	\alpha \int \rho_\Delta(\mathrm{d}t) \left(\frac{st}{1 + st}\right)^2 &= 1.
 \end{align}
 Indeed, it is straighforward to show that the left-hand side of eq.~(\ref{eq_app:sedge_vv})
 tends to $0$ as $s \to 0^-$, tends to $+\infty$ as $s \to -z_1(\Delta)^{-1}$, and is a strictly decreasing and continuous function of $s$.
 Then (see for instance eq.~(2.13) and eq.~(2.14) of \cite{lee2016tracy}) $z_{\rm edge}$ is given by
 \begin{align}
	z_{\rm edge} &= \lim_{s \to s_{\rm edge}^+} g_{\nu}^{-1}(s), \nonumber \\
	&=  -\frac{1}{s_{\rm edge}} + \alpha \int \rho_\Delta(\mathrm{d}t) \frac{t}{1 + s_{\rm edge}t}.
 \end{align}
 This ends the proof of $(ii)$. 
\end{proof}
Let us make a final remark that will be useful in our future analysis. Note that $z_1(\Delta) > 1$ for all $\Delta > 1$.
Moreover, for all $\Delta > 1$, we have by an explicit computation:
\begin{align*}
	\alpha \int \rho_\Delta(\mathrm{d}t) \left(\frac{t}{1 - t}\right)^2 &= \frac{\alpha}{\Delta - 1}.
\end{align*}
By the argument above, this yields the following result, that we state as a lemma:
\begin{lemma}\label{lemma_app:sedge_1}
	Assume $\Delta > 1$. Then:
	\begin{itemize}
		\item[$(i)$] If $\Delta < \Delta_c(\alpha)$, then $s_{\rm edge} > -1$.
		\item[$(ii)$] If $\Delta = \Delta_c(\alpha)$, then $s_{\rm edge} = -1$.
		\item[$(iii)$] If $\Delta > \Delta_c(\alpha)$, then $s_{\rm edge} < -1$.
	\end{itemize}	
\end{lemma}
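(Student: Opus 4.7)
The plan is to reduce the lemma to evaluating the integrand defining $s_{\rm edge}$ at the candidate point $s=-1$, then to conclude by monotonicity. First I would check that $-1$ lies in the admissible interval $(-z_1(\Delta)^{-1},0)$ when $\Delta > 1$: setting $x = 1/\sqrt{\Delta}$ gives $z_1(\Delta) = 2x - x^2 = 1 - (x-1)^2 < 1$ whenever $\Delta \neq 1$, so $z_1(\Delta)^{-1} > 1$ and thus $-1 \in (-z_1(\Delta)^{-1},0)$. (As a byproduct, this shows the displayed remark just before the lemma should read $z_1(\Delta) < 1$ rather than $z_1(\Delta)>1$, but only the containment is ever used.)

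Set $F(s) \equiv \alpha \int \rho_\Delta(\mathrm{d}t)\,(st/(1+st))^2$, so that the defining equation for $s_{\rm edge}$ is $F(s_{\rm edge})=1$. Differentiating under the integral yields $F'(s) = 2\alpha s \int \rho_\Delta(\mathrm{d}t)\, t^2/(1+st)^3$, which is strictly negative on $(-z_1(\Delta)^{-1},0)$ since $1+st > 0$ on $\mathrm{supp}\,\rho_\Delta$ there. Combined with $F(0)=0$ and the fact that $F(s) \to +\infty$ as $s \to -z_1(\Delta)^{-1}$ (the density of $\rho_\Delta$ vanishes like a square root at the right edge, so the integrand develops a non-integrable $(z_1-t)^{-3/2}$ singularity), $F$ is a strictly decreasing bijection of $(-z_1(\Delta)^{-1},0)$ onto $(0,+\infty)$, so $s_{\rm edge}$ is uniquely determined by comparing $F(s)$ to $1$.

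The substantive step is computing $F(-1)$. Expanding $(t/(1-t))^2 = 1 - 2/(1-t) + 1/(1-t)^2$ and using $g_{\rho_\Delta}(1) = -\int \rho_\Delta(\mathrm{d}t)/(1-t)$ together with $g_{\rho_\Delta}'(1) = \int \rho_\Delta(\mathrm{d}t)/(1-t)^2$ gives $F(-1) = \alpha\bigl[1 + 2 g_{\rho_\Delta}(1) + g_{\rho_\Delta}'(1)\bigr]$. Since $\rho_\Delta$ is the pushforward of the standard semicircle on $[-2,2]$ under $X \mapsto X/\sqrt{\Delta} - 1/\Delta$, the change of variable yields $g_{\rho_\Delta}(z) = \sqrt{\Delta}\, g_W\bigl(\sqrt{\Delta}(z+1/\Delta)\bigr)$ where $g_W(w) = (-w+\sqrt{w^2-4})/2$. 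At $z=1$ one has $w = (\Delta+1)/\sqrt{\Delta}$, and $\sqrt{w^2-4}=(\Delta-1)/\sqrt{\Delta}$ (using $\Delta>1$ to fix the sign of the square root), giving $g_{\rho_\Delta}(1) = -1$ and $g_{\rho_\Delta}'(1) = \Delta/(\Delta-1)$. Substitution gives the clean identity $F(-1) = \alpha/(\Delta-1)$, which is the computation already asserted in the paragraph preceding the lemma.

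Monotonicity of $F$ then finishes the proof: $F(-1) = 1$ is equivalent to $\Delta - 1 = \alpha$, i.e.\ $\Delta = \Delta_c(\alpha)$. Comparing $F(-1)$ to $F(s_{\rm edge})=1$ and invoking the strict decrease of $F$ one reads off the three cases: $s_{\rm edge} > -1$ when $\Delta < \Delta_c(\alpha)$, $s_{\rm edge} = -1$ when $\Delta = \Delta_c(\alpha)$, and $s_{\rm edge} < -1$ when $\Delta > \Delta_c(\alpha)$. The only computation of any weight is the explicit reduction to the semicircle Stieltjes transform, and this is entirely routine; I do not anticipate any real obstacle.
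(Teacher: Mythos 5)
Your proof takes exactly the same route as the paper: show that $s=-1$ is admissible, compute $F(-1)=\alpha/(\Delta-1)$ explicitly, and conclude by the strict monotonicity of $F$ established in the proof of Theorem~\ref{thm_app:rmt_vv}. You also correctly flag a sign error in the remark preceding the lemma — for $\Delta>1$ one has $z_1(\Delta) = 1-(1-\Delta^{-1/2})^2 < 1$, not $z_1(\Delta)>1$, and it is precisely this inequality that places $-1$ inside $(-z_1(\Delta)^{-1},0)$; the rest of your semicircle-Stieltjes-transform calculation cleanly fills in the "explicit computation" the paper leaves unstated.
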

\begin{proof}[Proof of Theorem~\ref{thm_app:rmt_vv}, $(i)$]
	Assume now $\Delta \leq \frac{1}{4}$. Then the support of $\rho_\Delta$ is a subset of $\bbR_-$.
	Since $0 \in \bbR_-$, we can use again the remark we made in the proof of $(ii)$ to study $\Gamma_k$ instead of
	$\Gamma_p$. Moreover, Theorem~\ref{thm_app:silverstein} still applies here so that we have the Silverstein equation~(\ref{eq_app:silverstein_inverse})
	 for every $s \in \bbC_+$:
	 \begin{align*}
		g^{-1}_{\nu}(s) &= -\frac{1}{s} + \alpha \int \rho_\Delta(\mathrm{d}t) \frac{t}{1 + st}.
	 \end{align*}
	 By the Stieltjes-Perron inversion Theorem~\ref{thm_app:stieltjes}, it is enough to check that for every $z > 0$, there exists a unique $s < 0$ such that
	 $g_\nu^{-1}(s) = z$. Indeed, this will yield $s = g_\nu(z) \in \bbR$. In particular, $\lim_{\epsilon \to 0^+} \mathrm{Im}\, g_\nu(z + i \epsilon) = 0$ for every $z > 0$, which will imply
	  $\mathrm{supp}(\nu) \subseteq \bbR_-$ and thus $\mathrm{supp}(\mu) \subseteq \bbR_-$.

	 Therefore, let $z > 0$. From eq.~(\ref{eq_app:silverstein_inverse}) and the fact that $\mathrm{supp}(\rho_\Delta)\subseteq \bbR_-$, we easily obtain:
	 \begin{align*}
		\lim_{s \to - \infty} g_{\nu}^{-1}(s) &= 0, \\
		\lim_{s \to 0^-} g_{\nu}^{-1}(s) &= + \infty.
	 \end{align*}
	 Moreover, $g_{\nu}^{-1}(s)$ is a strictly increasing  continuous function of $s$, so that the existence and unicity of $s = g_{\nu}(z) < 0$ is
	 immediate, which ends the proof.
\end{proof}

\paragraph{Proof of Corollary~\ref{corollary_app:lambdamax_vv}}
\begin{proof}
	Let us make a few remarks:
	\begin{itemize}
		\item By Theorem~\ref{thm_app:rmt_vv}, we know that if $\Delta \leq \frac{1}{4}$, then $\lambda_{\rm max} \leq 0$.
		\item It is trivial by the form of $\Gamma_p$ that, as $\Delta \to + \infty$, $\lambda_{\rm max} \to 0$.
	\end{itemize}
	Let $z_{\rm edge} = -\frac{1}{s_{\rm edge}} + \alpha \int \rho_\Delta(\mathrm{d}t) \frac{t}{1 + s_{\rm edge}t}$.
	 Then we know that $\lambda_{\rm max} = z_{\rm edge}$ if $\alpha \leq 1$ and
	$\lambda_{\rm max} = \max(0,z_{\rm edge})$ if $\alpha > 1$. In particular, by the remark above, $z_{\rm edge} \leq 0$ for $\Delta = \frac{1}{4}$ and $z_{\rm edge} \to 0^+$ as $\Delta \to \infty$.
	It is easy to see that $z_{\rm edge}$ is a continuous and differentiable function of $\Delta$, so that if we show the two following facts for any $\Delta \geq \frac{1}{4}$:
	\begin{align}
		\label{eq_app:dzedge_delta}
		\frac{\mathrm{d} z_{\rm edge}}{\mathrm{d} \Delta} &= 0 \Leftrightarrow \Delta = \Delta_c(\alpha) = 1+\alpha, \\
		\label{eq_app:zedge}
		z_{\rm edge}(\Delta_c(\alpha)) &= 1,
	\end{align}
	this would end the proof as $z_{\rm edge}$ would necessarily have a unique global maximum, located in $\Delta = \Delta_c(\alpha)$, in which we have
	$\lambda_{\rm max} = 1$. We thus prove eq.~(\ref{eq_app:dzedge_delta}) and eq.~(\ref{eq_app:zedge}) in the following.

	\textbf{Proof of eq.~(\ref{eq_app:dzedge_delta}) \hspace{0.5cm}} By the chain rule:
	\begin{align*}
		\frac{\mathrm{d} z_{\rm edge}}{\mathrm{d} \Delta} &= \frac{\partial z_{\rm edge}}{\partial \Delta} + \frac{\partial s_{\rm edge}}{\partial \Delta} \frac{\partial z_{\rm edge}}{\partial s_{\rm edge}}, \\
			&= \frac{\partial z_{\rm edge}}{\partial \Delta},
	\end{align*}
	using the very definition of $s_{\rm edge}$, eq.~(\ref{eq_app:sedge_vv}), as $z_{\rm edge} = g_\nu^{-1}(s_{\rm edge})$.
	Given the explicit form of $\rho_\Delta$, one can compute easily:
	\begin{align*}
			\frac{\partial z_{\rm edge}}{\partial \Delta} &= - \alpha \frac{s_{\rm edge} + 2 s_{\rm edge}^2 - \Delta + \sqrt{s_{\rm edge}^2 - 2 s_{\rm edge}(1+2s_{\rm edge}) \Delta + \Delta^2}}{2 s_{\rm edge}^3 \sqrt{s_{\rm edge}^2 - 2 s_{\rm edge}(1+2s_{\rm edge}) \Delta + \Delta^2}}.
	\end{align*}
	It is then simple analysis to see that since $s_{\rm edge} < 0$, $\frac{\partial z_{\rm edge}}{\partial \Delta} = 0$ is equivalent to $s_{\rm edge} = -1$ and $\Delta > 1$.
	Recall that $s_{\rm edge}$ is originally defined as a solution to eq.~(\ref{eq_app:sedge_vv}):
	\begin{align*}
		\alpha \int \rho_\Delta(\mathrm{d}t) \left(\frac{s_{\rm edge}t}{1 + s_{\rm edge}t}\right)^2 &= 1.
	\end{align*}
	Inserting $s_{\rm edge} = -1$ into this equation and using the explicit form of $\rho_{\Delta}$ given by eq.~(\ref{eq_app:def_rho_Delta}), and using moreover that $\Delta > 1$,
	this reduces to:
	\begin{align*}
		\frac{\alpha}{\Delta - 1} &= 1,
	\end{align*}
	which is equivalent to $\Delta = \Delta_c(\alpha) = 1+\alpha$.

	\textbf{Proof of eq.~(\ref{eq_app:zedge}) \hspace{0.5cm}} By Lemma~\ref{lemma_app:sedge_1}, we know that for $\Delta = \Delta_c(\alpha)$ we have
	$s_{\rm edge} = -1$. Given eq.~(\ref{eq_app:def_rho_Delta}), it is then straightforward to compute:
	\begin{align*}
		z_{\rm edge}(\Delta_c(\alpha)) &= -1 + \alpha \int \rho_{\Delta_c(\alpha)}(\mathrm{d}t) \frac{t}{1-t}, \\
		&= 1.
	\end{align*}
\end{proof}

%%%%%%%%%%%%%%%%%%%%%%%%%%%%%%%%%%%%%%%%%%%%%%%%%%%%%%%%%%%%%%%
\subsubsection{Proof of Theorem~\ref{thm_app:transition_vv}}
%%%%%%%%%%%%%%%%%%%%%%%%%%%%%%%%%%%%%%%%%%%%%%%%%%%%%%%%%%%%%%%

\paragraph{Transition of the largest eigenvalue}

This part is a detailed outline of the proof.
Some parts of the calculation are not fully rigorous, however they can be justified more precisely 
by following exactly
the lines of \cite{benaych2011eigenvalues} and \cite{silverstein1995analysis}. 
We will emphasize when such refinements have to be made.
 Recall that we have by eq.~(\ref{eq_app:gammap_vv}) the following decomposition of $\Gamma_p^{vv}$ (that we denote $\Gamma_p$ for simplicity):
\begin{align}
	\Gamma_p &= \underbrace{\left[\frac{1}{k} W W^\intercal \right] \, \left[\frac{1}{\sqrt{\Delta p}} \xi - \frac{1}{\Delta}\rI_p\right]}_{\Gamma_p^{(0)}} + \underbrace{\frac{1}{\Delta} \frac{WW^\intercal}{k} \frac{\bv \bv^\intercal}{p}}_{\textrm{rank }1 \textrm{ perturbation}}.
\end{align}
Theorem~\ref{thm_app:rmt_vv} and Corollary.~\ref{corollary_app:lambdamax_vv}, along with their respective proofs, already describe in great detail the limit eigenvalue distribution of $\Gamma_p^{(0)}$.
We first note that for any $\lambda \in \bbR$ that is not an eigenvalue of $\Gamma_p^{(0)}$ one can write:
\begin{align*}
	\det \left(\lambda \rI_p - \Gamma_p\right) &= \det \left(\lambda \rI_p - \Gamma_p^{(0)}\right) \det \left(\rI_p  - \left(\lambda \rI_p - \Gamma_p^{(0)}\right)^{-1} \frac{1}{\Delta} \frac{WW^\intercal}{k} \frac{\bv \bv^\intercal}{p}\right).
\end{align*}
In particular, this implies immediately that $\lambda$ is an eigenvalue of $\Gamma_p$ and not 
an eigenvalue of $\Gamma_p^{(0)}$ if and only if $1$ is an eigenvalue of $\left(\lambda \rI_p - \Gamma_p^{(0)}\right)^{-1} \frac{1}{\Delta} \frac{WW^\intercal}{k} \frac{\bv \bv^\intercal}{p}$.
Since this is a rank-one matrix, its only non-zero eigenvalue is equal to its trace, so it is equivalent to:
\begin{align}\label{eq_app:isolated_lambda_vv}
	1 &= \mathrm{Tr} \, \left[\left(\lambda \rI_p - \Gamma_p^{(0)}\right)^{-1} \frac{1}{\Delta} \frac{WW^\intercal}{k} \frac{\bv \bv^\intercal}{p}\right].
\end{align}
Recall that by definition, $\bv$ is constructed as $\bv = W \bz / \sqrt{k}$, with $\bz$ a standard Gaussian i.i.d. vector in $\bbR^k$, independent of $W$.
For any matrix $A$, we have the classical concentration $\frac{1}{k} \bz^\intercal A \bz = \frac{1}{k}\mathrm{Tr} A$ with high probability as $k \to \infty$.
In eq.~(\ref{eq_app:isolated_lambda_vv}), this yields at leading order as $p\to \infty$:
\begin{align}
	\Delta &= \frac{1}{p} \mathrm{Tr} \, \left[\left(\lambda \rI_p - \Gamma_p^{(0)}\right)^{-1} \left(\frac{WW^\intercal}{k}\right)^2 \right].
\end{align}
We will prefer to use $k \times k$ matrices.
We use the simple linear algebra identity, for any $p \times p$ symmetric matrix $A$, and any integer $q \geq 1$:
\begin{align*}
	\mathrm{Tr}\, \left[\left(\lambda \rI_p - \frac{WW^\intercal}{k} A\right)^{-1} \left(\frac{WW^\intercal}{k}\right)^q\right] &= \mathrm{Tr}\, \left[\left(\lambda \rI_k - \frac{1}{k}W^\intercal A W\right)^{-1} \left(\frac{W^\intercal W}{k}\right)^q\right].
\end{align*} 
This can be derived for instance by expanding both sides in powers of $\lambda^{-1}$ and using the cyclicity of the trace.
Finally, we can state that the eigenvalues of $\Gamma_p$ that are outside of the spectrum of $\Gamma_p^{(0)}$ must satisfy, as $k \to \infty$:
\begin{align}\label{eq_app:isolated_lambda_vv_2}
	\alpha \Delta &= \frac{1}{k} \mathrm{Tr} \, \left[\left(\lambda \rI_k - \Gamma_k^{(0)}\right)^{-1} \left(\frac{W^\intercal W}{k}\right)^2 \right],
\end{align}
with 
\begin{align*}
	\Gamma_k^{(0)} &\equiv \frac{1}{k} W^\intercal \left[\frac{1}{\sqrt{\Delta p}} \xi - \frac{1}{\Delta}\rI_p\right] W.
\end{align*}

We will now make use of two important lemmas, at the core of our analysis.
They will also prove to be useful in the eigenvector correlation analysis.
\begin{lemma}\label{lemma_app:hierarchy_Sk}
	Recall that $\nu$ is the limit eigenvalue distribution of $\Gamma_k^{(0)}$, that the supremum of its support
	is $\lambda_{\rm max}$, and its Stieltjes transform is $g_\nu$.
	For every integer $r \geq 0$, we define:
	\begin{align*}
		S^{(r)}_k(\lambda) &\equiv \frac{1}{k} \mathrm{Tr} \, \left[\left(\Gamma_k^{(0)} - \lambda \rI_k\right)^{-1} \left(\frac{W^\intercal W}{k}\right)^r \right].
	\end{align*}
	For $r \in \{0,1,2,3\}$\footnote{The almost sure convergence could probably be extended to all $r \in \bbN^\star$ but we will only use these values of $r$ in the following.} 
	and every $\lambda > \lambda_{\rm max}$, as $k \to \infty$ $S^{(r)}_k(\lambda)$ converges almost surely to a well defined limit $S^{(r)}(\lambda)$. 
	This limit is given by:
	\begin{align}
		\begin{cases}
			S^{(0)}(\lambda) &= g_\nu(\lambda),\\
			S^{(1)}(\lambda) &= g_\nu(\lambda) \left[\alpha - (1+\lambda g_\nu(\lambda))\right], \\
			S^{(2)}(\lambda) &= g_\nu(\lambda) \left[\alpha (1+\alpha) - (1+2\alpha) (1+\lambda g_\nu(\lambda)) + (1+\lambda g_\nu(\lambda))^2\right], \\
			S^{(3)}(\lambda) &= g_\nu(\lambda) \left[(\alpha+3\alpha^2+\alpha^3) - (1+5\alpha + 3\alpha^2) (1+\lambda g_\nu(\lambda)) \right. \\
			&\left. \hspace{1.5cm} + (2+3\alpha)(1+\lambda g_\nu(\lambda))^2 - (1+\lambda g_\nu(\lambda))^3\right].
		\end{cases}
	\end{align}
	We define similarly for every integer $r,q \geq 0$:
	\begin{align*}
		S^{(r,q)}_k(\lambda) &\equiv \frac{1}{k} \mathrm{Tr} \, \left[\left(\Gamma_k^{(0)} - \lambda \rI_k\right)^{-1} \left(\frac{W^\intercal W}{k}\right)^r \left(\Gamma_k^{(0)} - \lambda \rI_k\right)^{-1} \left(\frac{W^\intercal W}{k}\right)^q\right].
	\end{align*}
	Note that $S^{(r,q)}_k = S^{(q,r)}_k$ and that $S^{(r,0)}_k(\lambda) = \partial_z S_k^{(r)}(\lambda)$. 
	For every $\lambda > \lambda_{\rm max}$, $S^{(1,1)}_k(\lambda)$ and $S^{(1,2)}_k(\lambda)$ converge almost surely (as $k \to \infty$) to well-defined limits, that satisfy the following equations:
	\begin{align*}
		&S^{(1,1)}(\lambda) = g_\nu(\lambda) S^{(2)}(\lambda) - \left[1 + \lambda g_\nu(\lambda)\right] \partial_\lambda S^{(1)}(\lambda) \\ 
		&+ \alpha g_\nu(\lambda) \left[g_\nu(\lambda) + S^{(1)}(\lambda)\right] \int \frac{\rho_\Delta(\mathrm{d}t) t}{\left(1+t g_\nu(\lambda)\right)^2} \left[t \, \partial_\lambda S^{(1)}(\lambda) - g_\nu(\lambda)\right], \\
		&S^{(1,2)}(\lambda) = g_\nu(\lambda) S^{(3)}(\lambda) - \left[1 + \lambda g_\nu(\lambda)\right] \left[S^{(1,1)}(\lambda) + (1+\alpha)\partial_\lambda S^{(1)}(\lambda) \right]\\ 
		&+ \alpha g_\nu(\lambda) \left[(1+\alpha) g_\nu(\lambda) + S^{(1)}(\lambda)+ S^{(2)}(\lambda)\right] \int \frac{\rho_\Delta(\mathrm{d}t) t}{\left(1+t g_\nu(\lambda)\right)^2} \left[t \, \partial_\lambda S^{(1)}(\lambda) - g_\nu(\lambda)\right].
	\end{align*}
\end{lemma}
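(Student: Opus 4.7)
The plan is to use the cavity method together with the Silverstein--Bai deterministic equivalent to compute the traces in the hierarchy, then close the resulting identities via the Silverstein equation characterising $g_\nu$.

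First I would use the intertwining identity $W^\intercal (T_p K - \lambda)^{-1} = (\Gamma_k^{(0)} - \lambda)^{-1} W^\intercal$, combined with cyclic invariance of the trace and the telescoping relation $W(W^\intercal W/k)^r W^\intercal/k = K^{r+1}$, to rewrite
\begin{equation*}
S^{(r)}_k(\lambda) = \frac{1}{k} \mathrm{Tr}\left[(T_p K - \lambda)^{-1} K^r\right], \qquad r \geq 1,
\end{equation*}
where $T_p \equiv (\Delta p)^{-1/2}\xi - \Delta^{-1}\rI_p$ and $K \equiv WW^\intercal/k$. The case $r = 0$ differs from $\frac{1}{k}\mathrm{Tr}[(T_pK - \lambda)^{-1}]$ by $-(\alpha-1)/\lambda$, accounting for the excess zero eigenvalues in the non-square case. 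I would then establish the algebraic recursion
\begin{equation*}
\frac{1}{k}\mathrm{Tr}\left[T_p(T_pK - \lambda)^{-1}K^{r+1}\right] = m_r + \lambda S^{(r)}_k(\lambda),
\end{equation*}
obtained from $\Gamma_k^{(0)}(\Gamma_k^{(0)} - \lambda)^{-1} = \rI_k + \lambda(\Gamma_k^{(0)} - \lambda)^{-1}$ pushed through the intertwining, where $m_r$ is the $r$-th moment of $W^\intercal W/k$, converging to the Narayana-type moments of the Marchenko--Pastur distribution ($m_0^\infty = 1, m_1^\infty = \alpha, m_2^\infty = \alpha(1+\alpha), m_3^\infty = \alpha(1 + 3\alpha + \alpha^2)$).

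The main technical tool is the Silverstein deterministic equivalent: by the single-column cavity expansion $K = K_a + w_aw_a^\intercal/k$ and the Sherman--Morrison formula, combined with Hanson--Wright concentration for quadratic forms in Gaussian vectors, one obtains that $(T_pK - \lambda)^{-1}$ is asymptotically equivalent in trace to the deterministic matrix $-\lambda^{-1}(\rI_p + g_\nu(\lambda)\, T_p)^{-1}$. Consistency of this identification is exactly the content of Silverstein's equation, which can be rewritten as $u \equiv 1 + \lambda g_\nu = \alpha g_\nu \int \rho_\Delta(\mathrm{d}t)\, t/(1+tg_\nu)$ and yields the complementary identity $\int \rho_\Delta(\mathrm{d}t)/(1 + tg_\nu) = (\alpha - u)/\alpha$. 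Any trace of the form $\frac{1}{k}\mathrm{Tr}[(T_pK - \lambda)^{-1}T_p^s K^r]$ can now be reduced to combinations of the moment integrals $\int \rho_\Delta(\mathrm{d}t)\, t^{s'}/(1+tg_\nu)^{m'}$, which then simplify to polynomials in $u$ and $g_\nu$ by repeated use of Silverstein's equation. Bootstrapping from $S^{(0)} = g_\nu$ via the recursion above and a parallel cavity computation directly yields the stated closed forms for $S^{(1)}, S^{(2)}, S^{(3)}$; almost-sure convergence follows at each step from the standard concentration of Gaussian quadratic forms and the $O(1/k)$ negligibility of cavity corrections.

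The principal obstacle is the double-resolvent quantities $S^{(1,1)}$ and $S^{(1,2)}$, since differentiating $S^{(r)}(\lambda)$ in $\lambda$ only produces $S^{(r,0)} = \partial_\lambda S^{(r)}$. My plan is to introduce the two-variable generating object
\begin{equation*}
T^{(r,q)}(\lambda,\mu) \equiv \lim_k \frac{1}{k}\mathrm{Tr}\left[(\Gamma_k^{(0)} - \lambda)^{-1}(W^\intercal W/k)^r (\Gamma_k^{(0)} - \mu)^{-1}(W^\intercal W/k)^q\right],
\end{equation*}
and perform the cavity expansion once with respect to each resolvent. The self-consistent equations close onto $S^{(r)}(\lambda)$, $S^{(q)}(\mu)$, their derivatives, and the new two-parameter integral $\int \rho_\Delta(\mathrm{d}t)\, t/[(1+tg_\nu(\lambda))(1+tg_\nu(\mu))]$; specialising $\mu \to \lambda$ then produces the stated formulas. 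Crucially, the integral $\int \rho_\Delta(\mathrm{d}t)\, t/(1 + tg_\nu)^2$ surviving in the final expression cannot be reduced to a polynomial in $u$ using Silverstein's equation alone, which is exactly why the stated formulas retain this integral explicitly and why a second-order cavity argument --- rather than a mere differentiation of the single-resolvent hierarchy --- is genuinely required.
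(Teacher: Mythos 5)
Your proposal follows essentially the same route as the paper's proof: a Silverstein--Bai style cavity expansion with Sherman--Morrison, concentration of Gaussian quadratic forms, and closure via the Silverstein self-consistent equation, together with the key realization that $S^{(1,1)}$ and $S^{(1,2)}$ require a genuine second cavity pass rather than differentiation of the single-resolvent hierarchy. The reformulations you introduce (intertwining to the $p\times p$ picture, the explicit deterministic equivalent $-\lambda^{-1}(\rI_p + g_\nu T_p)^{-1}$, and the two-variable object $T^{(r,q)}(\lambda,\mu)$ before taking $\mu\to\lambda$) are bookkeeping variants of the paper's direct $k\times k$ computation, not a different argument. One small slip worth fixing: pushing $(\Gamma_k^{(0)}-\lambda)^{-1}\Gamma_k^{(0)}=\rI_k+\lambda(\Gamma_k^{(0)}-\lambda)^{-1}$ through the intertwining and telescoping gives $\frac{1}{k}\mathrm{Tr}\bigl[(T_pK-\lambda)^{-1}T_pK^{r+1}\bigr]=m_r+\lambda S_k^{(r)}(\lambda)$, not $\frac{1}{k}\mathrm{Tr}\bigl[T_p(T_pK-\lambda)^{-1}K^{r+1}\bigr]$; since $T_p$ and $K^{r+1}$ do not commute these traces differ, and you should also keep in mind that for $r\ge 1$ the factor $K^r$ is correlated with the resolvent, so the deterministic equivalent cannot be inserted blindly and the cavity step is needed at each order, as you indeed acknowledge.
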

\begin{lemma}\label{lemma_app:evalue_transition_vv}
	Let $\alpha, \Delta > 0$.
	We focus mainly on $S^{(2)}(\lambda)$. We have:
	\begin{itemize}
		\item[$(i)$] For every $r$, $S^{(r)}(\lambda)$ is a strictly increasing function of $\lambda$, and $\lim_{\lambda \to \infty} S^{(r)}(\lambda) = 0$.
		\item[$(ii)$] For every $\lambda > \lambda_{\rm max}$, $S^{(2)}(\lambda) = - \alpha \Delta$ if and only if $\Delta \leq \Delta_c(\alpha)$ and
	$\lambda = 1$.
		\item[$(iii)$] For every $\Delta > \Delta_c(\alpha)$, $\lim_{\lambda \to \lambda_{\rm max}} S^{(2)}(\lambda) \in (-\alpha \Delta, 0)$ (it is well defined by monotonicity of $S^{(2)}(\lambda)$).
	\end{itemize}
\end{lemma}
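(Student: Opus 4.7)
Part (i) is the most mechanical and I would dispatch it first. For fixed finite $k$ and $\lambda > \lambda_{\rm max}(\Gamma_k^{(0)})$, differentiating under the trace gives
\[
\partial_\lambda S_k^{(r)}(\lambda) = \frac{1}{k}\mathrm{Tr}\!\left[(\Gamma_k^{(0)}-\lambda I)^{-2}(W^\intercal W/k)^r\right],
\]
which is strictly positive since it is the trace of the product of the positive definite matrix $(\Gamma_k^{(0)}-\lambda I)^{-2}$ and the positive semidefinite matrix $(W^\intercal W/k)^r$ (write $\mathrm{Tr}(AB) = \mathrm{Tr}(A^{1/2}BA^{1/2})$). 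Strict monotonicity of the limit $S^{(r)}$ then follows from the a.s.\ convergence and convexity considerations. For the limit at infinity, expand $(\Gamma_k^{(0)}-\lambda I)^{-1} = -\lambda^{-1} I - \lambda^{-2}\Gamma_k^{(0)} + O(\lambda^{-3})$ inside the trace to obtain $S^{(r)}(\lambda) = -\lambda^{-1}\,\mathbb{E}_{\mathrm{MP}_\alpha}[t^r] + O(\lambda^{-2}) \to 0$.

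For (ii) the plan is to massage the formula of Lemma~\ref{lemma_app:hierarchy_Sk} into a form where the Silverstein equation eliminates one unknown. Write $u \equiv 1+\lambda g_\nu(\lambda)$; then $S^{(2)}(\lambda) = g_\nu(\lambda)(u-\alpha)(u-1-\alpha)$. The Silverstein equation~(\ref{eq_app:silverstein_inverse}) is equivalent to
\[
u = \alpha\bigl(1-R(g_\nu(\lambda))\bigr), \qquad R(s)\equiv \int \frac{\rho_\Delta(\mathrm{d}t)}{1+ts},
\]
so $(u-\alpha) = -\alpha R(g_\nu)$ and $(u-1-\alpha) = -1-\alpha R(g_\nu)$, giving the clean identity
\[
S^{(2)}(\lambda) = \alpha\,g_\nu(\lambda)\,R(g_\nu(\lambda))\bigl(1+\alpha R(g_\nu(\lambda))\bigr).
\]
Specialising to $\lambda=1$, the Silverstein equation reads $R(g) = (\alpha-1-g)/\alpha$ for $g=g_\nu(1)$, so $S^{(2)}(1) = g(\alpha-1-g)(\alpha-g)$. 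The condition $S^{(2)}(1)=-\alpha\Delta$ is therefore the cubic $g(\alpha-1-g)(\alpha-g) = -\alpha\Delta$. I would then verify: (a) at $\Delta=\Delta_c(\alpha)=1+\alpha$ the root $g=-1$ of this cubic coincides with the limiting edge value $s_{\rm edge}=-1$ identified in Lemma~\ref{lemma_app:sedge_1}, matching $\lambda=\lambda_{\rm max}=1$ from Corollary~\ref{corollary_app:lambdamax_vv}; (b) for $\Delta<\Delta_c$ the cubic possesses a root in the admissible interval $(s_{\rm edge}(\Delta),0)$, which by uniqueness of the Silverstein solution on this interval must equal $g_\nu(1)$, so $S^{(2)}(1)=-\alpha\Delta$ genuinely holds; (c) uniqueness in $\lambda$ follows from strict monotonicity (i). The converse direction---that a solution forces $\Delta\le\Delta_c$---follows from (iii) applied contrapositively.

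For (iii) I would compute $\lim_{\lambda\to\lambda_{\rm max}^+}S^{(2)}(\lambda)$ by substituting $g_\nu(\lambda_{\rm max}^+)=s_{\rm edge}$ into the same factored expression derived in (ii). Using Lemma~\ref{lemma_app:sedge_1}(iii), namely $s_{\rm edge}<-1$ for $\Delta>\Delta_c$, one can bound the corresponding value $g(\alpha-1-g)(\alpha-g)$ at $g=s_{\rm edge}$ and show it strictly exceeds $-\alpha\Delta$; the upper bound by $0$ is immediate from the factored form and sign analysis. The main obstacle, and the step I would spend most care on, is part (ii)(b): showing that the correct branch of the cubic---the one sitting in $(s_{\rm edge}(\Delta),0)$---is exactly traversed as $\Delta$ decreases from $\Delta_c$ to $0$. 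Concretely one must analyse the discriminant of the cubic as a function of $\Delta$, or equivalently track how the three real roots split and reconnect as $\Delta$ crosses $\Delta_c$, checking that the ``spurious'' root $g=-1$ that formally solves Silverstein at $\lambda=1$ for all $\Delta\ge 1$ falls outside the admissible interval whenever $\Delta\neq\Delta_c$, while a genuine admissible root emerges precisely in the recovery regime $\Delta\le\Delta_c$.
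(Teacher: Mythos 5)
Your reduction of $S^{(2)}(\lambda)$ via the $R$-transform ($u = 1+\lambda g_\nu(\lambda) = \alpha(1-R(g_\nu))$, hence $S^{(2)}(\lambda) = \alpha g_\nu R(g_\nu)(1+\alpha R(g_\nu))$) is a cleaner packaging than the paper's route, which writes $T^{(2)}(s) \equiv S^{(2)}(g_\nu^{-1}(s))$ directly and then produces by brute-force algebra the identity
\[
T^{(2)}(s) = -\alpha\Delta + \alpha\bigl[g_\nu^{-1}(s)-1\bigr]\,\frac{s-\Delta-2s\Delta+\sqrt{s^2 - 2s(1+s)\Delta + \Delta^2}}{2(1+s)},
\]
from which $T^{(2)}(s)=-\alpha\Delta$ forces $g_\nu^{-1}(s)=1$ whenever $s\neq -1$. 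Your factorization $S^{(2)}(1)=g(\alpha-1-g)(\alpha-g)$ is correct and recovers the same object, but your plan has a genuine gap at the crucial step (ii)(b). You write that the cubic ``possesses a root in the admissible interval $(s_{\rm edge},0)$, which by uniqueness of the Silverstein solution on this interval must equal $g_\nu(1)$.'' This is a non-sequitur: the cubic $g(\alpha-1-g)(\alpha-g)=-\alpha\Delta$ is derived from the \emph{conclusion} $S^{(2)}(1)=-\alpha\Delta$, and is a priori a different equation from the Silverstein self-consistency that defines $g_\nu(1)$. Uniqueness of the Silverstein solution does not force a root of an unrelated cubic to coincide with it. What saves the argument is a non-obvious algebraic fact specific to the semicircle $\rho_\Delta$: combining the Silverstein relation $G \equiv g_{\rho_\Delta}(-1/g)=g(g+1-\alpha)/\alpha$ with the semicircle self-consistency quadratic $g\,G^2 + G(\Delta-g) + g\Delta = 0$ collapses, after clearing denominators, to exactly your cubic. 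Your factorization therefore yields $S^{(2)}(1) = \alpha G(g-\alpha)$, and the quadratic gives $G(g-\alpha)=-\Delta$, so $S^{(2)}(1)=-\alpha\Delta$ identically for all $\Delta$ with $1>\lambda_{\rm max}$. This verification, using the explicit Stieltjes quadratic for $\rho_\Delta$, is the missing ingredient; the discriminant analysis of the cubic that you propose does not supply it, because the cubic by itself does not pin down $g_\nu(1)$.

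Part (iii) contains a second error. The expression $g(\alpha-1-g)(\alpha-g)$ that you plan to evaluate at $g=s_{\rm edge}$ was derived using $u = 1 + \lambda g$ \emph{with $\lambda = 1$}. For $\Delta > \Delta_c(\alpha)$ you have $\lambda_{\rm max} < 1$ (Corollary~\ref{corollary_app:lambdamax_vv}), so at the edge the correct substitution is $u = 1 + \lambda_{\rm max}\, s_{\rm edge}$, not $1+s_{\rm edge}$. Substituting the $\lambda=1$ expression at $\lambda_{\rm max}\neq 1$ gives the wrong value. The paper sidesteps this by showing directly, via the explicit $T^{(2)}$ identity and the separately computed value $T^{(2)}(-1)=-\alpha(1+\alpha)$ (for $\Delta\geq 1$), that $T^{(2)}(s)=-\alpha\Delta$ has no solution in $(s_{\rm edge},0)$ when $\Delta > \Delta_c(\alpha)$, and then concludes $T^{(2)}>-\alpha\Delta$ on that interval by continuity and $\lim_{s\to 0}T^{(2)}(s)=0$. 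Your plan for (iii) would need the analogous argument phrased in terms of the general $\lambda$-dependent factorization $S^{(2)}(\lambda) = g_\nu(\lambda)\,(u-\alpha)(u-1-\alpha)$. Part (i) is fine.
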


Let us see how item $(ii)$ of Lemma~\ref{lemma_app:evalue_transition_vv} and eq.~(\ref{eq_app:isolated_lambda_vv_2}) end the proof of the 
eigenvalue transition. First, note that by the celebrated Weyl's interlacing inequalities \cite{weyl1949inequalities}, we have:
\begin{align*}
	\liminf_{p \to \infty} \lambda_1 &\geq \lambda_{\rm max}, \\ 	
	\limsup_{p \to \infty} \lambda_2 &\leq \lambda_{\rm max}.
\end{align*}
This implies that because the perturbation of the matrix is of rank one, \emph{at most one} outlier eigenvalue will exist in the limit $p \to \infty$.
By eq.~(\ref{eq_app:isolated_lambda_vv_2}), this outlier $\lambda_1$ exists if and only if it satisfies, in the large $p \to \infty$ limit, the equation 
$S^{(2)}(\lambda_1) = - \alpha \Delta$. By item $(ii)$ of Lemma~\ref{lemma_app:evalue_transition_vv}, this is the case only for $\lambda_1 = 1$ and $\Delta \leq \Delta_{c}(\alpha)$, 
which ends the proof.
A completely rigorous treatement of these arguments requires to state more precisely concentration results. 
Such a treatment has been made in \cite{benaych2011eigenvalues} in a very close case (from which all the arguments transpose), and we refer to it for more details.
We finally describe the proofs of the lemmas in the following.
\begin{proof}[Proof of Lemma.~\ref{lemma_app:hierarchy_Sk}]
	The essence of the computation originates from the derivation of Theorem~\ref{thm_app:silverstein} in \cite{silverstein1995empirical}.
	Note that $S_k^{(0)}(\lambda)$ converges a.s. to the Stieljtes transform $g_\nu(\lambda)$ as $k \to \infty$ by Theorem~\ref{thm_app:silverstein}.
	For every $1 \leq i \leq p$, $w_i$ denotes the $i$-th row of $W$. We denote $y = \frac{1}{\sqrt{\Delta p}} \xi - \frac{1}{\Delta} \rI_p$.
	Since $W$ is independent of $y$, we can denote $y_1,\cdots,y_p$ the eigenvalues of $y$, and their empirical distribution converges a.s. to $\rho_\Delta$ as we know.
	We have in distribution:
	\begin{align*}
		\Gamma_k^{(0)} &= \frac{1}{k} W^\intercal \, y\, W \overset{d}{=} \frac{\alpha}{p} \sum_{i=1}^p y_i \, w_i \, w_i^\intercal. 
	\end{align*}
	For every $i$, we denote:
	\begin{align*}
		\Gamma_{k,i}^{(0)} &\equiv = \frac{\alpha}{p} \sum_{j(\neq i)}^p y_j \, w_j \, w_j^\intercal. 
	\end{align*}
	Note that $\Gamma_{k,i}^{(0)}$ is independent of $w_i$.
	We start from the (trivial) decomposition, for every $\lambda$:
	\begin{align}\label{eq_app:starting_point_cavity}
		-\frac{1}{\lambda} &= \left(\Gamma_k^{(0)} - \lambda \rI_k\right)^{-1}	- \frac{1}{\lambda} \frac{W^\intercal \, y \, W}{k} \left(\Gamma_k^{(0)} - \lambda \rI_k\right)^{-1}.
	\end{align}
	We will make use of the Sherman-Morrison formula that gives the inverse of a matrix perturbed by a rank-one change:
	\begin{align}
		\label{eq_app:Sherman-Morrison_full}
		\left(B + \tau \omega \omega^\intercal\right)^{-1}	&= B^{-1} - \frac{1}{1 + \tau \omega^\intercal B^{-1} \omega} B^{-1} \omega \omega^\intercal B^{-1}, \\
		\label{eq_app:Sherman-Morrison}
		\omega^\intercal \left(B + \tau \omega \omega^\intercal\right)^{-1}	&= \frac{1}{1 + \tau \omega^\intercal B^{-1} \omega} \omega^\intercal B^{-1}.
	\end{align}
	Using it in eq.~(\ref{eq_app:starting_point_cavity}) yields:
	\begin{align}\label{eq_app:starting_point_cavity_2}
		-\frac{1}{\lambda} &= \left(\Gamma_k^{(0)} - \lambda \rI_k\right)^{-1}	- \frac{\alpha}{\lambda} \frac{1}{p} \sum_{i=1}^p y_i \frac{w_i}{1 + \frac{y_i}{k} w_i^\intercal  (\Gamma^{(0)}_{k,i} - \lambda \rI_k)^{-1} w_i} w_i^\intercal \left(\Gamma_{k,i}^{(0)} - \lambda \rI_k\right)^{-1}.
	\end{align}
	 Taking the trace of eq.~(\ref{eq_app:starting_point_cavity_2}), 
	using the independence of $w_i$ and $\Gamma_{k,i}^{(0)}$, 
	and the concentration $\frac{1}{k} w_i^\intercal A w_i = \frac{1}{k} \mathrm{Tr} A$ with high probability for large $k$, we obtain the following equation:
	\begin{align}\label{eq_app:eq_gnu}
		-\frac{1}{\lambda} = g_\nu(\lambda)  - g_{\nu}(\lambda) \frac{\alpha}{\lambda} \int \rho_\Delta(\mathrm{d}t) \frac{t}{1+ t g_\nu(\lambda)}.
	\end{align}
	This is exactly the identity in Theorem~\ref{thm_app:silverstein} ! In the following, we will use very similar identities.
	A completely rigorous derivation of these would, however, require many technicalities to ensure in particular the concentration of all the involved quantities.
	It would exactly follow the proof of \cite{silverstein1995empirical}, and thus we do not repeat all the technicalities here.
	We can multiply eq.~(\ref{eq_app:starting_point_cavity_2}) by $\frac{W^\intercal W}{k}$, and take the trace:
	\begin{align*}
		- \frac{1}{\lambda} \frac{1}{k} \mathrm{Tr} \,\left[ \frac{WW^\intercal}{k}	\right] = S^{(1)}_k(\lambda) - \frac{\alpha}{\lambda} \frac{1}{p} \sum_i y_i \frac{ \frac{w_i^\intercal}{\sqrt{k}} \left(\Gamma_{k,i}^{(0)} - \lambda \rI_k\right)^{-1} \left(\frac{1}{k} \sum_{j(\neq i)} w_j w_j^\intercal + \frac{1}{k} w_i w_i^\intercal\right) \frac{w_i}{\sqrt{k}}}{1 + \frac{y_i}{k} w_i^\intercal  (\Gamma^{(0)}_{k,i} - \lambda \rI_k)^{-1} w_i}.
	\end{align*}
	In the large $p,k$ limit, this implies that $S_k^{(1)}(\lambda)$ converges to a well-defined limit $S^{(1)}(\lambda)$, and this limit satisfies:
	\begin{align*}
		- \frac{\alpha}{\lambda} = S^{(1)}(\lambda) - \frac{\alpha}{\lambda} \left[\int \rho_\Delta(\mathrm{d}t) \frac{t}{1 +t g_\nu(\lambda)}\right] \left(g_\nu(\lambda) + S^{(1)}(\lambda)\right)
	\end{align*}
	Using finally eq.~(\ref{eq_app:eq_gnu}), it is equivalent to:
	\begin{align*}
		S^{(1)}(\lambda) &= g_\nu(\lambda) \left[\alpha - (1 + \lambda g_\nu(\lambda))\right].	
	\end{align*}
	Multiplying eq.~(\ref{eq_app:starting_point_cavity_2}) by $\left(\frac{W^\intercal W}{k}\right)^2$ or $\left(\frac{W^\intercal W}{k}\right)^3$ yields, by the same analysis:
	\begin{align*}
		S^{(2)}(\lambda) &= g_\nu(\lambda) \left[\alpha (1+\alpha) - (1+2\alpha) (1+\lambda g_\nu(\lambda)) + (1+\lambda g_\nu(\lambda))^2\right], \\
		S^{(3)}(\lambda) &= g_\nu(\lambda) \left[(\alpha+3\alpha^2+\alpha^3) - (1+5\alpha + 3\alpha^2) (1+\lambda g_\nu(\lambda)) \right. \\
		&\left. \hspace{1.5cm} + (2+3\alpha)(1+\lambda g_\nu(\lambda))^2 - (1+\lambda g_\nu(\lambda))^3\right].
	\end{align*}
	The convergence of $S_k^{(1,1)}(\lambda)$ and $S_k^{(1,2)}(\lambda)$ follows from the same analysis, as well as the equations they satisfy.
	We detail the derivation of the equation on $S^{(1,1)}(\lambda)$ and leave the derivation of the second equation for the reader.
	We multiply eq.~(\ref{eq_app:starting_point_cavity_2}) by $\frac{W^\intercal W}{k}$. To simplify the calculations, we make use of concentrations, and denote $F_{i} \equiv \frac{W^\intercal W}{k} - \frac{1}{k} w_i w_i^\intercal$, which is independent of $w_i$.
	We obtain at leading order as $p \to \infty$:
	\begin{align*}
		-\frac{W^\intercal W}{k \lambda} &= \left(\Gamma_k^{(0)} - \lambda \rI_k\right)^{-1} \frac{W^\intercal W}{k}	- \frac{\alpha}{\lambda} \frac{1}{p} \sum_{i=1}^p \frac{y_i}{1 + y_i g_\nu(\lambda)} w_i w_i^\intercal \left(\Gamma_{k,i}^{(0)} - \lambda \rI_k\right)^{-1} F_i \\
		& \hspace{1cm}- \frac{\alpha}{\lambda} \frac{1}{p} \sum_{i=1}^p \frac{y_i g_\nu(\lambda)}{1 + y_i g_\nu(\lambda)} w_i w_i^\intercal.
	\end{align*}
	We multiply this equation by $(\Gamma_k^{(0)} - \lambda \rI_k)^{-1}$ and we use Sherman-Morrison formula eq.~(\ref{eq_app:Sherman-Morrison_full}):
	\begin{align*}
		\left(\Gamma_k^{(0)} - \lambda \rI_k\right)^{-1} &=\left(\Gamma_{k,i}^{(0)} - \lambda \rI_k\right)^{-1} - \left(\Gamma_{k,i}^{(0)} - \lambda \rI_k\right)^{-1} \frac{y_i w_i w_i^\intercal}{1 + y_i g_\nu(\lambda)} \left(\Gamma_{k,i}^{(0)} - \lambda \rI_k\right)^{-1}.
	\end{align*}
	Using again the concentration of $\frac{1}{k} w^\intercal A w$ on $\frac{1}{k} \mathrm{Tr}[A]$, this yields the cumbersome expression:
	\begin{align}
		\label{eq_app:S11_last}
		-\frac{W^\intercal W}{k \lambda} \left(\Gamma_k^{(0)} - \lambda \rI_k\right)^{-1} &= \left(\Gamma_k^{(0)} - \lambda \rI_k\right)^{-1} \frac{W^\intercal W}{k}	\left(\Gamma_k^{(0)} - \lambda \rI_k\right)^{-1} \\
		&- \frac{\alpha}{\lambda} \frac{1}{p} \sum_{i=1}^p \frac{y_i}{1 + y_i g_\nu(\lambda)} w_i w_i^\intercal \left(\Gamma_{k,i}^{(0)} - \lambda \rI_k\right)^{-1} F_i \left(\Gamma_{k,i}^{(0)} - \lambda \rI_k\right)^{-1} \nonumber\\
		& + \frac{\partial_\lambda S^{(1)}(\lambda)}{\lambda} \, \frac{\alpha}{p} \sum_{i=1}^p \frac{y_i^2}{(1 + y_i g_\nu(\lambda))^2} w_i w_i^\intercal \left(\Gamma_{k,i}^{(0)} - \lambda \rI_k\right)^{-1} \nonumber\\
		& - \frac{\alpha}{\lambda} \frac{1}{p} \sum_{i=1}^p \frac{y_i g_\nu(\lambda)}{(1 + y_i g_\nu(\lambda))^2} w_i w_i^\intercal \left(\Gamma_{k,i}^{(0)} - \lambda \rI_k\right)^{-1}.\nonumber
	\end{align}
	We finally multiply this equation by $\frac{W^\intercal W}{k}$ and take its trace. Using again the concentrations, we reach:
	\begin{align*}
		-\frac{S^{(2)}(\lambda)}{\lambda} &= S^{(11)}(\lambda) - \frac{\alpha}{\lambda p} \sum_{i=1}^p \frac{y_i}{1+y_i g_\nu(\lambda)} \left[S^{(11)}(\lambda) + \partial_\lambda S^{(1)}(\lambda)\right] \\
		& + \frac{\partial_\lambda S^{(1)}(\lambda)}{\lambda} \, \frac{\alpha}{p} \sum_{i=1}^p \frac{y_i^2}{(1 + y_i g_\nu(\lambda))^2} \left[g_\nu(\lambda) + S^{(1)}(\lambda)\right]\\
		& - \frac{\alpha}{\lambda} \frac{1}{p} \sum_{i=1}^p \frac{y_i g_\nu(\lambda)}{(1 + y_i g_\nu(\lambda))^2} \left[g_\nu(\lambda) + S^{(1)}(\lambda)\right].
	\end{align*}
	We now take the limit $p \to \infty$ in the sum over $i$ and use Theorem~\ref{thm_app:silverstein} in the form:
	\begin{align*}
		\frac{\alpha}{\lambda} \int \rho_\Delta(\mathrm{d}t) \frac{t}{1+tg_\nu(\lambda)} &= 1 + \frac{1}{\lambda g_\nu(\lambda)}.	
	\end{align*}
	Inserting this into eq.~(\ref{eq_app:S11_last}) along with some trivial algebra yields:
	\begin{align*}
		S^{(1,1)}(\lambda) &= g_\nu(\lambda) S^{(2)}(\lambda) - \left[1 + \lambda g_\nu(\lambda)\right] \partial_\lambda S^{(1)}(\lambda) \\ 
		&+ \alpha g_\nu(\lambda) \left[g_\nu(\lambda) + S^{(1)}(\lambda)\right] \int \frac{\rho_\Delta(\mathrm{d}t) t}{\left(1+t g_\nu(\lambda)\right)^2} \left[t \, \partial_\lambda S^{(1)}(\lambda) - g_\nu(\lambda)\right],
	\end{align*}
	which is what we aimed to show.
	Performing the same analysis for $S^{(1,2)}(\lambda)$ ends the proof.
\end{proof}

\begin{proof}[Proof of Lemma~\ref{lemma_app:evalue_transition_vv}]
	Point $(i)$ is trivial by definition of $S_k^{(r)}(\lambda)$ and the almost sure convergence proven in Lemma~\ref{lemma_app:hierarchy_Sk}.
	We turn to points $(ii)$ and $(iii)$.
	Let us denote the following function:
	\begin{align*}
		T^{(2)}(s) &\equiv s \left[\alpha(1+\alpha) - (1+2\alpha)\left(1+s g_\nu^{-1}(s)\right) + \left(1+s g_\nu^{-1}(s)\right)^2\right].
	\end{align*}
	By Lemma~\ref{lemma_app:hierarchy_Sk}, we have $T^{(2)}(s) = S^{(2)}(g_\nu^{-1}(s))$ so $T^{(2)}(s) < 0$ for $s \in (s_{\rm edge}, 0)$ by negativity of $S^{(2)}(\lambda)$ (as the trace of a negative matrix).
	Therefore, point $(ii)$ is equivalent to:
	\begin{align}\label{eq_app:lemma_eigenvalue_s}
		\forall s \in (s_{\rm edge}, 0), \quad T^{(2)}(s)&= - \alpha \Delta \Leftrightarrow s = g_\nu(1) \textrm{ and } \Delta \leq \Delta_c(\alpha),
	\end{align}
	while point $(iii)$ means that for every $\Delta > \Delta_c(\alpha)$,
	\begin{align}
		\forall s \in (s_{\rm edge}, 0), \quad T^{(2)}(s)&> - \alpha \Delta.
	\end{align}
	The condition $s > s_{\rm edge}$ arises naturally as the counterpart of $z \geq \lambda_{\rm max}$. Recall that by
	Corollary~\ref{corollary_app:lambdamax_vv}, we have $\lambda_{\rm max} \leq 1$ for all $\Delta$.
	As $g_\nu^{-1}(s)$ is here completely explicit by eq.~(\ref{eq_app:silverstein_inverse}), and recalling the form of $\rho_\Delta$ in eq.~(\ref{eq_app:def_rho_Delta}), 
	it is easy to show by an explicit computation the following identity:
	\begin{align*}
		\forall s \neq -1, \quad T^{(2)}(s) &= - \alpha \Delta	+ \alpha \left[g_\nu^{-1}(s) - 1\right] \frac{s - \Delta - 2s \Delta + \sqrt{s^2 - 2 s(1+s) \Delta + \Delta^2}}{2 (1+s)}, \\
		T^{(2)}(-1) &= 
		\begin{cases}
			- \alpha (1+\alpha) \quad & \textrm{ if } \Delta \geq 1, \\		
			- \alpha \Delta (1+\alpha \Delta) \quad & \textrm{ if } \Delta \leq 1.
		\end{cases}
	\end{align*}
	It is then easy to see that the only possible solution to $T(s) = -\alpha\Delta$ with $s \in (s_{\rm edge},0)$ is $s = g_\nu(1)$, if $g_\nu(1) \neq -1$.
	However, by Lemma~\ref{lemma_app:sedge_1}, for any
	$\Delta > \Delta_c(\alpha)$ we have 
	$s_{\rm edge} < -1$. Moreover, in this case, one computes very easily (all expressions are explicit) $g_\nu^{-1}(-1) = 1$. Given the identity above, there is therefore no solution to $T^{(2)}(s) = - \alpha \Delta$ in $(s_{\rm edge}, 0)$. By continuity of $T^{(2)}(s)$, and since 
	$\lim_{s \to 0} T^{(2)}(s) = 0$, this implies $T^{(2)}(s) > - \alpha \Delta$ for $s \in (s_{\rm edge},0)$, which proves point $(iii)$.

	Assume now $\Delta \leq \Delta_c(\alpha)$. Note that the case $\Delta = \Delta_c(\alpha)$ is easy, as $s_{\rm edge} = -1$ is the unique solution to $T^{(2)}(s) = -\alpha (1+\alpha)$.
	For $\Delta < \Delta_c(\alpha)$, by Lemma~\ref{lemma_app:sedge_1} we obtain $-1 < s_{\rm edge}$. In particular, $g_\nu(1) > s_{\rm edge} > -1$, and we thus have that $s = g_\nu(1)$
	is a solution (and the only one) to $T^{(2)}(s) = - \alpha \Delta$  by the identity shown above.
	This shows $(ii)$ and ends the proof of Lemma~\ref{lemma_app:evalue_transition_vv}.
\end{proof}

\paragraph{Correlation of the leading eigenvector}

We now turn to the study of the leading eigenvector.
Let $\tilde{\bv}$ be an eigenvector associated with the largest eigenvalue $\lambda_1$, normalized
such that $\norm{\tilde{\bv}}^2 = p$.
Then we have:
\begin{align}
	(\lambda_1 \rI_p - \Gamma_p^{(0)}) \tilde{\bv} &= \frac{1}{\Delta} \frac{WW^\intercal}{k} \frac{\bv^\intercal \tilde{\bv}}{p} \bv.
\end{align}
By normalization of $\tilde{\bv}$, we obtain:
\begin{align*}
	\tilde{\bv} &= \sqrt{p}\frac{\left(\lambda_1 \rI_p - \Gamma_p^{(0)}\right)^{-1} \frac{WW^\intercal}{k} \bv}{\sqrt{\bv^\intercal \frac{WW^\intercal}{k}\left(\lambda_1 \rI_p -\left(\Gamma_p^{(0)}\right)^\intercal\right)^{-1} \left(\lambda_1 \rI_p -\Gamma_p^{(0)}\right)^{-1}\frac{WW^\intercal}{k} \bv}},
\end{align*}
and therefore:
\begin{align}
	\frac{1}{p^2}\left|\tilde{\bv}^T \bv\right|^2 &= \frac{1}{p} \frac{\left[\bv^\intercal \left(\lambda_1 \rI_p - \Gamma_p^{(0)}\right)^{-1} \frac{WW^\intercal}{k} \bv\right]^2}{\bv^\intercal \frac{WW^\intercal}{k}  \left(\lambda_1 \rI_p - \left(\Gamma_p^{(0)}\right)^\intercal\right)^{-1} \left(\lambda_1 \rI_p - \Gamma_p^{(0)}\right)^{-1}\frac{WW^\intercal}{k} \bv}.
\end{align}
Using $\bv = \frac{W}{\sqrt{k}} \bz$ and the concentration of $\frac{1}{k} \bz^\intercal A \bz$ on $\frac{1}{k}\mathrm{Tr} \, A$, we reach that as $p,k \to \infty$, we have:
\begin{align}
	\frac{1}{p^2}\left|\tilde{\bv}^T \bv\right|^2 &\sim \frac{\left[\frac{1}{p}\mathrm{Tr}\, \left\{\left(\lambda_1 \rI_p - \Gamma_p^{(0)}\right)^{-1} \left(\frac{WW^\intercal}{k}\right)^2 \right\}\right]^2}{\frac{1}{p} \mathrm{Tr} \, \left\{\left(\lambda_1 \rI_p -\left(\Gamma_p^{(0)}\right)^\intercal\right)^{-1}\left(\lambda_1 \rI_p - \Gamma_p^{(0)}\right)^{-1} \left(\frac{WW^\intercal}{k}\right)^3\right\}}.
\end{align}

The numerator is equal to $[\alpha^{-1} S_k^{(2)}(\lambda_1)]^2$, using the $S^{(r)}$ functions that we introduced in Lemma~\ref{lemma_app:hierarchy_Sk}.
Let us compute the denominator. Recall that we can write $\Gamma_p^{(0)} = W W^\intercal M / k$, with a symmetric matrix $M$ that is independent of $W$.
For any $z$ large enough, we can expand:
\begin{align*}
	 &\mathrm{Tr} \, \left\{\left(z \rI_p -\left(\Gamma_p^{(0)}\right)^\intercal\right)^{-1}\left(z \rI_p - \Gamma_p^{(0)}\right)^{-1} \left(\frac{WW^\intercal}{k}\right)^3\right\},  \\
	 &= \sum_{a=0}^\infty \sum_{b=0}^\infty z^{-a-b-2} \, \mathrm{Tr} \, \left\{\left(M \frac{W W^\intercal}{k}\right)^a \left(\frac{W W^\intercal}{k} M)\right)^b \left(\frac{W W^\intercal}{k}\right)^3\right\}, \\
	 &\overset{(a)}{=} \sum_{a=0}^\infty \sum_{b=0}^\infty z^{-a-b-2} \, \mathrm{Tr} \, \left\{\left(\frac{W^\intercal M W}{k}\right)^a \frac{W^\intercal W}{k} \left(\frac{W^\intercal M W}{k}\right)^b \left(\frac{W^\intercal W}{k}\right)^2\right\}, \\
	 &= \mathrm{Tr} \, \left\{\left(z \rI_k -\Gamma_k^{(0)}\right)^{-1}\frac{W^\intercal W}{k}\left(z \rI_k - \Gamma_k^{(0)}\right)^{-1} \left(\frac{W^\intercal W}{k}\right)^2\right\}, \\
	 &= k S^{(1,2)}_k(z),
\end{align*}
where in $(a)$ we used the cyclicity of the trace. Given Corollary~\ref{corollary_app:lambdamax_vv}, we know $\liminf_{p \to \infty} \lambda_1 \geq \lambda_{\rm max}$, so we can use the above calculation 
to write:
\begin{align}\label{eq_app:epsilon_Delta_vv}
	\epsilon(\Delta) &= \lim_{\lambda \to \lambda_1} \lim_{k \to \infty} \frac{1}{\alpha} \frac{\left[S_k^{(2)}(\lambda)\right]^2}{S_k^{(1,2)}(\lambda)}.
\end{align}
As in the eigenvalue transition proof, to make this fully rigorous one would need to use more precisely the concentration results, 
and follow exactly the lines of \cite{benaych2011eigenvalues}.
We now use the transition of the leading eigenvalue (Corollary~\ref{corollary_app:lambdamax_vv}), that gives us the value of $\lambda_1$. 
\begin{itemize}
	\item For $\Delta < \Delta_c(\alpha)$, we know that $\lambda_1$ converges almost surely to $1$. Consequently, we have in this case:
	\begin{align*}
		\epsilon(\Delta) &= \frac{1}{\alpha} \frac{\left[S^{(2)}(1)\right]^2}{S^{(1,2)}(1)}.
	\end{align*}
	By Lemma~\ref{lemma_app:evalue_transition_vv}, we know that $S^{(2)}(1) = - \alpha \Delta$. Moreover, by Corollary~\ref{corollary_app:lambdamax_vv} 
	$\lambda_{\rm max} < 1$. This implies that $S^{(1,2)}(1) \in (0, +\infty)$. Indeed, $1$ is out of the bulk of $\nu(\alpha, \Delta)$, so $g_\nu(1) \in (-\infty,0)$ and 
	by the relations shown in Lemma~\ref{lemma_app:hierarchy_Sk}, all the transforms $S^{(r)}(1)$ and $S^{(r,q)}(1)$ will be finite. Note that $S^{(1,2)}(1) > 0$ by positivity of the matrices involved.
	This implies that for every $\Delta < \Delta_c(\alpha)$, $\epsilon(\Delta) > 0$.
	\item For $\Delta = \Delta_c(\alpha)$, we have $\lambda_{\rm max} = 1$ and $\lim_{\lambda \to 1}S^{(2)}(\lambda) = -\alpha \Delta$ as we have shown.
	For every $r,q$, let us define the functions $T^{(r)}$ and $T^{(r,q)}$ by $S^{(r)}(\lambda) = T^{(r)}[g_\nu(\lambda)]$ and $S^{(r,q)}(\lambda) = T^{(r,q)}[g_\nu(\lambda)]$.
	By Lemma~\ref{lemma_app:hierarchy_Sk} and the chain rule, we have:
	\begin{align}\label{eq_app:T12}
	&\forall s \in (s_{\rm edge}, 0), \\
	& T^{(1,2)}(s) = s T^{(3)}(s) - \left[1 + s g^{-1}_\nu(s)\right] \left[T^{(1,1)}(s) + (1+\alpha)\frac{\partial_s T^{(1)}(s)}{\partial_s g^{-1}_\nu(s)} \right] \nonumber \\ 
	&+ \alpha s \left[(1+\alpha)s + T^{(1)}(s)+ T^{(2)}(s)\right] \int \frac{\rho_\Delta(\mathrm{d}t) t}{\left(1+t s\right)^2} \left[t \, \frac{\partial_s T^{(1)}(s)}{\partial_s g^{-1}_\nu(s)} - s\right]. \nonumber
	\end{align}
	Recall that $g_\nu^{-1}(s)$ is explicit by eq.~(\ref{eq_app:silverstein_inverse}) and $s_{\rm edge} = \lim_{\lambda \to \lambda_{\rm max}} g_\nu(\lambda)$. It moreover satisfies (cf Theorem~\ref{thm_app:rmt_vv})
	$\partial_s g_\nu^{-1}(s_{\rm edge}) = 0$.
	For $\Delta = \Delta_c(\alpha)$, by Lemma~\ref{lemma_app:sedge_1}
	we have $g_\nu(1) = -1 = s_{\rm edge}$. 
	It is then only trivial algebra to verify from eq.~(\ref{eq_app:T12}) and the remaining relations of Lemma~\ref{lemma_app:hierarchy_Sk} that $T^{(1,2)}(-1) = + \infty$, 
	which implies $\epsilon(\Delta_c(\alpha)) = 0$. 
	\item We investigate here the $\Delta \to 0$ limit. In this limit, we know from eq.~(\ref{eq_app:epsilon_Delta_vv}) and the analysis in the case $\Delta < \Delta_c(\alpha)$ above
	that 
	\begin{align*}
		\lim_{\Delta \to 0} \epsilon(\Delta) &= \lim_{\Delta \to 0} \frac{\alpha \Delta^2}{S^{(1,2)}(1)}.
	\end{align*}
	It is again heavy but straightforward algebra to verify from eq.~(\ref{eq_app:T12}) and the remaining relations of Lemma~\ref{lemma_app:hierarchy_Sk} that as $\Delta \to 0$ and for any $s \in (s_{\rm edge}, 0)$:
	\begin{align*}
		T^{(1,2)}(s) &= \alpha \Delta^2 + \mathcal{O}(\Delta^{3}).	
	\end{align*}
	This yields $\lim_{\Delta \to 0} \epsilon(\Delta) = 1$.
	\item Finally, we consider $\Delta > \Delta_c(\alpha)$.
	By eq.~(\ref{eq_app:epsilon_Delta_vv}) and item $(iii)$ of Lemma~\ref{lemma_app:evalue_transition_vv},
	 to obtain $\epsilon(\Delta) = 0$ we only need to prove that 
	$\lim_{\lambda \to \lambda_{\rm max}} S^{(1,2)}(\lambda) = +\infty$.
	Equivalently, we must show $\lim_{s \to s_{\rm edge}} T^{(1,2)}(s) = +\infty$. Recall that $\partial_s g_\nu^{-1}(s_{\rm edge}) = 0$
	and that since $s_{\rm edge}$ is finite, all $T^{(r)}(s_{\rm edge})$ for $r = 0,1,2,3$ are finite as well by Lemma~\ref{lemma_app:hierarchy_Sk}.
	It thus only remains to check that $\lim_{s \to s_{\rm edge}} T^{(1,2)}(s)\partial_s \, g_\nu^{-1}(s) > 0$. This would imply that $\lim_{s \to s_{\rm edge}} T^{(1,2)}(s) = + \infty$.
	We put this statement as a lemma, actually stronger than what we need:
	\begin{lemma}\label{lemma_app:bound_T12}
		For every $\alpha > 0$ and $\Delta > 1$, we have
		\begin{align*}
			\liminf_{s \to s_{\rm edge}} T^{(1,2)}(s) \partial_s g_\nu^{-1}(s) > 0.	
		\end{align*}
	\end{lemma}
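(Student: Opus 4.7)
The plan is to compute $\lim_{s \to s_{\rm edge}} T^{(1,2)}(s)\,\partial_s g_\nu^{-1}(s)$ explicitly using the formulas of Lemma~\ref{lemma_app:hierarchy_Sk}, and then to verify that the resulting finite quantity is strictly positive for all $\Delta > 1$. The key structural observation is that both $T^{(1,1)}(s)$ and $T^{(1,2)}(s)$ contain a factor $\partial_s T^{(1)}(s)/\partial_s g_\nu^{-1}(s)$ which blows up as $s \to s_{\rm edge}$, because $\partial_s g_\nu^{-1}(s_{\rm edge}) = 0$ by the very definition of $s_{\rm edge}$, while the remaining ``algebraic'' terms stay bounded. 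Multiplying by $\partial_s g_\nu^{-1}(s)$ therefore cancels these divergences exactly and leaves only residues proportional to $\partial_s T^{(1)}(\sigma)$, where $\sigma \equiv s_{\rm edge}$; importantly, the divergent piece hidden inside $T^{(1,1)}(s)$ also contributes, via the coefficient $-[1+s g_\nu^{-1}(s)]$ appearing in front of it in the expression for $T^{(1,2)}(s)$.

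Writing $\sigma = s_{\rm edge}$, $\gamma = g_\nu^{-1}(\sigma) = \lambda_{\rm max}$, $h = 1 + \sigma\gamma$, and $I_2 = \int \rho_\Delta(\mathrm{d}t)\, t^2/(1+\sigma t)^2$, the edge condition of eq.~\eqref{eq_app:sedge_vv} reads $\alpha\sigma^2 I_2 = 1$. Using the explicit forms $T^{(1)}(\sigma) = \sigma(\alpha - h)$, $T^{(2)}(\sigma) = \sigma[\alpha(1+\alpha) - (1+2\alpha)h + h^2]$, and $\partial_s T^{(1)}(\sigma) = \alpha + 1 - 2h$, a short bookkeeping first yields the intermediate identity $\lim_{s \to \sigma} T^{(1,1)}(s)\,\partial_s g_\nu^{-1}(s) = (\alpha + 1 - 2h)^2$, and then, after collecting all contributions to $T^{(1,2)}(s)\,\partial_s g_\nu^{-1}(s)$,
\begin{equation*}
\lim_{s \to s_{\rm edge}} T^{(1,2)}(s)\,\partial_s g_\nu^{-1}(s) \;=\; (\alpha + 1 - 2h)\,\bigl[\,3h^2 - 4(1+\alpha)\,h + 1 + 3\alpha + \alpha^2\,\bigr].
\end{equation*}
All dependence on the measure $\rho_\Delta$ has been absorbed into the single real parameter $h$, so that strict positivity of the limit becomes a question about a cubic in $h$ with coefficients depending only on $\alpha$.

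It remains to show that the right-hand side above is strictly positive for every $(\alpha, \Delta)$ with $\alpha > 0$ and $\Delta > 1$. Setting $A = 1+\alpha$, the roots of the cubic are $h = A/2$ and $h_\pm = [\,2A \pm \sqrt{A^2 - 3A + 3}\,]/3$ (the discriminant is always positive), and an elementary sign analysis shows the polynomial is positive exactly on $(-\infty, h_-)\cup (A/2, h_+)$. Two reference points are direct: at $\Delta = \Delta_c(\alpha)$ one has $\sigma = -1$ and $\gamma = 1$ by Lemma~\ref{lemma_app:sedge_1}, hence $h = 0$, and the polynomial evaluates to $(1+\alpha)(1 + 3\alpha + \alpha^2) > 0$; as $\Delta \to \infty$, a rescaling of $\rho_\Delta$ by $\sqrt{\Delta}$ (so that the rescaled density becomes the standard semicircle) gives $\sigma \sim -\sqrt{\Delta/\alpha}$, $\gamma \sim \sqrt{\alpha/\Delta}$, so again $h \to 0$. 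The main obstacle is controlling the trajectory of $h(\alpha, \Delta)$ between these limiting values. The natural approach is to exploit the identity $h = \alpha\sigma \int \rho_\Delta(\mathrm{d}t)\, t/(1+\sigma t)$ derived from the Silverstein relation~\eqref{eq_app:silverstein_inverse}, together with the monotonicity of $z_{\rm edge}(\Delta)$ established in the proof of Corollary~\ref{corollary_app:lambdamax_vv}, to confirm that $h(\alpha,\Delta)$ remains in the connected component $(-\infty, h_-)$ throughout $\Delta > 1$; the degenerate case $\alpha = 1$ (where $h_- = A/2 = 1$) must be treated separately, and there the strict bound $\gamma = \lambda_{\rm max} > 0$ (valid for all $\Delta > 1/4$) forces $h < 1$ strictly and secures positivity.
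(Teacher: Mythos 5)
Your algebraic reduction is correct and genuinely different from the paper's route. I verified your formulas: the intermediate identity $\lim_{s\to s_{\rm edge}} T^{(1,1)}(s)\,\partial_s g_\nu^{-1}(s)=(\alpha+1-2h)^2$ follows from the edge relation $\alpha\sigma^2 I_2=1$ (which makes the coefficient of the divergent $\partial_\lambda S^{(1)}$ inside $T^{(1,1)}$ equal to $1+\alpha-2h$, while $\partial_s T^{(1)}(\sigma)=\alpha+1-2h$), and assembling the three surviving divergent pieces of $T^{(1,2)}$ indeed gives $(\alpha+1-2h)\bigl[3h^2-4(1+\alpha)h+1+3\alpha+\alpha^2\bigr]$. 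The paper instead uses a hybrid argument: for $\alpha\neq 1$ it bounds $S^{(1,2)}(\lambda)\geq\zeta_{\min}^3\,\partial_\lambda g_\nu(\lambda)$ using the strict positivity of the smallest eigenvalue of $W^\intercal W/k$ (or $WW^\intercal/k$) at the Marchenko--Pastur edge, and only at the degenerate point $\alpha=1$, where that edge touches zero, does it fall back on the explicit algebra. Your approach would unify the two cases and produce a clean closed-form expression, which is attractive; but it requires controlling where $h$ sits, and the paper's bound for $\alpha\neq1$ is both shorter and sidesteps that question entirely.

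The proof as written has a real gap. After reducing positivity to the claim that $h(\alpha,\Delta)$ lies in $(-\infty,h_-)$ throughout $\Delta>1$, you state this as a program --- \emph{``the natural approach is to exploit \ldots to confirm''} --- rather than proving it. The two checkpoints you give ($h=0$ at $\Delta=\Delta_c(\alpha)$ and $h\to 0$ as $\Delta\to\infty$) do not rule out $h$ leaving $(-\infty,h_-)$ in between (or, for that matter, on $1<\Delta<\Delta_c$, a region you do not mention at all). The monotonicity of $z_{\rm edge}(\Delta)$ from Corollary~\ref{corollary_app:lambdamax_vv} gives monotone control over $\gamma=\lambda_{\rm max}$, but $h=1+\sigma\gamma$ also depends on $\sigma=s_{\rm edge}$, and the joint behaviour of the product is what matters; nothing you have written controls it. Separately, in your treatment of $\alpha=1$ you assert $\lambda_{\rm max}>0$ ``valid for all $\Delta>1/4$'', which is false: the proof of Corollary~\ref{corollary_app:lambdamax_vv} shows $z_{\rm edge}\leq 0$ at $\Delta=1/4$ and, by continuity, for a range of $\Delta$ above it; $z_{\rm edge}$ only becomes positive after crossing zero somewhere in $(1/4,\Delta_c(\alpha))$. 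You would need to verify that this crossing happens before $\Delta=1$ for the $\alpha=1$ argument to cover all $\Delta>1$. Until both of these points are settled, the lemma is not proved by your route.
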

	We prove this for every $\Delta > 1$, while only the case $\Delta > 1+\alpha$ is needed in our analysis.
	As already argued, this lemma ends the proof.
	\begin{proof}[Proof of Lemma~\ref{lemma_app:bound_T12}]
		The idea is to lower bound $S^{(1,2)}(\lambda)$ by $\partial_\lambda g_\nu(\lambda)$, for every $\lambda > \lambda_{\rm max}$.
		We separate three cases:
		%[leftmargin=5.5mm]
		\begin{itemize}
			\item First, assume $\alpha > 1$. Then $W^\intercal W/k$ is full rank. In particular, by the classical results of \cite{marvcenko1967distribution}, 
			its lowest eigenvalue, denoted $\zeta_{\rm \min}$ converges almost surely to $(1-\alpha^{-1/2})^2$, the left edge of the Marchenko-Pastur distribution.
			Moreover, for any two symmetric positive square matrices $A$ and $B$, we know that $\mathrm{Tr}\,[AB] \geq 0$. Indeed, there exists a positive square root  
			of $A$, and $\mathrm{Tr}\,[AB] = \mathrm{Tr}[A^{1/2} B A^{1/2}] \geq 0$. This implies immediately that if $a_0$ is the smallest eigenvalue of $A$, then 
			$\mathrm{Tr}\,[AB] \geq a_0 \mathrm{Tr}\,[B]$, as $A-a_0 \rI$ is positive. We can use this to write, for any $\lambda > \lambda_{\rm max}$:
			\begin{align*}
			S^{(1,2)}_k(\lambda) &= \frac{1}{k} \mathrm{Tr} \, \left[\left(\Gamma_k^{(0)} - \lambda \rI_k\right)^{-1} \left(\frac{W^\intercal W}{k}\right) \left(\Gamma_k^{(0)} - \lambda \rI_k\right)^{-1} \left(\frac{W^\intercal W}{k}\right)^2\right], \\
			&\geq \zeta_{\rm min}^2  \frac{1}{k} \mathrm{Tr} \, \left[\left(\Gamma_k^{(0)} - \lambda \rI_k\right)^{-1} \left(\frac{W^\intercal W}{k}\right) \left(\Gamma_k^{(0)} - \lambda \rI_k\right)^{-1} \right], \\
			&\geq \zeta_{\rm min}^3  \frac{1}{k} \mathrm{Tr} \, \left[\left(\Gamma_k^{(0)} - \lambda \rI_k\right)^{-2} \right].
			\end{align*}
			Taking the limit $k \to \infty$ in this last inequality, we obtain:
			\begin{align}
				S^{(1,2)}(\lambda) \geq \left(1-\alpha^{-1/2}\right)^6 \partial_\lambda g_\nu(\lambda).
			\end{align}
			Taking the limit $\lambda \to \lambda_{\rm max}$ (or equivalently $s \to s_{\rm edge}$) yields
			\begin{align}
				\liminf_{s \to s_{\rm edge}} T^{(1,2)}(s) \partial_s g_\nu^{-1}(s) \geq   \left(1-\alpha^{-1/2}\right)^6 > 0.	
			\end{align}
			\item Now assume $\alpha < 1$. We do the same reasoning, as $W W^\intercal / k$ is now full rank, and it smallest eigenvalue, also denoted $\zeta_{\rm min}$, converges 
			a.s. as $k \to \infty$ to $(1-\sqrt{\alpha})^2$. We know (see the beginning of the current proof of the eigenvector correlation) that we can rewrite $S_k^{(1,2)}(\lambda)$ as the trace of a $p \times p$ matrix:
			\begin{align*}
			S^{(1,2)}_k(\lambda) &= \frac{1}{k} \mathrm{Tr} \, \left[\left(\left(\Gamma_k^{(0)}\right)^\intercal - \lambda \rI_k\right)^{-1} \left(\Gamma_k^{(0)} - \lambda \rI_k\right)^{-1} \left(\frac{W W^\intercal}{k}\right)^3\right], \\
			&\geq \zeta_{\rm min}^3  \frac{1}{k} \mathrm{Tr} \, \left[\left(\left(\Gamma_k^{(0)}\right)^\intercal - \lambda \rI_k\right)^{-1}\left(\Gamma_k^{(0)} - \lambda \rI_k\right)^{-1} \right], \\
			&\geq \zeta_{\rm min}^3  \frac{1}{k} \mathrm{Tr} \, \left[\left(\Gamma_k^{(0)} - \lambda \rI_k\right)^{-2} \right],
			\end{align*}
			in which the last inequality comes from $\mathrm{Tr}\, [A A^\intercal] \geq \mathrm{Tr}\,[A^2]$ for any positive square matrix $A$.
			Once again, taking the limit $k \to \infty$, and then the limit $\lambda \to \lambda_{\rm max}$, this yields
			\begin{align}
				\liminf_{s \to s_{\rm edge}} T^{(1,2)}(s) \partial_s g_\nu^{-1}(s) \geq   \left(1-\alpha^{1/2}\right)^6 > 0.	
			\end{align}
			\item Finally, we treat the $\alpha = 1$ case. In this case, we can not use easy bounds as in the two previous cases as the support of the Marchenko-Pastur distribution touches $0$.
			However, recall that everything is explicit here : $\rho_\Delta$ is given by eq.~(\ref{eq_app:def_rho_Delta}), $g_\nu^{-1}(s)$ is given by eq.~(\ref{eq_app:silverstein_inverse}) and 
			Lemma~\ref{lemma_app:hierarchy_Sk} gives all the $T^{(r)}$ and $T^{(r,q)}$ in terms of $g_\nu^{-1}$ and $\rho_\Delta$.
			We can moreover use what we proved in Theorem~\ref{thm_app:rmt_vv}:
			\begin{align*}
				\partial_s g_{\nu}^{-1}(s_{\rm edge}) &= \frac{1}{s^2} - \alpha \int \rho_\Delta(\mathrm{d}t) \frac{t^2}{(1+ts_{\rm edge})^2} = 0.
			\end{align*}
			This can be used to simplify the term $\partial_s T^{(1)}(s)$ and the term $\int \rho_\Delta(\mathrm{d}t) \frac{t^2}{(1+ts)^2}$.
			Some heavy but straightforward algebra yields from these relations that the following limit is finite, and is given by:
			\begin{align*}
				\lim_{s \to s_{\rm edge}} T^{(1,2)}(s) \, \partial_s g_\nu^{-1}(s) &= h(s_{\rm edge}),
			\end{align*}
			with 
			\begin{align*}
				&h(s) = \frac{h_1(s)^2 \times h_2(s)}{4 s^6} , \\
				&h_1(s) = -\Delta +\sqrt{\Delta ^2+s^2-2 \Delta  (2 s+1) s}+s, \\
				&h_2(s) = 3 \Delta -3 \sqrt{\Delta ^2+s^2-2 \Delta  (2 s+1) s}+s (4 s-3), 
			\end{align*}
			It is then very simple algebra (solving quadratic equations and using $\Delta > 1$) to see that  there is no
			real negative solution to $h(s) = 0$, and that $h(s) > 0$ for all $s \in (-\infty,0)$. This implies that $h(s_{\rm edge}) > 0$, which ends the proof.
		\end{itemize}
	\end{proof}
\end{itemize}

All together, this ends the proof of Theorem~\ref{thm_app:transition_vv}.
%%%%%%%%%%%%%%%%%%%%%%%%%%%%%%%%%%%%%%%%%%%%%%%%%%%%%%%%%%%%%%%
\subsubsection{Proof of Theorem~\ref{thm_app:rmt_uv} and Corollary~\ref{corollary_app:lambdamax_uv}}
%%%%%%%%%%%%%%%%%%%%%%%%%%%%%%%%%%%%%%%%%%%%%%%%%%%%%%%%%%%%%%%
\paragraph{Proof of Theorem~\ref{thm_app:rmt_uv}}
\begin{proof}
	The proof is very similar to the proof of Theorem~\ref{thm_app:rmt_vv}, and we will only point out the main differences.
	The proof of $(i)$ is exactly the same as the proof of the point $(i)$ of Theorem~\ref{thm_app:rmt_vv}, once one notices
	that for $\Delta \leq \Delta_{\rm pos}(\beta)$, the support of $\rho_{\beta,\Delta}$ is a subset of $\bbR_-$.
	We thus turn to the proof of $(ii)$.
	Again, the spectrum of $\Gamma_p^{uv}$, given by eq.~(\ref{eq_app:def_gammap_uv}) is, up to $0$ eigenvalues, the same as the spectrum of $\Gamma_k^{uv}$, defined as follows:
	\begin{align}
		\Gamma_k^{uv} &\equiv \frac{1}{\Delta} \frac{1}{k} \du{W}^\intercal \( \frac{1}{1+\Delta}\frac{\du{y}^\intercal\du{y}}{p}  -  \beta \,\rI_p  \) \du{W} \in \bbR^{k \times k}.
	\end{align}
	We drop for simplicity the $uv$ exponents in these matrices.
	Once again, we can apply the Silverstein equation of Theorem~\ref{thm_app:silverstein} and the same arguments that we used in the proof of Theorem~\ref{thm_app:rmt_vv}
	completely transpose here. One notices that, by the classical Marchenko-Pastur results \cite{marvcenko1967distribution}, the spectral distribution of $\du{y}^\intercal\du{y}/(p \Delta(1+\Delta))  -  (\beta/\Delta) \,\rI_p $
	converges almost surely and in law to $\rho_{\beta,\Delta}$, before repeating the exact arguments of the proof of Theorem~\ref{thm_app:rmt_vv}.
	This ends the proof of Thm.~\ref{thm_app:rmt_uv}.
\end{proof}

\paragraph{Proof of Corollary~\ref{corollary_app:lambdamax_uv}}
\begin{proof}
		Let $\alpha, \beta > 0$. We note:
	\begin{itemize}
		\item By Theorem~\ref{thm_app:rmt_uv}, we know that if $\Delta = \Delta_{\rm pos}(\beta)$, then $\lambda_{\rm max} \leq 0$.
		\item It is trivial by the form of $\Gamma_p$, see eq.~(\ref{eq_app:def_gammap_uv}), that as $\Delta \to + \infty$, $\lambda_{\rm max} \to 0$.
	\end{itemize}
	Let $z_{\rm edge} = -\frac{1}{s_{\rm edge}} + \alpha \int \rho_{\beta, \Delta}(\mathrm{d}t) \frac{t}{1 + s_{\rm edge}t}$. Then we know that $\lambda_{\rm max} = z_{\rm edge}$ if $\alpha \leq 1$ and
	$\lambda_{\rm max} = \max(0,z_{\rm edge})$ if $\alpha > 1$. In particular, by the remark above, $z_{\rm edge} \leq 0$ for $\Delta \leq \Delta_{\rm pos}(\beta)$ and $z_{\rm edge} \to 0^+$ as $\Delta \to \infty$.
	It is easy to see that $z_{\rm edge}$ is a continuous and derivable function of $\Delta$, so that if we show the two following facts for any $\Delta \geq \Delta_{\rm pos}(\beta)$:
	\begin{align}
		\label{eq_app:dzedge_uv}
		\frac{\mathrm{d} z_{\rm edge}}{\mathrm{d} \Delta} &= 0 \Leftrightarrow \Delta = \Delta_c(\alpha,\beta) = \sqrt{\beta(1+\alpha)} \\
		\label{eq_app:zedge_uv}
		z_{\rm edge}(\Delta_c(\alpha, \beta)) &= 1,
	\end{align}
	this would end the proof as $z_{\rm edge}$ would necessarily have a unique local maximum, located in $\Delta_c(\alpha, \beta)$, in which we have
	$\lambda_{\rm max} = 1$. We thus prove eq.~(\ref{eq_app:dzedge_uv}) and eq.~(\ref{eq_app:zedge_uv}) in the following.

	\paragraph{Proof of eq.~(\ref{eq_app:dzedge_uv}):} By the chain rule,
	\begin{align*}
		\frac{\mathrm{d} z_{\rm edge}}{\mathrm{d} \Delta} &= \frac{\partial z_{\rm edge}}{\partial \Delta} + \frac{\partial s_{\rm edge}}{\partial \Delta} \frac{\partial z_{\rm edge}}{\partial s_{\rm edge}}, \\
			&= \frac{\partial z_{\rm edge}}{\partial \Delta},
	\end{align*}
	by the very definition of $s_{\rm edge}$, c.f. Theorem~\ref{thm_app:rmt_uv}, since $z_{\rm edge} = g_{\nu}^{-1}(s_{\rm edge})$.
	 Given the explicit form of $\rho_{\beta,\Delta}$, c.f. eq.~(\ref{eq_app:def_rho_beta_Delta}), one can compute $z_{\rm edge}$ as a function of $s_{\rm edge}$.
	 Its expression is cumbersome, but nevertheless explicit (we write $s$ instead of $s_{\rm edge}$ to avoid too heavy expressions):
	\begin{align*}
		z_{\rm edge} &= \frac{-\alpha  \Delta  (\Delta +1)+\alpha  \sqrt{\Delta ^2 (\Delta +1)^2+s^2 \left(\beta ^2-2 \beta  \Delta  (2 \Delta +1)+\Delta ^2\right)-2 \Delta  (\Delta +1) s (\beta -\Delta )}}{2 s^2 (\beta  s-\Delta )} \\
				&\hspace{1cm} + \frac{2 (\alpha -1) \beta  s^2+\alpha s (\beta -\Delta )+2 \Delta  s}{2 s^2 (\beta  s-\Delta )}.
	\end{align*}

	From this expression, it is simple analysis to verify that the only $s_{\rm edge} \in (-z_1(\beta,\Delta)^{-1},0)$
	that satisfies $\frac{\partial z_{\rm edge}}{\partial \Delta} = 0$ is $s_{\rm edge} = -1$, and only if $\Delta > \sqrt{\beta}$.
	Recall that $s_{\rm edge}$ is defined as the solution to:
	\begin{align*}
		\alpha \int \rho_{\beta,\Delta}(\mathrm{d}t) \left(\frac{s_{\rm edge}t}{1 + s_{\rm edge}t}\right)^2 &= 1.
	\end{align*}
	Inserting $s_{\rm edge} = -1$ into this equation and using the explicit form of $\rho_{\beta,\Delta}$ of eq.~(\ref{eq_app:def_rho_beta_Delta}) and that $\Delta > \sqrt{\beta}$,
	this reduces to:
	\begin{align*}
		\frac{\alpha \beta}{\Delta^2 - \beta} &= 1,
	\end{align*}
	which is equivalent to $\Delta = \Delta_c(\alpha,\beta) = \sqrt{\beta(1+\alpha)}$.

	\paragraph{Proof of eq.~(\ref{eq_app:zedge_uv}):} Given the computation above, we know that for $\Delta = \Delta_c(\alpha, \beta)$ we have
	$s_{\rm edge} = -1$. Given eq.~(\ref{eq_app:def_rho_beta_Delta}), it is straightforward to compute:
	\begin{align*}
		z_{\rm edge}(\Delta_c(\alpha, \beta)) &= -1 + \alpha \int \rho_{\Delta_c(\alpha, \beta)}(\mathrm{d}t) \frac{t}{1-t}, \\
		&= 1.
	\end{align*}
\end{proof}

\subsection{A note on non-linear activation functions}\label{subsec_app:nonlinear_rmt}

We consider here a non-linear activation function, in the spiked Wigner model or the spiked Wishart model.
In these models, the spectral method with a non-linear activation function consists in taking the largest eigenvalue and the corresponding eigenvector of the matrix $\Gamma_{p}^{uu}$
(for the spiked Wigner model) or $\Gamma_p^{uv}$ (for the spiked Wishart model). These matrices are given by:
\begin{align*}
	\Gamma^{uu}_p &=  \frac{1}{\Delta}  \( (a-b) \rI_p  +  b  \frac{W W^\intercal}{k} + c  \frac{\id_p \id_k^\intercal}{k} \frac{W^\intercal}{\sqrt{k} }  \) \times \( \frac{Y}{\sqrt{p}}  -  a \id_{M}  \) \,, \\
	\Gamma^{uv}_p &=  \frac{1}{\Delta}  \( (a-b) \rI_p  +  b  \frac{W W^\intercal}{k} + c  \frac{\id_p \id_k^\intercal}{k} \frac{W^\intercal}{\sqrt{k} }  \) \times \( \frac{1}{a + \frac{\Delta}{d}} \frac{Y^\intercal Y}{p}  -  d \beta \rI_p  \) 
\end{align*}
In these equations, $a,b,c$ are coefficients that depend on the non-linearity. In the linear case, $c = 0$ and $a = b = 1$.
Let us now assume for instance a non-linearity such that $a,b \neq 0$ and $c = 0$.
Both $\Gamma_p^{uv}$ and $\Gamma_p^{uu}$ can be represented as 
\begin{align}\label{eq_app:gamma_p_nonlinear}
	\Gamma_p &= \left[(a-b) \rI_p + b \frac{WW^\intercal}{k}\right]	M, 
\end{align}
in which $M$ is a symmetric (non necessarily positive or negative) matrix, independent of $W$. In order to perform the same analysis we made in the case of a linear activation function, 
we need in particular to be able to characterize the bulk of such matrices.
Although this might be doable with more refined techniques, this does not seem to come as a direct consequence of the analysis 
of Silverstein and Bai \cite{marvcenko1967distribution, silverstein1995analysis}.
Indeed, one cannot write that the eigenvalues of $\Gamma_p$ are identical, up to $0$ eigenvalues, to the ones of a matrix of the type
\begin{align*}
	\frac{1}{k} W^\intercal M' W,
\end{align*}
which are the types of matrices covered by the analysis of Bai and Silverstein. Moreover, it is not immediate to use results of free probability \cite{voiculescu1992free} in this context.
Indeed, $\Gamma_p$ in eq.~(\ref{eq_app:gamma_p_nonlinear}) is the product of two matrices that are asymptotically free, but $M$ is not positive, which prevents 
a priori the use of the classical results on the $S$-transform of a product of two asymptotically free matrices. Writing $\Gamma_p$ as the sum of $(a-b)M$ and $b (W W^\intercal) M /k$ does not yield any obvious results 
either, as these two matrices are not asymptotically free. 
For this reason, and although there might exist techniques to study the bulk of the matrix of eq.~(\ref{eq_app:gamma_p_nonlinear}) and the transition in its largest eigenvalue, 
this is left for future work.
% \newpage

\section{Phase diagrams of the Wishart model}\label{appendix:plots_vu}
Despite we illustrated the main part mostly with the Wigner model, in this section we present phase diagrams for the Wishart model. We show in particular a heat map of $\textrm{MMSE}_v$ as a function of the noise to signal ratio $\Delta/\rho_v^2$ for linear, sign and relu activation functions in Fig.~\ref{appendix:fig_map_mse_delta_alpha_vu}. The white dashed line marks the critical threshold $\Delta_c$, given in the Wishart model by eq.~\eqref{appendix:stability_threshold:wishart}, while the the dotted line shows the critical threshold of reconstruction for PCA.
 Besides we show also the mean squared error as a function of the noise variance for larger values of $\alpha$ in Fig.~\ref{appendix:fig_mse_u_VU}. The $\textrm{MMSE}_v$ has been obtained solving the state evolution equations eq.~\eqref{appendix:se_from_amp_bayes_uv}, that show as well an unique stable fixed point for the large range of values that we studied, initializing with either informative or random conditions.
  Finally we illustrate the LAMP algorithm for the linear activation in the Wishart model with $\alpha= \beta = 1$, and compare it to classical PCA and AMP algorithms. We show the comparison in Fig.~\ref{appendix:bbp_lamp_amp_se}  and we added their corresponding state evolutions.
 \begin{figure}[!htb]
	\centering
		\includegraphics[width=1.0\linewidth]{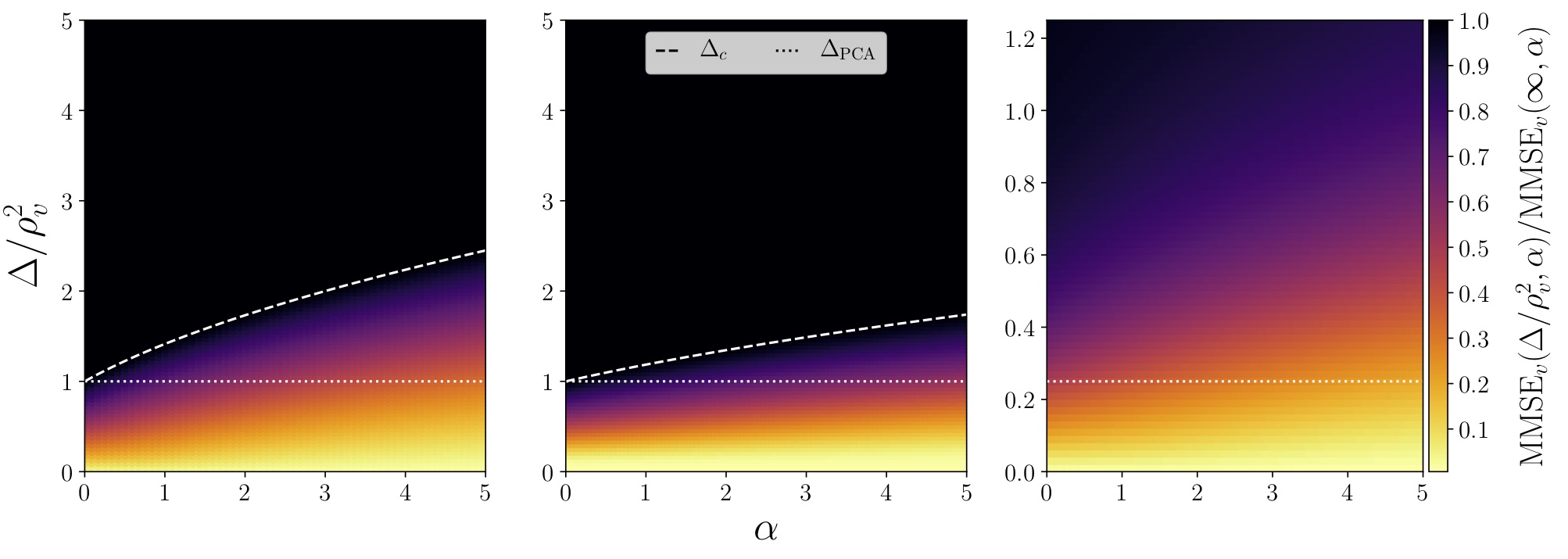}
	\caption{Spiked Wishart model: ${\rm MMSE}_v$ on the spike as a function of noise to signal ratio $\Delta/\rho_v^2$, and generative prior (\ref{gen_single}) with
          compression ratio $\alpha$ for linear  (left), sign
          (center), and relu (right) activations at $\beta=1$. Dashed white lines
          mark the phase transitions $\Delta_c$, matched by both the
          AMP and LAMP algorithms. Dotted white line
          marks the phase transition of canonical PCA.}
	\label{appendix:fig_map_mse_delta_alpha_vu}
\end{figure}
\begin{figure}[!htb]
	\centering
	\includegraphics[width=1\linewidth]{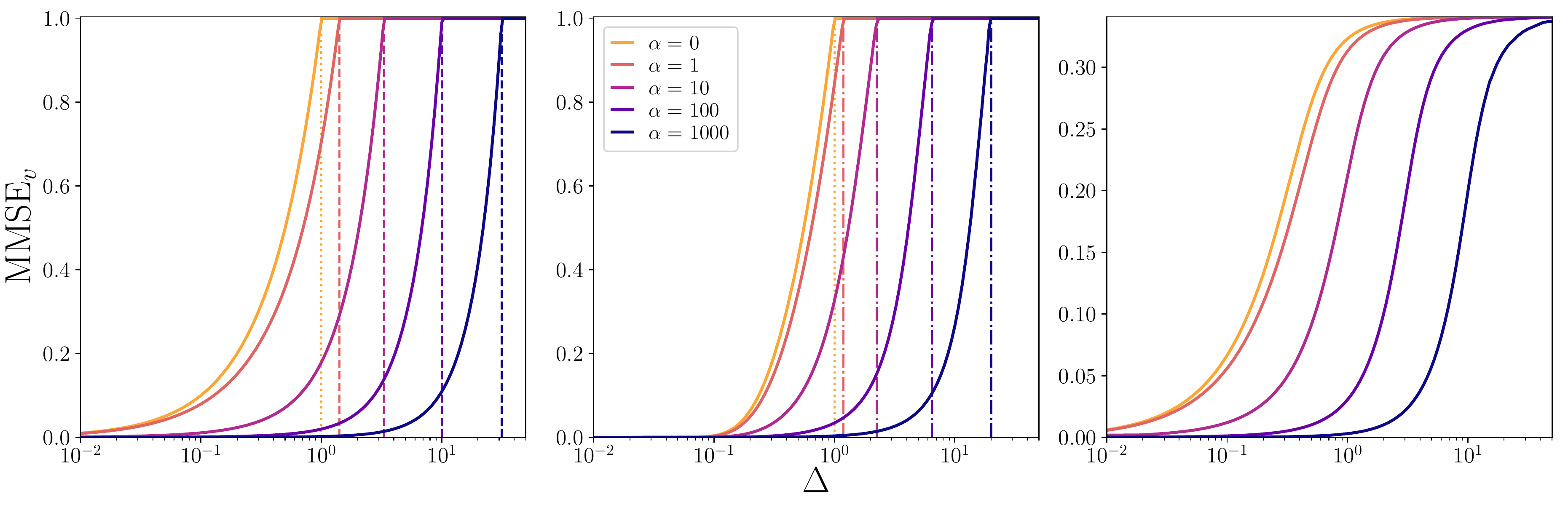}
	\caption{Spiked Wishart model: ${\rm MMSE}_v$ as a function of
          noise $\Delta$ for a wide range of compression ratios
          $\alpha=0,1,10,100,1000$, for linear (left), sign
          (center), and relu (right) activations, at $\beta=1$.}
	\label{appendix:fig_mse_u_VU}
\end{figure}
\begin{figure}[!htb]
	\centering
	\includegraphics[width=0.33\linewidth]{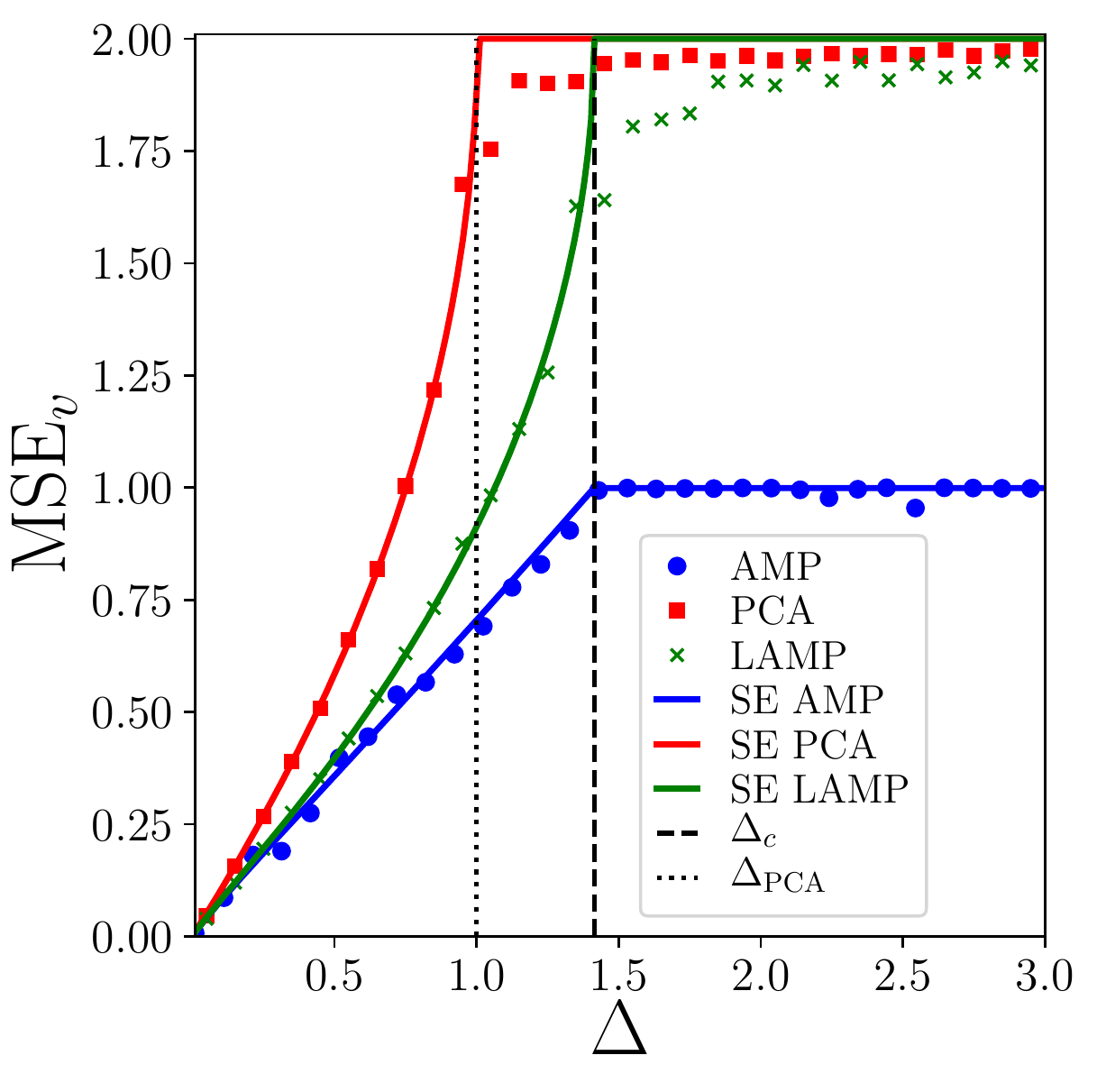}
	\caption{Spiked Wishart model: Comparison between PCA, LAMP and AMP for
          the linear activation at $\beta=1$ and compression ratio
          $\alpha=1$. Lines correspond to the theoretical asymptotic
          performance of PCA (red line), LAMP (green line) and AMP
          (blue line). Dots correspond to simulations
          of PCA (red squares), LAMP (green crosses) and AMP (blue
          points) for $k=10^4$, $\sigma^2=1$.}
    \label{appendix:bbp_lamp_amp_se}
\end{figure}

% \newpage

\newpage

\bibliographystyle{unsrt}
\bibliography{refs}

\begin{thebibliography}{10}

\bibitem{olshausen1997sparse}
Bruno~A Olshausen and David~J Field.
\newblock Sparse coding with an overcomplete basis set: A strategy employed by
  v1?
\newblock {\em Vision research}, 37(23):3311--3325, 1997.

\bibitem{donoho2006compressed}
David~L Donoho.
\newblock Compressed sensing.
\newblock {\em IEEE Transactions on information theory}, 52(4):1289--1306,
  2006.

\bibitem{goodfellow2014generative}
Ian Goodfellow, Jean Pouget-Abadie, Mehdi Mirza, Bing Xu, David Warde-Farley,
  Sherjil Ozair, Aaron Courville, and Yoshua Bengio.
\newblock Generative adversarial nets.
\newblock In {\em Advances in neural information processing systems}, pages
  2672--2680, 2014.

\bibitem{tramel2016inferring}
Eric~W Tramel, Andre Manoel, Francesco Caltagirone, Marylou Gabri{\'e}, and
  Florent Krzakala.
\newblock Inferring sparsity: Compressed sensing using generalized restricted
  {B}oltzmann machines.
\newblock In {\em 2016 IEEE Information Theory Workshop (ITW)}, pages 265--269.
  IEEE, 2016.

\bibitem{bora2017compressed}
Ashish Bora, Ajil Jalal, Eric Price, and Alexandros~G Dimakis.
\newblock Compressed sensing using generative models.
\newblock In {\em Proceedings of the 34th International Conference on Machine
  Learning-Volume 70}, pages 537--546. JMLR. org, 2017.

\bibitem{manoel2017multi}
Andre Manoel, Florent Krzakala, Marc M{\'e}zard, and Lenka Zdeborov{\'a}.
\newblock Multi-layer generalized linear estimation.
\newblock In {\em 2017 IEEE International Symposium on Information Theory
  (ISIT)}, pages 2098--2102. IEEE, 2017.

\bibitem{hand2017global}
Paul Hand and Vladislav Voroninski.
\newblock Global guarantees for enforcing deep generative priors by empirical
  risk.
\newblock In {\em Conference On Learning Theory}, pages 970--978, 2018.

\bibitem{fletcher2018inference}
Alyson~K Fletcher, Sundeep Rangan, and Philip Schniter.
\newblock Inference in deep networks in high dimensions.
\newblock In {\em 2018 IEEE International Symposium on Information Theory
  (ISIT)}, pages 1884--1888. IEEE, 2018.

\bibitem{hand2018phase}
Paul Hand, Oscar Leong, and Vlad Voroninski.
\newblock Phase retrieval under a generative prior.
\newblock In {\em Advances in Neural Information Processing Systems}, pages
  9136--9146, 2018.

\bibitem{mixon2018sunlayer}
Dustin~G Mixon and Soledad Villar.
\newblock Sunlayer: Stable denoising with generative networks.
\newblock {\em arXiv preprint arXiv:1803.09319}, 2018.

\bibitem{BlogSoledad}
Soledad Villar.
\newblock Generative models are the new sparsity?
\newblock https://solevillar.github.io/2018/03/28/SUNLayer.html, 2018.

\bibitem{zou2006sparse}
Hui Zou, Trevor Hastie, and Robert Tibshirani.
\newblock Sparse principal component analysis.
\newblock {\em Journal of computational and graphical statistics},
  15(2):265--286, 2006.

\bibitem{jenatton2010structured}
Rodolphe Jenatton, Guillaume Obozinski, and Francis Bach.
\newblock Structured sparse principal component analysis.
\newblock In {\em Proceedings of the Thirteenth International Conference on
  Artificial Intelligence and Statistics}, pages 366--373, 2010.

\bibitem{rangan2012iterative}
Sundeep Rangan and Alyson~K Fletcher.
\newblock Iterative estimation of constrained rank-one matrices in noise.
\newblock In {\em Information Theory Proceedings (ISIT), 2012 IEEE
  International Symposium on}, pages 1246--1250. IEEE, 2012.

\bibitem{deshpande2014information}
Yash Deshpande and Andrea Montanari.
\newblock Information-theoretically optimal sparse {PCA}.
\newblock In {\em 2014 IEEE International Symposium on Information Theory},
  pages 2197--2201. IEEE, 2014.

\bibitem{lesieur2015phase}
Thibault Lesieur, Florent Krzakala, and Lenka Zdeborov{\'a}.
\newblock Phase transitions in sparse {PCA}.
\newblock In {\em 2015 IEEE International Symposium on Information Theory
  (ISIT)}, pages 1635--1639. IEEE, 2015.

\bibitem{perry2016optimality}
Amelia Perry, Alexander~S Wein, Afonso~S Bandeira, and Ankur Moitra.
\newblock Optimality and sub-optimality of {PCA} for spiked random matrices and
  synchronization.
\newblock {\em arXiv preprint arXiv:1609.05573}, 2016.

\bibitem{lelarge2019fundamental}
Marc Lelarge and L{\'e}o Miolane.
\newblock Fundamental limits of symmetric low-rank matrix estimation.
\newblock {\em Probability Theory and Related Fields}, 173(3-4):859--929, 2019.

\bibitem{barbier2016mutual}
Jean Barbier, Mohamad Dia, Nicolas Macris, Florent Krzakala, Thibault Lesieur,
  and Lenka Zdeborov{\'a}.
\newblock Mutual information for symmetric rank-one matrix estimation: A proof
  of the replica formula.
\newblock In {\em Advances in Neural Information Processing Systems}, pages
  424--432, 2016.

\bibitem{miolane2017fundamental}
L{\'e}o Miolane.
\newblock Fundamental limits of low-rank matrix estimation: the non-symmetric
  case.
\newblock {\em arXiv preprint arXiv:1702.00473}, 2017.

\bibitem{lesieur2017constrained}
Thibault Lesieur, Florent Krzakala, and Lenka Zdeborov{\'a}.
\newblock Constrained low-rank matrix estimation: phase transitions,
  approximate message passing and applications.
\newblock {\em Journal of Statistical Mechanics: Theory and Experiment},
  2017(7):073403, 2017.

\bibitem{amini2009high}
Arash~A Amini and Martin~J Wainwright.
\newblock High-dimensional analysis of semidefinite relaxations for sparse
  principal components.
\newblock {\em The Annals of Statistics}, pages 2877--2921, 2009.

\bibitem{berthet2013computational}
Quentin Berthet and Philippe Rigollet.
\newblock Computational lower bounds for sparse {PCA}.
\newblock {\em arXiv preprint arXiv:1304.0828}, 2013.

\bibitem{deshpande2014sparse}
Yash Deshpande and Andrea Montanari.
\newblock Sparse {PCA} via covariance thresholding.
\newblock In {\em Advances in Neural Information Processing Systems}, pages
  334--342, 2014.

\bibitem{deshpande2016asymptotic}
Yash Deshpande, Emmanuel Abbe, and Andrea Montanari.
\newblock Asymptotic mutual information for the balanced binary stochastic
  block model.
\newblock {\em Information and Inference: A Journal of the IMA}, 6(2):125--170,
  2016.

\bibitem{krzakala_mutual_2016}
Florent Krzakala, Jiaming Xu, and Lenka Zdeborov{\'a}.
\newblock Mutual {Information} in {Rank}-{One} {Matrix} {Estimation}.
\newblock {\em 2016 IEEE Information Theory Workshop (ITW)}, pages 71--75,
  September 2016.
\newblock arXiv: 1603.08447.

\bibitem{AlaouiKrzakala}
Ahmed~El Alaoui and Florent Krzakala.
\newblock Estimation in the spiked {W}igner model: A short proof of the replica
  formula.
\newblock In {\em 2018 IEEE International Symposium on Information Theory
  (ISIT)}, pages 1874--1878, June 2018.

\bibitem{alaoui2017finite}
Ahmed~El Alaoui, Florent Krzakala, and Michael~I Jordan.
\newblock Finite size corrections and likelihood ratio fluctuations in the
  spiked {W}igner model.
\newblock {\em arXiv preprint arXiv:1710.02903}, 2017.

\bibitem{barbier2018adaptive}
Jean Barbier and Nicolas Macris.
\newblock The adaptive interpolation method: a simple scheme to prove replica
  formulas in bayesian inference.
\newblock {\em Probability Theory and Related Fields}, pages 1--53, 2018.

\bibitem{mourrat2019hamilton}
Jean-Christophe Mourrat.
\newblock Hamilton-{J}acobi equations for finite-rank matrix inference.
\newblock {\em arXiv preprint arXiv:1904.05294}, 2019.

\bibitem{Barbier2017c}
Jean Barbier, Florent Krzakala, Nicolas Macris, L{\'e}o Miolane, and Lenka
  Zdeborov{\'a}.
\newblock Optimal errors and phase transitions in high-dimensional generalized
  linear models.
\newblock {\em Proceedings of the National Academy of Sciences},
  116(12):5451--5460, 2019.

\bibitem{reeves2017additivity}
Galen Reeves.
\newblock Additivity of information in multilayer networks via additive
  gaussian noise transforms.
\newblock In {\em 2017 55th Annual Allerton Conference on Communication,
  Control, and Computing (Allerton)}, pages 1064--1070. IEEE, 2017.

\bibitem{gabrie2018entropy}
Marylou Gabri\'{e}, Andre Manoel, Cl\'{e}ment Luneau, Jean Barbier, Nicolas
  Macris, Florent Krzakala, and Lenka Zdeborov\'{a}.
\newblock Entropy and mutual information in models of deep neural networks.
\newblock In S.~Bengio, H.~Wallach, H.~Larochelle, K.~Grauman, N.~Cesa-Bianchi,
  and R.~Garnett, editors, {\em Advances in Neural Information Processing
  Systems 31}, pages 1821--1831. Curran Associates, Inc., 2018.

\bibitem{cover2012elements}
Thomas~M Cover and Joy~A Thomas.
\newblock {\em Elements of information theory}.
\newblock John Wiley \& Sons, 2012.

\bibitem{GuoShamaiVerdu_IMMSE}
Dongning Guo, S.~Shamai, and S.~Verd{\'u}.
\newblock Mutual information and minimum mean-square error in gaussian
  channels.
\newblock {\em IEEE Transactions on Information Theory}, 51(4):1261--1282,
  April 2005.

\bibitem{metzler2016denoising}
Christopher~A Metzler, Arian Maleki, and Richard~G Baraniuk.
\newblock From denoising to compressed sensing.
\newblock {\em IEEE Transactions on Information Theory}, 62(9):5117--5144,
  2016.

\bibitem{berthier2017state}
Raphael Berthier, Andrea Montanari, and Phan-Minh Nguyen.
\newblock State evolution for approximate message passing with non-separable
  functions.
\newblock {\em Information and Inference: A Journal of the IMA}, 2017.
\newblock preprint arXiv:1708.03950.

\bibitem{javanmard2013state}
Adel Javanmard and Andrea Montanari.
\newblock State evolution for general approximate message passing algorithms,
  with applications to spatial coupling.
\newblock {\em Information and Inference: A Journal of the IMA}, 2(2):115--144,
  2013.

\bibitem{StructuredPrior_demo_repo}
Benjamin Aubin, Bruno Loureiro, Antoine Maillard, Florent Krzakala, and Lenka
  Zdeborov{\'a}.
\newblock Demonstration codes - the spiked matrix model with generative priors.
\newblock \url{https://github.com/sphinxteam/StructuredPrior_demo}.

\bibitem{baik2005phase}
Jinho Baik, G{\'e}rard~Ben Arous, Sandrine P{\'e}ch{\'e}, et~al.
\newblock Phase transition of the largest eigenvalue for nonnull complex sample
  covariance matrices.
\newblock {\em The Annals of Probability}, 33(5):1643--1697, 2005.

\bibitem{krzakala_spectral_2013}
F.~Krzakala, C.~Moore, E.~Mossel, J.~Neeman, A.~Sly, L.~Zdeborov\'a, and
  P.~Zhang.
\newblock Spectral redemption in clustering sparse networks.
\newblock {\em Proceedings of the National Academy of Sciences},
  110(52):20935--20940, December 2013.

\bibitem{xiao2017/online}
Han Xiao, Kashif Rasul, and Roland Vollgraf.
\newblock Fashion-mnist: a novel image dataset for benchmarking machine
  learning algorithms, 2017.

\bibitem{nishimori2001statistical}
Hidetoshi Nishimori.
\newblock {\em Statistical physics of spin glasses and information processing:
  an introduction}, volume 111.
\newblock Clarendon Press, 2001.

\bibitem{Guerra2003}
Francesco Guerra.
\newblock Broken replica symmetry bounds in the mean field spin glass model.
\newblock {\em Communications in Mathematical Physics}, 233(1):1--12, Feb 2003.

\bibitem{korada2009exact}
Satish~Babu Korada and Nicolas Macris.
\newblock Exact solution of the gauge symmetric p-spin glass model on a
  complete graph.
\newblock {\em Journal of Statistical Physics}, 136(2):205--230, 2009.

\bibitem{franz1995recipes}
Silvio Franz and Giorgio Parisi.
\newblock Recipes for metastable states in spin glasses.
\newblock {\em Journal de Physique I}, 5(11):1401--1415, 1995.

\bibitem{boucheron2013concentration}
St{\'e}phane Boucheron, G{\'a}bor Lugosi, and Pascal Massart.
\newblock {\em Concentration inequalities: A nonasymptotic theory of
  independence}.
\newblock Oxford university press, 2013.

\bibitem{dunford1967linear}
Nelson Dunford and Jacob~T Schwartz.
\newblock {\em Linear operators. 2. Spectral theory: self adjoint operators in
  Hilbert Space}.
\newblock Interscience Publ., 1967.

\bibitem{benaych2011eigenvalues}
Florent Benaych-Georges and Raj~Rao Nadakuditi.
\newblock The eigenvalues and eigenvectors of finite, low rank perturbations of
  large random matrices.
\newblock {\em Advances in Mathematics}, 227(1):494--521, 2011.

\bibitem{silverstein1995empirical}
Jack~W Silverstein and ZD~Bai.
\newblock On the empirical distribution of eigenvalues of a class of large
  dimensional random matrices.
\newblock {\em Journal of Multivariate analysis}, 54(2):175--192, 1995.

\bibitem{marvcenko1967distribution}
Vladimir~A Mar{\v{c}}enko and Leonid~Andreevich Pastur.
\newblock Distribution of eigenvalues for some sets of random matrices.
\newblock {\em Mathematics of the USSR-Sbornik}, 1(4):457, 1967.

\bibitem{wigner1993characteristic}
Eugene~P Wigner.
\newblock Characteristic vectors of bordered matrices with infinite dimensions
  i.
\newblock In {\em The Collected Works of {E}ugene {P}aul {W}igner}, pages
  524--540. Springer, 1993.

\bibitem{lee2016tracy}
Ji~Oon Lee, Kevin Schnelli, et~al.
\newblock Tracy--widom distribution for the largest eigenvalue of real sample
  covariance matrices with general population.
\newblock {\em The Annals of Applied Probability}, 26(6):3786--3839, 2016.

\bibitem{silverstein1995analysis}
Jack~W Silverstein and Sang-Il Choi.
\newblock Analysis of the limiting spectral distribution of large dimensional
  random matrices.
\newblock {\em Journal of Multivariate Analysis}, 54(2):295--309, 1995.

\bibitem{weyl1949inequalities}
Hermann Weyl.
\newblock Inequalities between the two kinds of eigenvalues of a linear
  transformation.
\newblock {\em Proceedings of the National Academy of Sciences of the United
  States of America}, 35(7):408, 1949.

\bibitem{voiculescu1992free}
Dan~V Voiculescu, Ken~J Dykema, and Alexandru Nica.
\newblock {\em Free random variables}.
\newblock Number~1. American Mathematical Soc., 1992.

\end{thebibliography}

\end{document}